\newtheorem{theorem}{Theorem}[section]
\newtheorem{introtheorem}{Theorem}
\newtheorem{introconj}[introtheorem]{Conjecture}
\newtheorem{prop}[theorem]{Proposition}
\newtheorem{lem}[theorem]{Lemma}
\newtheorem{cor}[theorem]{Corollary}
\theoremstyle{definition}
\newtheorem{defn}[theorem]{Definition}
\newtheorem{nota}[theorem]{Notation}
\newtheorem{constr}[theorem]{Construction}
\newtheorem{exa}[theorem]{Example}
\newtheorem{rem}[theorem]{Remark}
\newtheorem{introrem}[introtheorem]{Remark}
\numberwithin{equation}{section}
\DeclareMathOperator{\Spec}{Spec}
\newcommand{\Pic}{{\rm Pic}}
\newcommand{\Hom}{{\rm Hom}}
\newcommand{\im}{{\operatorname{im}}}
\newcommand{\supp}{{\rm supp}\,}
\newcommand{\0}{\emptyset}
\newcommand{\sE}{{\mathcal E}}
\newcommand{\sI}{{\mathcal I}}
\newcommand{\sJ}{{\mathcal J}}
\newcommand{\sL}{{\mathcal L}}
\newcommand{\sM}{{\mathcal M}}
\newcommand{\sN}{{\mathcal N}}
\newcommand{\sO}{{\mathcal O}}
\newcommand{\sV}{{\mathcal V}}
\newcommand{\sW}{{\mathcal W}}
\newcommand{\A}{{\mathbb A}}
\newcommand{\C}{{\mathbb C}}
\newcommand{\G}{{\mathbb G}}
\renewcommand{\L}{{\mathbb L}}
\renewcommand{\P}{{\mathbb P}}
\newcommand{\Q}{{\mathbb Q}}
\newcommand{\R}{{\mathbb R}}
\newcommand{\V}{{\mathbb V}}
\newcommand{\Z}{{\mathbb Z}}
\newcommand{\sgn}{\operatorname{sgn}}
\newcommand{\BM}{{\operatorname{B.M.}}}
\renewcommand{\det}{\operatorname{det}}
\newcommand{\id}{{\operatorname{\rm Id}}}
\newcommand{\Zar}{{\text{\rm Zar}}} 
\newcommand{\Nis}{{\text{\rm Nis}}}
\newcommand{\Sch}{{\operatorname{\mathbf{Sch}}}} 
\newcommand{\colim}{{\mathop{\rm colim}}}
\newcommand{\<}{\langle}
\renewcommand{\>}{\rangle}
\renewcommand{\dim}{{\operatorname{\rm dim}}} 
\newcommand{\Div}{{\operatorname{div}}}
\newcommand{\Hilb}{\operatorname{Hilb}}
\newcommand{\Mon}{{\operatorname{Mon}}}
\newcommand{\sig}{{\operatorname{sig}}}
\newcommand{\Sm}{{\mathbf{Sm}}}
\newcommand{\GpS}{{\mathbf{GpSch}}}
\newcommand{\Sym}{{\operatorname{Sym}}}
\newcommand{\Ext}{{\operatorname{Ext}}}
\newcommand{\rnk}{{\operatorname{\text{rnk}}}} 
\newcommand{\Bl}{\text{Bl}}
\newcommand{\GW}{{\operatorname{GW}}} 
\newcommand{\SH}{{\operatorname{SH}}}
\newcommand{\sHom}{\mathcal{H}om}
\newcommand{\Aut}{{\operatorname{Aut}}}
\newcommand{\ret}{{\operatorname{\text{r\'et}}}}
\newcommand{\GL}{\operatorname{GL}}
\newcommand{\SL}{{\operatorname{SL}}}
\newcommand{\vir}{\text{\it vir}}
\newcommand{\ind}[1]{}
\newcommand{\inp}[1]{}
\newcommand{\EM}{{\operatorname{EM}}}
\newcommand{\Var}{{\operatorname{Var}}}
\newcommand{\sEnd}{\sEnd}
\begin{document}
	\title[Quadratic Donaldson-Thomas invariants]{Quadratic Donaldson-Thomas invariants for $(\P^1)^3$ and some other smooth proper toric threefolds}

\author{Marc~Levine}
\address{Marc Levine\\
Universit\"at Duisburg-Essen,
Fakult\"at Mathematik, Campus Essen, 45117 Essen, Germany}
\email{marc.levine@uni-due.de}

\author{Anna M. Viergever}
\address{Anna M. Viergever\\
Leibniz Universit\"at Hannover\\
Fakult\"at f\"ur Mathematik, Welfengarten 1, 30167 Hannover, Germany}
\email{viergever@math.uni-hannover.de} 

\subjclass[2020]{14N35, 14F42}
\keywords{motivic homotopy theory, refined enumerative geometry, Donaldson-Thomas invariants}

	\date{\today}
\thanks{The author M.L. was supported by the ERC through the project QUADAG.  This paper is part of a project that has received funding from the European Research Council (ERC) under the European Union's Horizon 2020 research and innovation programme (grant agreement No. 832833).  \\
\includegraphics[scale=0.08]{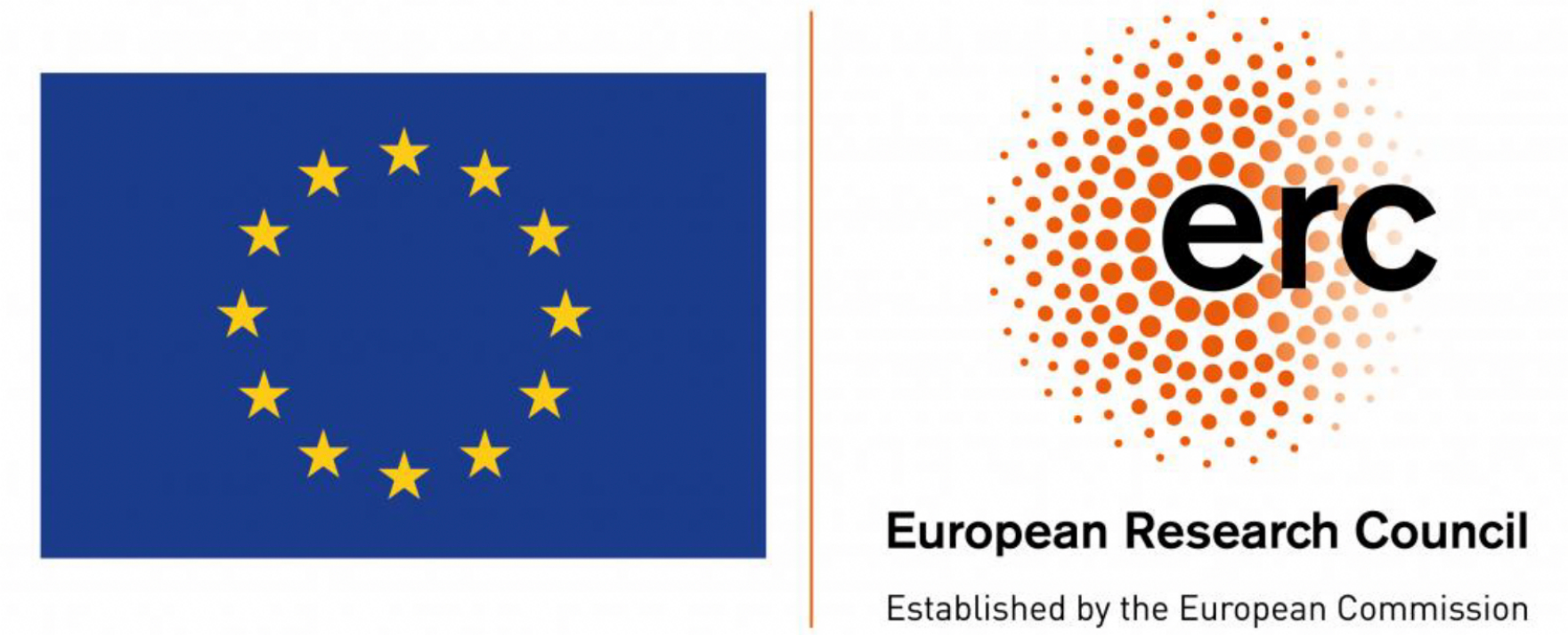}}

	\begin{abstract}
	Using virtual localization in Witt sheaf cohomology, we show that the generating series of quadratic Donaldson-Thomas invariants of $(\P^1)^3$, valued in the Witt ring of $\R$, $W(\R)\cong \Z$, is equal to $M(q^2)^{-8}$, where $M(q)$ is the MacMahon function.  This confirms a modified version of the conjecture of \cite{Viergever}. We also show that a localized version of this conjecture holds for certain iterated blow-ups of $(\P^1)^3$ and other related smooth proper toric varieties.
	\end{abstract} 
	
	\maketitle

	\tableofcontents
	
\begingroup	
	\section*{Introduction}
	\renewcommand{\thesection}{I}

	Let $X$ be a smooth projective threefold over the complex numbers. For $n\geq 1$, the Donaldson-Thomas invariant $I_n\in\Z$ of $X$ is defined to be the degree of the virtual fundamental class of the Hilbert scheme $\Hilb^n(X)$  of degree $n$, dimension zero  closed subschemes of $X$. These invariants were first introduced by Donaldson and Thomas in the papers \cite{ThomasHCI} and \cite{DonaldsonThomas}. 
	
	In their papers \cite{MaulikI, MaulikII}, Maulik, Nekrasov, Okounkov and Pandharipande consider the case of a smooth projective toric threefold $X$ over $\C$, and prove that 
\begin{equation}\label{equation: MNOP formula}
\sum_{n\geq 0}I_nq^n = M(-q)^{\deg(c_3(T_X\otimes K_X))},
\end{equation}
where $T_X$ is the tangent bundle on $X$ and $K_X$ is its canonical sheaf, and  $M(q) = \prod_{n=1}^\infty (1-q^n)^{-n}$ is the MacMahon function, which was first considered by MacMahon \cite{MacMahon} and proven by Stanley \cite{Stanley} to be the generating series of $3$-dimensional partitions. 
	
	The proof uses the virtual localization theorem \cite{GraberLVC} of Graber and Pandharipande, together with an explicit trace computation; this calculation is highly non-trivial, and is one of the main achievements of these papers. The formula \eqref{equation: MNOP formula} was conjectured by Maulik, Nekrasov, Okounkov and Pandharipande to hold for all smooth projective threefolds, which was proven in this generality by Levine-Pandharipande \cite{LevinePandharipande} and by Li \cite{Li}, relying on the toric case, but using entirely different methods from those employed in \cite{MaulikI, MaulikII}. 
	
	Levine has constructed a motivic analogue of virtual fundamental classes in \cite{LevineISNC}. This was extended to the equivariant setting in \cite{LevineVLEWC} and a virtual localization formula for this setting is proven in the same paper. Using this, one can define a motivic analogue of DT invariants for a smooth projective threefold $X$ over a field $k$, taking values in the Witt ring $W(k)$ of $k$. For these invariants to be  defined one needs to endow the Hilbert schemes $\Hilb^n(X)$ of $X$ with  {\em   orientations}, and in \cite{LevineOST},  Levine showed how to construct an orientation on each $\Hilb^n(X)$  from a spin structure on $X$, that is, an isomorphism of the canonical sheaf $K_X$ with the square of a line bundle.  
		
	We recall that $W(\C)\cong \Z/2\Z$, and when working over $\C$, one merely recovers the  invariants $I_n$ modulo 2.    However, $W(\R)\cong \Z$ via the signature map, so one can hope to find new information when working over $\R$. Another difference is that when applying virtual localization to obtain the $W(k)$-valued invariants, one cannot use a group action by a torus $\G_m^n$ as in the classical case, instead, one needs to use an action by a non-commutative group scheme admitting irreducible representations of even rank, such as $\SL_2$ or the normalizer $N$ of the diagonal torus $T_1\cong \G_m$ in  $\SL_2$, generated by $T_1$ and
\[
\sigma:=\begin{pmatrix}0&1\\-1&0\end{pmatrix}. 
\]
See the discussion in \cite[Introduction]{LevineABLEWC} for details. 
	
We will prove the following theorem. 
\begin{introtheorem}\label{theorem: main theorem} Let $k$ be a perfect field of characteristic $\neq2$. For $n\geq 0$, let $\tilde{I}_n$ be the quadratic Donaldson-Thomas  invariant of $(\P^1)^3$ over $k$.  Then in $W(k)\otimes\Q$ we have
\[
\sum_{n\geq 0}\tilde{I}_nq^n = M(q^2)^{-8}.
\]
\end{introtheorem}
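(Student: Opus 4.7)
My plan is to carry out a Witt-theoretic analogue of the MNOP calculation \cite{MaulikI, MaulikII}, replacing the torus action with an $N$-action and classical equivariant localization by the virtual localization formula in Witt sheaf cohomology of \cite{LevineVLEWC}.

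First, I equip $X := (\P^1)^3$ with the spin structure given by the isomorphism $K_X \cong \sO(-1,-1,-1)^{\otimes 2}$; by \cite{LevineOST} this induces compatible orientations on all $\Hilb^n(X)$, so that $\tilde I_n \in W(k)$ is defined. I then act on $X$ by $N^3$, where each factor $N \subset \SL_2$ acts on the corresponding $\P^1$ in the standard way, so that $T_1$ fixes $\{0,\infty\}$ and $\sigma$ interchanges these two points. After verifying that this action lifts to the chosen square root of $K_X$ (possibly after passing to a suitable double cover), virtual localization expresses $\tilde I_n$ in $W(k)\otimes \Q$ as a sum of pushforwards of inverse Witt-theoretic Euler classes of virtual normal bundles, indexed by the connected components of the $N^3$-fixed locus of $\Hilb^n(X)$.

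The $T_1^3$-fixed locus of $\Hilb^n(X)$ is the classical isolated point set labelled by triples of $3$-dimensional partitions placed at the $8$ torus-fixed points of $X$. The induced free action of $N^3/T_1^3 \cong (\Z/2)^3$ organises these $8$ points into $4$ antipodal pairs, so that the $N^3$-fixed components of $\Hilb^n(X)$ correspond to $\sigma$-compatible configurations of partitions attached to these pairs. In the Witt-theoretic Euler class of each local virtual normal bundle, a torus character $\chi$ paired with its inverse $-\chi$ should contribute only through the symmetric combination $\chi^2$; this is the algebraic mechanism that converts the classical MNOP substitution $q \mapsto -q$ into $q \mapsto q^2$ and halves the classical exponent $\deg c_3(T_X \otimes K_X) = -16$ to give the predicted value $M(q^2)^{-8}$.

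The main obstacle will be the explicit Witt-theoretic local trace identity: showing that, at each $N^3$-fixed component, the inverse Euler class of the virtual normal bundle assembles into the correct local MacMahon factor. As in \cite{MaulikI, MaulikII}, I expect this to reduce to a model calculation on $\A^3$ with a suitable $N$-action, glued across antipodal charts of the toric atlas of $X$, from which the formula $M(q^2)^{-8}$ follows by summing local contributions. This combinatorial identity is a direct analogue of the nontrivial trace calculation of \cite{MaulikI}, and reproving it in the Witt setting—where the Euler class is genuinely quadratic rather than linear in characters—is the hardest step of the proof.
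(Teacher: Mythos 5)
Your overall strategy --- spin structure from $K_{(\P^1)^3} \cong \sO(-1,-1,-1)^{\otimes 2}$, virtual localization in Witt-sheaf cohomology, local vertex computation on $\A^3$ glued across antipodal charts --- is indeed the blueprint the paper follows, but three specific steps in your outline would fail or leave substantial gaps as written.

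\textbf{Choice of group.} You propose using the full $N^3$-action. The paper instead restricts to a family of copies of $N$ embedded via $\iota_{a,b,c}\colon N \to \SL_2^3$, $\iota_{a,b,c}(t) = (t^a, t^b, t^c)$, $\iota_{a,b,c}(\sigma) = (\sigma,\sigma,\sigma)$, with $a,b,c$ odd positive integers chosen to depend on the range of $n$ one wants to compute. This is not an inessential cosmetic choice. First, the localization theorem of \cite{LevineVLEWC} inverts equivariant Euler classes of the normal bundle, and this only produces usable information if you control $H^*(BG,\sW)$ and its localization --- this is done explicitly for $G = N$ (Proposition~\ref{prop:BNCoh}), not for $G = N^3$. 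Second, and more subtly, even once you have the localization identity $\tilde{I}_n^N = \sum (\text{local terms})$ in a localized ring $H^*(BN,\sW)[1/me]^0 \cong W(k)[1/m]$, this does \emph{not} by itself determine $\tilde I_n$: the class $\tilde I_n^N$ lives in $H^0(BN,\sW) = W(k)\cdot 1 \oplus W(k)\cdot\langle Q\rangle$, the localization map kills $\langle Q\rangle + 1$, and the forget-the-action map kills $\langle Q\rangle$, so the two projections give different answers (Remark~\ref{rem:LocId}). The paper closes this gap via Proposition~\ref{prop:Splitting Identity}, which shows $\tilde I_n^N = p^*\tilde I_n$ lies in the summand $W(k)\cdot 1$ \emph{precisely because} $\iota_{a,b,c}$ factors through a closed immersion of $N$ into $\SL_2^3$. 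Your $N^3$-based proposal does not explain how to get this splitting, and without it the method computes nothing.

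\textbf{Isolated fixed points.} You assert the $N^3$-fixed locus of $\Hilb^n(X)$ is isolated. This is exactly the issue that blocked the $\P^3$ computation for $n\ge 8$ in \cite{Viergever}. For a single fixed $N$-action the fixed locus eventually acquires positive-dimensional components. The paper's remedy is that for any bound $M$ there are infinitely many triples $(a,b,c)$ for which the $N$-action via $\iota_{a,b,c}$ has only isolated $N$-fixed points on $\Hilb^{2n}$ for all $n\le M$, with monomial restrictions and no trivial $T_1$-subrepresentations in the relevant Ext groups (Lemma~\ref{lem:NumericalConditions}). This flexibility --- varying the embedding as $n$ grows --- is essential, and your proposal is silent on it.

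\textbf{The sign mechanism.} Your heuristic that a character $\chi$ paired with $\chi^{-1}$ ``contributes through $\chi^2$'' points in the right direction but misses the actual, rather delicate, arithmetic. The paper pairs weight spaces at antipodal origins into two-dimensional irreducibles $\rho_w$ of $N$, whose equivariant Euler class carries the sign $\epsilon(w) \in \{\pm 1\}$ depending on $w \bmod 4$ (Lemma~\ref{lem:NEulerClass}). The key identity is Proposition~\ref{proposition: signs}: using the Serre-duality splitting $V_I = V_I^+ + V_I^-$ of the trace and the congruence $s_1+s_2+s_3 \equiv 2 \bmod 4$ of the coordinate weights, one shows $\prod_w \epsilon(w)^{v_w(I)} = (-1)^n$ for a monomial ideal of co-length $n$. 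It is exactly this $(-1)^n$ that implements the substitution $W(s,q) = W'(s,-q^2)$ relating the quadratic vertex measure to the classical one, so that \cite[Theorem~1]{MaulikII}, $W'(s,q) = M(-q)^{-(s_1+s_2)(s_1+s_3)(s_2+s_3)/(s_1 s_2 s_3)}$, becomes $W(s,q) = M(q^2)^{-(\cdots)}$. Summing the exponent over the $4$ antipodal pairs then gives $-8$. Without this precise sign computation there is no derivation of the $q\mapsto q^2$ substitution or of the halved exponent.

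\textbf{Minor point.} The parenthetical ``possibly after passing to a suitable double cover'' is unnecessary: the isomorphism $K_{(\P^1)^3}\cong \sO(-1,-1,-1)^{\otimes 2}$ given by $p_1^*\Omega\wedge p_2^*\Omega\wedge p_3^*\Omega$ is $\SL_2^3$-equivariant on the nose, so the spin structure restricts along any $\iota_{a,b,c}$ without modification. Also, $N^3/T_1^3 \cong (\Z/2)^3$ does not act freely on the $8$ torus-fixed points with $4$ orbits; it is the diagonal $\sigma_\delta = (\sigma,\sigma,\sigma)$ whose action gives the $4$ antipodal pairs.
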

Of course, if $k$ has positive characteristic, $W(k)\otimes\Q=0$, so our result says nothing. In characteristic zero, $W(k)\otimes\Q$ detects exactly the signature for each embedding of $k$ into a real closed field. Since $(\P^1)^3$ is defined over $\Q$, $W(\Q)\otimes\Q=W(\R)\otimes \Q=\Q$, and the invariants $\tilde{I}_n$ are compatible with change of the base field, one does not really achieve any greater generality by taking a general ground field $k$ rather than just using $\R$. However, the arguments involved in the computation are valid for arbitrary perfect $k$ of characteristic $\neq2$, so we prefer to frame the statement of Theorem~\ref{theorem: main theorem} in this generality.

	The proof uses Levine's virtual localization theorem together with an explicit trace computation following \cite{MaulikI, MaulikII}. 
	
	In the paper \cite{Viergever}, the first three nonzero quadratic DT invariants of $\P^3$ are computed to be $\tilde{I}_2(\P^3) = 10, \tilde{I}_4(\P^3)= 25$ and $\tilde{I}_6(\P^3)=-50$, and a conjecture is made about the general case.\footnote{What are called quadratic Donaldson-Thomas invariants in this paper are called motivic Donaldson-Thomas invariants in \cite{Viergever}.} These results and ours above agree with the following slightly modified version of this conjecture.  
	
	\begin{introconj}\label{conj:Main} Let $X$ be a smooth projective threefold over $\R$ such that  $K_X\cong L^{\otimes 2}$ for some line bundle $L$ on $X$. For $n\geq 0$, let $\tilde{I}_n(X)\in W(\R)$ be the quadratic DT invariant of $X$ and let $\text{ob}_2(X)$ be the obstruction bundle $h^1(E^{DT(2)\vee}_\bullet)$ for the Donaldson-Thomas perfect obstruction theory $E^{DT(2)}_\bullet$ on $\Hilb^2(X)$. Then in $W(\R)[[q]]=\Z[[q]]$, we have 
\[
\sum_{n\geq 0}\tilde{I}_n(X)q^n = M(\epsilon\cdot q^2)^{\epsilon\cdot\deg_\R(e(\text{ob}_2(X)))},
\]
for some $\epsilon\in\{\pm1\}$.
\end{introconj}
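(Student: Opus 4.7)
The plan is to follow the two-stage strategy of Maulik--Nekrasov--Okounkov--Pandharipande and Levine--Pandharipande/Li, now in Witt-sheaf cohomology: Stage~1 reduces the conjecture to a computation on a class of spin toric threefolds, and Stage~2 carries out that computation via equivariant localization.

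First, in Stage~1, I would seek a quadratic/spin refinement of the algebraic cobordism formalism of \cite{LevinePandharipande} and \cite{Li}. Concretely, one wants to show that both sides of the conjectural identity depend only on a ``spin cobordism class'' of the pair $(X, L)$, and that the resulting semigroup of smooth projective spin threefolds over $\R$, modulo the appropriate spin double-point relations, is generated by $(\P^1)^3$ and its spin-admissible iterated blow-ups (together, perhaps, with $\P^3$, whose canonical bundle is a square). Since the right-hand side $M(\epsilon q^2)^{\epsilon \deg_\R e(\text{ob}_2(X))}$ is multiplicative in the exponent, the crucial point is to verify that $\deg_\R e(\text{ob}_2)$ is additive under the degeneration relation; this should reduce to a deformation-invariance statement for the obstruction bundle on $\Hilb^2$, together with a blow-up formula for $\Hilb^2$ with respect to the orientation data.

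Next, in Stage~2, for a spin toric threefold $X$ admitting an $N$- or $\SL_2$-action compatible with the spin structure, I would apply Levine's virtual localization in Witt cohomology \cite{LevineVLEWC} to express $\tilde I_n(X)$ as a sum of traces over $N$-fixed loci in $\Hilb^n(X)$. In parallel with MNOP, the fixed loci are indexed by box configurations at the $N$-fixed points of $X$; the twist is that only even-rank equivariant summands contribute to Witt-valued Euler classes, which explains both the appearance of $M(q^2)$ in place of $M(-q)$ and of $\deg_\R e(\text{ob}_2(X))$ in place of $\deg c_3(T_X \otimes K_X)$ in the exponent. The $(\P^1)^3$ case, Theorem~\ref{theorem: main theorem}, together with the $\P^3$ computations of \cite{Viergever}, should pin down the sign $\epsilon$, which ought to track the obstruction to $L$ admitting a further square root in $\Pic(X)$ and be identifiable with a Stiefel--Whitney-type class in $\Pic(X)/2\Pic(X)$.

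The hard part will be Stage~1. The double-point relations of \cite{LevinePandharipande} do not a priori preserve the condition $K_X \cong L^{\otimes 2}$, so a genuinely new ``spin cobordism'' theory, most plausibly modeled on $\MSL$-cobordism rather than $\MGL$-cobordism, appears necessary before any general reduction to toric varieties can be run. Until such a theory is available, the realistic intermediate target is to verify the conjecture for a sufficient supply of spin toric threefolds, which is essentially the route taken in the body of this paper for $(\P^1)^3$ and certain of its iterated blow-ups.
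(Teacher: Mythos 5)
This statement is a conjecture; the paper does not prove it, and your proposal --- explicitly, in its final paragraph --- does not claim to either. Your two-stage outline (a spin refinement of the algebraic cobordism machinery of Levine--Pandharipande and Li to reduce to toric models, followed by equivariant localization at fixed loci) is the natural analogue of the strategy that established the classical MNOP formula, and you correctly identify the missing ingredient: a spin (plausibly $\MSL$-flavored) cobordism theory for which the double-point relations respect $K_X \cong L^{\otimes 2}$ and under which both sides of the conjectural identity descend. The paper does not attempt Stage 1 at all; its contribution is confined to Stage 2 for $(\P^1)^3$ and certain toric blow-ups.

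Two points of calibration on Stage 2 as you describe it. First, the claim that ``only even-rank equivariant summands contribute to Witt-valued Euler classes, which explains the appearance of $M(q^2)$'' slightly misdescribes the mechanism. The use of $N$- rather than $\G_m$-actions is indeed dictated by the need for even-rank irreducibles with nonvanishing $\sW$-Euler classes, but the even powers $q^{2n}$ arise for a different and more elementary reason: the involution $\sigma$ acts on $X$ with no fixed points, so every $N$-fixed ideal sheaf of finite co-length has even co-length (Remark~\ref{rem:PointPairs}, \eqref{eqn:OddVanishing}), whence $\tilde{I}_n = 0$ for odd $n$. Also, $N$ has no fixed \emph{points} on $X$, only fixed pairs of points; the box configurations live at $T_1$-fixed points, matched in pairs by $\sigma$. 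The replacement of $\deg c_3(T_X\otimes K_X)$ by $\deg_\R e(\text{ob}_2(X))$ in the exponent is not a formal consequence of a rank-parity constraint but comes from the sign analysis in Proposition~\ref{proposition: signs} combined with the Bott-residue identity $\tilde{I}^{N,\tau}_2(X) = \tfrac12\deg_\R c_3(T_X\otimes K_X)$ established in the proof of Theorem~\ref{thm:MainBlowup}.

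Second, the paper's results for iterated blow-ups of $(\P^1)^3$ are weaker than your closing paragraph suggests. Theorem~\ref{thm:MainBlowup} computes only the \emph{localized equivariant} generating series $\tilde{Z}^N(X,q)$, and the paper leaves open whether this equals $Z(X,q)$. For $(\P^1)^3$ the identification $\tilde{I}_n^N = p^*\tilde{I}_n$ is supplied by Proposition~\ref{prop:Splitting Identity}, which requires that the $N$-action, linearizations, and equivariant spin structure extend through $\SL_2^3$; that extension is not available for the blow-ups, which is precisely why Theorem~\ref{thm:MainBlowup} is stated only in the localized equivariant setting. So the conjecture is actually verified in the paper (after tensoring $W(k)$ with $\Q$) only for $(\P^1)^3$; for the blow-ups a consistency check is performed, not a verification. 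Any serious attempt at Stage 1 would have to supply, as an intermediate step, a replacement for Proposition~\ref{prop:Splitting Identity} that does not lean on an $\SL_2^m$-extension.
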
 
The differences  between this conjecture and the one presented in (the published version of) \cite{Viergever} are the correction of a sign error in the exponent, and that the conjecture stated in \cite{Viergever} requires the scheme to have a suitable $N$-action.

Here are some comments on Conjecture~\ref{conj:Main}. We recall that $\Hilb^n(X)$ is smooth for any smooth projective $X$ and $n\le 3$ (see, e.g., \cite[Remark 7.2.5]{FG}), and hence for $n\le 3$, the obstruction bundle is a vector bundle, with $\deg_\R(e(\text{ob}_n(X))=\tilde{I}_n(X)$ (cf. \cite[Proposition 5.6]{BehrendFantechiISNC}). For $n=2$, we see that $\tilde{I}_2(X)$ agrees with the coefficient of $q^2$ in $M(\epsilon\cdot q^2)^{\epsilon \cdot\deg_\R(e(\text{ob}_2(X)))}$ for any choice of $\epsilon\in \{\pm1\}$. Working in reverse, we see that Theorem~\ref{theorem: main theorem} implies that
\[
\deg_\R(e(\text{ob}_2((\P^1_\R)^3)))=\tilde{I}_2((\P^1)^3)=-8.
\]
so Theorem~\ref{theorem: main theorem} is compatible with the Conjecture. Moreover, for $n=1,3$, the obstruction bundle has odd rank $3n=\dim_k\Hilb^n(X)$, since the Donaldson-Thomas obstruction theory for $\Hilb^n(X)$ has virtual rank zero. By \cite[Lemma 4.3]{LevineMEC}, it follows that $e(\text{ob}_n(X))=0$ in $H^{3n}(\Hilb^n(X), \sW(\det^{-1}\text{ob}_n(X))$, so $\tilde{I}_1(X)=\tilde{I}_3(X)=0$, in accordance with 
Conjecture~\ref{conj:Main}, which if true would imply that $\tilde{I}_n(X)=0$ for all odd $n$. At present, the only cases for which we know that $\tilde{I}_n(X)=0$ for all odd $n$ is for $X=(\P^1)^3$ (an immediate consequence of Theorem~\ref{theorem: main theorem}) and $X=\P^3$ (see \cite[Introduction]{Viergever}). 

We note that Theorem~\ref{theorem: main theorem}  does not seem to provide new insights into the case of $\P^3$. The main reason that $\tilde{I}_n(\P^3)$ for $n\geq 8$ could not be computed in \cite{Viergever} was that the method used  relies on the $N$-action on $\P^3$ only having isolated fixed points. It turns out, however, that the fixed points inside the Hilbert scheme in degrees $n\geq 8$ have positive dimension. On $(\P^1)^3$, the $N$-actions which we use have only isolated fixed points on $\Hilb^n$ for each $n$ up to a bound $M$, with $M$ tending to infinity as we vary our choice of $N$-action, and so the same method as used in \cite{Viergever} computes all quadratic DT invariants. Once we have this information, we use the method of the ``equivariant vertex measure'' employed in  \cite{MaulikI, MaulikII} to assemble our computations of the individual $\tilde{I}_n$ to give the formula for the generating function in Theorem~\ref{theorem: main theorem}.
	
Note also that in the conjecture we do not ask for a choice of isomorphism $K_X\cong L^{\otimes 2}$, only its existence. A chosen isomorphism $\rho:K_X\xrightarrow{\sim} L^{\otimes 2}$, i.e., a choice of a spin structure on $X$, is what is needed to construct an orientation on $\Hilb^n(X)$ for each $n$, but the choice of  $\rho$ will not affect the orientation on $\Hilb^n(X)$ for even $n$. Changing $L$  by a 2-torsion line bundle will similarly have no effect on the invariants $\tilde{I}_n(X)$ for even $n$.   As part of the conjecture is that $\tilde{I}_n(X)=0$ for odd $n$, these choices for the spin structure on $X$ should not play a role.

If  $X\to (\P^1)^3$ is a blow up at a set of $k$-points, stable under the $N$-action, the  $N$-action on $(\P^1)^3$ extends uniquely to an $N$-action on $X$. We can repeat this construction, forming an {\em $N$-equivariant iterated blow-up of $(\P^1)^3$}. This is a special case of what we call a {\em $N_3$-oriented smooth proper toric variety over $k$} $X_\Sigma$, associated to a complete regular simplicial fan $\Sigma\subset \Z^3$, which we study in  \S\ref{sec:BlowUp}. $N_3$ is the subgroup of $N^3$ generated by the torus $T_1^3$ and the element $\sigma_\delta=(\sigma, \sigma,\sigma)$, and plays a technical role in giving us the necessary flexibility to vary the $N$-action on  $X_\Sigma$ to aid in computing the $N$-equivariant DT invariants for $\Hilb^n(X_\Sigma)$ as  $n$ increases.  

 In this case, we are not able to identify the corresponding DT invariants in $W(\R)$ with the ones we get by applying the localization methods of \cite{LevineVLEWC}. However, we are still able to compute the localized $N$-equivariant DT invariants, which live in $W(\R)=\Z$, giving our two main results, Theorem~\ref{thm:MainBlowup} and Corollary~\ref{cor:MainBlowup}. Forming the corresponding generating series $\tilde{Z}^N(X,q)$,  we summarize 
 Theorem~\ref{thm:MainBlowup} and Corollary~\ref{cor:MainBlowup} as follows.
 \begin{introtheorem} Let $X=X_\Sigma$ be an $N_3$-oriented smooth proper toric variety over $k$, associated to a complete regular simplicial fan $\Sigma\subset \Z^3$. Then
\[
\tilde{Z}^N(X,q)=M(q^2)^{\widetilde{\deg}_\R(e^N(\text{ob}_2(X)))}.
\]
Moreover, 
\[
\widetilde{\deg}_\R(e^N(\text{ob}_2(X)))=-\frac{1}{2}\deg_\R(c_3(\text{ob}_1(X)))=\frac{1}{2}\deg_\R(c_3(T_X\otimes K_X)).
\]
\end{introtheorem}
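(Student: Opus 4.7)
The plan is to follow the MNOP strategy as in the proof of Theorem~\ref{theorem: main theorem}, but now carried out on an arbitrary $N_3$-oriented smooth proper toric threefold $X=X_\Sigma$. The key simplification is that, once the vertex computation has been done for $(\P^1)^3$, it can be transported to any toric vertex by equivariant pullback, so the hard trace calculation need only be done once.

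First, I would apply the virtual localization theorem of \cite{LevineVLEWC} to the $N_3$-action on $\Hilb^n(X)$. Because $X$ is toric and $T_1^3\subset N_3$ acts through the toric torus, the $T_1^3$-fixed points on $\Hilb^n(X)$ are $0$-dimensional and are parametrized by tuples $(\pi_v)_{v\in\Sigma(3)}$ of $3$-dimensional partitions attached to the vertices of the fan, with total box count $n$. The involution $\sigma_\delta$ permutes these tuples, and the Witt-theoretic localization formula picks out only those configurations fixed (as a whole orbit, and with the appropriate symmetry) by $\sigma_\delta$, exactly as in the $(\P^1)^3$ case.

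Second, I would decompose the local equivariant Witt-Euler class of the virtual tangent space at each fixed configuration into a product of vertex contributions (one per vertex of $\Sigma$) and edge contributions (one per $1$-cone of $\Sigma$), following MNOP \cite{MaulikI, MaulikII} and the trace computations used for Theorem~\ref{theorem: main theorem}. The edge contributions are of even rank and cancel in the Witt-theoretic signature, precisely as in the classical case, and each $\sigma_\delta$-symmetric pair of vertices contributes the same generating series $M(q^2)^{\pm 1}$ already computed for $(\P^1)^3$. Assembling the factors gives
\[
\tilde{Z}^N(X,q)=M(q^2)^{e}
\]
for some integer exponent $e$ depending only on $\Sigma$ and its $\sigma_\delta$-action.

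Third, to identify $e$, I would compare coefficients of $q^2$. Since $\Hilb^2(X)$ is smooth, the localized equivariant DT invariant $\tilde{I}_2^N(X)$ equals $\widetilde{\deg}_\R(e^N(\text{ob}_2(X)))$ by the (equivariant) excess-intersection description of the virtual class, as in \cite[Proposition 5.6]{BehrendFantechiISNC}. On the other hand the $q^2$-coefficient of $M(q^2)^e$ is $e$, so $e=\widetilde{\deg}_\R(e^N(\text{ob}_2(X)))$, giving the first displayed equality. For the identification with $\tfrac{1}{2}\deg_\R(c_3(T_X\otimes K_X))$, I would argue locally at each $N_3$-fixed vertex: the vertex contribution to $\deg_\R c_3(T_X\otimes K_X)$ equals the MNOP local exponent at that vertex up to the factor $2$ accounting for the $\sigma_\delta$-orbit, and the local exponent is minus that of $c_3(\text{ob}_1(X))$ via the self-duality of the DT obstruction complex (the identity $\text{ob}_1^\vee\otimes K_X\simeq T_X\otimes K_X$ after taking traces). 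Summing over vertex orbits yields the two remaining equalities.

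The main obstacle is Step~2, the explicit computation of the equivariant Witt-vertex and Witt-edge measures, together with the demonstration that the $\sigma_\delta$-symmetric vertex pairs all reduce to the $(\P^1)^3$-vertex already treated. In the non-toric-standard case one has to check that the $N_3$-weights on the coordinate directions at each vertex of $\Sigma$ are, up to reordering compatible with $\sigma_\delta$, the same as those appearing at a $T_1^3$-fixed point of $(\P^1)^3$; this is exactly what is built into the definition of an $N_3$-oriented toric variety in \S\ref{sec:BlowUp}, so the reduction should go through cleanly once the bookkeeping of characters is set up.
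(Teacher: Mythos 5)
Your outline captures the high-level strategy (localize, match $q^2$-coefficients, compare with a Bott residue computation of $\deg c_3(T_X\otimes K_X)$), but several of the concrete intermediate claims are wrong, and one crucial technical step is missing entirely.

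First, there are no edge contributions in this setting. Your Step 2 decomposes the local Euler class into vertex and edge factors and argues the edge factors cancel; but for degree-zero DT invariants the $T$-fixed ideal sheaves have zero-dimensional support concentrated at the torus-fixed points, so the localization contribution is a pure product over vertices. Edge contributions in MNOP arise only when the curve class $\beta$ is nonzero (ideal sheaves supported on the toric $1$-skeleton), which does not occur here. Relatedly, the claim that ``each $\sigma_\delta$-symmetric pair of vertices contributes $M(q^2)^{\pm1}$'' is not correct; the contribution of the pair $\{0_\kappa,0_{-\kappa}\}$ is $W(s^\kappa,q)=M(q^2)^{-\frac{(s_1^\kappa+s_2^\kappa)(s_1^\kappa+s_3^\kappa)(s_2^\kappa+s_3^\kappa)}{s_1^\kappa s_2^\kappa s_3^\kappa}}$, with exponent determined by the coordinate weights at $0_\kappa$ and generally not equal to $\pm1$. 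Likewise, the $N_3$-weights at a vertex of $X_\Sigma$ are \emph{not} the same as those at a $T_1^3$-fixed point of $(\P^1)^3$ --- only the parity conditions of Definition~\ref{defn:N3Orient} are imposed, so the paper applies the full MNOP vertex theorem \cite[Theorem 1]{MaulikII} at arbitrary weights rather than reducing to the $(\P^1)^3$-vertex. Finally, your stated duality ``$\text{ob}_1^\vee\otimes K_X\simeq T_X\otimes K_X$'' would give $\text{ob}_1^\vee\cong T_X$; the correct identity is $\text{ob}_1(X)\cong(T_X\otimes K_X)^\vee$ (Lemma~\ref{lem:Ob1}), which yields the sign $\deg c_3(\text{ob}_1)=-\deg c_3(T_X\otimes K_X)$.

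The more serious gap is that you nowhere address why the localized answer is well defined. Unlike the $(\P^1)^3$ case, the $N$-action on $X_\Sigma$ arises by restriction via $\iota_{a,b,c}:N\hookrightarrow N_3$ and does \emph{not} extend to an $\SL_2^3$-action, so Proposition~\ref{prop:Splitting Identity} is unavailable: one cannot conclude that $\tilde{I}_n^{N,a,b,c}(X)$ lies in the summand $p^*W(k)\subset H^0(BN,\sW)$, and hence \emph{a priori} the localized value depends on the choice of $(a,b,c)$. Showing that it depends only on the class $\tau$ of $(a,b,c)$ in $((\Z/4)^\times)^3/\{\pm1\}$ (Lemma~\ref{lem:IndABC}) requires the real-realization argument of \S\ref{subsec:NAction} via the analysis of $\pi_0(BN(\R))\to\pi_0(BN_3(\R))$ (Proposition~\ref{prop:RealRealization}, Lemma~\ref{lem:pi0Ind}). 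Without this, $\tilde{I}_n^{N,\tau}(X)$, and consequently $\tilde{Z}^N(X,q)$ and $\widetilde{\deg}_\R(e^N(\text{ob}_2(X)))$, are not even defined, and since the argument (as in the proof of Theorem~\ref{theorem: main theorem}) must vary $(a,b,c)$ with $M$ to obtain the monomial-ideal and no-trivial-summand conditions for larger and larger $n$, consistency across these choices is essential before one can take the limit $M\to\infty$.
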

This conforms with Conjecture~\ref{conj:Main}. Here $\text{ob}_1(X)$ and $\text{ob}_2(X)$ are the  obstruction bundles on $X=\Hilb^1(X)$ and $\Hilb^2(X)$, respectively, $e^N(\text{ob}_2(X))$ is an equivariant Euler class and $\widetilde{\deg}_\R(e^N(\text{ob}_2(X)))$  is its degree,  suitably localized to live in $W(\R)=\Z$. In fact, we need to introduce additional data to define the generating series and the degree of the equivariant Euler class, but one main point of Theorem~\ref{thm:MainBlowup} is that the resulting objects are in fact independent of these extra data. If one could show that $Z(X,q)=\tilde{Z}^N(X,q)$, that is, that the localized equivariant DT invariants compute the non-equivariant ones $\tilde{I}_n(X)$, this would give further evidence for Conjecture~\ref{conj:Main}, but this seems out of reach at present.

We conclude in \S\ref{sec:Construction} with giving a criterion for a complete regular simplicial fan $\Sigma\subset \Z^3$ to define a toric variety $X_\Sigma$ over $k$ that is $N_3$-oriented. We hope that this will give us additional examples besides the iterated blow-ups of $(\P^1)^3$ mentioned above, but at present, these are the only examples we have.

\begin{introrem} The main reason we have framed Conjecture~\ref{conj:Main} over the base-field $\R$ is to make sense of the expression $M(\epsilon\cdot q^2)^{\epsilon\cdot\deg_\R(e(\text{ob}_2(X)))}$ as an element of $W(\R)[[q]]$, using the signature to identify $W(\R)$ with $\Z$. Over a general field $k$, the formal sum $\sum_{n\geq 0}\tilde{I}_nq^n$ defines an element of $W(k)[[q]]$ and $\epsilon\cdot\deg_k(e(\text{ob}_2(X))$ is an element of $W(k)$. 

Recall  that a  {\em power structure} on a commutative ring $R$ is a map
\[
(1+tR[[t]])\times R\to 1+tR[[t]];\quad (f(t), r)\mapsto f(t)^r,
\]
satisfying the formal properties one would expect from an expression of the form $f(t)^r$ (see \cite[pg. 50, Definition]{GZLMH} for details). The usual power $f(t)^n$ defines the unique power structure on $\Z$. A power structure on $W(k)$ would give a meaning to the expression $M(\epsilon\cdot q^2)^{\epsilon\cdot\deg_k(e(\text{ob}_2(X)))}$, so one could generalize the above conjecture to an arbitrary base-field $k$, given a power structure on the Witt ring. There are several constructions of power structures on the Grothendieck-Witt ring (see \cite{McGarraghy}, \cite{PajPal}), but we are not aware of constructions of a power structure on the Witt ring. 

The methods developed in \cite{LevineISNC, LevineOST} do give a lifting of the Witt-valued Donaldson-Thomas invariants $\tilde{I}_n(X)$ discussed here to DT invariants $\tilde{I}^{\GW}_n(X)$ valued in the Grothendieck-Witt ring $\GW(k)$. Via the rank homomorphism $\rnk:\GW(k)\to \Z$, and the quotient map $\pi:\GW(k)\to W(k)$, one recovers the classical DT invariant $I_n(X)\in \Z$ and our Witt ring valued invariant $\tilde{I}_n(X)\in W(k)$ by
\[
I_n(X)=\rnk(\tilde{I}^{\GW}_n(X)),\ \tilde{I}_n(X)=\pi(\tilde{I}^{\GW}_n(X)).
\]
As $(\rnk,\pi):\GW(k)\to \Z\times W(k)$ is injective, one also recovers $\tilde{I}^{\GW}_n(X)$ from $(I_n(X), \tilde{I}_n(X))$, with the added information that these two invariants agree in $\Z/2$ after applying the quotient map $\Z\to \Z/2$ to $I_n(X)$ and the rank map $\overline{\rnk}:W(k)\to \Z/2$ to $\tilde{I}_n(X)$. \footnote{For $X=(\P^1)^3$, we have $\deg(c_3(T_X\otimes K_X))=-16$, and the congruence $M(-q)^{-16}\equiv M(q^{16})^{-1}\equiv M(q^2)^{-8}\mod 2$ agrees with the congruences $I_n(X)\equiv \tilde{I}_n(X)\mod 2$ for all $n$.}

It would be interesting, even in the case $k=\R$, to package the MNOP formula \eqref{equation: MNOP formula} with our Conjecture~\ref{conj:Main} to give a (conjectural) formula for the generating series $\sum_{n\geq 0}\tilde{I}^\GW_nq^n$, perhaps using one of the known power structures on $\GW(k)$. Note that the obstruction bundle $\text{ob}_1(X)$ on $\Hilb^1(X)=X$ is just the dual of $T_X\otimes K_X$, so one could replace the exponent $\deg(c_3(T_X\otimes K_X))$ in \eqref{equation: MNOP formula}  with $-\deg(c_3(\text{ob}_1(X)))$, which perhaps more closely resembles Conjecture~\ref{conj:Main} with $\epsilon=-1$. 

There is another approach to quadratic DT invariants, due to Espreafico and Walcher \cite{EspreaficoDT}. For a characteristic zero field $k$, they consider the local DT invariant for $\Hilb^n(\A^3_k)$ as an element in the Grothendieck ring of varieties $K_0(\Var_k)$, computed by Behrend, Bryan and Szendr\"oi \cite{BehrendMDT}, to which they apply an extended version 
\[
\tilde{\chi}_c(-/k):K_0(\Var_k)[\L^{-1/2}]\to \GW(k)[\langle-1\rangle^{1/2}].
\]
of the compactly supported $\A^1$-Euler characteristic of Arcila-Maya, Bethea, Opie, Wickelgren and Zakharevich \cite{ABOWZ}. This is also discussed in \cite[Introduction]{Viergever}. It would be interesting to see how to globalize this construction to give invariants for $\Hilb^n(X)$, with $X$ a smooth projective threefold with spin structure, to see if this approach would agree with the one via the motivic virtual fundamental classes given by \cite{LevineISNC, LevineOST}.

We remark that, under the hypothesis that $X$ is a smooth projective spin threefold,   $\deg(c_3(T_X\otimes K_X))$ is even, so 
\[
M(-q)^{\deg(c_3(T_X\otimes K_X))}\equiv M(q^2)^{\frac{1}{2}\deg(c_3(T_X\otimes K_X))}\mod 2 
\]
and all the odd DT invariants $I_n(X)$ vanish mod 2, consistent with our conjecture that $\tilde{I}_n(X)=0$ in $W(\R)$ for all odd $n$. In fact, we might expect that for smooth projective spin threefolds $X$, we have
\[
\deg(c_3(T_X\otimes K_X))=\epsilon\cdot 2\deg_\R(e(\text{ob}_2)),
\]
with $\epsilon$ as in  Conjecture~\ref{conj:Main}, in accordance with our result for the equivariant classes in Theorem~\ref{thm:MainBlowup}. 
\end{introrem}

\noindent{\bf Acknowledgments.}\ We would like to thank Rahul Pandharipande for generously taking the time to explain much of the foundational material and basic ideas underlying the papers \cite{MaulikI, MaulikII}. We would also like to thank Lukas Br\"oring for coming up with the algorithm that helped calculate some of the quadratic DT invariants with SAGE, which we used to check our results numerically.   
\\[5pt]
{\bf Notation}. For a noetherian scheme $S$, we let $\Sch/S$ denote the category of quasi-projective schemes over $S$, that is, $S$-schemes admitting a closed embedding into an open subscheme of a projective space $\P^n_S$ over $S$ for some $n$. We let $\Sm/S$ denote the full subcategory of smooth quasi-projective $S$-schemes. For a noetherian ring $A$, we write $\Sch/A$ and $\Sm/A$ for $\Sch/\Spec A$ and $\Sm/\Spec A$.

For $S$ a noetherian scheme of finite Krull dimension, we denote the motivic stable homotopy category over $S$ by $\SH(S)$ (or $\SH(A)$ for a noetherian ring of finite Krull dimension). We refer the reader to \cite{HoyoisSOEMHT} for the details on the construction of $\SH(S)$ and the six functor formalism for  $\SH(-)$ on  $\Sch/S$.  

Let $R=\oplus_{n\in\Z} R^n$ be a graded ring and $e\in R^d$ a homogeneous element. We write $R^n[1/e]$ for the degree $n$ homogeneous component of the localization $R[1/e]$.

Let $N$ be the normalizer of the diagonal torus $T_1\subset  \SL_2$. $N$ is generated by
\[
T_1:=\left\{\begin{pmatrix} t & 0 \\ 0 & t^{-1}\end{pmatrix}\right\}\subset \SL_2 \text{ and }\sigma := \begin{pmatrix} 0 & 1 \\ -1 & 0\end{pmatrix}.
\]
We write $t\in T_1$ to denote the matrix $\begin{pmatrix} t & 0 \\ 0 & t^{-1}\end{pmatrix}\in \SL_2$. 
\endgroup
\section{Quadratic Donaldson-Thomas invariants}
Let $k$ be a perfect field of characteristic $\neq2$, let $X$ be a smooth projective threefold over $k$ and let $K_X$ be the canonical line bundle on $X$.  We briefly recall from \cite[Section 1]{Viergever} how one constructs quadratic Donaldson-Thomas invariants $\tilde{I}_n(X)\in W(k)$ of $X$ using work of Levine, see \cite{LevineISNC}, \cite{LevineVLEWC} and \cite{LevineOST}. Since our quadratic Donaldson-Thomas  invariants will live in the Witt ring of our base-field, we begin by recalling some basic facts on this topic.
 
 \subsection{The Witt ring, the Witt sheaf and the Witt spectrum}\label{subsec:Witt}	
		
For a  noetherian scheme $X$, we let $W(X)$ denote the Witt ring of $X$. This  is the group completion of the monoid (under direct sum) of isometry classes of symmetric non-degenerate  $\sO_X$-bilinear forms $b:\sV\otimes_{\sO_X}\sV\to \sO_X$ on locally free coherent sheaves on $X$,  modulo the classes of {\em metabolic forms}, where $(\sV,b)$ is metabolic if $\sV$ admits a short exact sequence 
\[
	0\to \sL\to \sV\to \sL'\to 0
\]
with $\sL$, $\sL'$ locally free, such that $\sL=\sL^\perp$, where $\sL^\perp$ is the kernel of the map $\sV\to \sHom_{\sO_X}(\sL, \sO_X)$ induced by $b$.    Here we use the definition given by Balmer \cite[Definition 1.1.27]{Balmer} for the exact category of locally free coherent sheaves on $X$, with duality $\sV^\vee:=\sHom_{\sO_X}(\sV, \sO_X)$, going back to Knebusch's original definition \cite[\S 3.5]{Knebusch} (phrased slightly differently) of the Witt ring of a scheme. This in turn extends Witt's original definition of the Witt ring of a field \cite{Witt} to the case of schemes; see also \cite[\S7]{MilnorSBF} for a detailed discussion of the Witt ring of a commutative ring. 

For a noetherian commutative ring $R$, we write $W(R)$ for $W(\Spec R)$. We note that in case 2 is invertible in $R$, then $(P,b)$ is metabolic if and only if $(P,b)$ is hyperbolic, that is, $(P,b)$ is isometric to a space of the form $(Q\oplus Q^\vee, \phi_{can})$ with $\phi_{can}((v,\ell), (v',\ell'))=\ell(v')+\ell'(v)$.

The ring structure on $W(X)$ is induced from the tensor product over $\sO_X$, with $b\cdot b'(v\otimes w, v'\otimes w')=b(v,v')b'(w,w')$. Sending $X$ to $W(X)$ defines a presheaf of rings on noetherian schemes,  yielding in particular its sheafication $\sW$ on $\Sm/k_\Nis$.  We are assuming here that 2 is invertible in $k$, so metabolic forms are hyperbolic in the affine case.

Noting that for $(\sV,b)$ a metabolic form on $X$, the locally free sheaf $\sV$ has even rank on each connected component of $X$ gives us the rank homomorphism $\rnk:W(X)\to  H^0(X,\Z/2)$, taking $(\sV,b)$ to the rank of $\sV$ modulo 2. We recall that for $k$ an algebraically closed field $\rnk:W(k)\to \Z/2$ is an isomorphism. For $k$ a real closed field (e.g., $k=\R$), Sylvester's theorem of inertia gives us the signature map $\sgn:W(k)\to \Z$, which is an isomorphism \cite[Definitions 4.5, 4.7, Corollary 4.8]{Scharlau}.

There is a $\P^1$-spectrum $\EM(\sW)\in \SH(k)$ that represents Witt sheaf cohomology on $\Sm/k$, via a canonical and natural isomorphism of presheaves on $\Sm/k$, 
\[
\EM(\sW)^{a,b}(X)=H^{a-b}(X_\Nis,\sW).
\]
  We note that $H^0(\Spec(k),\sW) = W(k)$.

\subsection{Motivic virtual fundamental classes} 
Let $Z$ be a quasi-projective scheme over $k$. Let $E_\bullet$ be a perfect obstruction theory over $Z$, i.e. a perfect complex supported in (homological) degrees $0$ and $1$ together with a morphism $\phi_\bullet:E_\bullet\to \mathbb{L}_{Z/k}$ to the cotangent complex $\mathbb{L}_{Z/k}$ of $Z$ over $k$, such that $\phi_\bullet$ induces an isomorphism on homology sheaves in degree zero and a surjection in degree $1$. 
	
Let $\sE$ be an $\SL$-oriented ring spectrum, see for instance \cite{AnanyevskiySLPBT} for a definition and properties; we recall that  the $\P^1$-spectrum $\EM(\sW_*)$ is canonically $\SL$-oriented (see, e.g., \cite[Example 3.1]{LevineABLEWC} for details).

In the paper \cite{LevineISNC}, Levine defines a motivic virtual fundamental class $[Z,\phi_\bullet]^\vir_\sE\in \sE^{B.M.}_{0,0}(Z,\V(E_\bullet))$ associated to $E_\bullet$ on $Z$, where $\V(E_\bullet)$ is the virtual vector bundle associated to $E_\bullet$. Since $\sE$ is $\SL$-oriented, we have a canonical isomorphism $\sE^{B.M.}_{0,0}(Z,\V(E_\bullet))\cong \sE^{B.M.}_{2r,r}(Z,\V(\det E_\bullet)-\V(\sO_Z))$, where $r$ is the virtual rank of $E_\bullet$.
		
 An  {\em orientation} of $E_\bullet$ is the data of an isomorphism $\rho:\det(E_\bullet)\xrightarrow{\sim} L^{\otimes 2}$, where $L$ is a line bundle on $Z$.  An orientation $\rho$ gives us the isomorphism
 \[
 \sE^{B.M.}_{0,0}(Z,\V(E_\bullet))\cong \sE^{B.M.}_{2r,r}(Z,\V(\det E_\bullet)-\V(\sO_Z))\xymatrix{\ar[r]^{\rho_*}_\sim&}\sE^{B.M.}_{2r,r}(Z).
 \]
If we further suppose that $E_\bullet$ has virtual rank 0 and the structure map $p_Z:Z\to \Spec(k)$ is proper, we have the pushforward map 
\[
p_{Z_*}:\sE^{B.M.}_{0,0}(Z)\to \sE^{B.M.}_{0,0}(k)=\sE^{0,0}(k).
\]
The resulting composition $\deg_k^\rho\colon \sE^{B.M.}_{0,0}(Z,\V(E_\bullet))\to \sE^{0,0}(k)$ gives us the {\em degree} of the motivic virtual fundamental class
\[
\deg_k^\rho([Z,\phi_\bullet]^\vir_\sE)\in  \sE^{0,0}(k).
\]

In  \cite{LevineVLEWC}, this construction was taken over to the equivariant setting, in the case where $Z$ has an action of a smooth closed subgroup of $\GL_n$. A virtual localization formula for this situation is proven in the same paper (see \cite[Theorem 1]{LevineVLEWC}).
	
\subsection{Quadratic DT invariants of smooth projective threefolds}\label{subsec:MotDTInv}

Let $X$ be a smooth projective threefold, and for $n\geq 1$, let $\Hilb^n(X)$ be the Hilbert scheme of ideal sheaves of length $n$ with support of dimension zero on $X$. There is a perfect obstruction theory of virtual rank 0,  $\phi_{DT,n}:E^{DT(n)}_\bullet\to \L_{\Hilb^n(X)/k}$ on $\Hilb^n(X)$, which was first constructed by Thomas \cite{ThomasHCI} and given a concise reformulation in the language of the derived category by Behrend-Fantechi \cite[Lemma 1.22]{Behrend-FantechiSOTHSPT}.  We call an isomorphism $\tau:K_X\xrightarrow{\sim} L^{\otimes 2}$ a {\em spin structure} on $X$. By \cite[Theorem 6.3]{LevineOST},  a  spin structure $\tau$ on $X$ induces an orientation $\rho^\tau_n$ on $E^{DT(n)}_\bullet$ for every $n\ge1$. 
	
We can now make the following definition. 
	
\begin{defn}
Let $X$ be a smooth projective threefold with a spin structure $\tau:K_X\xrightarrow{\sim} L^{\otimes 2}$. Let $n\geq 1$. The $n$th {\em quadratic DT invariant} $\tilde{I}_n(X)\in W(k)$ is the degree of the $\EM(\sW_*)$-valued motivic virtual fundamental class of $\Hilb^n(X)$:
\[
\tilde{I}_n(X):=\deg_k^{\rho^\tau_n}([\Hilb^n(X), \phi_{DT,n}]^\vir_{\EM(\sW_*)})\in W(k).
\]		
\end{defn}
	
\begin{rem} In general, it is not clear to what extent the invariant $\tilde{I}_n(X)$ depends on the choice of the spin structure $\tau:K_X\xrightarrow{\sim} L^{\otimes 2}$. However, if $n$ is even, and the choice of line bundle $L$ is fixed, then it follows from the construction of the orientation in \cite{LevineOST} that the orientation $\rho^\tau_n$ is independent of the choice of isomorphism $\tau:K_X\to L^{\otimes 2}$. Furthermore, for $M$ a 2-torsion bundle, changing $\tau$ to the composition $K_X\xrightarrow{\tau} L^{\otimes 2}\cong (L\otimes M)^{\otimes 2}$ also leaves   $\tilde{I}_n(X)$ unchanged. Thus, for even $n$, $\tilde{I}_n(X)$ is independent of the choice of spin structure on $X$. 
\end{rem}

\section{Equivariant quadratic DT invariants} Let $G$ be a smooth affine group-scheme over $k$. 
We have a version of the quadratic Donaldson-Thomas  invariants in the $G$-equivariant setting, which play a central role in our computations of $\tilde{I}_n(X)$ by using virtual localization. For this, we use $G$-equivariant $\sE$-cohomology $\sE^{*,*}_G(-,-)$ and equivariant Borel-Moore homology,  $\sE^\BM_{G,(*,*)}(-,-)$, defined using the method of Totaro \cite{TotaroCRCS},  Edidin-Graham \cite{EdidinGrahamEIT} and Morel-Voevodsky \cite[\S 4]{MorelVoevodskyAHTS}, adapted to $\sE$-cohomology and $\sE$-Borel-Moore homology; see \cite[\S 4]{LevineABLEWC} and \cite[\S 3]{LevineVLEWC} for details. We specialize to the case of interest  $G=N$. We first recall some facts about the representation theory of $N$, and the Witt sheaf cohomology of the classifying space $BN$.

\subsection{Some representation theory} \label{subsec:NRep}  Each representation $\rho$ of $N$ on a $k$-vector space $V$ of dimension $n$ gives rise to a rank $n$ vector bundle  $V_\rho$ on $BN$ and a corresponding Euler class $e_N(V_\rho)\in H^n(BN, \sW(\det^{-1} V_\rho))$; our computation of $\tilde{I}_n^N$ relies on knowing the $N$-equivariant Euler classes for irreducible representations of $N$. Here we recall some basic facts about the representation theory of $N$ and the computation of the Euler class $e_N(V_\rho)$ for each irreducible $N$-representation $\rho$.  

As described in \cite[\S 6]{LevineAEGQF}, the group scheme $N$ has the following series of irreducible representations on finite dimensional $k$-vector spaces, where $k$ is our base-field of characteristic $\neq2$.
\begin{lem} Let $k$ be a field of characteristic $\neq2$. The group-scheme $N$ has the following irreducible representations over $k$.
\begin{enumerate}
\item The trivial representation $\rho_0:N\to \GL_1$, and the sign representation $\rho_0^-:N\to \GL_1$, with $\rho_0^-(T_1)=\{\id\}$ and $\rho_0^-(\sigma)=-\id$.
\item For $m$ a non-zero integer,  the representation $\rho_m:N\to \GL_2$,
\[
\rho_m(t)=\begin{pmatrix}t^m&0\\0&t^{-m}\end{pmatrix},  
\rho_m(\sigma)=\begin{pmatrix}0&1\\(-1)^m&0\end{pmatrix}.
\]
\end{enumerate}
For $m\neq0$, the representations $\rho_m$ and $\rho_{-m}$ are isomorphic and
 the set of representations $\{\rho_0, \rho_0^-, \rho_m\mid m>0\}$ form a complete set of representatives for isomorphism classes of irreducible $N$-representations in $k$-vector spaces.
\end{lem}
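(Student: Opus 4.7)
The plan is to exploit the structure of $N$ as a non-split extension
\[
1 \to T_1 \to N \to N/T_1 \cong \Z/2 \to 1,
\]
where $\sigma$ lifts the nontrivial element of the quotient and satisfies the two relations $\sigma^2 = -I \in T_1$ (the element corresponding to $t = -1$) and $\sigma t \sigma^{-1} = t^{-1}$. Equivalently, I would carry out the Clifford-theoretic analysis of the $N$-representations induced from the normal subgroup $T_1 \trianglelefteq N$ of index $2$.

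First I would restrict an arbitrary finite-dimensional algebraic representation $\rho \colon N \to \GL(V)$ to the split torus $T_1 \cong \G_m$, obtaining a weight decomposition $V = \bigoplus_{m\in\Z} V_m$. The conjugation relation $\sigma t \sigma^{-1} = t^{-1}$ immediately forces $\sigma(V_m) = V_{-m}$, since if $tv = t^m v$ then $t(\sigma v) = \sigma(t^{-1}v) = t^{-m}(\sigma v)$. Now assume $V$ is irreducible and split into cases. If every weight of $V|_{T_1}$ is $0$, then $T_1$ acts trivially, so $\sigma^2 = -I \in T_1$ also acts trivially, and $V$ descends to an irreducible representation of $N/T_1 \cong \Z/2$; over $k$ of characteristic $\neq 2$ these are exactly the trivial and sign characters, yielding $\rho_0$ and $\rho_0^-$. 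Otherwise, pick a nonzero $v \in V_m$ with $m \neq 0$; then $v$ and $\sigma v$ lie in the distinct weight spaces $V_{\pm m}$ and are therefore linearly independent, and the subspace $W = \langle v, \sigma v\rangle$ is $N$-stable by weight considerations together with $\sigma^2 v = (-1)^m v \in W$. Irreducibility forces $V = W$, and reading off $\rho(t)$ and $\rho(\sigma)$ in the ordered basis $\{\sigma v, v\}$ recovers the matrices defining $\rho_{-m}$ on the nose.

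To close the loop I would verify the claimed isomorphism $\rho_m \cong \rho_{-m}$ by exhibiting an explicit $2\times 2$ intertwiner that swaps the two weight lines with a sign correction of $(-1)^m$, and then check irreducibility of each $\rho_m$ for $m > 0$ by observing that its two $T_1$-weight lines are the only $T_1$-stable lines, but neither is preserved by $\sigma$. The main bookkeeping hazard is the sign $(-1)^m$ coming from $\sigma^2 = -I$: it distinguishes the cases $m$ even and $m$ odd and must be threaded carefully through the matrix defining $\rho_m(\sigma)$; apart from this subtlety, everything reduces to standard Clifford theory for an index-$2$ extension of a split torus.
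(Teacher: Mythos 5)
Your proposal is correct and follows essentially the same route as the paper's proof: both restrict to $T_1$, use $\sigma t\sigma^{-1}=t^{-1}$ to pair the weight spaces $V_m$ and $V_{-m}$, split into the cases $m=0$ (quotient to $N/T_1\cong\Z/2$) and $m\neq 0$ (the two-dimensional span of a weight vector and its $\sigma$-image is $N$-stable), and finish by checking irreducibility and non-isomorphism via $T_1$-characters. The only cosmetic difference is that you read off the matrices in the basis $\{\sigma v, v\}$ (getting $\rho_{-m}$ directly), whereas the paper rescales $f_1$ to $(-1)^m f_1$ to land on $\rho_m$; both amount to the same thing since $\rho_m\cong\rho_{-m}$.
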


\begin{proof}
Let $V$ be a $k$-vector space and $\rho:N\to \Aut_k(V)$ an irreducible representation. We decompose $V$ into irreducible representations for $T_1$ as $V=\oplus_m O(m)^{n_m}$, where $t\in T_1$ acts by $t^m$ on the one-dimensional $k$-vector space $O(m)$. Since $\sigma\cdot t=t^{-1}\cdot \sigma$, $\rho(\sigma)$ restricts to an isomorphism $\rho(\sigma)_m:O(m)^{n_m}\to O(-m)^{n_{-m}}$ for each $m$, in particular, $n_m=n_{-m}$. Also, for $m\neq0$,  the subspace $O(m)^{n_m}\oplus O(-m)^{n_{-m}}$ is $N$-stable, and for $m=0$, the subspace $O(0)^{n_0}$ is $N$-stable, so since $V$ is irreducible, this means there is a unique $m\ge0$ with $n_m>0$.  Let $n=n_m$.

Suppose that $m=0$. Then $V=O(0)^n$, $T_1$ acts trivially on $V$, and thus the $N$-action factors through an action of $N/T_1=\Z/2$. Since we are assuming the characteristic is not 2, this implies that $n=1$ and there are exactly two possible actions: either $\sigma$ acts by the identity or $\sigma$ acts by $-\id$. This yields the irreducible representations $\rho_0$ and $\rho_0^-$; clearly $\rho_0$ is not isomorphic to $\rho_0^-$. 

If $m>0$,  choose a basis $e_1,\ldots, e_n$ for $O(m)^n$, then the rank two subspace spanned by $e_1$ and $f_1:=\rho(\sigma)(e_1)$ is an $N$-summand of $V$, so $V=ke_1\oplus k f_1$, with $\rho(t)(e_1)=t^m e_1$, $\rho(t)(f_1)=t^{-m}f_1$ and $\rho(\sigma)(f_1)=\rho(\sigma^2)(e_1)=\rho(-1)(e_1)=(-1)^me_1$. Taking the basis $e=e_1$, $f=(-1)^mf_1$ for $V$ shows that $\rho$ is isomorphic to $\rho_m$. 

The representation $\rho_{-m}$  is isomorphic to $\rho_m$, by conjugating $\rho_m$ by the matrix 
\[
\begin{pmatrix}0&1\\(-1)^m&0\end{pmatrix}. 
\]
Since the character of $T_1$ for $\rho_m$ is different from that of $\rho_{m'}$ if $m>m'>0$, the two-dimensional representations $\rho_m$, $m>0$, together with the one-dimensional representations $\rho_0$, $\rho_0^-$, form a complete set of representatives of isomorphism classes of irreducible representations of $N$. 
\end{proof}

\begin{nota}
For $m\neq0$, we let $\tilde{\sO}(m)$ denote the rank two vector bundle on $BN$ corresponding to $\rho_m$, and let $\gamma$ denote the line bundle on $BN$ corresponding to $\rho_0^-$. 
\end{nota}

\begin{rem} Since the Picard group of $BN$ is isomorphic to the group of 1-dimensional representations of $N$ over $k$ (under tensor product), we see that $\Pic(BN)\cong \Z/2$, with generator $\gamma$.
\end{rem}

\begin{defn}\label{defn:StandardBasis}
For $m\neq0$, we call a basis $e,f$ for the representation space of $\rho_m$ a {\em standard basis} if $\rho_m(t)(e)=t^m e$, $\rho_m(t)(f)=t^{-m} f$ and $\rho_m(\sigma)(e)=(-1)^mf$.
\end{defn}
For example, the basis $e,f$  of $k^2$,  $e=\begin{pmatrix}1\\0\end{pmatrix}$, $f=\begin{pmatrix}0\\1\end{pmatrix}$, is a standard basis for  $\rho_m$. If we have two standard bases $(e,f)$, $(e',f')$, then for a unique $\lambda\in k^\times$, we have $e'=\lambda e$, $f'=\lambda f$. 

A standard basis $e,f$ for $\rho_m$ gives the basis $e\wedge f$ for $\det\rho_m$, which gives the isomorphism $\det\rho_m\cong \rho_0$ for $m$ odd and $\det\rho_m\cong\rho_0^-$ for $m$ even. These isomorphisms are thus canonical up to multiplication by some $\lambda^2$, $\lambda\in k^\times$, so define canonical isomorphisms of sheaves on $BN$,
\begin{align}\label{align:DetIs}
&\sW(\det^{-1}\tilde{\sO}(m))\cong \sW&\text{for $m$ odd,}\\
&\sW(\det^{-1}\tilde{\sO}(m))\cong \sW(\gamma)&\text{for $m$ even.}\notag
\end{align}

We have the equivariant Euler class $e_N(\tilde{\sO}(m))\in H^2(BN, \sW(\det^{-1}\tilde{\sO}(m)))$. Using the canonical isomorphisms \eqref{align:DetIs}, this gives
\[
e_N(\tilde{\sO}(m))\in \begin{cases} H^2(BN, \sW) &\text{ for $m$ odd,}\\
H^2(BN, \sW(\gamma)) &\text{ for $m$ even.}
\end{cases}
\]

\subsection{Equivariant Witt sheaf cohomology}\label{subsecEquivWittCoh}

For a smooth $k$-scheme $Y$ with $N$-action,  and an $N$-linearized invertible sheaf $\sL$ on $Y$, we write 
\[
H^\BM_{N,*}(Y, \sW(\sL)):=\EM(\sW)^\BM_{N, (*,0)}(Y, \sL-\sO_Y), 
\]
for the $N$-equivariant Borel-Moore homology and 
\[
H^*_N(Y, \sW(\sL)):=\EM(\sW)^{*,0}_N(Y, \sL-\sO_Y)
\]
 for the $N$-equivariant cohomology. For $Y=\Spec(k)$ with trivial $N$-action, we often write $H^\BM_*(BN, \sW(\sL))$ and $H^*(BN, \sW(\sL))$ for  $H^\BM_{N,*}(\Spec(k), \sW(\sL))$ and $H^*_N(\Spec(k), \sW(\sL))$.

The $N$-equivariant quadratic Donaldson-Thomas  invariants will live in $H^*(BN, \sW(\sL))$, and are built out of the Euler classes of $N$-representations, so we recall here some relevant facts.

\begin{nota}\label{nota:e}
We let $e=e_N(\tilde{\sO}(1))\in H^2(BN, \sW)$, $\tilde{e}=e_N(\tilde{\sO}(2))\in H^2(BN, \sW(\gamma))$. Also, define the map $\epsilon:\Z\to \{\pm 1\}$ by $\epsilon(m)=+1$ for $m\equiv 1,2\mod 4$, $\epsilon(m)=-1$ for $m\equiv 0,3\mod 4$.
\end{nota}

Pullback via the structure map $p:BN\to \Spec k$ makes the full coefficient ring 
$H^*(BN, \sW)\oplus H^*(BN, \sW(\gamma))$ a $W(k)$-algebra. 

\begin{prop} \label{prop:BNCoh} 1. There is an isomorphism of graded
 $W(k)$-algebras
 \[
 W(k)[x,y]/(x^2-1, (x+1)y)\cong  H^*(BN, \sW)
 \]
 with $x$ of degree 0, $y$ of degree 2, and sending $y$ to $e$; the image of $x$ under this isomorphism is denoted $\<Q\>$. In particular, $H^0(BN,\sW)\cong W(k)\cdot 1\oplus W(k)\cdot \<Q\>\cong W(k)^2$ as $W(k)$-module. \\[2pt]
 2. As $H^*(BN, \sW)$-module,  $H^{*\ge2}(BN, \sW(\gamma))$ is generated by $\tilde{e}$, with the single relation $(1+\<Q\>)\tilde{e}=0$. 
 3. The $W(k)$-algebra map $W(k)[e, \tilde{e}]\to H^*(BN, \sW)\oplus H^*(BN, \sW(\gamma))$ is an inclusion, and an isomorphism in degrees $\ge1$. $H^0(BN, \sW)$ is the free $W(k)$-module $W(k)\cdot 1\oplus W(k)\cdot\<Q\>$.  $H^0(BN, \sW(\gamma)$ is the free $W(k)$-module $W(k)\cdot\tilde{Q}$, for a particular element $\tilde{Q}\in H^0(BN, \sW(\gamma)$,  with $\tilde{Q}\cdot e=\tilde{e}$, and $\tilde{Q}\cdot \<Q\>=-\tilde{Q}$.\\[2pt]
4. The map $W(k)[e, \tilde{e}]\to H^*(BN, \sW)\oplus H^*(BN, \sW(\gamma))$ induces an isomorphism $W(k)[e, e^{-1}, \tilde{e}, \tilde{e}^{-1}]\to (H^*(BN, \sW)\oplus H^*(BN, \sW(\gamma)))[1/e, 1/\tilde{e}]$, and the  localization map  $H^*(BN, \sW)\oplus H^*(BN, \sW(\gamma))\to (H^*(BN, \sW)\oplus H^*(BN, \sW(\gamma)))[1/e, 1/\tilde{e}]$ sends $\<Q\>$ to $-1$.
 \end{prop}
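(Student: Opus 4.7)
The plan is to model $BN$ using the standard Totaro--Edidin--Graham construction: choose $N$-representations $V_n$ and $N$-stable open subschemes $U_n \subset V_n$ on which $N$ acts freely, with $V_n \setminus U_n$ of codimension tending to infinity; then $H^*(BN, \sW) = \lim_n H^*(U_n/N, \sW)$, and similarly in the $\sW(\gamma)$-twisted setting. A convenient family of $V_n$'s comes from sums of copies of $\tilde{\sO}(1)$: the resulting quotients $U_n/N$ are tractable via Ananyevskiy's projective bundle formula for $\SL$-oriented theories (since $\tilde{\sO}(1)$ has trivial determinant) combined with the transfer--restriction sequence for the index-$2$ inclusion $T_1 \subset N$.

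For the degree-$0$ classes: since $\Pic(BN) \cong \Z/2$ with non-trivial generator $\gamma$ and $\gamma^{\otimes 2} \cong \sO_{BN}$ canonically via an isomorphism $Q$, we obtain $\langle Q \rangle = [(\gamma, Q)] \in H^0(BN, \sW) = W(BN)$, and $\langle Q \rangle^2 = 1$ is immediate. The assertions $H^0(BN, \sW) = W(k) \cdot 1 \oplus W(k) \cdot \langle Q \rangle$ and $H^0(BN, \sW(\gamma)) = W(k) \cdot \tilde Q$ for an appropriate $\tilde Q$ then reduce via the transfer--restriction sequence to the corresponding computation for $BT_1$, where the degree-$0$ twisted and untwisted pieces are both copies of $W(k)$.

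For the Euler-class generators in degree $\geq 2$: set $e = e_N(\tilde{\sO}(1))$ and $\tilde e = e_N(\tilde{\sO}(2))$. The key relation $(1 + \langle Q \rangle) e = 0$ comes from the $N$-representation isomorphism $\tilde{\sO}(1) \otimes \gamma \cong \tilde{\sO}(1)$ (obtained by conjugating with $\operatorname{diag}(1, -1)$), together with the functorial tensor formula for the Euler class of a rank-$2$ $\SL$-oriented bundle twisted by a line bundle whose square is trivialized; tracking the sign from the non-canonicity of the isomorphism yields $\langle Q \rangle \cdot e = -e$. The identity $(1+\langle Q \rangle)\tilde e = 0$ is analogous, using the abstract isomorphism $\tilde{\sO}(2) \otimes \gamma \cong \tilde{\sO}(2)$; the relations $\tilde Q \cdot e = \tilde e$ and $\tilde Q \cdot \langle Q \rangle = -\tilde Q$ then follow from how $\tilde Q$ was characterized above.

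The main obstacle is exhaustiveness: showing that the listed classes generate all of $H^*(BN, \sW) \oplus H^*(BN, \sW(\gamma))$ with no further relations, in particular that $H^1$ vanishes in both variants and that no additional generators appear in higher degrees. I would verify this by computing $H^*(U_n/N, \sW)$ and $H^*(U_n/N, \sW(\gamma))$ explicitly using iterated Gysin sequences alongside the projective bundle formula, matching with the proposed ring in degrees $\leq 2n$ and passing to the limit. Part~4 is then formal: inverting $e$ in $W(k)[x,y]/(x^2-1,(x+1)y)$ forces $x+1 = 0$, so $\langle Q \rangle$ localizes to $-1$; the relation $(1+\langle Q \rangle)\tilde e = 0$ becomes vacuous after inverting $\tilde e$, and the localized ring reduces to $W(k)[e, e^{-1}, \tilde e, \tilde e^{-1}]$.
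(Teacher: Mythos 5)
The paper does not prove Proposition~\ref{prop:BNCoh} at all: it is quoted verbatim from Levine's prior work, with the ``proof'' consisting of the citation to \cite[Theorem~11.15]{LevineABLEWC}, \cite[Theorem~5.1]{LevineABLEWC}, and \cite[Proposition~5.5, Lemma~6.1]{LevineMEC}. So your proposal is necessarily a from-scratch reconstruction rather than a variant of the paper's argument; the right benchmark is whether it would actually succeed as a proof.

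The broad framing is sound and in the spirit of Levine's computations: Totaro--Edidin--Graham approximations for $BN$, the index-two inclusion $T_1 \subset N$ and the associated transfer--restriction sequence, and the identification $\Pic(BN)\cong\Z/2$ generated by $\gamma$ to produce $\langle Q\rangle$ with $\langle Q\rangle^2 = 1$. But there is a genuine gap in the step you present as the heart of the matter. You deduce $\langle Q\rangle\cdot e = -e$ from the $N$-equivariant isomorphism $\tilde{\sO}(1)\otimes\gamma \cong \tilde{\sO}(1)$ (conjugation by $\operatorname{diag}(1,-1)$, which indeed flips the $\SL$-orientation and contributes a factor $\langle -1\rangle = -1$) \emph{together with} ``the functorial tensor formula for the Euler class of a rank-2 $\SL$-oriented bundle twisted by a line bundle whose square is trivialized,'' which you use to write $e(\tilde{\sO}(1)\otimes\gamma) = \langle Q\rangle\cdot e(\tilde{\sO}(1))$. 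No such tensor formula is a general feature of $\SL$-oriented theories, and as stated it fails: take $V = \sO_X^{\oplus 2}$ with its standard $\SL$-orientation and any 2-torsion $L$; then $e(V) = 0$, while $e(V\otimes L) = e(L)^2 \in H^2(X,\sW)$ need not vanish. Whatever relation holds on $BN$ between $e(\tilde{\sO}(1)\otimes\gamma)$ and $\langle Q\rangle e(\tilde{\sO}(1))$ has to be established by a computation specific to $BN$ and $\sW$ (e.g.\ by comparing images under restriction to $BT_1$ and under transfer, or via the explicit model used by Levine), not by an off-the-shelf multiplicativity axiom. Without this the key relation $(1+\langle Q\rangle)e = 0$, and with it $(1+\langle Q\rangle)\tilde e = 0$ and the assertions about $\tilde Q$ in part~3, are not actually proved.

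You also flag exhaustiveness (no $H^1$, no missing generators, no further relations) as ``the main obstacle,'' and propose to settle it by computing $H^*(U_n/N,\sW)$ and $H^*(U_n/N,\sW(\gamma))$ explicitly via iterated Gysin sequences and passing to the limit. That is indeed where most of the work lies, and it is not carried out; Ananyevskiy's projective bundle theorem for $\SL$-oriented theories applies to odd-rank projectivizations or $\SL$-bundles and does not directly hand you the Gysin structure on the chosen approximations, so some care is needed in setting this up. Part~4 (formal localization killing $x+1$ once $e$ is inverted) is correct as a consequence of parts~1--3, but rests on them. As it stands, the proposal is a plausible plan with the right ingredients but with the two central technical claims --- the sign relation for $\langle Q\rangle\cdot e$ and the completeness of the generators-and-relations description --- left unproved.
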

 This is  \cite[Theorem 11.15]{LevineABLEWC}; see also  \cite[Theorem 5.1]{LevineABLEWC}, \cite[Proposition 5.5, Lemma 6.1]{LevineMEC}.\footnote{Note that the statement of \cite[Proposition 5.5]{LevineMEC} is corrected in \cite[Theorem 5.1]{LevineABLEWC}, but the correction only involves the degree 0 component.}  
 
The equivariant Euler classes of the bundles $\tilde{\sO}(m)$ are given as follows.
\begin{lem}\label{lem:NEulerClass} For $m\neq0$, we have
\[
e_N(\tilde{\sO}(m))=\begin{cases} \epsilon(m)m\cdot e&\text{ for $m$ odd,}\\
\epsilon(m)(m/2)\cdot \tilde{e}&\text{ for $m$ even.}
\end{cases}
\]
\end{lem}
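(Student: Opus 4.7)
The plan is to determine, for each $m\neq 0$, the coefficient $c_m\in W(k)$ in the expansion $e_N(\tilde{\sO}(m))=c_m\cdot e$ (for $m$ odd) or $e_N(\tilde{\sO}(m))=c_m\cdot \tilde{e}$ (for $m$ even); that these are the only possibilities in degree $2$ is immediate from Proposition~\ref{prop:BNCoh}(3) combined with the determinant identification~\eqref{align:DetIs}, which places $e_N(\tilde{\sO}(m))$ in the free rank-one $W(k)$-module $W(k)\cdot e\subset H^2(BN,\sW)$ or $W(k)\cdot\tilde{e}\subset H^2(BN,\sW(\gamma))$.

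I would first handle the case of odd $m$ by constructing an explicit group-scheme homomorphism $\phi_m\colon N\to N$ given by $\phi_m(t)=t^m$ on $T_1$ and $\phi_m(\sigma)=\sigma$. The compatibility with the relation $\sigma^2=-I$ forces $(-1)^m=-1$, so $\phi_m$ is well-defined precisely when $m$ is odd. A matrix check then shows $\rho_1\circ\phi_m=\rho_m$, hence $\tilde{\sO}(m)=(B\phi_m)^*\tilde{\sO}(1)$ and $e_N(\tilde{\sO}(m))=(B\phi_m)^*(e)$. To evaluate this as an explicit scalar multiple of $e$, I would restrict to the maximal torus via $j\colon BT_1\hookrightarrow BN$: since $j^*e=e_{T_1}(\sO(1)\oplus\sO(-1))\in H^2(BT_1,\sW)$, we get $j^*(B\phi_m)^*e=e_{T_1}(\sO(m)\oplus\sO(-m))$, which can be computed via an Ananyevskiy-style formula for Euler classes of rank-two split bundles on $B\G_m$ in $\SL$-oriented Witt sheaf cohomology. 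This evaluates the scalar as $\epsilon(m)m$, and combined with injectivity of $j^*$ on the relevant degree-$2$ piece---accessed by localizing at $e$ via Proposition~\ref{prop:BNCoh}(4)---pins down $c_m=\epsilon(m)m$.

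For $m\equiv 2\pmod 4$, I would write $m=2n$ with $n$ odd and apply the same pullback argument using $\phi_n$, now to $\tilde{e}=e_N(\tilde{\sO}(2))$ rather than $e$; the key observations are that $\rho_2\circ\phi_n=\rho_{2n}=\rho_m$ and $\phi_n^*\gamma=\gamma$, and careful sign-bookkeeping yields $c_m=\epsilon(m)(m/2)$. The main obstacle lies in the case $m\equiv 0\pmod 4$, where the naive candidate $\phi_{m/2}$ fails to be a homomorphism $N\to N$ (the relation $\phi_{m/2}(\sigma)^2=\phi_{m/2}(\sigma^2)$ would become $-I=I$); no group-scheme homomorphism $N\to N$ pulls $\rho_2$ back to $\rho_m$, and one must instead verify the formula by direct torus restriction, computing $e_{T_1}(\sO(m)\oplus\sO(-m))$ by the same line-bundle Euler class formula and lifting back via the localization injectivity of Proposition~\ref{prop:BNCoh}(4). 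A further recurring subtlety is tracking the sign $\epsilon(m)$ itself, which encodes the interplay between the standard-basis conventions of Definition~\ref{defn:StandardBasis}, the canonical trivializations~\eqref{align:DetIs}, and the precise sign data in the line-bundle Euler class formula on $B\G_m$.
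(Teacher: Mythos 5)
Your approach diverges significantly from the paper's: for $m>0$ the paper simply quotes \cite[Theorem 7.1]{LevineMEC}, and then handles $m<0$ by exhibiting an explicit isomorphism of representations $\rho_m\xrightarrow{\sim}\rho_{-m}$ and tracking its effect on the determinant (which is what produces the $\langle-1\rangle$ that flips the sign in the even case). Your proposal instead tries to re-derive the $m>0$ case from scratch, and the central step of that derivation does not work.

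The problem is the use of the restriction $j^*\colon H^2(BN,\sW)\to H^2(BT_1,\sW)$. Since $T_1\cong\G_m$, we have $H^i(B\G_m,\sW)=0$ for $i>0$: the positive-degree Witt cohomology of $\P^n$ lives only in top degree and dies in the colimit defining $B\G_m$. In particular $j^*e=e_{T_1}(\sO(1)\oplus\sO(-1))=0$, so $j^*$ annihilates the entire degree-$2$ part of $H^*(BN,\sW)$ and cannot pin down any scalar. This is not a technical subtlety but the whole point of the paper's framework: as emphasized in the Introduction, torus actions cannot be used for Witt-sheaf localization precisely because the relevant Euler classes vanish on $B\G_m$, and one must pass to the nonabelian group $N$. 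Your proposal also silently conflates two distinct operations---inverting the element $e$ in the graded ring $H^*(BN,\sW)$ (which is indeed injective in degree $2$, by Proposition~\ref{prop:BNCoh}(4), since $H^2$ is free on $e$) and pulling back along $BT_1\hookrightarrow BN$. The first is a purely ring-theoretic localization; the second is a genuine restriction map and is zero here. ``Localizing at $e$'' does not give you access to a faithful torus restriction.

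The part of your argument that is sound is the construction of $\phi_m\colon N\to N$ (well defined exactly for $m$ odd, with $\rho_1\circ\phi_m=\rho_m$), so $e_N(\tilde{\sO}(m))=(B\phi_m)^*e$; but you still need an independent way to evaluate $(B\phi_m)^*$ on $H^2(BN,\sW)$, and the torus route does not supply it. You would need to redo, in effect, the content of \cite[Theorem 7.1]{LevineMEC}. You also correctly identify the genuine obstacle in the case $m\equiv 0\bmod 4$ (no $N\to N$ homomorphism pulls $\rho_2$ back to $\rho_m$), but the fallback you propose---``direct torus restriction''---fails for the same reason. The paper sidesteps all of this: once the $m>0$ cases are granted from the reference, the $m<0$ cases follow from a two-line computation with the explicit isomorphism $\phi_m\colon\rho_m\to\rho_{-m}$ and the observation that $\det\phi_m=\pm1$ according to the parity of $m$, with $\langle-1\rangle=-1$ in the Witt ring producing exactly the sign needed.
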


\begin{proof} For $m>0$, this is  \cite[Theorem 7.1]{LevineMEC}. Now take $m<0$ and odd. We have the isomorphism $\phi_m:\rho_m\to \rho_{-m}$ defined by $\phi_m(e_1)=-e_2$, $\phi_m(e_2)=e_1$. Since $\phi_m(e_1\wedge e_2)=e_1\wedge e_2$, $\det\phi_m$ is compatible with our chosen isomorphisms $\det \rho_m\cong \rho_0\cong \det\rho_{-m}$. 
By the functoriality of the Euler class we thus have 
\[
e_N(\tilde{\sO}(m))=\det\phi_m(e_N(\tilde{\sO}(-m)))=e_N(\tilde{\sO}(-m))=\epsilon(-m)(-m)\cdot e=\epsilon(m)m\cdot e.
\]

For $m<0$ and even, we have the isomorphism $\phi_m:\rho_m\to \rho_{-m}$ defined by $\phi_m(e_1)=e_2$, $\phi_m(e_2)=e_1$. Then $\phi_m(e_1\wedge e_2)=-e_1\wedge e_2$, so
\begin{multline*}
e_N(\tilde{\sO}(m))=\det\phi_m(e_N(\tilde{\sO}(-m)))=\langle-1\rangle e_N(\tilde{\sO}(-m))\\=-\epsilon(-m)(-m/2)\cdot \tilde{e}=\epsilon(m)(m/2)\cdot \tilde{e}.
\end{multline*}
Here we use the fact that in the Witt ring, we have $\langle-1\rangle=-1$.
\end{proof}

\subsection{Equivariant quadratic Donaldson-Thomas  invariants}\label{subsec:EMDT}

Let $X$ be a smooth projective threefold with an $N$-action, and with an $N$-linearized  spin structure $\tau:K_X\xrightarrow{\sim}L^{\otimes 2}$, that is, the line bundle $L$ is given an $N$-linearization such that the spin structure $\tau$ is $N$-equivariant, where we give $K_X$ the canonical $N$-linearization induced from the $N$-action on $X$. We assume in addition that the very ample line bundle $\sO_X(1)$ giving the projective embedding of $X$ is given an $N$-linearization. Then, by  \cite[Theorem 6.4]{LevineOST},  the $N$-action extends to $\Hilb^n(X)$ and gives $E^{DT(n)}_\bullet$ an $N$-linearization for which $\phi_{DT,n}$ is $N$-equivariant. It follows from the construction of the orientation $\rho^\tau_n$ given in \cite[Theorem 6.3]{LevineOST} that the $N$-equivariance of the spin structure $\tau$ implies that $\rho^\tau_n$ is $N$-equivariant as well. We then have (following \cite[\S 5]{LevineVLEWC}) an $N$-equivariant virtual fundamental class $[\Hilb^n(X), \phi_{DT,n}]^\vir_{N,\EM(\sW_*)}]\in H^\BM_{N,0}(\Hilb^n(X), \sW)$, with values in the $N$-equivariant Borel-Moore homology of $\EM(\sW)$.

\begin{defn} Let $X$ be a smooth projective threefold  with spin structure $\tau:K_X\xrightarrow{\sim} L^{\otimes 2}$. Let $n\geq 1$. Suppose that $X$ has an $N$-action for which $\sO_X(1)$ has an $N$-linearization and that $L$ has an $N$-linearization  making $\tau$ $N$-equivariant. The $n$th {\em equivariant quadratic DT invariant} $\tilde{I}_n^N(X)\in H^0(BN, \sW)$ is the degree of the equivariant motivic virtual fundamental class of $\Hilb^n(X)$:
\[
\tilde{I}_n^N(X):=p_{\Hilb^n(X)*}([\Hilb^n(X), \phi_{DT,n}]^\vir_{N,\EM(\sW_*)})\in H^0(BN, \sW).
\]
Here $p_{\Hilb^n(X)*}: H^\BM_{N,0}(\Hilb^n(X), \sW)\to H^\BM_{N,0}(\Spec(k), \sW)=H^0(BN, \sW)$ is the pushforward with respect to the structure map $p_{\Hilb^n(X)}: \Hilb^n(X)\to \Spec k$.
\end{defn}

\subsection{Relating $\tilde{I}_n(X)$ and  $\tilde{I}^N_n(X)$}  We retain the setting and notation of \S\ref{subsec:EMDT}. 

 We have the structure morphism $p:BN\to \Spec k$ and the ``quotient'' map $\pi:\Spec k\to BN$, inducing maps
\[
W(k)=H^0(\Spec k, \sW)\xrightarrow{p^*} H^0(BN, \sW)\xrightarrow{\pi^*}H^0(\Spec k, \sW)=W(k).
\]
Since $\pi^*\circ p^*=\id$, the pullback $p^*$ canonically identifies $W(k)$ with a summand of $H^0(BN, \sW)$; as $\pi^*(\<Q\>)=0$, this is exactly the decomposition  $H^0(BN, \sW)=W(k)\cdot 1\oplus W(k)\cdot \<Q\>$ given in Proposition~\ref{prop:BNCoh}(3).

The map $\pi^*$ is the ``forget the $N$-action'' map, which implies that
\begin{equation}\label{eqn:ForgetMap}
\pi^*(\tilde{I}^N_n(X))=\tilde{I}_n(X)\in W(k).
\end{equation}
However, as we shall see below, the $N$-localization method computes for us the image of 
$\tilde{I}^N_n(X)$ in a localization $H^*(BN, \sW)[1/m_ne]$ for some integer $m_n>0$, so we would like to know to what  this image of $\tilde{I}^N_n(X)$ tells us about our invariant of interest, $\tilde{I}_n(X)$. 

Let $m>0$ be an integer and let  $H^*(BN, \sW)[1/me]^0$ denote the degree zero part of $H^*(BN, \sW)[1/me]$. By Proposition~\ref{prop:BNCoh}(4), we have
\begin{equation}\label{eqn:LocalizedCoh}
H^*(BN, \sW)[1/me]=W(k)[e, e^{-1}, 1/m], \ H^*(BN, \sW)[1/me]^0=W(k)[1/m].
\end{equation}

\begin{prop}\label{prop:Splitting Identity}  Let $m\ge1$ be an integer.  Let $X$ be a smooth projective  threefold with an $\SL_2^m$-action, and that $\sO_X(1)$ is given an $\SL_2^m$-linearization. Suppose in addition we have an $\SL_2^m$-linearized line bundle $L$ and an $\SL_2^m$-equivariant spin structure $\tau_{\SL_2}:K_X\to L^{\otimes 2}$, where we give $K_X$ the $\SL_2^m$-linearization induced from the $\SL_2^m$-action on $X$.

Let $\iota:N\to \SL_2^m$ be a closed immersion of group-schemes. By restricting the $\SL_2^m$-action on $X$, the various $\SL_2^m$-linearizations and  $\SL_2^m$-equivariant spin structure $\tau_{\SL_2}$, we have an $N$-action on $X$ and the necessary data to define the Donaldon-Thomas invariants $\tilde{I}_n(X)\in W(k)$ and the $N$-equivariant versions $\tilde{I}_n^N(X)\in H^0(BN, \sW)$. Then
\[
\tilde{I}_n^{N} = p^*\tilde{I}_n\in W(k)\cdot 1\subset H^0(\Spec(k),\sW). 
\]
Moreover, for any integer $m>0$, the image of $\tilde{I}_n^{N}$ in the localization  
$H^*(BN, \sW)[1/me]^0=W(k)[1/m]$ is equal to the image of $\tilde{I}_n$ in $W(k)[1/m]$ under the localization map $W(k)\to W(k)[1/m]$
\end{prop}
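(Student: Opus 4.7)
The plan is to exploit the hypothesis that the $N$-action on $X$ factors through an action of the larger group-scheme $\SL_2^m$, whose degree-zero Witt cohomology $H^0(B\SL_2^m,\sW)$ is just $W(k)$ and hence contains no $\langle Q\rangle$-component. Identity \eqref{eqn:ForgetMap} already identifies $\tilde I_n(X)$ with the $1$-component of $\tilde I_n^N(X)$ under the splitting $H^0(BN,\sW)=W(k)\cdot 1\oplus W(k)\cdot\langle Q\rangle$ of Proposition~\ref{prop:BNCoh}(3), so the whole task is to force the $\langle Q\rangle$-component to vanish.

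First I would construct the $\SL_2^m$-equivariant Donaldson--Thomas invariant
\[
\tilde I_n^{\SL_2^m}(X)\in H^0(B\SL_2^m,\sW)
\]
by exactly the procedure of \S\ref{subsec:EMDT}, using the given $\SL_2^m$-linearizations of $\sO_X(1)$ and of $L$ together with the $\SL_2^m$-equivariance of $\tau_{\SL_2}$. Then by functoriality of the equivariant motivic virtual fundamental class under restriction of the group, which is built into the construction of \cite[\S 3, \S 5]{LevineVLEWC}, the closed immersion $\iota\colon N\hookrightarrow \SL_2^m$ yields
\[
\iota^*[\Hilb^n(X),\phi_{DT,n}]^{\vir}_{\SL_2^m,\EM(\sW_*)}=[\Hilb^n(X),\phi_{DT,n}]^{\vir}_{N,\EM(\sW_*)};
\]
pushing forward to $\Spec k$ and using the compatibility of $\iota^*$ with this pushforward yields $\tilde I_n^N(X)=\iota^*\tilde I_n^{\SL_2^m}(X)$ in $H^0(BN,\sW)$.

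Next I would invoke the computation $H^0(B\SL_2^m,\sW)=W(k)$. For $m=1$ this is contained in $H^*(B\SL_2,\sW)=W(k)[e_{\SL_2}]$ with $e_{\SL_2}$ in positive degree (compare \cite{LevineABLEWC}); for general $m$ it follows by an iterated argument over the fibration $B\SL_2^m\to B\SL_2^{m-1}$ using the Leray--Hirsch type tools for $\SL_2$-bundles from the same source. Thus $\tilde I_n^{\SL_2^m}(X)=p_{\SL_2^m}^*\alpha$ for a unique $\alpha\in W(k)$, and pulling back along $\pi_{\SL_2^m}\colon\Spec k\to B\SL_2^m$, combined with the non-equivariant analogue of \eqref{eqn:ForgetMap}, forces $\alpha=\tilde I_n(X)$. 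Since $\iota^*p_{\SL_2^m}^*=p^*$, this yields
\[
\tilde I_n^N(X)=p^*\tilde I_n(X)\in W(k)\cdot 1\subset H^0(BN,\sW),
\]
which is the first assertion. For the second, I would substitute $\tilde I_n^N=p^*\tilde I_n$ into Proposition~\ref{prop:BNCoh}(4): under the isomorphism $H^*(BN,\sW)[1/me]^0\cong W(k)[1/m]$ the composite of $p^*$ with localization is just the canonical map $W(k)\to W(k)[1/m]$, so the localized image of $\tilde I_n^N(X)$ is that of $\tilde I_n(X)$.

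The main obstacle I expect is verifying carefully the functoriality of the equivariant motivic virtual fundamental class under the change of group $\iota\colon N\hookrightarrow \SL_2^m$. Morally this is a base-change statement for the mixing models $EG\times^G\Hilb^n(X)$ along the square associated with $\iota$, together with the compatibility of the intrinsic normal cone construction of \cite{LevineISNC,LevineVLEWC} with smooth pullback, but both compatibilities need to be spelled out at the level of the six-functor formalism for $\EM(\sW)$. The cohomological input $H^0(B\SL_2^m,\sW)=W(k)$ itself is comparatively routine once the $\SL_2$-case is taken for granted.
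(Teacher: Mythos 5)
The paper disposes of the first assertion in one line by citing \cite[Proposition 6.9]{LevineVLEWC}, and your argument is essentially a reconstruction of that cited result rather than a genuinely alternative route, but it is a correct and useful reconstruction. Your three ingredients are exactly the right ones: (a) the $N$-equivariant virtual class is obtained from the $\SL_2^m$-equivariant one by restriction along $\iota$, so $\tilde I_n^N(X)=\iota^*\tilde I_n^{\SL_2^m}(X)$; (b) $H^0(B\SL_2^m,\sW)=W(k)$ (which for $m=1$ is part of $H^*(B\SL_2,\sW)\cong W(k)[e_{\SL_2}]$ and then iterates via the $\SL_2$-bundle structure of $B\SL_2^m\to B\SL_2^{m-1}$), so $\tilde I_n^{\SL_2^m}(X)$ is a pullback of a class from $W(k)$; and (c) the forgetful pullback to $\Spec k$ identifies that class with $\tilde I_n(X)$. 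Combined with $\iota^*p_{\SL_2^m}^*=p^*$ this forces the $\langle Q\rangle$-component of $\tilde I_n^N(X)$ to vanish, which as you note is the entire content of the first assertion. Your treatment of the second assertion coincides with the paper's.

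One point that your proposal rightly flags as the genuine work: the functoriality of the $G$-equivariant motivic virtual class under a closed immersion $\iota\colon N\hookrightarrow\SL_2^m$ of groups acting on the same scheme. This is not a formality; it has to be checked at the level of the ind-scheme mixing models $EG\times^G\Hilb^n(X)$ and their compatibility with the construction of the intrinsic normal cone and the orientation of $E^{DT(n)}_\bullet$, which is exactly what \cite[Proposition 6.9]{LevineVLEWC} packages. So where the paper outsources this to the reference, you would need to either cite the same result or carry out that verification. As a sketch intended to be completed by appeal to the cited functoriality, your argument is correct and gives a transparent reason why the $\langle Q\rangle$-component vanishes, which is precisely the information Remark~\ref{rem:LocId} points out cannot be extracted from the localized identity alone.
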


\begin{proof} The first assertion  is \cite[Proposition 6.9]{LevineVLEWC}, the second follows by applying the localization map $H^*(BN, \sW)\to H^*(BN, \sW)[1/me]$ and using \eqref{eqn:LocalizedCoh}.
\end{proof} 
In short,  we can calculate the $\tilde{I}_n$ from the equivariant versions $\tilde{I}_n^N$. 

\begin{rem} \label{rem:LocId} The identity of Proposition~\ref{prop:Splitting Identity} is not purely formal. Indeed, without any additional information, we have by Proposition~\ref{prop:BNCoh}
\[
\tilde{I}_n^N(X)\in H^0(BN, \sW)=W(k)\cdot 1\oplus W(k)\cdot\<Q\>=p^*(W(k))\oplus W(k)\cdot\<Q\>.
\]
 If we write $\tilde{I}_n^N(X)=p^*a\oplus b\cdot \<Q\>$, then since $\<Q\>$ maps to $-1$ under the localization $H^*(BN, \sW)\to H^*(BN, \sW)[1/e]$, we see that the image of $\tilde{I}_n^N(X)$ in $H^*(BN, \sW)[1/me]^0$ is the image of $a-b$ in $W(k)[1/m]$, while 
\[
\tilde{I}_n(X)=\pi^*(\tilde{I}_n^N(X))=\pi^*(p^*a+b\cdot\<Q\>)=a
\]
since $\pi^*(\<Q\>)=0$. Thus, without knowing that $\tilde{I}_n^N(X)$ lives in the summand $p^*W(k)$ of $H^0(BN, \sW)$, knowing the image of $\tilde{I}_n^N(X)$ in a localization 
$H^*(BN, \sW)[1/me]$ tells us nothing about the invariant $\tilde{I}_n(X)$. As the virtual localization method requires passing to such a localization, the added information given in 
Proposition~\ref{prop:Splitting Identity} will be crucial for our computation of $\tilde{I}_n((\P^1)^3)$. As we shall see, the $N$-action and related structures for $X=(\P^1)^3$ all arise via restriction with respect to a suitable homomorphism $\iota_{a,b,c}N\to \SL_2^3$, allowing us to apply Proposition~\ref{prop:Splitting Identity}.

Similarly, since we have essentially no control over the integer $m$, we loose all torsion information in $W(k)$, so we end up only being able to compute the image in $W(k)_\Q$, which is given by signature information with respect to embeddings of $k$ in real closed fields.
\end{rem}

\section{The equivariant vertex measure}\label{sec:EquivVertex}
	
	We continue to work over our perfect base field $k$ of characteristic $\neq2$. Giving $\P^1$ homogeneous coordinates $X_0, X_1$, we have the generating section $\Omega=X_0dX_1-X_1dX_0$ of $K_{\P^1}(2)$, giving the isomorphism $\sO_{\P^1}(-2)\cong K_{\P^1}$ by sending a local section $\lambda$ of $\sO_{\P^1}(-2)$ to $\lambda\cdot\Omega$. Letting $p_i:(\P^1)^3\to \P^1$ denote the $i$'th projection,  we have the generating section $p_1^*\Omega\wedge p_2^*\Omega\wedge p_3^*\Omega$ of $K_{(\P^1)^3}(2,2,2)$, giving us the isomorphism $K_{(\P^1)^3}\cong  \sO(-2,-2,-2)=\sO(-1,-1,-1)^{\otimes 2}$, that is, we have a canonical choice of an orientation $\tau:K_{(\P^1)^3}\xrightarrow{\sim}\sO(-1,-1,-1)^{\otimes 2}$ on $(\P^1)^3$.  Therefore, the quadratic DT invariants $\tilde{I}_n(\P^1)^3$ are defined. 
	
\begin{nota}  For $n\geq 1$, let $\tilde{I}_n\in W(k)$ be the $n$th quadratic DT invariant of $(\P^1)^3$. 
\end{nota} 	
	
	We now discuss how to calculate the $\tilde{I}_n$ using the method of \cite{MaulikI}. The same method was used in \cite{Viergever} to calculate the first three nonzero quadratic DT invariants of $\P^3$. 
	
\subsection{The $N$-action on $\Hilb^n((\P^1)^3)$}\label{subsec:NActionP13}
	
Let $a, b, c$ be odd integers. We have the homomorphism
\[
\iota'_{a,b,c}:N\to N^3
\]
with 	$\iota'_{a,b,c}(t)=(t^a, t^b, t^c)$ and $\iota'_{a,b,c}(\sigma)=(\sigma,\sigma,\sigma)$. We let $\iota_{a,b,c}:N\to (\SL_2)^3$ be the composition of $\iota'_{a.b.c}$ with the inclusion $N^3\hookrightarrow (\SL_2)^3$. 

	The standard action of $\SL_2$ on $\P^1$ by fractional linear transformations arises from the standard representation of $\SL_2$ on $\A^2$, hence lifts canonically to an $\SL_2$-linearization of $\sO_{\P^1}(1)$. Moreover,  $\Omega$ is an $\SL_2$-invariant section of $K_{\P^1}(2)$. Thus, the product action of $(\SL_2)^3$ on $(\P^1)^3$ linearizes the very amble invertible sheaf $\sO(1,1,1)$, and 
$\tau$ is $(\SL_2)^3$-equivariant. We can then restrict to $N$ via $\iota_{a,b,c}$, giving us the $N$-action on $(\P^1)^3$ with linearization of $\sO(1,1,1)$, and such that the spin structure $\tau$ is $N$-equivariant. Explicitly, this action of $N$ on $(\P^1)^3$ is given by 
	\begin{align*}
		t\cdot ([X_0:X_1],[Y_0:Y_1],[Z_0:Z_1]) &=( [t^aX_0: t^{-a}X_1], [t^bY_0: t^{-b}Y_1], [t^cZ_0: t^{-c}Z_1])\\
		\sigma \cdot ([X_0:X_1],[Y_0:Y_1],[Z_0:Z_1]) &= ([-X_1:X_0],[-Y_1:Y_0],[-Z_1:Z_0])
	\end{align*}

\begin{rem}\label{rem:Extension} Using $(\SL_2)^3$-action on $(\P^1)^3$ and the $(\SL_2)^3$-linearizations and $(\SL_2)^3$-equivariant spin structure described above, the homomorphism $\iota_{a,b,c}:N\to (\SL_2)^3$ gives by restriction all the data needed to define the $N$-equivariant Donaldson-Thomas invariants $\tilde{I}_n^N((\P^1)^3)\in H^0(BN, \sW)$. We may also apply Proposition~\ref{prop:Splitting Identity} to use $\tilde{I}_n^N((\P^1)^3)$ to compute $\tilde{I}_n((\P^1)^3)\in W(k)$. 

 {\it A priori} $\tilde{I}_n^N((\P^1)^3)$ depends on the choice of embedding $\iota_{a,b,c}:N\to \SL_2^3$ used to define the $N$-action. However, it follows from Proposition~\ref{prop:Splitting Identity} that in fact $\tilde{I}_n^N((\P^1)^3)=p^*\tilde{I}_n((\P^1)^3)$, where $p:BN\to\Spec k$ is the structure morphism. In particular,  $\tilde{I}_n^N((\P^1)^3)$ is independent of the choice of embedding $\iota_{a,b,c}$.  
 \end{rem}

\begin{nota} We denote by $\tilde{I}_n^N\in H^0(BN,\sW)$  $n$th $N$-equivariant quadratic DT invariant $\tilde{I}_n^N((\P^1)^3)$ of $(\P^1)^3$. 
\end{nota}

\begin{nota}  We use homogeneous coordinates $([X_0:X_1],[Y_0:Y_1],[Z_0:Z_1])$ on $(\P^1)^3$. We cover $(\P^1)^3$ by the standard cover $\{U_{klm}\}_{k,l,m\in\{0,1\}}$ where $U_{klm} = \{X_kY_lZ_m\neq 0\}$.  This identifies $U_{klm}$ with $\A^3=\Spec k[x,y,z]$, via the standard affine coordinates  $x:=X_{1-k}/X_k$, $y:=Y_{1-l}/Y_l$, $z:=Z_{1-m}/Z_m$. We let $0_{klm}$ denote the corresponding origin in $U_{klm}$.  The set $\{0_{klm}\}_{k,l,m\in \{0,1\}}$ is the set of $T_1$-fixed points in $(\P^1)^3$ with respect to the embedding $\iota_{a,b,c}$, for any choice of  odd integers $a, b,c$. 

For an ideal sheaf $\sI$ on $(\P^1)^3$, we let $\sI_{klm}$ denote the restriction of $\sI$ to $U_{klm}$.  We consider $\sI_{klm}$ as an ideal in $k[x,y,z]$, using the standard affine coordinates $x,y,z$ on $U_{klm}$.

For an ideal sheaf $\sI$ on a scheme $X$, we call the support of $\sO_X/\sI$ the {\em support of $\sI$}.
\end{nota}

\begin{rem}\label{rem:PointPairs} The $N$-action on $(\P^1)^3$ has no fixed points, and has exactly  four finite orbits (each consisting of a pair of $k$-points):  
\begin{align*}
			\alpha_1 &= ([1:0], [1:0], [1:0]), ([0:1], [0:1], [0:1]) \\
			\alpha_2 &= ([1:0], [0:1], [1:0]), ([0:1], [1:0], [0:1]) \\
			\alpha_3 &= ([1:0], [1:0], [0:1]), ([0:1], [0:1], [1:0])\\
			\alpha_4 &= ([1:0], [0:1], [0:1]), ([0:1], [1:0], [1:0])
\end{align*}
For all choices of odd integers $a, b, c$, all ideal sheaves in $\Hilb^n((\P^1)^3)$ which are fixed under the $N$-action induced by $\iota_{a,b,c}$ will thus be supported on the $\alpha_i$, and will be invariant with respect to the action by $\sigma$, in particular, all $N$-fixed ideal sheaves of finite co-length have even co-length: For all $n\ge1$, we thus have
\begin{equation}\label{eqn:OddVanishing}
\Hilb^{2n-1}((\P^1)^3)^N=\0.
\end{equation}
 Note that each $\alpha_i$ is supported on exactly two of the $U_{klm}$.
\end{rem} 

\begin{defn}\label{defn:CoordWgts} We fix odd  integers $a, b, c$, and use $\iota_{a,b,c}:N\to \SL_2^3$ to define the $N$-action on $(\P^1)^3$ and $\Hilb^n((\P^1)^3)$.  

Given $k,l,m\in\{0,1\}$, we have the standard coordinates $x,y,z$ on $U_{klm}\cong \A^3$. The $N$-action on $(\P^1)^3$ restricts to a $T_1$-action on  $U_{klm}$.  We  say that $T_1$ has {\em coordinate weights} $(s_1, s_2, s_3)$ at $0_{klm}$ if for $t\in T_1$ 
and a point $(y_1,y_2,y_3)\in \A^3$, $t\cdot (y_1,y_2,y_3)=(t^{s_1}y_1, t^{s_2}y_2, t^{s_3}y_3)$; equivalently, the action of $T_1$ on the coordinate ring $k[x,y,z]$ is given by $t\cdot (x,y,z)=(t^{s_1}x, t^{s_2}y, t^{s_3}z)$. 
 \end{defn}
 	 	 
\begin{rem} \label{remark: coordinate weights} For odd integers $a, b, c$, the coordinate weights $(s_1, s_2, s_3)$ at $0_{klm}\in U_{klm}$  are given by
\begin{itemize}
	 \item  $(-2a,-2b,-2c)$ at $0_{000}$, and $(2a, 2b, 2c)$ at $0_{111}$, corresponding to $\alpha_1$. 
	 \item  $(-2a, 2b,  -2c)$ at $0_{010}$ and $(2a,  -2b,  2c)$ at $0_{101}$, corresponding to $\alpha_2$. 
	 \item  $(-2a,  -2b,  2c)$ at $0_{001}$ and $(2a, 2b,  -2c)$ at $0_{110}$, corresponding to $\alpha_3$. 
	 \item  $(-2a, 2b, 2c)$ at $0_{011}$ and $(2a,  -2b,  -2c)$ at $0_{100}$, corresponding to $\alpha_4$. 
\end{itemize}
In short, the coordinate weights at $0_{klm}\in U_{klm}$  are  $((-1)^{k+1}2a,  (-1)^{l+1}2b, (-1)^{m+1}2c)$. In particular,  $(s_1, s_2, s_3)\equiv(2,2,2)\mod 4$, and  $s_1+s_2+s_3\equiv 2\mod 4$. 
\end{rem}

Just for reference, we state the following elementary result. 
\begin{lem}\label{lem:Nonzero} Let $U\subset \A^r_\Q$ be a non-empty Zariski open subset, let $S^+, S^-,T\subset\{1,\ldots,r\}$ be subsets with $S^+\cap S^-=\0$, and let $\{(m_i, a_i)\in \Z^2\mid i\in T\}$ be a set of pairs of integers, with each $m_i>1$ and $0\le a_i<m_i$ for each $i\in T$. Then there are infinitely many $(n_1,\ldots, n_r)\in \Z^r$ such that
\begin{enumerate}
\item $(n_1,\ldots, n_r)$ is in $U(\Q)$ and $\prod_in_i\neq0$,
\item $n_i\equiv a_i\mod m_i$ for all $i\in T$,
\item $n_i>0$ for all $i\in S^+$ and $n_i<0$ for all $i\in S^-$.
\end{enumerate}
Suppose in addition that $r>1$ and there is an $i_0\in \{1,\ldots,r\}$ such that either $i_0\not\in T$ or $i_0\in T$ and $a_{i_0}$ is a unit modulo $m_{i_0}$. Then there are infinitely many $(n_1,\ldots, n_r)\in \Z^r$ satisfying (1), (2), (3) and 
\begin{enumerate}
\item[(4)] $n_1,\ldots, n_r$ generate the unit ideal in $\Z$.
\end{enumerate}
 \end{lem}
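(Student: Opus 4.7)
My plan is to reduce to a principal open, then prove (1)--(3) by induction on $r$ (combining the infinitude of arithmetic progressions in half-lines with polynomial non-vanishing), and finally bootstrap to (4) using the Chinese Remainder Theorem at the coordinate $i_0$. First, since $\A^r_\Q$ is irreducible and $U$ is non-empty, the open set $U \cap D(x_1\cdots x_r)$ is still non-empty, so it contains a principal open $D(f)$ for a nonzero $f \in \Q[x_1,\ldots,x_r]$; it therefore suffices to produce integer tuples with $f(n_1,\ldots,n_r)\neq 0$ (which forces both $(n_1,\ldots,n_r)\in U(\Q)$ and $\prod_i n_i \neq 0$) together with the congruence and sign conditions.

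For (1)--(3), I would induct on $r$. When $r = 1$, the constraints carve out an infinite arithmetic progression inside a half-line (or all of $\Z$), and the univariate $f$ excludes only finitely many values. For the inductive step, expand $f = \sum_{j \ge 0} f_j(x_2,\ldots,x_r)\, x_1^j$ and pick the largest $j_0$ with $f_{j_0}\neq 0$; by the induction hypothesis applied to the open $D(f_{j_0})\subset \A^{r-1}_\Q$ with the restricted sign/congruence data, there are infinitely many $(n_2,\ldots,n_r)$ satisfying the analogous conditions. For each such tuple, $f(x_1, n_2,\ldots,n_r)$ is a nonzero polynomial in $x_1$ of degree $j_0$, so the base case produces infinitely many valid $n_1$.

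For the refinement (4), relabel so $i_0 = 1$, pick any $(n_2,\ldots,n_r)$ produced in $r-1$ variables as above, and set $d := \gcd(n_2,\ldots,n_r) \ge 1$. Since $\gcd(n_1,\ldots,n_r) = \gcd(n_1, d)$, it suffices to find infinitely many $n_1$ satisfying the sign and congruence conditions on $n_1$, the non-vanishing $f(n_1,n_2,\ldots,n_r)\neq 0$, and $\gcd(n_1,d) = 1$. If $1 \notin T$, integers coprime to $d$ inside any half-line have positive density, so after removing the finitely many roots of $f(\cdot,n_2, \ldots, n_r)$ infinitely many choices remain. If $1 \in T$ with $\gcd(a_1, m_1) = 1$, factor $d = d' d''$ with $d'$ supported on primes dividing $m_1$ and $\gcd(d'', m_1) = 1$; then $n_1 \equiv a_1 \pmod{m_1}$ forces $\gcd(n_1, d') = 1$ automatically, and the Chinese Remainder Theorem lets us additionally impose $n_1 \equiv 1 \pmod{d''}$ to secure $\gcd(n_1, d'') = 1$, yielding an infinite arithmetic progression modulo $m_1 d''$ from which signs and the roots of $f$ exclude at most finitely many.

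The argument is essentially formal once the reduction to a principal open is in place; the only bookkeeping subtlety is keeping the leading-coefficient polynomial $f_{j_0}$ available to propagate the induction in the right $r-1$ variables. I do not anticipate any substantive obstacle.
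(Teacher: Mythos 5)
Your proof is correct and follows essentially the same route as the paper: reduce to a principal open, induct on $r$, and handle the gcd condition at the distinguished index $i_0$ via the Chinese Remainder Theorem. The only differences are cosmetic bookkeeping: you single out the leading coefficient $f_{j_0}$ and work over $D(f_{j_0})$, while the paper uses the complement of $V(f_0,\ldots,f_M)$ (both guarantee $f(x_1,n_2,\ldots,n_r)\neq 0$ as a univariate polynomial); and for (4) you split $d=d'd''$ by prime support relative to $m_1$ and impose $n_1\equiv 1 \pmod{d''}$, whereas the paper replaces $(a_1,m_1)$ by a pair $(a_1',m_1')$ with $m_1'=\operatorname{lcm}(m_1,d)$ and $a_1'$ a unit modulo $m_1'$ — these are two phrasings of the same CRT step.
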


\begin{proof} We proceed by induction on $r$. Shrinking $U$ if necessary, we may assume that 
$U$ is the principle open subscheme $f\neq0$ defined by a non-zero $f\in \Z[x_1, \ldots, x_r]$.

For $r=1$, the polynomial $f(x)$ has finitely many roots, so $f(n)\neq0$ for all but finitely many $n\in \Z$. The points (1)-(3) are then clear and (4) is empty.

For $r>1$, assume the result for $r-1$ and $r=1$, and suppose there is an $i_0$ satisfying the additional hypothesis.  Reordering the set $\{1,\ldots, r\}$ we may assume that $i_0=1$. Write $f(x_1,\ldots, x_r)=\sum_{i=0}^Mf_i(x_2,\ldots, x_r)x_1^i$. Since $f\neq 0$, $f_i$ is a non-zero polynomial for at least one $i$; let $V\subset \A^{r-1}_\Q=\Spec\Q[x_2,\ldots, x_r]$ be complement of $V(f_0,\ldots, f_M)$.  Applying the induction hypothesis,  there is an infinite subset $A$ of  $\Z^{r-1}$ such that, for all $(n_2,\ldots, n_r)\in A$,  $\prod_{i=2}^rn_i\neq0$,  $f(x_1,n_2,\ldots, n_r)$ is a non-zero polynomial, and for $2\le i\le r$, $n_i\equiv a_i\mod m_i$ for $i\in T$,   $n_i>0$ for $i\in S^+$, and $n_i<0$ for $i\in S^-$.

Take some $(n_2,\ldots, n_r)\in A$. Noting that $\prod_{i=2}^rn_i\neq0$, we let $d>0$ be the greatest common divisor of $n_2,\ldots, n_r$.  If $1\in T$,  let $m_1'>1$ be the least common multiple of $m_1, d$ and let $a_1'$ be an integer with $a_1'$ a unit modulo $m_1'$ and $a_1'\equiv a_1\mod m_1$. We then replace $(a_1, m_1)$ with $(a_1', m_1')$ in condition (2) at $i=1$, giving the new condition (2').
If $1\not\in T$, and $d>1$, we let $m_1'=d$,  let $a_1'=1$, replace  $T$ with $T':=T\cup\{1\}$ with the condition  $n_1\equiv a_1'\mod m_1'$ at $i=1$, replacing (2) with this new condition (2'). Finally, if $d=1$, and $1\not\in T$, we take (2') equal to condition (2). Applying the case $r=1$ to $U':=\A^1_\Q\setminus V(f(x_1,n_2,\ldots, n_r))$,  there are infinitely many integers $n_1\in \Z\setminus\{0\}$ such that $f(n_1, n_2,\ldots, n_r)\neq0$,  $n_1\equiv a_1'\mod m'_1$ and $n_1>0$ if $1\in S^+$, $n_1<0$ if $1\in S^-$. In particular $\prod_{i=1}^rn_i\neq0$. Then $(n_1,\ldots, n_r)$ satisfies (1) and satisfies (2') and (3) for all $i=1,\ldots, r$. In case $i\in T$, then $n_1\equiv a_1'\mod m_1'$ implies $n_1\equiv a_1\mod m_1$, so  $(n_1,\ldots, n_r)$ satisfies (2) for all $i=1,\ldots, r$. If $d>1$, then since $n_1$ is a unit modulo $d$, $n_1,\ldots, n_r$ generate the unit ideal in $\Z$, and if $d=1$, then $n_2,\ldots, n_r$ already generate the unit ideal in $\Z$. Thus, $(n_1,\ldots, n_r)$ satisfies condition (4), and the induction goes through. 

In case there is no such $i_0$, we leave the set $T$ and the condition at $i=1$ unchanged, and let $V\subset\A^{r-1}_\Q$ be the same as above. The same argument as above then shows that, for each $(n_2,\ldots, n_r)\in \Z^{r-1}$ with $(n_2,\ldots, n_r)\in V$, there are infinitely many integers $n_1$ such that $(n_1,\ldots, n_r)$ satisfy (1), (2), (3), and the induction goes through as before.  
\end{proof}

\begin{lem}\label{lem:NumericalConditions} Let $M>0$ be an integer. Then there are infinitely many triples $(a,b,c)$ of odd positive integers such that for each integer $n$ with $1\le n\le M$, 
 the action of $N$ on $\Hilb^{2n}((\P^1)^3)$ defined via $\iota_{a,b,c}$ satisfies the following conditions
 \begin{enumerate}
 \item The action of $N$ on $\Hilb^{2n}((\P^1)^3)$ has only isolated fixed points.
 \item For each ideal sheaf $\sI$ of co-length $2n$, fixed by $N$, and each $k,l,m\in \{0,1\}$,  the restriction $\sI_{klm}\subset k[x,y,z]$ is a monomial ideal, supported at $0_{klm}$. 
  \item Each point $[\sI]\in  \Hilb^{2n}((\P^1)^3)^N$ has residue field $k$.
  \item For each ideal sheaf $\sI$ of co-length $2n$, fixed by $N$, and each $k,l,m\in \{0,1\}$,    the $T_1$-representation on $\Ext^i(\sI_{klm}, \sI_{klm})$ contains no trivial subrepresentation for $i=1,2$.
 \end{enumerate}
\end{lem}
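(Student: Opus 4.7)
\medskip
\noindent\textbf{Proof plan.}
The plan is to reduce conditions (1)--(4) to requiring that the triple $(a,b,c)$ avoid the zero locus of finitely many nonzero linear forms in $\Z[a,b,c]$, and then apply Lemma~\ref{lem:Nonzero} to produce infinitely many such triples of odd positive integers.

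First I would unwind what $N$-fixedness means. Since $T_1\subset N$ has finite index and the $T_1$-fixed locus on $(\P^1)^3$ under $\iota_{a,b,c}$ equals $\{0_{klm}\}_{k,l,m\in\{0,1\}}$ (the weights $\pm 2a, \pm 2b, \pm 2c$ are nonzero), every $T_1$-fixed ideal sheaf $\sI\in\Hilb^{2n}((\P^1)^3)$ of finite colength has support contained in $\{0_{klm}\}$, and each restriction $\sI_{klm}\subset k[x,y,z]$ is a $T_1$-stable ideal of colength at most $2n\le 2M$ supported at the origin, where $T_1$ acts on $(x,y,z)$ by the coordinate weights $(s_1^{klm},s_2^{klm},s_3^{klm})$ of Remark~\ref{remark: coordinate weights}. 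Thus all four conditions of the lemma reduce to statements about monomial ideals in $k[x,y,z]$ of colength $\le 2M$ and the $T_1$-weights they produce.

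Next I would carry out the standard weight-genericity analysis. There is an explicit integer $B=B(M)$ (computable from the fact that generators, syzygies, and higher Taylor-resolution terms of any monomial ideal of colength $\le 2M$ are built from monomials of degree $\le 2M$) such that:
\begin{itemize}
\item if the $T_1$-weights $(s_1,s_2,s_3)$ satisfy $\alpha s_1+\beta s_2+\gamma s_3\neq0$ for every $(\alpha,\beta,\gamma)\in\Z^3\setminus\{0\}$ with $|\alpha|+|\beta|+|\gamma|\le B$, then every weight space of $k[x,y,z]$ of weight appearing in a colength-$\le 2M$ ideal is one-dimensional, forcing every $T_1$-fixed ideal of colength $\le 2M$ in $k[x,y,z]$ to be monomial, giving (2);
\item the $T_1$-weights appearing in the tangent space $\Hom(\sI_{klm},k[x,y,z]/\sI_{klm})$ and in $\Ext^1,\Ext^2$ of any such monomial ideal against itself are all of the form $\alpha s_1+\beta s_2+\gamma s_3$ with $(\alpha,\beta,\gamma)$ in the same bounded set, so the same non-vanishing condition gives (4), and in turn forces isolation of the $T_1$-fixed points, giving (1);
\item all monomial ideals are defined over the prime field, so (3) is automatic.
\end{itemize}
Since $(s_1^{klm},s_2^{klm},s_3^{klm})=(\pm2a,\pm2b,\pm2c)$, each condition $\alpha s_i^{klm}+\beta s_j^{klm}+\gamma s_k^{klm}\neq 0$ becomes a nonzero linear form in $(a,b,c)$, and we only need to avoid finitely many such forms as $(k,l,m)$, $(\alpha,\beta,\gamma)$, and the index $i$ range over their finite sets.

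Finally, let $U\subset\A^3_\Q$ be the complement of the finite union of these linear hypersurfaces; then $U$ is a nonempty Zariski open. Apply Lemma~\ref{lem:Nonzero} to $U$ with $r=3$, $S^+=\{1,2,3\}$, $S^-=\emptyset$, $T=\{1,2,3\}$, and $(m_i,a_i)=(2,1)$ for $i=1,2,3$: this produces infinitely many triples $(a,b,c)\in\Z^3$ with $a,b,c$ odd positive, $abc\neq 0$, and $(a,b,c)\in U(\Q)$, which is exactly what is required.

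\medskip
\noindent\textbf{Main obstacle.} The nontrivial technical point is the boundedness assertion: that the $T_1$-weights arising in $\Ext^{0,1,2}(\sI_{klm},\sI_{klm})$ across \emph{all} monomial ideals of colength $\le 2M$ are of the form $\alpha s_1+\beta s_2+\gamma s_3$ with $(\alpha,\beta,\gamma)$ confined to an effectively bounded subset of $\Z^3$. One justifies this either via the Taylor (or Hull) resolution of a monomial ideal, whose terms are indexed by subsets of a generating set of monomials and are shifted by lcms of those monomials, or by noting that $\Ext^i$ is computed inside $\Hom(I,k[x,y,z]/I)$-type complexes whose weight support is contained in the differences of monomials in $I$ and in $k[x,y,z]/I$, both bounded by the colength. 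Everything else in the argument is either standard torus-localization or a direct application of Lemma~\ref{lem:Nonzero}.
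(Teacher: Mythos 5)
Your proposal is correct and follows essentially the same strategy as the paper's proof: reduce all four conditions to a finite set of linear avoidance conditions on $(a,b,c)$ and then invoke Lemma~\ref{lem:Nonzero} with parity and positivity constraints to produce infinitely many admissible odd positive triples. The one place where you diverge is the justification of the uniform bound on the relevant weight indices. The paper obtains its degree bound $D$ by covering the projective Hilbert scheme $\Hilb^m_0(\A^3)$ by finitely many affines and using noetherianity of the universal ideal sheaf to bound the degrees of generators uniformly. You instead propose to read the bound off the Taylor (or Hull) resolution of a monomial ideal, or more elementarily from the containment $(x,y,z)^{n}\subset I$ for a colength-$n$ ideal supported at the origin. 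The elementary bound is clean and arguably simpler than the Hilbert-scheme argument, and it applies directly to arbitrary $T_1$-stable ideals, which is what you need before you know they are monomial. One small caution: your parenthetical claim that the Taylor resolution terms of a colength-$\le 2M$ monomial ideal are "built from monomials of degree $\le 2M$" is not quite right as stated — the lcm shifts in the Taylor complex can exceed $2M$ — but this does not harm the argument, since the relevant $\Ext^{1},\Ext^{2}$ weight support is still finite and effectively bounded (and the paper's route, via the character of the $\G_m^3$-action on $\Ext^i$ specialized along $\iota_{a,b,c}$, sidesteps the issue entirely). Your treatment of (3) is somewhat compressed — "monomial ideals are defined over the prime field" needs the preceding observation that any $T_1$-stable ideal over an extension $L\supset k$ is again monomial, hence the base extension of its $k$-rational counterpart — but this is the same argument the paper gives, just abbreviated.
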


\begin{proof} By Remark~\ref{remark: coordinate weights}, $T_1$ acts on $U_{klm}$ with coordinate weights $(s_1,s_2,s_3)=(\pm 2a, \pm 2b, \pm 2c)$ at $0_{klm}$. As $0_{klm}$ is the only $T_1$-fixed point in $U_{klm}$, each $N$-fixed ideal sheaf $\sI$ has restriction $\sI_{klm}$ supported at $0_{klm}$, and has co-length $m\le n\le M$.  

Let $\Hilb^m_0(\A^3)$ be the Hilbert scheme of ideals $I\subset k[x,y,z]$ of co-length $m$ and supported at 0. Since  $\Hilb^m_0(\A^3)$ is represented by the closed subscheme $\Hilb^m_{[1:0:0:0]}(\P^3)$ of $\Hilb^m(\P^3)$,  representing ideal sheaves on $\P^3$ of co-length $m$ supported at $[1:0:0:0]$, $\Hilb^m_0(\A^3)$ is a projective $k$-scheme. Let $\mathfrak{I}$ be the universal ideal sheaf on $\Hilb^m_0(\A^3)\times \A^3$, so $\mathfrak{I}$ is supported on $\Hilb^m_0(\A^3)\times 0$.  If $V=\Spec A$ is an affine open subscheme of $\Hilb^m_0(\A^3)$, then the restriction of $\mathfrak{I}$ to $V\times \A^3$ is given by an ideal $\mathfrak{I}_V\subset  (x,y,z)\subset A[x,y,z]$, containing some power of the ideal $(x,y,z)$. Since $A$ is noetherian, 
$\mathfrak{I}_V$ is finitely generated, in particular, $\mathfrak{I}_V$ is generated by finitely many polynomials $f_i\in A[x,y,z]$, so there is an upper bound $D_V$ on the degrees  (in $x,y,z$) of the $f_i$. Covering $\Hilb^m_0(\A^3)$ by finitely many affines, and letting $m$ run from 1 to $M$,  there is an integer $D>0$ such that every ideal $I\subset k[x,y,z]$ supported at 0, and having co-length $n\le M$, has a finite set of generators $f_1,\ldots, f_r\in k[x,y,z]$ with $\deg f_i\le D$. 

Consider the set of monomials $x^iy^jz^k$ of degree $\le D$. By Lemma~\ref{lem:Nonzero}, there are infinitely many triples $(a,b,c)$ of odd positive integers such that   $n_1\cdot a+n_2\cdot b+n_3\cdot c\neq 0$ with the $n_i$ integers, not all zero, and with $|n_i|\le D$, $i=1,2,3$. Thus, for such $(a,b,c)$,  distinct monomials $x^iy^jz^k$, $x^{i'}y^{j'}z^{k'}$ of degree $\le D$, have distinct weights with respect to $T_1$, since $\pm(i-i')a+\pm(j-j')b+\pm(k-k')c\neq 0$. Therefore, the representation of $T_1$ on the $k$-vector space $k[x,y,z]_{\le D}$ of polynomials of degree $\le D$ breaks up into a sum of 1-dimensional weight spaces, with the weight space of weight $w$ spanned by the unique monomial $x^iy^jz^k$ of degree $\le D$ of weight $w=is_1+js_2+ks_3$.  Now if  $I$ is a $T_1$-stable ideal of co-length $m\le M$, supported at $0$, then $I$ has generators $f_1,\ldots, f_r$ all of degree $\le D$. Writing $f_i$ as a sum of monomials (of degree $\le D$), the fact that $t\cdot f_i$ is also in $I$ and the monomials occuring in $f_i$ have distinct weights shows that each such monomial is in $I$, hence $I$ is a monomial ideal.  As there are only finitely many monomial ideals of given co-length, this shows that $[I]$ is an isolated fixed point in $\Hilb^m_0(\A^3)$. 

Now let $L$ be an extension field of $k$, and let $I\subset L[x,y,z]$ be a $T_1$-fixed ideal of co-length $m\le M$. Repeating the above argument, we find that $I$ is also a monomial ideal, hence $I$ is the base-extension of $I\cap k[x,y,z]$. In other words, the point $[I]\in \Hilb^m_0(\A^3)$ is a $k$-point, that is $k([I])=k$.

Writing an arbitrary $N$-fixed ideal sheaf $\sI$ on $(\P^1)^3$ (uniquely) as an intersection $\sI=\cap_{klm}\sI_{0_{klm}}$ with $\sI_{0_{klm}}$ supported at $0_{klm}$, then each $\sI_{0_{klm}}$ is fixed by $T_1$, and, using the $\sigma$-symmetry, $\sI_{0_{klm}}$ has co-length at most half the co-length of $\sI$. Since $\Hilb^m_{0_{klm}}(U_{klm})=\Hilb^m_{0_{klm}}((\P^1)^3)$, the above argument shows that for odd integers $a,b,c$ with $a>D(b+c)$, $b>Dc$, $c>0$, and for $n\le M$, all $N$ fixed points $[\sI]\in \Hilb^{2n}((\P^1)^3)$ are isolated fixed points, and each $\sI_{klm}\subset k[x,y,z]$ is a monomial ideal (of co-length $m\le n\le M$) with $k([\sI_{klm}])=k$. Since  $\sI=\cap_{klm}\sI_{0_{klm}}$, this  implies that $k([\sI])=k$ as well. This proves (1), (2) and (3).

For (4), let $I\subset k[x,y,z]=:A$ be a monomial ideal. Let $T:=\G_m^3$ act on $k[x,y,z]$ by $(t_1, t_2, t_3)\cdot (x,y,z)=(t_1x, t_2y, t_3z)$, inducing a $T$-action on the Ext-groups $\Ext^i_A(I,I)$. Write the character of the $T$-action on $\Ext^i_A(I,I)$ as $\chi_i(t_1,t_2,t_3)=\sum_{k\in \Z^3}n_{k,i}t^k$, with $n_{k,i}\ge0$ integers, all but finitely many being zero, and with $t^{(k_1, k_2, k_3)}:=t_1^{k_1}t_2^{k_2}t_3^{k_3}$. Let $s=(s_1, s_2, s_3)=(\pm2a, \pm2b,\pm 2c)$ be the coordinate weights of $T_1$ at $0_{klm}$ with respect to the embedding $\iota_{a,b,c}$.   The character of the $T_1$-representation on $\Ext^i_A(\sI_{klm},\sI_{klm})$ induced by $\iota_{a,b,c}$ is thus $\sum_{w\in \Z}(\sum_{k\in \Z^3, s\cdot k=w}n_{k,i})t^w$.  For $k\in \Z^3$, let  $L_{k}=\sum_{j=1}^3k_jx_j$, and consider the finite set of linear polynomials $S_{i, (klm)}=\{L_{k}\mid n_{k,i}\neq0\}$. By Lemma~\ref{lem:Nonzero}, there are infinitely many odd positive integers $(a,b,c)$ such that $L_{k}(s_1, s_2, s_3)\neq0$ for all $L_{k}\in S_{i, (klm)}$. Adding these conditions (for $i=1,2$, $(klm)\in \{0,1\}^3$) to the list of conditions for satisfying (1)-(3), it follows from Lemma~\ref{lem:Nonzero} that there are infinitely many odd positive integers $(a,b,c)$ such that (1)-(3) hold and in addition the $T_1$-representations $\Ext^i(\sI_{klm}, \sI_{klm})$ contain no trivial summand, for all $k,l,m$ and for $i=1,2$. This completes the proof.
\end{proof}

\begin{rem}  We only consider the Hilbert scheme of co-length $2n$ ideal sheaves in Lemma~\ref{lem:NumericalConditions} because of \eqref{eqn:OddVanishing}.
\end{rem}

\subsection{Equivariant version and virtual localization}\label{subsec:EquivVirtLoc} 

\begin{prop}\label{prop:LocFormula}  Let $n\ge1$ be an integer, and let $a, b,c$ be odd positive integers. If $n$ is even, assume in addition that  $a,b,c$ are chosen to satisfy the conditions of Lemma~\ref{lem:NumericalConditions}.
Then there is an integer $m_n>0$ and an  identity in 
$H^*(BN, \sW)[1/m_ne]^0=W(k)[1/m_n]$:
\begin{equation}\label{equation: virtual localization} 
\tilde{I}_n=\tilde{I}_n^N = \sum_{[\sI]\in \text{Hilb}^n((\P^1)^3)^{N}} \frac{e_N(\Ext^2(\sI,\sI))}{e_N(\Ext^1(\sI,\sI))},
	\end{equation} 
where $e_N$ is the $N$-equivariant Euler class.  
\end{prop}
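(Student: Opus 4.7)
I would decompose the argument into three pieces: a comparison of $\tilde I_n$ with $\tilde I_n^N$, the odd case (trivial), and the even case (virtual localization).

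\textit{Step 1 (Comparison via Proposition~\ref{prop:Splitting Identity}).} The $N$-action on $(\P^1)^3$, together with the $N$-linearization of $\sO(1,1,1)$ and the $N$-equivariant structure on the spin structure $\tau$, all arise by restriction along $\iota_{a,b,c}\colon N\to (\SL_2)^3$ from the corresponding $(\SL_2)^3$-structures exhibited in \S\ref{subsec:NActionP13}. Proposition~\ref{prop:Splitting Identity} therefore gives $\tilde I_n^N = p^*\tilde I_n$ in $H^0(BN,\sW)$, and a fortiori the images of $\tilde I_n^N$ and of $\tilde I_n$ agree in $H^*(BN,\sW)[1/m_n e]^0 = W(k)[1/m_n]$ for any integer $m_n>0$. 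This explains the first equality in \eqref{equation: virtual localization}, and reduces the problem to matching $\tilde I_n^N$, in the localization, with the claimed sum over fixed points.

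\textit{Step 2 (Odd $n$).} By equation \eqref{eqn:OddVanishing}, $\Hilb^n((\P^1)^3)^N=\emptyset$ whenever $n$ is odd, so the sum on the right is empty. The virtual localization theorem \cite[Theorem~1]{LevineVLEWC}, applied to the $N$-equivariant virtual fundamental class $[\Hilb^n((\P^1)^3),\phi_{DT,n}]^{\vir}_{N,\EM(\sW_*)}$, then forces its image under $p_{\Hilb^n((\P^1)^3)*}$ to be zero after localization. So \eqref{equation: virtual localization} holds with $m_n=1$.

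\textit{Step 3 (Even $n$).} For even $n$, the hypothesis on $(a,b,c)$ gives all four conclusions of Lemma~\ref{lem:NumericalConditions}. Points (1)--(3) say that $\Hilb^n((\P^1)^3)^N$ is a finite set of reduced $k$-rational points whose ideal sheaves $\sI$ restrict to monomial ideals on each chart $U_{klm}$, so that $\Ext^i(\sI,\sI)$ decomposes over the eight standard charts; point (4) ensures that the $T_1$-representations on $\Ext^1(\sI,\sI)$ and $\Ext^2(\sI,\sI)$ contain no trivial subrepresentation, hence a fortiori the $N$-fixed parts vanish. Because the DT perfect obstruction theory $E^{DT(n)}_\bullet$ has virtual tangent complex at $[\sI]$ given (up to trace decomposition) by $\Ext^1(\sI,\sI)-\Ext^2(\sI,\sI)$, the vanishing of fixed parts means that the fixed-point contribution in \cite[Theorem~1]{LevineVLEWC} is just the whole virtual tangent space: there is no positive-dimensional fixed component, and the virtual normal bundle equals the full virtual tangent complex. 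Applying the localization theorem with the $N$-equivariant orientation $\rho^\tau_n$ produced in \cite[Theorem~6.3]{LevineOST}, each isolated fixed point contributes exactly $e_N(\Ext^2(\sI,\sI))/e_N(\Ext^1(\sI,\sI))$, yielding \eqref{equation: virtual localization}.

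\textit{Choice of $m_n$ and main obstacle.} By Lemma~\ref{lem:NEulerClass} and the decomposition of $\Ext^i(\sI,\sI)$ into irreducible $N$-representations $\rho_w$, each factor $e_N(\rho_w)$ is an integer multiple of $e$ or $\tilde e$, with the integer of the form $s_1 a+s_2 b+s_3 c$ for bounded $(s_1,s_2,s_3)\in \Z^3$. The hypotheses on $(a,b,c)$ guarantee these integers are nonzero; taking $m_n$ to be their product (or least common multiple) across the finite fixed locus renders every $e_N(\Ext^1(\sI,\sI))$ a unit in $H^*(BN,\sW)[1/m_ne]$. The main technical obstacle is the careful bookkeeping needed to verify that the localization theorem of \cite{LevineVLEWC}, formulated in terms of the oriented virtual fundamental class and the equivariant Euler class of the virtual normal bundle with its induced orientation, simplifies at isolated fixed points to the stated ratio $e_N(\Ext^2)/e_N(\Ext^1)$; this requires that the orientation $\rho^\tau_n$ restrict compatibly to the $N$-fixed points and that the determinantal correction contributed by the orientation cancel in the ratio, so that no extra sign or twist appears.
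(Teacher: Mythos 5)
Your proposal follows essentially the same route as the paper: compare $\tilde I_n$ with $\tilde I_n^N$ via Proposition~\ref{prop:Splitting Identity}, then apply \cite[Theorem~1]{LevineVLEWC} at the isolated $N$-fixed points supplied by Lemma~\ref{lem:NumericalConditions}. You simply unpack more detail than the paper's terse proof does.

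One small divergence worth noting. You treat odd $n$ as a separate case and assert the identity holds ``with $m_n=1$.'' The paper instead handles odd $n$ uniformly: since $\Hilb^n((\P^1)^3)^N=\emptyset$ by \eqref{eqn:OddVanishing}, the hypotheses stated for even $n$ (isolated $k$-rational fixed points, no trivial $T_1$-subrepresentations in the $\Ext$-groups) are vacuously satisfied, so the localization theorem applies in exactly the same way to produce an empty sum. Your claim that $m_n=1$ works is not actually justified by \cite[Theorem~1]{LevineVLEWC}: that theorem gives the concentration statement only after inverting suitable equivariant Euler classes (i.e.\ after passing to $H^*(BN,\sW)[1/m_n e]$ for some $m_n$), and the emptiness of the fixed locus shows the image of $\tilde I_n^N$ in the localization vanishes — it does not by itself show that $\tilde I_n^N=0$ in the unlocalized $H^0(BN,\sW)$. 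Since the proposition only asserts an identity in $W(k)[1/m_n]$ for some $m_n>0$, this overstatement is harmless here, but you should not rely on $m_n=1$. (Of course, $\tilde I_n^N=p^*\tilde I_n$ does hold unlocalized by Proposition~\ref{prop:Splitting Identity}; the eventual conclusion $\tilde I_n=0$ for odd $n$ in $W(\R)$ is stated separately as Corollary~\ref{corollary: In vanishes for n odd} and still requires inverting some integer over a general $k$.)

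The final paragraph of your proposal correctly identifies the remaining bookkeeping — compatibility of the orientation $\rho^\tau_n$ with the fixed-point decomposition and the cancellation of the determinantal twist in the ratio $e_N(\Ext^2)/e_N(\Ext^1)$ — but note that the paper defers this to Proposition~\ref{prop:NEulerClassLocal}, where it is carried out in full; the present proposition simply cites the localization theorem and leaves that verification to the later computation.
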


\begin{proof} The identity $H^*(BN, \sW)[1/m_ne]^0=W(k)[1/m_n]$ is \eqref{eqn:LocalizedCoh}. 

 The identity $\tilde{I}_n=\tilde{I}_n^N$ in $H^*(BN, \sW)[1/m_ne]^0=W(k)[1/m_n]$ follows from Proposition~\ref{prop:Splitting Identity} and then inverting $m_ne$, since we already have this identity in  $H^0(BN, \sW)$. 

For odd $n$, we have $\Hilb^n((\P^1)^3)^N=\0$ by \eqref{eqn:OddVanishing}, so the hypotheses on the $N$-fixed points in $\Hilb^n((\P^1)^3)$ as stated for even $n$ are also trivially satisfied for odd $n$.

The formula $\tilde{I}_n^N = \sum_{[\sI]\in \text{Hilb}^n((\P^1)^3)^{N}} \frac{e_N(\Ext^2(\sI,\sI))}{e_N(\Ext^1(\sI,\sI))}$ in $H^*(BN, \sW)[1/m_ne]^0$ for suitable $m_n>0$ follows from the virtual localization theorem \cite[Theorem 1]{LevineVLEWC}, noting that  under our assumptions on $n$ and $a,b,c$, each $N$-fixed point $[\sI]\in \Hilb^n((\P^1)^3)$ is an isolated fixed point and has residue field $k$.
\end{proof} 

\begin{rem}\label{rem:Choices}  By Lemma~\ref{lem:NumericalConditions}, given an integer $M>0$, there are infinitely many triples of odd positive integers $a,b,c$ so that  the hypotheses stated in Proposition~\ref{prop:LocFormula} for even $n$ are satisfied for all $n\le 2M$.
\end{rem}

\begin{rem}
 If we take $k=\R$, the identity \eqref{equation: virtual localization} takes place in $\Z[1/m_n]$ and $\tilde{I}_n^N\in \Z[1/m_n]$ is the image of $\tilde{I}_n\in W(\R)=\Z$ under the localization map $\Z\to \Z[1/m_n]$.
 
Since we often  have no information about the integer $m_n$, we will usually just pass to $W(k)\otimes_\Z\Q$.
\end{rem}
	
\begin{cor}\label{corollary: In vanishes for n odd} Let $n\ge 1$ be an odd integer. Then there is an integer $m_n>0$ such that  $\tilde{I}_n = 0$ in $W(k)[1/m_n]$. In particular, for $k=\R$, $\tilde{I}_n = 0$ for all odd $n$.
\end{cor}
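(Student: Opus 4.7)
The plan is to invoke Proposition~\ref{prop:LocFormula} and observe that the fixed-point sum on the right-hand side is empty for odd $n$. More precisely, by Remark~\ref{rem:PointPairs}, in particular the displayed identity \eqref{eqn:OddVanishing}, we have $\Hilb^{n}((\P^1)^3)^N = \emptyset$ whenever $n$ is odd, regardless of the choice of odd integers $a,b,c$ used to define $\iota_{a,b,c}$. The hypotheses of Proposition~\ref{prop:LocFormula} concerning the fixed locus of $N$ on $\Hilb^n((\P^1)^3)$ are therefore automatically satisfied in the odd case, so the localization identity applies directly.

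Applying the identity with $n$ odd, the sum on the right-hand side of \eqref{equation: virtual localization} is an empty sum, hence equals $0$ in $W(k)[1/m_n]$. Consequently
\[
\tilde{I}_n = 0 \quad \text{in } W(k)[1/m_n],
\]
which proves the first assertion with whatever integer $m_n > 0$ is furnished by Proposition~\ref{prop:LocFormula}.

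For the case $k = \R$, recall from \S\ref{subsec:Witt} that the signature map induces an isomorphism $W(\R) \cong \Z$. Since $\Z$ is torsion-free, the localization map $W(\R) \to W(\R)[1/m_n]$ is injective for any positive integer $m_n$. The vanishing of $\tilde{I}_n$ in $W(\R)[1/m_n]$ therefore lifts to the vanishing $\tilde{I}_n = 0$ in $W(\R)$ itself, completing the proof. There is no real obstacle here: the work has all been done in establishing the localization formula and the emptiness of the odd-degree fixed locus; the corollary is simply the observation that an empty sum is zero, together with a torsion-freeness argument to descend from $W(\R)[1/m_n]$ to $W(\R)$.
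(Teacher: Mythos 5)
Your proposal is correct and follows exactly the paper's own argument: apply Proposition~\ref{prop:LocFormula}, note via \eqref{eqn:OddVanishing} that $\Hilb^n((\P^1)^3)^N=\emptyset$ for odd $n$, and conclude the localization sum is empty. The only addition is your explicit torsion-freeness observation for the $k=\R$ case, which the paper leaves implicit but which is of course correct.
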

	
\begin{proof} This follows from  Proposition~\ref{prop:LocFormula}, since for odd $n$, $\Hilb^n((\P^1)^3)^N=\0$ for all choices of odd integers $a, b, c>0$ (see \eqref{eqn:OddVanishing}). 
\end{proof}
 
\subsection{Trace calculation}\label{subsec:Trace}  In this paragraph, we fix an integer $M>0$, and  we take our embedding $\iota_{a,b,c}:N\to \SL_2^3$ with respect to odd integers $a, b, c>0$ satisfying the hypotheses of Proposition~\ref{prop:LocFormula} for every even integer $n\le 2M$; by Remark~\ref{rem:Choices} there are infinitely many such choices. Let  $n$ be an even integer with $0<n\le 2M$.

Consider an $N$-fixed ideal sheaf $\sI$ of co-length $n$.  Following \cite[Section 4]{MaulikI}, we can calculate the equivariant Euler class  $\frac{e_N(\Ext^2(\sI,\sI))}{e_N(\Ext^1(\sI,\sI))}$ appearing in  \eqref{equation: virtual localization} by using the trace function $V_I(t)$ defined below. This is carried out in Proposition~\ref{prop:NEulerClassLocal} and Corollary~\ref{cor:SumFormula} in \S\ref{sub:EquivVertex}.
	
\begin{defn}\label{defn:PartitionFunction}  Let $I\subset k[x,y,z]$ be a monomial ideal of finite co-length and let 
\[
\pi_I = \{(k_1,k_2,k_3)\in\Z_{\geq 0}^3: x^{k_1}y^{k_2}z^{k_3}\notin I\}
\]
be the 3-dimensional partition associated to $I$. Given  $(s_1, s_2, s_3)\in\Z^3$, we define the {\em partition function} of $I$  to be  
\[ 
Q_I(t) = \sum_{(k_1,k_2,k_3)\in \pi_I} t^{s_1k_1 + s_2k_2 + s_3k_3}.
\] 
We define the {\em trace function} $V_I(t)$ by
\[
V_I(t) = Q_I(t) - \frac{Q_I(t^{-1})}{t^{s_1+s_2+s_3}} + Q_I(t)Q_I(t^{-1}) \frac{(1-t^{s_1})(1-t^{s_2})(1-t^{s_3})}{t^{s_1+s_2+s_3}}.
\]
\end{defn}

\begin{prop}[See Formula (9) of \cite{MaulikI} or Proposition 3.9 of \cite{Viergever}]\label{proposition: compute trace} Let $I\subset k[x,y,z]$ be the finite co-length monomial ideal let $(s_1, s_2, s_3)$ be the coordinate weights for $T_1$ at $0$ for some linear action of $T_1$ on $\A^3$. Then $V_I(t)$ is the trace of the virtual $T_1$-representation $\Ext^1(I,I) - \Ext^2(I,I)$.
\end{prop}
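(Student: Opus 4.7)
The plan is to compute the $T_1$-character of the virtual module $\Ext^1_A(I,I) - \Ext^2_A(I,I)$ directly, using the short exact sequence $0 \to I \to A \to A/I \to 0$ of $T_1$-equivariant $A$-modules, where $A = k[x,y,z]$. Fix a $T_1$-equivariant finite free resolution $F_\bullet \to A/I$, which exists because $A$ has global dimension $3$ and $A/I$ is a finitely generated $T_1$-equivariant $A$-module. Writing $\chi(M)$ for the $T_1$-character of a $T_1$-module $M$, one has $\chi(A) = 1/\prod_{i=1}^3(1-t^{s_i})$, while by definition $\chi(A/I) = Q_I(t)$; consequently, the alternating sum of the characters of the $F_i$ is the Laurent polynomial $P(t) := Q_I(t)\prod_i(1-t^{s_i})$.

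Using $F_\bullet$, I would compute the characters
\[
\chi(R\Hom_A(A/I,A)) = \frac{P(t^{-1})}{\prod_i(1-t^{s_i})}, \qquad \chi(R\Hom_A(A/I,A/I)) = P(t^{-1})\,Q_I(t),
\]
together with the trivial $\chi(R\Hom_A(A,A)) = \chi(A)$ and $\chi(R\Hom_A(A,A/I)) = Q_I(t)$. The identity $[I] = [A] - [A/I]$ in the equivariant Grothendieck group and bi-$K$-linearity of $R\Hom_A$ then express $\chi(R\Hom_A(I,I))$ as an alternating sum of the four characters above. Subtracting $\chi(\Hom_A(I,I)) = \chi(A)$ from both sides, and applying the elementary sign identity $\prod_i(1-t^{-s_i}) = -t^{-(s_1+s_2+s_3)}\prod_i(1-t^{s_i})$, rearranges the outcome into
\[
\chi(\Ext^1(I,I) - \Ext^2(I,I) + \Ext^3(I,I)) = V_I(t).
\]

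It remains to verify $\Ext^3_A(I,I) = 0$: since $A/I$ has finite length, Auslander--Buchsbaum gives $\mathrm{pd}_A(A/I) = 3$, so $I$, being a first syzygy of $A/I$, satisfies $\mathrm{pd}_A(I) \le 2$, which forces the vanishing. The main obstacle is the bookkeeping in the character computation: $\chi(A)$ and $\chi(I)$ are rational functions rather than Laurent polynomials, so one must either work in a suitable localization of the equivariant $K$-group, or clear denominators by writing everything in terms of $P(t)$ and checking the identity at the level of Laurent polynomials in $t^{\pm 1}$. Once this is set up carefully, the algebraic manipulation is essentially the one in \cite[Section 4]{MaulikI}; the novelty here is only that we work with $T_1$ rather than with a three-dimensional torus, so the weights are specialized to $(s_1,s_2,s_3)$ via $\iota_{a,b,c}$, but no part of the character computation is sensitive to that restriction.
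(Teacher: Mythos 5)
Your computation is correct and is essentially the argument behind \cite[Formula (9)]{MaulikI} and \cite[Proposition 3.9]{Viergever}, which the paper cites in place of its own proof; there is nothing genuinely different in your route. The one fact you use silently is that $\Hom_A(I,I)\cong A$ as a $T_1$-representation, which is what lets you turn $\chi(R\Hom_A(I,I))=\chi(A)-V_I(t)$ into $\chi(\Ext^1-\Ext^2)=V_I(t)$ once you also have $\Ext^3(I,I)=0$; this holds because the cosupport of $I$ has codimension $3$ in the normal domain $A$, so $I^*\cong A$ and $A\subseteq\Hom_A(I,I)\subseteq\Hom_A(I,A)=I^*=A$, and it is worth recording explicitly.
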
 
 	
Consider one of the opens $U_{klm}$ as before, and let $x,y,z$ be  our standard coordinates on $U_{klm}\cong \mathbb{A}^3$.  A $T_1$-fixed ideal sheaf $\sI$ of  co-length $n\le M$   on  $U_{klm}$ supported at $0_{klm}$ is given by a monomial ideal $I\subset k[x,y,z]$; we often abuse notation by refering to $\sI$ as a monomial ideal in $k[x,y,z]$. 

We note that the $\sigma$-action exchanges $0_{klm}$ with $0_{(1-k)(1-l)(1-m)}$. Using the symmetries in the coordinate weights of Remark \ref{remark: coordinate weights}, the trace of the image $\sigma(\sI_{klm})$ on $U_{(1-k)(1-l)(1-m)}$ is $V_I(t^{-1})$.  Thus, for an $N$-fixed  ideal sheaf $\sI$ with $\sI$  supported on $\alpha_i$, and with $\sI_{klm}$ monomial, this gives the trace  $V_{\alpha_i}(t) = V_{\sI_{klm}}(t) + V_{\sI_{klm}}(t^{-1})$ for  $\Ext^1(\sI,\sI) - \Ext^2(\sI,\sI)$, where we consider ${\sI_{klm}}$ as a monomial ideal in $k[x,y,z]$. 
	
\subsection{Equivariant vertex measure}\label{sub:EquivVertex} We retain our assumptions from \S\ref{subsec:EquivVirtLoc} on the choice of embedding $\iota_{a,b,c}$ for the given integer $M>0$
	
Recall the function $\epsilon:\Z\to \{\pm1\}$ of Notation~\ref{nota:e}. For $s:=(s_1,s_2,s_3)$ and $k:=(k_1,k_2,k_3)$ in $\Z^3$, we set $s\cdot k:=\sum_is_ik_i$. For a commutative ring $A$, we write $A_\Q$ for $A\otimes_\Z\Q$.
	
\begin{prop}\label{prop:NEulerClassLocal} Let $\sI$ be an $N$-fixed ideal sheaf on $(\P^1)^3$ with $\sI$ supported on $\alpha_i=\{0_{klm},\sigma(0_{klm})\}$,  let $s:=(s_1, s_2, s_3)$ be the coordinate weights at $0_{klm}$. We suppose that the restriction $\sI_{klm}$ is a monomial ideal in the standard coordinates $x,y,z$ on $U_{klm}$, and that the $T_1$-representations $\Ext^i(0_{klm}, 0_{klm})$ contain no trivial summand, for $i=1,2$.

We write the trace $V_{\sI_{klm}}(t)$ for the virtual $T_1$-representation $\Ext^1(\sI_{klm},\sI_{klm})- \Ext^2(\sI_{klm},\sI_{klm})$ as the finite sum  $V_{\sI_{klm}}(t) = \sum_{w \in \Z} v_w t^w$, $v_w\in \Z$. Then  
\begin{equation}\label{eqn:MainEulerClassId}
\frac{e_N(\Ext^2(\sI,\sI))}{e_N(\Ext^1(\sI,\sI))} = \prod_{w\in \Z}(\epsilon(w)\cdot w)^{-v_w}\in H^*(BN, \sW)[1/e]_\Q^0=W(k)_\Q.
\end{equation}
\end{prop}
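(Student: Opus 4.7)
The plan is to reduce the computation to the explicit Euler classes for the irreducible $N$-representations given in Lemma~\ref{lem:NEulerClass}, by first decomposing $\Ext^j(\sI,\sI)$ as an $N$-representation using the disjoint-support structure of $\sI$ on $\alpha_i$. Since $\alpha_i=\{0_{klm},\sigma(0_{klm})\}$ consists of two points in disjoint opens $U_{klm}$ and $U_{(1-k)(1-l)(1-m)}$, the Ext groups split as $T_1$-representations,
$$\Ext^j(\sI,\sI)=\Ext^j(\sI_{klm},\sI_{klm})\oplus\Ext^j(\sigma(\sI_{klm}),\sigma(\sI_{klm})),$$
with $\sigma\in N$ interchanging the two summands. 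Hence $\Ext^j(\sI,\sI)$ is the $N$-representation induced from the $T_1$-representation on $\Ext^j(\sI_{klm},\sI_{klm})$, and the induction pairs the weight-$w$ subspace on $U_{klm}$ with the weight-$(-w)$ subspace on the opposite chart. Writing $v_w=v_w^1-v_w^2$ with $v_w^j$ the multiplicity of $t^w$ in $\Ext^j(\sI_{klm},\sI_{klm})$ (so $v_0^1=v_0^2=0$ by the no-trivial-summand hypothesis), and using $\rho_w\cong\rho_{-w}$, one obtains the virtual $N$-decomposition
$$\Ext^1(\sI,\sI)-\Ext^2(\sI,\sI)\;=\;\bigoplus_{w>0}\rho_w^{\oplus(v_w+v_{-w})}.$$

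By multiplicativity of the equivariant Euler class and Lemma~\ref{lem:NEulerClass},
$$\frac{e_N(\Ext^2(\sI,\sI))}{e_N(\Ext^1(\sI,\sI))}=\prod_{w>0,\,\text{odd}}(\epsilon(w)w\cdot e)^{-(v_w+v_{-w})}\cdot\prod_{w>0,\,\text{even}}(\epsilon(w)(w/2)\cdot\tilde e)^{-(v_w+v_{-w})}.$$
Evaluating Definition~\ref{defn:PartitionFunction} at $t=1$ gives $V_{\sI_{klm}}(1)=0$ (the last term vanishes because of $(1-t^{s_1})(1-t^{s_2})(1-t^{s_3})$, and the first two cancel), so $v_0+\sum_{w>0}(v_w+v_{-w})=0$ and together with $v_0=0$ this forces $\sum_{w>0}(v_w+v_{-w})=0$; hence the total degree in $e,\tilde e$ is zero and the right-hand side lies in degree $0$ after inverting $e$. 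To match the target $\prod_{w\in\Z}(\epsilon(w)w)^{-v_w}$, regroup that product by pairing $w>0$ with $-w<0$: for odd $w$, the identity $\epsilon(-w)=-\epsilon(w)$ gives $\epsilon(-w)(-w)=\epsilon(w)w$, so the odd factors become $(\epsilon(w)w)^{-(v_w+v_{-w})}$, matching Step~3 after the $e^{-(v_w+v_{-w})}$ balances against the other odd factors; for even $w$, $\epsilon(-w)=\epsilon(w)$ yields $\epsilon(-w)(-w)=-\epsilon(w)w$, producing an extra sign $(-1)^{v_{-w}}$ and a factor of $2^{v_w+v_{-w}}$ discrepancy between $(w/2)^{-(v_w+v_{-w})}$ and $w^{-(v_w+v_{-w})}$. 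These are absorbed using $\tilde e=\tilde Q\cdot e$ together with the relation $\tilde Q\cdot\langle Q\rangle=-\tilde Q$ of Proposition~\ref{prop:BNCoh}(3) and the specialisation $\langle Q\rangle=-1$ in the localization (Proposition~\ref{prop:BNCoh}(4)), which pins down $\tilde Q^2$ as a specific non-zero scalar in $W(k)_\Q$.

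The main obstacle is the delicate sign- and $2$-adic bookkeeping in the even-weight case: one must verify that the sign $(-1)^{v_{-w}}$, the factor $2^{v_w+v_{-w}}$, and the powers of $\tilde Q$ coming from rewriting $\tilde e=\tilde Q\cdot e$ conspire (via the explicit value of $\tilde Q^2$ in $W(k)_\Q$) to reproduce exactly $\prod_{w\in\Z}(\epsilon(w)w)^{-v_w}$. The rest of the argument is essentially formal, being a direct translation of the representation-theoretic data encoded in $V_{\sI_{klm}}(t)$ into the equivariant Euler class computation.
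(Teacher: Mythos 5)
Your approach of decomposing $\Ext^j(\sI,\sI)$ into irreducible $N$-representations is the right first move, and your virtual decomposition $\Ext^1-\Ext^2\cong\oplus_{w>0}\rho_w^{\oplus(v_w+v_{-w})}$ is correct \emph{as an isomorphism class of virtual representations}. But this is not enough: the Euler class $e_N$ is defined only after a choice of orientation (a trivialization of $\det$ up to squares), and the left-hand side of the formula uses the Donaldson--Thomas orientation coming from the spin structure via \cite[Theorem 6.3]{LevineOST}. The entire content of the proposition is that this DT orientation transports, through the weight-space decomposition, to the standard-basis orientations implicit in the formula of Lemma~\ref{lem:NEulerClass}. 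You never address this compatibility; you simply apply Lemma~\ref{lem:NEulerClass} summand by summand, which is only valid if the orientations match. The bulk of the paper's proof is exactly this orientation bookkeeping: one chooses oriented bases $a_1,\dots,a_r$ of $\Ext^1(\sI_{0_{klm}},\sI_{0_{klm}})$, pushes forward by $\sigma$, checks that the rearranged pairs $(a_i,\sigma(a_i))$ are \emph{standard bases} in the sense of Definition~\ref{defn:StandardBasis} and that the resulting concatenation is still an oriented basis for the DT orientation.

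Moreover your specific decomposition actually gets the sign wrong. When you merge the weight-$(-w)$ contribution into $\rho_w$ ($w>0$) via $\rho_{-w}\cong\rho_w$, you are using an orientation-\emph{reversing} isomorphism: as the proof of Lemma~\ref{lem:NEulerClass} shows, for even $m$ the isomorphism $\phi_m\colon\rho_m\to\rho_{-m}$ satisfies $\phi_m(e_1\wedge e_2)=-e_1\wedge e_2$. (And in this setting \emph{all} weights are even, since each $s_i\equiv2\bmod4$, so your ``odd weight'' case is vacuous.) Concretely, a weight-$(-w)$ basis vector $a$ with $w>0$ gives the standard basis $(a,\sigma(a))$ for $\rho_{-w}$; viewing the same subspace as $\rho_w$ forces the basis $(\sigma(a),a)$, which flips the determinant. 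Tracking this through, your formula differs from the correct one by the factor $(-1)^{\sum_{w<0}v_w}$, which is not always $+1$ (e.g.\ $I=(x,y,z)$ at a chart with coordinate weights $(2a,-2b,-2c)$, $0<a<b,c$, gives $\sum_{w<0}v_w=1$). Since this discrepancy depends on $\sI$, it cannot be absorbed by a universal scalar such as $\tilde Q^2$; the proposed fix via $\tilde e=\tilde Q\cdot e$ therefore does not close the gap. The paper avoids all of this by keeping each weight $w$ with its sign, writing the decomposition as $\oplus_{w\in\Z}\rho_w^{-v_w}$, and applying $e_N(\tilde{\sO}(w))=\epsilon(w)(w/2)\tilde e$ directly for each (possibly negative) even $w$, so that the $\tilde e$- and $2$-powers cancel simply because $\sum_w v_w=0$.
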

	
\begin{proof} This is roughly the same argument as used in \cite{Viergever} for the case of $\P^3$; we repeat here the main steps. 

We write $X$ for $(\P^1)^3$ and $\Hilb^m$ for $\Hilb^m(X)$.  We apply Proposition~\ref{proposition: compute trace} to $\sI_{klm}$ and  to $\sigma(\sI_{klm})$, which by assumption are each mononomial ideals in the respective standard coordinates on $U_{klm}$ and $U_{1-k,1-l,1-m}$.

On the left-hand side of \eqref{eqn:MainEulerClassId}, the Euler class is living in degree 0 in a localization of 
$H^*(BN, \sW(\det E^{DT(n)}_\bullet))$. The expression on the right-hand side will arise from decomposing the virtual $N$-representation $\Ext^2(\sI,\sI)-\Ext^1(\sI,\sI)$ as a virtual sum of irreducible representations $\rho_w$, with Euler class of the corresponding bundle $\tilde{\sO}(w)$ living in 
$H^2(BN, \sW(\det^{-1}\tilde{\sO}(w)))$, and then taking the corresponding signed product of the Euler classes. We use the orientation for $E^{DT(n)}_\bullet$ to give an isomorphism $H^*(BN, \sW(\det E^{DT(n)}_\bullet))\cong H^*(BN, \sW)$, and we use the orientation given by a choice of standard basis for $\rho_w$ to give an isomorphism 
$H^2(BN, \sW(\det^{-1}\tilde{\sO}(w))\cong H^2(BN, \sW(\gamma))$. After applying this latter isomorphism, the fact that 
$\Ext^2(\sI,\sI)-\Ext^1(\sI,\sI)$ has virtual rank zero means that the corresponding signed product of Euler classes $e_N(\tilde{\sO}(w))$ will live in degree zero in the same localization of $H^*(BN, \sW)$ as for the left-hand side. However, to arrive at the desired identity, we need to show that the orientation isomorphism for the left-hand side is compatible with the orientation isomorphism used for the right-hand side, which amounts to showing that we can find a decomposition of the virtual $N$-representation $\Ext^2(\sI,\sI)-\Ext^1(\sI,\sI)$ as a virtual sum of irreducible representations $\rho_w$, compatible with the two given orientations. Here are the details.

We have $V_{\sI_{klm}}(t)=\sum_{w\in\Z}v_wt^w$, with only finitely many $v_w$ non-zero and with $v_0=0$.  For each $m\ge1$, our orientation on $\Hilb^m$ is an isomorphism $\tau_m:\det E^{DT(m)}_\bullet\xrightarrow{\sim} \sM^{\otimes 2}$ for some invertible sheaf $\sM$. Note that the fact that $E^{DT(m)}_\bullet$ has virtual rank 0 implies that $\dim_k\Ext^1(\sJ,\sJ)=\dim_k\Ext^2(\sJ,\sJ)$ for each  co-length $m$ ideal sheaf $\sJ$ on $X$. At a point $[\sJ]\in \Hilb^m(k)$, we choose a local generator $\lambda$ for $\sM$, and call $k$-bases $a_1,\ldots, a_r$ for $\Ext^1(\sJ,\sJ)$, and  $b_1,\ldots, b_r$ for $\Ext^2(\sJ,\sJ)$  oriented bases if $(a_1\wedge\ldots\wedge a_r)^{-1}\otimes (b_1\wedge\ldots\wedge b_r)=c^2\tau_m^{-1}(\lambda^2)$ for some $c\in k^\times$, where we use the canonical isomorphism
\[
\det E^{DT(m)}_\bullet\otimes k([\sJ])\cong \det\Ext^1(\sJ,\sJ)^{-1}\otimes\det\Ext^2(\sJ,\sJ) .
\]
 
For the case at hand, we write $\sI=\sI_{0_{klm}}\cap \sigma(\sI_{0_{klm}})$ with $\sI_{0_{klm}}$ supported at $0_{klm}$. Recall that $n=2n'$ is even,  $\sI_{0_{klm}}$ is an ideal sheaf of co-length $n'$, and we have $\dim_k\Ext^1(\sI_{0_{klm}},\sI_{0_{klm}})=\dim_k\Ext^2(\sI_{0_{klm}},\sI_{0_{klm}})$. Moreover, $\sI_{0_{klm}}$ is a monomial ideal with respect to the standard affine coordinates on $U_{klm}$. 

Our orientation on  $\Hilb^{n'}$ gives us oriented bases $a_1,\ldots, a_r$ for $\Ext^1(\sI_{0_{klm}}
, \sI_{0_{klm}})$ and $b_1,\ldots, b_r$ for $\Ext^2(\sI_{0_{klm}}, \sI_{0_{klm}})$, that is, $(a_1\wedge\ldots  \wedge a_r)^{-1}\otimes(b_1\wedge\ldots\wedge b_r)$ is (up to a square) the given orientation on $\det E_\bullet\otimes k([\sI_{0_{klm}}])$. Since the orientation on $\Hilb^{n'}$ is $N$-equivariant, this implies that $(\sigma(a_1)\wedge\ldots  \wedge \sigma(a_r))^{-1}\otimes(\sigma(b_1)\wedge\ldots\wedge \sigma(b_r))$ is (again, up to a square) the given orientation on $\det E_\bullet\otimes k([\sigma(\sI_{0_{klm}})])$, so $\sigma(a_1),\ldots, \sigma(a_r)$ and $\sigma(b_1),\ldots\,\sigma(b_r)$ give oriented bases for $\Ext^1(\sigma(\sI_{0_{klm}})
,\sigma(\sI_{0_{klm}}))$, respectively, for $\Ext^2(\sigma(\sI_{0_{klm}}),
\sigma(\sI_{0_{klm}}))$. 

It follows from the construction of the orientation on $\Hilb^n$ given in \cite[Theorem 6.3]{LevineOST} that 
$(a_1\wedge\ldots  \wedge a_r)^{-1}\otimes (\sigma(a_1)\wedge\ldots  \wedge \sigma(a_r))^{-1} \otimes(b_1\wedge\ldots\wedge b_r)\otimes(\sigma(b_1)\wedge\ldots\wedge \sigma(b_r))$ is the given orientation on 
\begin{align*}
\det E_\bullet\otimes k([\sI])=\det(\Ext^1(&\sI_{0_{klm}}, \sI_{0_{klm}})\oplus \Ext^1(\sigma(\sI_{0_{klm}}), \sigma(\sI_{0_{klm}})))\\
&\otimes \det(\Ext^2(\sI_{0_{klm}}, \sI_{0_{klm}})\oplus \Ext^2(\sigma(\sI_{0_{klm}}) \sigma(\sI_{0_{klm}})))^{-1}.
\end{align*}
We then rearrange the bases to be of the form
\[
a_1, \sigma(a_1)), \ldots, a_r, \sigma(a_r), b_1, \sigma(b_1), \ldots, b_r, \sigma(b_r). 
\]
As we have the identity of determinants
\begin{multline*}
(a_1\wedge\ldots  \wedge a_r)^{-1} \otimes (\sigma(a_1)\wedge\ldots  \wedge \sigma(a_r))^{-1} \otimes(b_1\wedge\ldots\wedge b_r)\otimes(\sigma(b_1)\wedge\ldots\wedge \sigma(b_r))\\=
[a_1\wedge\sigma(a_1)\wedge \ldots  \wedge a_r\wedge\sigma(a_r)]^{-1} \otimes[b_1\wedge\sigma(b_1)\wedge\ldots\wedge b_r\wedge\sigma(b_r)]
\end{multline*}
this new pair of bases is still oriented. 

For an integer $w$, let $O(w)$ be the 1-dimensional $T_1$-representation with $t\cdot \lambda =t^w\lambda$ for $\lambda\in O(w)$, thus, $V_{\sI_{klm}}(t)$ is the character of the virtual $T_1$-representation $\oplus_{w\in \Z}O(w)^{v_w}$, in other words, $\oplus_{w\in \Z}O(w)^{-v_w}$ is  the virtual $T_1$-representation $\Ext^2(\sI_{klm},\sI_{klm})- \Ext^1(\sI_{klm},\sI_{klm})$. If we now take the $a_i$ and $b_j$ to be basis elements for  $\Ext^1(\sI_{0_{klm}}, \sI_{0_{klm}})$ and $\Ext^2(\sI_{0_{klm}}, \sI_{0_{klm}})$ corresponding to the individual weight spaces $O(w)$ occurring in these $T_1$-representations, we can change these basis elements by multiplying by a suitable scalar to yield oriented bases. There may be cancellations in expressing the virtual representation $V_{\sI_{klm}}(t)$ as a difference of actual representations, but as the extra terms occur once in the numerator of the determinant and once in the denominator, these can be safely ignored.  Since each $w$ is even and non-zero,  each pair of bases $(a_i,\sigma(a_i))$ or $(b_j,\sigma(b_j))$, is a standard basis  for $\rho_w$, as described in Definition~\ref{defn:StandardBasis}. As the bases of pairs $a_1, \sigma(a_1),\ldots$ and $b_1, \sigma(b_1),\ldots$ are oriented bases,  we can use these bases to compute the equivariant Euler class for $\Ext^2-\Ext^1$. Also, the order of the terms $\rho_w$ does not matter, since each of these representations has rank 2, so changing the order of  the $\rho_w$ does not affect the determinant (or the Euler class).

We have thus written the virtual $N$-representation $\Ext^2(\sI, \sI)-\Ext^1(\sI, \sI)$ as the direct sum $\oplus_{w\in\Z}\rho_w^{-v_w}$, or as virtual bundle on $BN$, as $\oplus_{w\in\Z^3}\tilde{\sO}(w)^{-v_w}$, with the orientations on $\tilde{\sO}(w)$ given by the standard bases yielding the orientation on $\Ext^2(\sI, \sI)-\Ext^1(\sI, \sI)$ induced  by the  given orientation for $E^{DT(m)}_\bullet$. By Lemma~\ref{lem:NEulerClass} and the multiplicativity of the Euler class, we have
\[
\frac{e_N(\Ext^2(\sI,\sI))}{e_N(\Ext^1(\sI,\sI))} =\prod_{w\in\Z^3}(\epsilon(w)(w/2)\tilde{e})^{-v_w}
\]
where we are working in a localization of $H^*(BN,\sW)\times H^*(BN,\sW(\gamma))$ with $2\tilde{e}$ inverted. Since $E^{DT(m)}_\bullet$ has virtual rank 0 we have $\sum_wv_w=0$, so the powers of $\tilde{e}$ and of $2$ all cancel, yielding
\[
\frac{e_N(\Ext^2(\sI,\sI))}{e_N(\Ext^1(\sI,\sI))} =\prod_{w\in\Z^3}(\epsilon(w) w)^{-v_w}\in 
H^*(BN,\sW)[1/me]^0
\]
for a suitable integer $m>0$, which yields the desired formula.
\end{proof}

\begin{cor}\label{cor:SumFormula} Let $\sI\subset \sO_{(\P^1)^3}$ be an $N$-stable ideal sheaf of finite co-length $n\le 2M$, and write $\sI=\cap_{i=1}^4\sI_{\alpha_i}$ with $\sO_{(\P^1)^3}/ \sI_{\alpha_i}$ supported on $\alpha_i$. Then	 
\[
\frac{e_N(\Ext^2(\sI,\sI))}{e_N(\Ext^1(\sI,\sI))} = \prod_{i=1}^4 \frac{e_N(\Ext^2(\sI_{\alpha_i},\sI_{\alpha_i}))}{e_N(\Ext^1(\sI_{\alpha_i},\sI_{\alpha_i}))}\in (H^*(BN, \sW)[1/e]_\Q)^0=W(k)_\Q.
\]
\end{cor}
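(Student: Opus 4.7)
The key point is that the four $N$-orbits $\alpha_1,\ldots,\alpha_4$ are pairwise disjoint $N$-stable finite subsets of $(\P^1)^3$, so I may choose pairwise disjoint $N$-stable open neighborhoods $V_i\supset\alpha_i$, and on each $V_i$ the ideal sheaf $\sI$ coincides with $\sI_{\alpha_i}$ (the other factors $\sI_{\alpha_j}$ being equal to $\sO_X$ on $V_i$). Since $\sI$ fails to be locally free only at the finite set $\bigsqcup_{i=1}^4\alpha_i$, the local Ext-sheaves $\mathcal{E}xt^j_{\sO_X}(\sI,\sI)$ are supported on this zero-dimensional locus for $j\ge1$, so the local-to-global spectral sequence degenerates to give
\[
\Ext^j(\sI,\sI)=H^0(X,\mathcal{E}xt^j_{\sO_X}(\sI,\sI))=\bigoplus_{i=1}^4\Ext^j(\sI_{\alpha_i},\sI_{\alpha_i}),\qquad j=1,2,
\]
as $N$-representations, by locality and the disjointness of the supports of the four quotients $\sO_X/\sI_{\alpha_i}$.

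Next I would verify the corresponding compatibility of orientations. The DT orientation on $\det E^{DT(n)}_\bullet\otimes k([\sI])$ induced by $\tau$ is constructed point-wise from the Serre-duality pairing in \cite[Theorem 6.3]{LevineOST}, and this construction is local on $\supp(\sO_X/\sI)$. Since that support decomposes as the disjoint union of the $\alpha_i$, the DT orientation for $\sI$ agrees, up to a square scalar in $k^\times$, with the tensor product of the DT orientations for the four ideals $\sI_{\alpha_i}$, under the direct-sum decomposition of the first paragraph. I can therefore concatenate oriented weight bases for the individual pieces, exactly as in the proof of Proposition~\ref{prop:NEulerClassLocal}, to obtain an oriented basis for the full virtual space $\Ext^1(\sI,\sI)-\Ext^2(\sI,\sI)$.

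Given this oriented direct-sum decomposition, multiplicativity of the $N$-equivariant Euler class yields
\[
e_N(\Ext^j(\sI,\sI))=\prod_{i=1}^4 e_N(\Ext^j(\sI_{\alpha_i},\sI_{\alpha_i})),\qquad j=1,2,
\]
whence the claimed formula follows after passing to the localization where $e$ is invertible. Each factor on the right lies in $(H^*(BN,\sW)[1/e]_\Q)^0=W(k)_\Q$ by Proposition~\ref{prop:NEulerClassLocal}, so the product does as well. The only nontrivial ingredient is the orientation-compatibility in the second paragraph; everything else is formal and reduces to Proposition~\ref{prop:NEulerClassLocal} applied to each $\alpha_i$ separately, so this is where I would focus the detailed verification.
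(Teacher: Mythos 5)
Your proposal is correct and follows essentially the same route as the paper: decompose $\Ext^j(\sI,\sI)$ as the direct sum over the four orbits via disjointness of supports, check that concatenating oriented bases for the summands yields an oriented basis for the whole (using the local nature of the orientation construction from \cite[Theorem 6.3]{LevineOST}, as in the proof of Proposition~\ref{prop:NEulerClassLocal}), then apply multiplicativity of the equivariant Euler class. The paper simply cites the analogous computations in \cite{MaulikI} and \cite{Viergever} and the orientation argument of Proposition~\ref{prop:NEulerClassLocal}, whereas you spell out the local-to-global spectral sequence and the orientation locality explicitly, but the content is the same.
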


\begin{proof} This follows from the same line of reasoning as in \cite[Formula (9)]{MaulikI} or \cite[Lemma 3.4]{Viergever}.

We  write $\sI=\cap_{i=1}^4\sI_{\alpha_i}$, where $\sI_{\alpha_i}$ is an ideal sheaf with $\sI_{\alpha_i}$ supported on $\alpha_i$. By our assumption on the co-length of $\sI$ and our choice of embedding $\iota_{a,b,c}$, $\sI_{klm}$ is a monomial ideal for every $k,l,m$, and the representation $\Ext^i(\sI_{klm},\sI_{klm})$ contains no trivial summand for $i=1,2$.

Each $\alpha_i$ is a single orbit of $N$, acting on two $k$-points $0_{klm}$ and $\sigma(0_{klm})$, and we are considering the Ext-groups $\Ext^i(\sI_{\alpha_i},\sI_{\alpha_i})$ as $N$-representations, or equivalently, as $N$-equivariant vector bundles on the quotient $\alpha_i/N=\Spec(k)$. Similarly, we consider the Ext-groups $\Ext^i(\sI,\sI)$ as $N$-equivariant vector bundles over the $k$-point $[\sI]\in \Hilb^n((\P^1)^3)$. As $N$-representations we have
\[
\Ext^j(\sI,\sI)=\oplus_{i=1}^4\Ext^j(\sI_{\alpha_i},\sI_{\alpha_i}).
\]
Moreover, arguing as in the proof of Proposition~\ref{prop:NEulerClassLocal}, a concatenation of oriented bases for the virtual representation $\Ext^1(\sI_{\alpha_i},\sI_{\alpha_i})-\Ext^2(\sI_{\alpha_i},\sI_{\alpha_i})$ gives oriented bases for the virtual representation $\Ext^1(\sI,\sI)-\Ext^2(\sI,\sI)$. The Corollary then follows from the multiplicativity of the equivariant Euler class.
\end{proof}

Following \cite{MaulikI, MaulikII}, we  make the following definition. 
	
\begin{defn} \label{def:EVM}  For a monomial ideal $I\in k[x,y,z]$, let 
\[
V_I(t) = \sum_{k\in \Z^3} v_k(I)t^{s\cdot k}
\] 
be the trace function (see Definition~\ref{defn:PartitionFunction}), where  $s=(s_1,s_2,s_3)$ is to be considered as an arbitrary element of $\Z^3$. Let $\Mon_\ell$ denote the set of monomial ideals $I\subset k[x,y,z]$ of finite co-length $\ell$. The \textit{quadratic equivariant vertex measure} is the function 
\[ 
W(s,q) := 1+\sum_{\ell\ge1}q^{2\ell}\cdot\left[ {\sum_{\substack{I\subset k[x,y,z]\\ I\in \Mon_\ell}}} \prod_{k\in \Z^3} (\epsilon(s\cdot k)\cdot (s\cdot k))^{-v_k(I)}\right].
\]
\end{defn} 
 
\begin{rem}\label{rem:QuadEquivVM} Let $(s_1,s_2,s_3)$ be the coordinate weights of $T_1$ at $0_{klm}$ with respect to our given embedding $\iota_{a,b,c}$, let 
$\alpha=\{0_{klm}, \sigma(0_{klm})\}$ and let $\Mon_\ell(\alpha)$ be the set of $N$-stable   ideal sheaves $\sI$ on $(\P^1)^3$ of co-length $\ell$ and supported on $\alpha$, and such that the restriction $\sI_{klm}$ to $U_{klm}$ is a monomial ideal.\footnote{$\sI_{klm}$ is monomial for every $N$-stable $\sI$ of co-length $\ell\le 2M$, by our assumptions on $a,b,c$.}  

Taking $s=(s_1,s_2,s_3)$ to be the coordinate weights of $T_1$, we can  write $V_I(t)$ as $V_I(t)=\sum_{w\in \Z}v_w(I)t^w$, where $v_w(I)=\sum_{k, s\cdot k=w}v_k(I)$. Substituting this into the expression for $W(s,q)$ yields
\[ 
W(s,q) = 1+\sum_{\ell\ge1}q^{2\ell}\cdot\left[ {\sum_{\substack{I\subset k[x,y,z]\\ I\in \Mon_\ell}}} \prod_{w\in \Z} (\epsilon(w)\cdot w)^{-v_w(I)}\right].
\]
Then by Proposition~\ref{prop:NEulerClassLocal}, we have
\[ 
W(s,q) = 1+\sum_{1\le \ell\le 2M}q^{2\ell}\left[\sum_{\sI\in \Mon_\ell(\alpha)} \frac{e_N(\Ext^2(\sI,\sI))}{e_N(\Ext^1(\sI,\sI))}\right] \in W(k)_\Q[[q]]/(q^{2M+1}).
\] 
for $s=(s_1,s_2,s_3)$ the coordinate weights of $T_1$ at $0_{klm}$ with respect to our given embedding $\iota_{a,b,c}$.
\end{rem}

\subsection{Serre duality} 
Let $I\subset k[x,y,z]$ be a monomial ideal, and suppose we are given a $T_1$-action on $k[x,y,z]$ with coordinate weights $(s_1, s_2, s_3)$. We now define a splitting of the trace $V(t)$ associated to $I$ following \cite[Section 4.11]{MaulikI}, which we will need in the proof of Theorem \ref{theorem: main theorem}. 
	
\begin{constr} Let $Q_I(t)$ be the partition function for the monomial ideal $I\subset k[x,y,z]$, with respect to given coordinate weights $(s_1, s_2, s_3)\in \Z^3$ for $T_1$ acting on $k[x,y,z]$ (see Definition~\ref{defn:PartitionFunction}).
		
Define
\[
V_I^+(t) = Q_I(t) - Q_I(t)Q_I(t^{-1})\frac{(1-t^{s_1})(1-t^{s_2})}{t^{s_1+s_2}} 
\]
and 
\[
V_I^-(t) = -\frac{Q_I(t^{-1})}{t^{s_1+s_2+s_3}} + Q_I(t)Q_I(t^{-1})\frac{(1-t^{s_1})(1-t^{s_2})}{t^{s_1+s_2+s_3}}.
\]
This is a splitting of $V_I(t)$ in the sense that $V_I(t) = V_I^+(t) + V_I^-(t)$. 
\end{constr}

\begin{lem}\label{lemma: splitting} The splitting $V_I(t) = V_I^+(t) + V_I^-(t)$ defined above satisfies 
\[
V_I^+(t^{-1}) = -V_I^-(t)t^{s_1+s_2+s_3}.
\] 
\end{lem}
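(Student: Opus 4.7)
The plan is to prove the identity by brute-force substitution into the defining formulas for $V_I^+$ and $V_I^-$; the whole assertion is a formal manipulation of rational functions in $t$, with no input from the geometry of $I$ beyond what is already encoded in $Q_I(t)$ and $Q_I(t^{-1})$. So my strategy is to compute $V_I^+(t^{-1})$ and $-t^{s_1+s_2+s_3}V_I^-(t)$ independently and show that the two expressions simplify to the same rational function in $t$.

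The key elementary identity driving the simplification is $1-t^{-s} = -t^{-s}(1-t^s)$, which when applied twice gives
\[
(1-t^{-s_1})(1-t^{-s_2}) \;=\; t^{-s_1-s_2}(1-t^{s_1})(1-t^{s_2}),
\]
so that the rational factor appearing in $V_I^+(t^{-1})$ satisfies
\[
\frac{(1-t^{-s_1})(1-t^{-s_2})}{t^{-s_1-s_2}} \;=\; (1-t^{s_1})(1-t^{s_2}).
\]
Substituting into the formula for $V_I^+$ with $t$ replaced by $t^{-1}$ and using this simplification, I would obtain
\[
V_I^+(t^{-1}) \;=\; Q_I(t^{-1}) \;-\; Q_I(t)Q_I(t^{-1})(1-t^{s_1})(1-t^{s_2}).
\]

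For the other side, I would multiply the formula for $V_I^-(t)$ by $-t^{s_1+s_2+s_3}$; each of the two summands of $V_I^-(t)$ carries the denominator $t^{s_1+s_2+s_3}$, which cancels, and the outside minus sign flips the signs of both terms, yielding exactly
\[
-t^{s_1+s_2+s_3}\,V_I^-(t) \;=\; Q_I(t^{-1}) \;-\; Q_I(t)Q_I(t^{-1})(1-t^{s_1})(1-t^{s_2}).
\]
Comparing the two expressions completes the proof.

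There is essentially no conceptual obstacle: the identity is designed so that $V_I^+$ collects the ``positive-degree'' half of $V_I$ under Serre duality, and $V_I^-$ collects the ``negative-degree'' half, related by the shift $t\mapsto t^{-1}\cdot t^{-(s_1+s_2+s_3)}$ coming from the canonical class of $\A^3$. The only real risk is a bookkeeping slip with the signs in $1-t^{-s_i}$ or with the overall power of $t$ in the denominators, so I would write out both sides fully before comparing.
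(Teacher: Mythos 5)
Your proof is correct and is essentially the same direct-substitution computation as the paper's: you simplify $V_I^+(t^{-1})$ using $(1-t^{-s_1})(1-t^{-s_2})=t^{-s_1-s_2}(1-t^{s_1})(1-t^{s_2})$ to $Q_I(t^{-1})-Q_I(t)Q_I(t^{-1})(1-t^{s_1})(1-t^{s_2})$, which visibly equals $-t^{s_1+s_2+s_3}V_I^-(t)$. The paper writes this out as a single chain of equalities rather than computing the two sides separately, but the content is identical.
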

	
\begin{proof} We directly check that 
\begin{align*}
V_I^+(t^{-1}) &= Q_I(t^{-1}) - Q_I(t^{-1})Q_I(t)t^{s_1+s_2}(1-t^{-s_1})(1-t^{-s_2}) \\
		&= Q_I(t^{-1}) - Q_I(t)Q_I(t^{-1})(1-t^{s_1})(1-t^{s_2}) \\
		&= -V_I^-(t)t^{s_1+s_2+s_3}
\end{align*}
as desired. 
\end{proof}
	
We can now prove the following proposition. We retain the integer $M>0$ and the choice of embedding $\iota_{a,b,c}:N\to \SL_2^3$ from \S\ref{subsec:Trace}.
	
\begin{prop}\label{proposition: signs} Let $\sI$ be an $N$-invariant  ideal sheaf of co-length $2n$, supported on $\alpha_i=\{0_{klm}, \sigma(0_{klm})\}$, and let $(s_1, s_2,s_3)$ be the coordinate weights for $T_1$ at $0_{klm}$.\\[5pt]
1.  Write $\sI=I_{0_{klm}}\cap\sigma(I_{0_{klm}})$ with
$I_{0_{klm}}$ supported at $0_{klm}$. We suppose that $I:=I_{klm}$ is a monomial ideal,   and let $V_I(t) = \sum_{w\in \Z} v_w(I) t^w$ be the trace for $I$. Then we have  
\begin{enumerate}
\item[i)] $\prod_{w\in\Z}\epsilon(w)^{v_w(I)}=(-1)^n$.
\item[ii)] If $\sI$ has co-length $\le 2M$, then 
\[
\frac{e_N(\Ext^2(I,I))}{e_N(\Ext^1(I,I))} = (-1)^n\prod_{w\in \Z} w^{-v_w(I)}.
\]
\end{enumerate}
2. We have
\[
W(s,q) = 1+\sum_{\ell\ge1}(-q^2)^\ell\cdot\left[{\sum_{\substack{I\subset k[x,y,z]\\ I\in \Mon_\ell}}} \prod_{w\in \Z}  w^{-v_w(I)}\right]\in \Z[[q]].
\]
\end{prop}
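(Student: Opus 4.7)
The plan is to prove 1(i) as the main combinatorial computation, deduce 1(ii) immediately by combining 1(i) with Proposition~\ref{prop:NEulerClassLocal}, and obtain part 2 by substituting 1(i) termwise into Definition~\ref{def:EVM}.

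For 1(i), I would exploit the Serre-duality splitting $V_I(t) = V_I^+(t) + V_I^-(t)$ together with the identity $V_I^+(t^{-1}) = -V_I^-(t)\,t^{S}$ of Lemma~\ref{lemma: splitting}, where $S := s_1+s_2+s_3$. Writing $V_I^+(t) = \sum_w a_w t^w$ and comparing coefficients gives $V_I^-(t) = \sum_w (-a_{-w-S}) t^w$, hence $v_w(I) = a_w - a_{-w-S}$. The crucial input from Remark~\ref{remark: coordinate weights} is that each $s_i \equiv 2 \pmod 4$, so $S \equiv 2 \pmod 4$ and every exponent $w = s\cdot k$ appearing in $V_I$ is even. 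For such $w$, exactly one of $w$ and $-w-S$ lies in $4\Z$ and the other in $4\Z+2$; with the conventions of Notation~\ref{nota:e}, this yields $\epsilon(w)\epsilon(-w-S) = -1$. Since $\epsilon$ takes values in $\{\pm1\}$, one has $\epsilon(x)^{-1}=\epsilon(x)$, and reindexing the second piece of the product by $w \mapsto -w-S$ gives
\[
\prod_w \epsilon(w)^{v_w(I)} \;=\; \prod_w \epsilon(w)^{a_w}\cdot \epsilon(-w-S)^{a_w} \;=\; \prod_w (-1)^{a_w} \;=\; (-1)^{V_I^+(1)}.
\]
Finally, evaluating $V_I^+(t) = Q_I(t) - Q_I(t)Q_I(t^{-1})\frac{(1-t^{s_1})(1-t^{s_2})}{t^{s_1+s_2}}$ at $t=1$, the second term vanishes (double zero from $(1-t^{s_1})(1-t^{s_2})$), leaving $V_I^+(1) = Q_I(1) = |\pi_I|$, which equals the co-length of $I$, namely $n$.

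For 1(ii), I would factor the Euler-class expression of Proposition~\ref{prop:NEulerClassLocal}:
\[
\frac{e_N(\Ext^2(\sI,\sI))}{e_N(\Ext^1(\sI,\sI))} \;=\; \prod_w \bigl(\epsilon(w)\,w\bigr)^{-v_w(I)} \;=\; \Bigl(\prod_w \epsilon(w)^{-v_w(I)}\Bigr)\cdot \prod_w w^{-v_w(I)}.
\]
Using $\epsilon(w)^{-v_w}=\epsilon(w)^{v_w}$ (values in $\{\pm1\}$) and 1(i) yields the sign $(-1)^n$, giving the claim under the co-length hypothesis that makes Proposition~\ref{prop:NEulerClassLocal} applicable. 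Part 2 then follows by substituting the sign identity of 1(i) into Definition~\ref{def:EVM}: each inner factor $\prod_k(\epsilon(s\cdot k)(s\cdot k))^{-v_k(I)}$ equals $(-1)^\ell \prod_w w^{-v_w(I)}$ when $I$ has co-length $\ell$, and the sign $(-1)^\ell$ combines with $q^{2\ell}$ to produce $(-q^2)^\ell$.

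The main obstacle is the sign bookkeeping in 1(i): verifying that the pairing $w \leftrightarrow -w-S$ has no fixed points on the support of $V_I$ (which follows since $w = -w-S$ forces $2w \equiv 2 \pmod 4$, i.e., $w$ odd, contradicting $w$ even), correctly tracking the shift by $S$ through the reindexing, and confirming the mod-$4$ dichotomy that makes each pair contribute exactly one factor of $-1$. Once these are in hand, the evaluation $V_I^+(1)=n$ is the only non-formal ingredient, and it is immediate from the explicit formula for $V_I^+$.
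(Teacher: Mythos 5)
Your proof is correct, and for the crucial step it takes a genuinely shorter route than the paper. Both arguments reduce 1(i) to showing $\prod_w \epsilon(w)^{v_w(I)} = (-1)^{V_I^+(1)}$ via the splitting $V_I = V_I^+ + V_I^-$ and the mod-4 identity $\epsilon(w)\epsilon(-w-S) = -1$. The difference is how one pins down $V_I^+(1)$: the paper computes the determinant of the virtual $T_1$-representation in two ways (once from the $V_I^\pm$ decomposition, once from $\det V_I^+ = \det Q_I$ and $\det V_I^- = \det(-Q_I(t^{-1})t^{-S})$), equates weight-sums, and reduces modulo 4 to obtain only the parity $V_I^+(1) \equiv n \pmod 2$; you instead observe directly that $V_I^+(t)$ is a Laurent polynomial and evaluate at $t=1$, where the correction term vanishes because $(1-t^{s_1})(1-t^{s_2})$ vanishes (and the denominator $t^{s_1+s_2}$ does not), giving the exact value $V_I^+(1) = Q_I(1) = |\pi_I| = n$. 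Your version is cleaner, avoids the determinant bookkeeping entirely, and proves the stronger equality rather than a congruence.

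Two small remarks. The phrase ``double zero'' is misleading: the factor $(1-t^{s_1})(1-t^{s_2})/t^{s_1+s_2}$ is already a Laurent polynomial, so evaluation at $t=1$ is a plain substitution with no cancellation of a singularity --- you only need one of the two factors in the numerator to vanish. Second, the fixed-point-free check for the involution $w \mapsto -w - S$ that you list among the obstacles is not actually used in your reindexing argument, which is a bijective change of variable on the index set and works regardless; it would only matter in an argument that matched terms pairwise. Neither point affects the correctness of the proof.

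Your treatment of 1(ii) and part 2 matches the paper's: factor $\epsilon(w)^{-v_w} = \epsilon(w)^{v_w}$ out of the Euler-class product and apply 1(i), then substitute into Definition~\ref{def:EVM}, noting that the $(-1)^\ell$ from 1(i) applied to a co-length-$\ell$ monomial ideal converts $q^{2\ell}$ into $(-q^2)^\ell$.
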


\begin{rem} By our choice of embedding $\iota_{a,b,c}$, the conditions of 
Proposition~\ref{proposition: signs}(1) are satisfied if $\sI$ has co-length $\le 2M$.
\end{rem}
	
\begin{proof} For (1), (ii) follows from (i) and Proposition~\ref{prop:NEulerClassLocal}\eqref{eqn:MainEulerClassId}. (2) follows from (1i)
 and the definition of $W(s,q)$ (Definition~\ref{def:EVM}), after rewriting $W(s,q)$  following Remark~\ref{rem:QuadEquivVM}.  

To prove (1i), consider the splitting $V_I(t) = V_I^+(t) + V_I^-(t)$ defined above, and write
$V_I^+(t)$ as $V_I^+(t)=\sum_{w\in \Z}v_w^+(I)t^w$.
Let  $V_I^+$ be the set of $w\in \Z$ such that the coefficient $v_w^+(I)\neq0$, so we have
\[
V_I^+(t)=\sum_{w\in V_I^+}v_w^+(I)t^w.
\]
We recall that $Q_I(t)=\sum_{k\in \pi_I}t^{k\cdot s}$, and that each  $s_i$ is even and $\sum_is_i\equiv 2\mod 4$. In particular, every term $t^w$ occurring in $Q_I(t)$ has even weight $w$. Thus, $w$ is even for every  $w\in V_I^+$.
				
 Consider a non-zero term $v_w^+(I)t^w$ in  $V_I^+(t)$.  By Lemma \ref{lemma: splitting}, there is a corresponding term $-v_w^+(I)t^{-w - s_1-s_2-s_3}$ in $V_I^-(t)$.  As $w$ is even, we see that $w=-w\mod 4$. As $s_1+s_2+s_3\equiv 2\mod 4$, we have that 
\[
\epsilon(w)\epsilon(-w-s_1-s_2-s_3) = -1. 
\]
Let $k=\sum_{w\in V_I^+}v^+_w(I)$. Since 
\[
V_I(t)=V_I^+(t)+V_I^-(t)=\sum_{w\in V_I^+}(v_w^+(I)t^w-v_w^+(I)t^{-w - s_1-s_2-s_3})
\]
we see that 
\begin{equation}\label{eqn:SignIdentity}
\prod_{w\in \Z}\epsilon(w)^{v_w(I)}=
\prod_{w\in V_I^+}[\epsilon(w)\epsilon(-w-s_1-s_2-s_3)]^{v_w^+(I)}=(-1)^k.
\end{equation}

We note that 
\begin{align*}
\det(V_I(t)) &= \det(V_I^+(t))\cdot \det(V_I^-(t))\\
&= \prod_{w\in V_I^+}t^{v_w^+(I)w}t^{(-v_w^+(I))(-w - s_1-s_2-s_3)} = t^{k(s_1+s_2+s_3)}t^{2\sum_{w\in V_I^+} v_w^+(I)\cdot w}.
\end{align*}
Let $K=\sum_{w\in V_I^+}v_w^+(I)\cdot w$.  Since each $w\in V_I^+$ is even,  $K\equiv 0\mod 2$. 
		
Next, we claim that
\[
\det V_I^+(t)=\det Q_I(t),\ \det V_I^-(t)=\det(-Q_I(t^{-1})t^{-s_1-s_2-s_3}).
\]
Indeed, for $\det V_I^+(t)$, we have  $\det((1-t^{s_1})(1-t^{s_2})t^{-s_1-s_2})=1$, and $(1-t^{s_1})(1-t^{s_2})t^{-s_1-s_2})$ is the character of a virtual representation of virtual rank 0. Thus, letting $R$ be the virtual rank of  $Q_I(t)Q_I(t^{-1})$, we have
\begin{multline*}
 \det(Q_I(t)Q_I(t^{-1})(1-t^{s_1})(1-t^{s_2})t^{-s_1-s_2})\\= \det(Q_I(t)Q_I(t^{-1}))^0\cdot(\det((1-t^{s_1}) (1-t^{s_2})t^{-s_1-s_2}))^R=1,
 \end{multline*}
 and thus
 \[
 \det V_I^+(t)=\det Q_I(t)\cdot \det(Q_I(t)Q_I(t^{-1})(1-t^{s_1}(1-t^{s_2})t^{-s_1-s_2}))^{-1}=\det Q_I(t).
 \]
 The proof for $\det V_I^-(t)$ is similar. 
 
 Noting that $\pi_I$ has $n$ elements, this gives us another computation of $\det V_I(t)$,
 \begin{align*}
 \det V_I(t)=\det V_I^+(t)\cdot \det V_I^-(t)&=\det Q_I(t)\cdot  \det(-Q_I(t^{-1})t^{-s_1-s_2-s_3})\\
 &=t^{\sum_{k\in \pi_I}k\cdot s}\cdot t^{\sum_{k\in \pi_I}k\cdot s} \cdot t^{n(s_1+s_2+s_3)}\\
 &=t^{\sum_{k\in \pi_I}2k\cdot s}\cdot  t^{n(s_1+s_2+s_3)}.
 \end{align*}
Comparing with our previous computation gives
 \[
 k(s_1+s_2+s_3)+2K=n(s_1+s_2+s_3)+\sum_{k\in \pi_I}2k\cdot s
 \]
 Recalling that $s_i\equiv 0\mod 2$ for each $i$, and $s_1+s_2+s_3\equiv2\mod 4$, reducing modulo 4 gives
 \[
 2k\equiv 2n \mod 4
 \]
 or $k\equiv n\mod 2$, which together with \eqref{eqn:SignIdentity} proves (1i).
 \end{proof}

\begin{cor}\label{cor: signs} Let $\sI$ be an $N$-stable ideal sheaf of co-length $2n\le 2M$, and write $\sI=\cap_{i=1}^4\sI_{\alpha_i}$. For each $i$, write $\alpha_i=\{0_{klm}, \sigma(0_{klm})\}$, Let $s^i:=(s_1^i, s_2^i, s_3^i)$ be the coordinate weights at $0_{klm}$, let $V_i(t)=\sum_{w\in \Z}v_w^it^w$ be the trace of $\Ext^1(\sI_{0_{klm}},\sI_{0_{klm}})-\Ext^2(\sI_{0_{klm}},\sI_{0_{klm}})$ as virtual $T_1$-representation. Then
\[
\frac{e_N(\Ext^2(\sI,\sI))}{e_N(\Ext^1(\sI,\sI))}=(-1)^n\prod_{i=1}^4 
\prod_{w\in \Z} w^{-v^i_w}.
\]
\end{cor}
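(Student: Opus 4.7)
The plan is to deduce the corollary by combining Corollary~\ref{cor:SumFormula} with Proposition~\ref{proposition: signs}(1)(ii) applied separately to each of the four orbit pieces $\sI_{\alpha_i}$. First, since the orbits $\alpha_1, \ldots, \alpha_4$ are pairwise disjoint as subsets of $(\P^1)^3$, the supports of the $\sI_{\alpha_i}$ are disjoint, so the co-lengths add: writing $2n_i$ for the co-length of $\sI_{\alpha_i}$ (each is even by Remark~\ref{rem:PointPairs}, since $N$ swaps the two points of $\alpha_i$), we have $\sum_{i=1}^{4} n_i = n$.

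Next, by Corollary~\ref{cor:SumFormula}, in $W(k)_\Q$ we have
\[
\frac{e_N(\Ext^2(\sI,\sI))}{e_N(\Ext^1(\sI,\sI))} = \prod_{i=1}^{4} \frac{e_N(\Ext^2(\sI_{\alpha_i},\sI_{\alpha_i}))}{e_N(\Ext^1(\sI_{\alpha_i},\sI_{\alpha_i}))}.
\]
For each $i$, we apply Proposition~\ref{proposition: signs}(1)(ii) to the $N$-invariant ideal sheaf $\sI_{\alpha_i}$, which is supported on the two-point orbit $\alpha_i = \{0_{klm}, \sigma(0_{klm})\}$ and has co-length $2n_i \le 2n \le 2M$. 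The hypotheses of the proposition are in force because our choice of embedding $\iota_{a,b,c}$ satisfies Lemma~\ref{lem:NumericalConditions}, so each restriction $(\sI_{\alpha_i})_{klm}$ is a monomial ideal in $k[x,y,z]$ with respect to the standard coordinates, and the trace $V_i(t) = \sum_w v_w^i t^w$ of the virtual $T_1$-representation $\Ext^1(\sI_{0_{klm}},\sI_{0_{klm}}) - \Ext^2(\sI_{0_{klm}},\sI_{0_{klm}})$ (where $\sI_{0_{klm}}$ is the component of $\sI_{\alpha_i}$ supported at $0_{klm}$) is precisely the trace function considered in the proposition. The proposition then yields
\[
\frac{e_N(\Ext^2(\sI_{\alpha_i},\sI_{\alpha_i}))}{e_N(\Ext^1(\sI_{\alpha_i},\sI_{\alpha_i}))} = (-1)^{n_i} \prod_{w\in\Z} w^{-v_w^i}.
\]

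Finally, multiplying these four identities together, the sign contributions combine to $\prod_{i=1}^4 (-1)^{n_i} = (-1)^{\sum_i n_i} = (-1)^n$, giving exactly the claimed formula. There is no real obstacle here: this is a bookkeeping step assembling four local computations (each governed by the Serre-duality sign analysis of Proposition~\ref{proposition: signs}) into a single global one via the disjoint-support factorization of the equivariant Euler class. The only point requiring a brief check is that the decomposition $\sI = \cap_{i=1}^4 \sI_{\alpha_i}$ induces an $N$-equivariant direct sum decomposition of the $\Ext$-groups, so that the multiplicativity of equivariant Euler classes applies — but this is exactly the content of Corollary~\ref{cor:SumFormula}, already established.
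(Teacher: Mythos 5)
Your proof is correct and follows essentially the same route as the paper's: factor via Corollary~\ref{cor:SumFormula}, apply Proposition~\ref{proposition: signs}(1)(ii) to each orbit piece $\sI_{\alpha_i}$, and combine the signs $(-1)^{n_i}$ using $\sum_i n_i = n$. Nothing is missing.
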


\begin{proof} Let $2n_i$ be the co-length of $\sI_{\alpha_i}$, so $n=\sum_{i=1}^4n_i$, in particular $n_i\le M$ for each $i$. Thus,  by our assumption on the embedding $\iota_{a,b,c}$, $\sI_{klm}$ is a monomial ideal for all $k,l,m$.  

By Corollary~\ref{cor:SumFormula}, we have
\[
\frac{e_N(\Ext^2(\sI,\sI))}{e_N(\Ext^1(\sI,\sI))}=
\prod_{i=1}^4\frac{e_N(\Ext^2(\sI_{\alpha_i},\sI_{\alpha_i}))}{e_N(\Ext^1(\sI_{\alpha_i},\sI_{\alpha_i}))}
\]
and  by Proposition~\ref{proposition: signs}, we have
\begin{align*}
\prod_{i=1}^4\frac{e_N(\Ext^2(\sI_{\alpha_i},\sI_{\alpha_i}))}{e_N(\Ext^1(\sI_{\alpha_i},\sI_{\alpha_i}))}&=\prod_{i=1}^4 
(-1)^{n_i}\prod_{w\in \Z} w^{-v^i_w}\\
&=(-1)^n\prod_{i=1}^4 
\prod_{w\in \Z} w^{-v^i_w}
\end{align*}
as desired.
\end{proof}
	
\section{Proof of Theorem \ref{theorem: main theorem}}\label{sec:MainProof}
 Recall from the Introduction the generating function 
 \[
 Z((\P^1)^3,q)=1+\sum_{n\ge 1}\tilde{I}_n q^n\in W(k)[[q]]
 \]
 for the quadratic Donaldson-Thomas  invariants for $(\P^1)^3$. We consider $\Z$ as a subring of $W(k)$ via the unit map $\Z\to W(k)$.
 
In this section, we will prove Theorem \ref{theorem: main theorem}. We start by recalling the construction of the equivariant vertex measure as used in \cite{MaulikI} and \cite{MaulikII}. 
	
\begin{constr} Let $I\subset k[x,y,z]$ be a monomial ideal. For  $s:=(s_1, s_2, s_3)\in \Z^3$, we have the  partition function $Q_I(t)$ and trace function $V_I(t)=\sum_{k\in \Z^3}(s\cdot k)^{v_k(I)}$ of Definition~\ref{defn:PartitionFunction}. Following \cite[Section 4]{MaulikI},   the {\em equivariant vertex measure} is defined as
\[
W'(s,q) = 1+\sum_{\ell\ge1}q^{\ell}\cdot \left[{\sum_{\substack{I\subset k[x,y,z]\\I\in \Mon_\ell}}} \prod_{k\in \Z^3} (s\cdot k)^{-v_k(I)}\right].
\]
If we rewrite $V_I(t)$ as $V_I(t)=\sum_{w\in \Z}w^{v_w(I)}$ for a fixed choice of $s\in \Z^3$, then this rewrites $W'(s,q)$ as 
\[
W'(s,q) = 1+\sum_{\ell\ge1}q^{\ell}\cdot \left[{\sum_{\substack{I\subset k[x,y,z]\\I\in \Mon_\ell}}} \prod_{k\in \Z^3} w^{-v_w(I)}\right].
\]

The $s_i$ will in practice be specialized to the coordinate weights of the $T_1$-action  at the appropriate origin $0_{klm}$.  
\end{constr}
	
\begin{proof}[Proof of Theorem \ref{theorem: main theorem}]
	By Corollary \ref{corollary: In vanishes for n odd}, $\tilde{I}_n$ vanishes if $n$ is odd. Therefore, assume that $n$ is even from now on.  
		
For each value of $s=(s_1, s_2, s_3)\in \Z^3$, it follows from  Proposition~\ref{proposition: signs}(2) that
\[
W(s_1,s_2,s_3,q) = W'(s_1,s_2,s_3,-q^2) 
\]
in $\Z[[q]]$.  
		
Applying \cite[Theorem 1]{MaulikII}, we have that 
\[
W'(s_1,s_2,s_3,q) = M(-q)^{-\frac{(s_1+s_2)(s_1+s_3)(s_2+s_3)}{s_1s_2s_3}},
\]
which implies that 
\begin{equation}\label{eqn:VertexIdentity}	
W(s_1,s_2,s_3,q) = M(q^2)^{-\frac{(s_1+s_2)(s_1+s_3)(s_2+s_3)}{s_1s_2s_3}}.
\end{equation}

Now take an integer $M>0$ and choose odd integers $a, b, c>0$ so that the $N$-action given by the embedding $\iota_{a,b,c}$ satisfies the hypotheses of Proposition~\ref{prop:LocFormula}, and thus for all $N$-fixed ideal sheaves $\sI$ of co-length $n\le 2M$, the restrictions $\sI_{klm}$ are all monomial ideals and the $T_1$-representations $\Ext^i(\sI_{klm},\sI_{klm})$, $i=1,2$, contain no trivial summand. By Remark~\ref{rem:Choices}, there are infinitely many such $a,b,c$.

Substituting the coordinate weights from Remark~\ref{remark: coordinate weights} for $(s_1, s_2, s_3)$,  and using \eqref{eqn:VertexIdentity}, we conclude that 
\begin{equation}\label{eqn:WIdentity}
\vbox{
$W(-2a,-2b,-2c,q)W(-2a,2b,-2c,q)W(-2a,-2b,2c,q)W(-2a,2b,2c,q)$

\ \hfill$=  M(q^2)^{-8},$
}
\end{equation}
since the sum over all pairs of opens of $(s_1+s_2)(s_1+s_3)(s_2+s_3)/(s_1s_2s_3)$  is $8$. 

Let $n$ be an even integer with $n\le 2M$. For $\sI$ an $N$-stable ideal sheaf on $(\P^1)^3)$ of   co-length $n$, write $\sI=\cap_{i=1}^4\sI_{\alpha_i}$ with $\sI_{\alpha_i}$ supported on $\alpha_i$.  It follows from Proposition~\ref{prop:Splitting Identity}, the localization formula \eqref{equation: virtual localization} and Corollary~\ref{cor:SumFormula}  that in $W(k)_\Q$, we have
\begin{align*}
\tilde{I}_n&=\sum_{[\sI]\in \text{Hilb}^n((\P^1)^3)^{N}} \frac{e_N(\Ext^2(\sI,\sI))}{e_N(\Ext^1(\sI,\sI))}\\
&=\sum_{[\sI]\in \text{Hilb}^n((\P^1)^3)^{N}}  \prod_{i=1}^4 \frac{e_N(\Ext^2(\sI_{\alpha_i},\sI_{\alpha_i}))}{e_N(\Ext^1(\sI_{\alpha_i},\sI_{\alpha_i}))}.
\end{align*}
By Remark~\ref{remark: coordinate weights} and Remark~\ref{rem:QuadEquivVM}, we thus have the identity
\begin{multline*}
Z((\P^1)^3,q)= \\
W(-2a,-2b,-2c,q)W(-2a,2b,-2c,q)W(-2a,-2b,2c,q)W(-2a,2b,2c,q).
\end{multline*}
in $W(k)_\Q[[q]]/(q^{2M+1})$.
But then \eqref{eqn:WIdentity} shows that $Z((\P^1)^3,q)=M(q^2)^{-8}$ in $W(k)_\Q[[q]]/(q^{2M+1})$, and  as $M>0$ is arbitrary, it follows that $Z((\P^1)^3,q)=M(q^2)^{-8}$ in $W(k)_\Q[[q]]$, proving 
Theorem \ref{theorem: main theorem}.
\end{proof} 
	
\begin{rem} Using a SAGE-implementation for the  procedure described in the proof of Proposition~\ref{prop:NEulerClassLocal}, similar to the one used in \cite{Viergever}, one can compute directly that for $(\P^1)^3$, the first quadratic DT invariants are $-8, 12, 48$ and $-98$, in agreement with the statement of Theorem \ref{theorem: main theorem}. \end{rem}

\section{Equivariant DT invariants of $N_3$-oriented toric varieties}\label{sec:BlowUp} 

In this section, we discuss some extensions of our results for the DT invariants of $(\P^1)^3$ to the case of what we call an {\em  $N_3$-oriented smooth proper toric threefold $X_\Sigma$}; see Definition~\ref{defn:N3Orient} for details. In \S\ref{sec:Construction}, we will give a criterion for $X_\Sigma$ to be $N_3$-oriented, entirely in terms of the fan $\Sigma$. 

Throughout this section, we let $k$ be a perfect field which is not of characteristic $2$. 

\subsection{Recollections on toric varieties}\label{subsec:ToricRecoll}
We give a brief overview on some basic facts about toric varieties, to set the stage and fix notation; we refer the reader to \cite{CoxTV} for details.\footnote{The treatment in \cite{CoxTV} is for the base-field $k=\C$, but we have checked that the proofs of the results we recall here extend easily to arbitrary $k$. In the interest of simplifying the notation for our particular setting, some of our notations and conventions differ from those used in {\it loc. cit.}}  We fix a base-field $k$ and let $\GpS/k$ denote the category  of
group schemes over $k$.

Fix an integer $n>0$. Let $\G_m=\Spec k[x,x^{-1}]$ with its usual structure of a group scheme over $k$. Let $\sN$ be the co-character lattice for $\G_m^n$, that is,  the group of homomorphisms $f:\G_m\to \G_m^n$, and let $\sM$ be the dual character lattice of homomorphisms $\chi:\G_m^n\to \G_m$. Identifying $\Hom_{\GpS/k}(\G_m, \G_m)$ with $\Z$ by $n\mapsto f_n$, $f_n(t)=t^n$, the duality of $\sN$ and $\sM$ is given by $\<\chi,f\>=\chi\circ f$.   Let $x_i:\G_m^n\to \G_m$ be the projection on the $i$th factor, which we can also view as a function on $\G_m^n$ via the open immersion $\G_m=\Spec k[x,x^{-1}]\subset \A^1_k=\Spec k[x]$. This gives us the isomorphism  $k[\sM]\cong \Gamma(\G_m^n, \sO_{\G_m^n})=k[x_1^{\pm1},\ldots, x_n^{\pm1}]$, and identifies $\sM$ with the subgroup of
$k[x_1^{\pm1},\ldots, x_n^{\pm1}]^\times$ generated by $x_1,\ldots, x_n$. This gives us an isomorphism $\sM\cong\Z^n$; the dual isomorphism $\sN\cong \Z^n$ gives $\sN$ the basis $i_1,\ldots, i_n$, with $i_j:\G_m\to \G_m^n$ the co-character $i_j$ with $x_i\circ i_j(t)=1$ for $i\neq j$, $x_j\circ i_j(t)=t$.

We will usually identify $\sN$ with $\Z^n$ by this basis and write the group law for $\sN$ additively; we continue to write the group law for $\sM$, as a group of characters, multiplicatively.

\begin{defn} A   {\em  cone in $\sN$} is a submonoid $\kappa$ of $\sN$ of the form
	\[
	\kappa=\sN\cap C_\kappa,
	\]
	where $C_\kappa\subset \sN_\R\cong \R^n$ is a convex polyhedral cone generated by finitely many elements $v_1,\ldots, v_r$ of $\sN$:
	\[
	C_\kappa=\{\sum_{i=1}^r\lambda_iv_i\mid \lambda_i\in \R, \lambda_i\ge0\}\subset \sN_\R. 
	\]
	
	Call a cone $\kappa=\sN\cap C_\kappa$ a {\em simplicial cone} if $C_\kappa$ is the convex cone generated by $v_1,\ldots, v_r\in \sN$, such that the $v_i$ are linearly independent over $\R$, and there is a non-zero $\R$-linear function $h:\sN_\R\to \R$ such that $h(v_i)>0$ for $i=1,\ldots, r$. Such a simplicial cone $\kappa$ is said to have dimension $r$. If $\kappa$ is a simplicial cone defined as above via  $\R$-linearly independent elements $v_1,\ldots, v_r$ of $\sN$, a {\em face} $\kappa'$ of  $\kappa$ is a simplicial subcone $\kappa'=C\cap \kappa$,  where $C\subset C_\kappa\subset \sN_\R$ is the convex cone generated by some subset of $\{v_1,\ldots, v_r\}$.
	
	For a simplicial cone $\kappa=\sN\cap C_\kappa$ with $C_\kappa$, generated by $v_1,\ldots, v_r\in \sN$, we may rescale each $v_i$ so that $v_i$ is a generator of the monoid $\R^+\cdot v_i\cap \sN$. We then say that $v_1,\ldots, v_r$ are {\em minimal generators} of $C_\kappa$. We say that $\kappa$ is {\em regular} if the set of minimal generators of $C_\kappa$ are part of a $\Z$-basis of $\sN$; in this case, $\kappa$ is generated as a monoid by the minimal generators $v_1,\ldots, v_r$ of $C_\kappa$.  
\end{defn}

Let $\Sigma$ be a simplicial fan in $\sN$, that is, $\Sigma$  is non-empty finite set of simplicial cones in $\sN$, satisfying
\begin{itemize}
	\item For $\kappa\in  \Sigma$ and $\kappa'\subset \kappa$ a face of $\kappa$, we have 
	$\kappa'\in \Sigma$.
	\item For $\kappa, \kappa'\in  \Sigma$, the intersection  $\kappa\cap \kappa'$ in $\sN$ is a common face of $\kappa$ and $\kappa'$.
\end{itemize}
We order $\Sigma$ by inclusion, making $\Sigma$ a partially ordered set, which we consider as a category with a unique map $\kappa'\to \kappa$ exactly when $\kappa'\subset \kappa$. We let $\Sigma(m)\subset \Sigma$ denote the set of dimension $m$ cones in $\Sigma$, and we will assume that $\Sigma(n)$ is the set of maximal elements in $\Sigma$.  If each $\kappa\in \Sigma$ is regular, we call $\Sigma$ a {\em regular} fan.

For each simplicial cone $\kappa$ in $\Sigma$,  we let $\kappa^\vee\subset \sM$ be the dual cone
\[
\kappa^\vee=\{\chi\in\sM\mid \<\chi,f\>\ge0\ \forall f\in \kappa\}.
\]
We let $R_\kappa\subset k[\sM]$ be the $k$-subalgebra generated by $\kappa^\vee$ and let $U_\kappa=\Spec R_\kappa$.  By \cite[Theorem 1.3.12]{CoxTV}, $U_\kappa$ is smooth over $k$ if and only if $\kappa$ is regular.

If $\kappa'\subset \kappa$ then there is a $c\in \kappa^\vee$ such that $(\kappa')^\vee=\kappa^\vee[c^{-1}]$ (see \cite[Proposition 1.3.16]{CoxTV}). Thus 
$R_{\kappa'}$ is the localization $R_{\kappa}[1/c]$ of $R_{\kappa}$ and the inclusion  $R_\kappa\subset R_{\kappa'}$ induces an open immersion $j_{\kappa'\subset \kappa}:U_{\kappa'}\hookrightarrow U_\kappa$. We thus have the functor
\[
U_{(-)}: \Sigma\to \Sm/k
\]
sending $\kappa$ to $U_\kappa$ and $\kappa'\subset\kappa$ to $j_{\kappa'\subset \kappa}$. 

\begin{defn} The toric variety over $k$ associated to a simplicial fan $\Sigma\subset \sN\cong \Z^n$  is the colimit
	\[
	X_\Sigma:=\colim_{\kappa\in \Sigma}U_\kappa.
	\]
\end{defn}
If we need to specify the base-field $k$, we write $X_\Sigma/k$, but usually the context will specify the base-field.

In fact, $X_\Sigma$ exists as a separated, normal finite type $k$-scheme and the canonical map $j_\kappa:U_\kappa\to X_\Sigma$ is an open immersion for each $\kappa\in  \Sigma$ (see \cite[Theorem 3.1.5 and proof]{CoxTV}). We consider $j_\kappa$ as the inclusion of $U_\kappa$ as an open subscheme of $X_\Sigma$ and often write $j_{\kappa'\subset \kappa}$ simply as an inclusion $U_{\kappa'}\subset U_{\kappa}$. Since each $\kappa\in  \Sigma$ contains $0$ as a face, each $U_{\kappa}$ contains $U_0=\G_m^n$ as a dense open subscheme; in particular, $X_\Sigma$ is integral, of pure dimension $n$ over $k$.  If $\Sigma$ is regular, then for each $\kappa\in \Sigma(d)$, $U_\kappa$ is isomorphic to $\A^d\times\G_m^{n-d}$. In addition,  $X_\Sigma$ is smooth over $k$ if and only if $\Sigma$ is regular \cite[Theorem 3.1.19]{CoxTV}. 

The product $\G_m^n\times \G_m^n\to \G_m^n$ defining $\G_m^n$ as a group scheme over $k$ is given by the co-product
\[
\nabla:k[\sM]\to k[\sM]\otimes_kk[\sM],\ \nabla(t)=t\otimes t \text{ for each }t\in \sM.
\]
For each $\kappa\in  \Sigma$, $\nabla$ restricts to a coproduct
\[
\nabla_\kappa:k[\kappa^\vee] \to k[\sM]\otimes_kk[\kappa^\vee] 
\]
defining a $\G_m^n$-action on $U_\kappa$, compatible with the restriction maps $j_{\kappa'\subset \kappa}$, hence extending to a $\G_m^n$-action on $X_\Sigma$:
\[
m_\Sigma:\G_m^n\times X_\Sigma\to X_\Sigma,
\]
with each $U_\kappa$ stable under this $\G_m^n$-action. The $\G_m^n$-action on $U_0=\G_m^n$ is just the multiplication on $\G_m^n$.

$X_\Sigma$ has the open cover $\{U_\kappa\mid \kappa\in \Sigma(n)\}$, so each $\G_m^n$-fixed point $x\in X_\Sigma^{\G_m^n}$ is a closed $\G_m^n$-orbit in some $U_\kappa$. In fact, for each $\kappa\in \Sigma$, $U_\kappa$ contains a unique closed orbit $0_\kappa$ (see \cite[Theorem 3.2.6]{CoxTV}), so we have
\[
X_\Sigma^{\G_m^n}=\{0_\kappa\mid \kappa\in\Sigma(n)\},
\]
and we have a bijection $\Sigma(n) \leftrightarrow X_\Sigma^{\G_m^n}$ associating $\kappa$ to $0_\kappa$. 

For each $1$-dimensional cone $\kappa^*\in \Sigma(1)$, $U_{\kappa^*}$ is isomorphic to $\A^1_k\times \G_m^{n-1}$. Indeed, $\kappa^*$ has a single generator $v_1^{\kappa^*}$ and, as  $v_1^{\kappa^*}$ is not divisible in $\sN$, $\sN$ contains a subgroup $\sN'\cong \Z^{n-1}$ with $\sN=\Z\cdot v_1^{\kappa^*}\oplus \sN'$.  Giving $\sN'$ a basis $v_2',\ldots, v_n'$, we have the dual basis $x_1^{\kappa^*}, x_2', \ldots, x_n'$ for $\sM$, and then $(\kappa^*)^\vee$ is the submonoid generated by  $x_1^{\kappa^*}, x_2^{\prime\pm1},\ldots, x_n^{\prime\pm1}$ and 
$U_{\kappa^*}=\Spec k[x_1^{\kappa^*}, x_2^{\prime\pm1},\ldots, x_n^{\prime\pm1}]\cong 
\A^1_k\times \G_m^{n-1}$. Thus $U_{\kappa^*}$ contains the unique closed orbit $0_{\kappa^*}=\{0\}\times \G_m^{n-1}$, a codimension one closed subscheme. 

The closure $D_{\kappa^*}=\overline{0_{\kappa^*}}\subset X_\Sigma$ is then a $\G_m^n$-stable codimension one integral closed subscheme of $X_\Sigma$, and each such is of the form  $D_{\kappa^*}$ for a unique $\kappa^*\in \Sigma(1)$. Moreover, for $\kappa\in \Sigma$ we have
\begin{equation}\label{eqn:ComplementDiv}
	X_\Sigma\setminus U_\kappa =\cup_{\substack{\kappa^*\in \Sigma(1)\\\kappa^*\not\subset \kappa}} D_{\kappa^*}.
\end{equation}
The assertions  in this paragraph all follow from the ``Orbit-Cone correspondence'' \cite[Theorem 3.2.6]{CoxTV}.

Suppose $\Sigma$ is regular. Then each $\kappa\in \Sigma(n)$ is the free abelian monoid on generators $v_1^\kappa,\ldots, v_n^\kappa$,  and dually,  $\kappa^\vee$ is the free abelian monoid on generators $x_1^\kappa,\ldots, x_n^\kappa$ that form a basis for $\sM$ dual to $v_1^\kappa,\ldots, v_n^\kappa$:
\[
\<x_i^\kappa, v_j^\kappa\>=\delta_{ij},
\]
where $\delta_{ij}$ is the Kronecker delta. 
Thus
\[
U_\kappa=\Spec k[x_1^\kappa,\ldots, x_n^\kappa]\cong  \A^n_k,
\]
with $0_\kappa$ the origin in $\A^n_k$. We call $x_1^\kappa,\ldots, x_n^\kappa$ {\em standard affine coordinates for $U_\kappa$}; note that $\{x_1^\kappa,\ldots, x_n^\kappa\}\subset \sM$ is uniquely determined by $\kappa$.  

\subsection{$N_3$-oriented toric threefolds}\label{subsec:N3OrientedThreefolds}

\begin{nota} We let $N_3\subset N^3\subset \SL_2^3$ be the subgroup scheme of $N^3$  generated by $T_1^3\subset N^3$ and $\sigma_\delta:=(\sigma, \sigma,\sigma)$. We write $T$ for $T_1^3\subset N_3$. 
	
	Identifying $T_1$ with $\G_m$ by sending $t\in \G_m$ to $\begin{pmatrix}t&0\\0&t^{-1}\end{pmatrix}\in T_1$ identifies $T$ with $\G_m^3$; via this identification we write $(t_1, t_2, t_3)\in T$ for $t_1, t_2, t_3\in \G_m$.
	
	From now on, when we say that $X$ is a smooth proper toric threefold over $k$, we will always mean that $X=X_\Sigma$ for $\Sigma$ a regular complete simplicial fan in $\sN\cong \Z^3$. 
\end{nota}

\begin{defn}\label{defn:N3Orient} Let $X=X_\Sigma$ be a smooth proper toric threefold  over $k$. Suppose  there is an action of $N_3$ on $X$, defined by a homomorphism $\rho:N_3\to \Aut_k(X)$.  We say that $X$ is {\em $N_3$-oriented} if the following conditions hold.
	\begin{enumerate}
		\item\label{defn:N3Orient:1} Let $\G_m^3\subset \Aut_k(X)$ be the inclusion given by the action of $\G_m^3$ on $X$ as a toric variety. Then $\rho$ restricts to a surjective homomorphism $\rho:T\to \G_m^3$. 
		\item \label{defn:N3Orient:1bis}  $X$ has a spin structure $\tau:K_X\xrightarrow{\sim} L^{\otimes 2}$, and the invertible sheaf  $L$ is given an $N_3$-linearization such that $\tau$ is $N_3$-equivariant with respect to the canonical $N_3$-linearization on $K_X$ induced by the action $\rho$.
		\item\label{defn:N3Orient:2} $X$ admits an $N_3$-linearized very ample invertible sheaf $\sO_X(1)$.
		\item\label{defn:N3Orient:3} $\rho(-1,-1,-1)=\id_X$.
		\item\label{defn:N3Orient:4} Let $\{0_\kappa\}_{\kappa\in \Sigma(3)}$ be the set of $\G_m^3$-fixed points on $X$, and let $U_\kappa\subset X$ be the corresponding $\G_m^3$-stable affine space  containing $0_\kappa$.  Let $x_1^\kappa,x_2^\kappa,x_3^\kappa$ be standard affine coordinates on $U_\kappa$, and write  
		\begin{equation}\label{eqn:CoordWeightMatrix}
			\rho(t_1, t_2, t_3)^*(x_j^\kappa)=t_1^{s_{1j}^\kappa}t_2^{s_{2j}^\kappa}t_3^{s_{3j}^\kappa}\cdot x_j^\kappa
		\end{equation}
		for $(t_1, t_2, t_3)\in T$, with the  $s_{ij}^\kappa$ integers. Then $\sum_is_{ij}$ is even for each $j=1,2,3$, $\sum_js_{ij}$ is even for each $i=1,2,3$,  and $\sum_{i,j}s_{ij}\equiv 2\mod 4$.
	\end{enumerate}
	An {\em $N_3$-orientation on $X$} is the data of an $N_3$-action on $X$ together with an $N_3$-equivariant spin structure as in \eqref{defn:N3Orient:1bis} and an $N_3$-linearized very ample invertible sheaf $\sO_X(1)$ such that the above conditions hold.
	
	We call the matrix of integers $(s_{ij}^\kappa)_{1\le i,j\le 3}$ satisfying \eqref{eqn:CoordWeightMatrix} a {\em coordinate weight matrix  for $T$ at $0_\kappa$}. 
\end{defn}
Note that two different matrices are both coordinate weights for $T$ at $0_\kappa$ if and only if they differ by a permutation of the columns.

\begin{lem}\label{lem:SigmaInvolution} Let $\Sigma$ be a regular complete simplicial fan in $\sN\cong \Z^3$,  and suppose the $k$-scheme $X:=X_\Sigma$ is given an $N_3$-orientation, with $N_3$-action given by $\rho:N_3\to \Aut_k(X)$.  Then the fan $\Sigma$ is stable under the automorphism $-\id_\sN:\sN\to \sN$, and for each  $\kappa\in\Sigma(3)$, we have $
	\rho(\sigma_\delta)(U_\kappa)=U_{-\id_{\sN}(\kappa)}$.
\end{lem}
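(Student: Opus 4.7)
The plan is to exploit the involution $\rho(\sigma_\delta) \in \Aut_k(X)$ together with the way it conjugates the $T$-action. First, a direct computation in $N_3$ gives $\sigma_\delta^2 = (-1,-1,-1) \in T$, so by \eqref{defn:N3Orient:3} the map $\rho(\sigma_\delta)$ is an involution. Since $\sigma t \sigma^{-1} = t^{-1}$ in $N$ for $t \in T_1$, one also has $\sigma_\delta (t_1,t_2,t_3) \sigma_\delta^{-1} = (t_1^{-1}, t_2^{-1}, t_3^{-1})$, so $\rho(\sigma_\delta)$ intertwines the $T$-action with the inversion automorphism of $T$. By \eqref{defn:N3Orient:1}, the $T$-action on $X$ factors through a surjection $\phi : T \surj \G_m^3$, so $\rho(\sigma_\delta)$ is $\G_m^3$-equivariant up to the twist $\lambda \mapsto \lambda^{-1}$; in particular it permutes $\G_m^3$-fixed points and carries $\G_m^3$-stable opens to $\G_m^3$-stable opens.

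Next, I would fix $\kappa \in \Sigma(3)$. Then $\rho(\sigma_\delta)(0_\kappa)$ is a $\G_m^3$-fixed point, hence equals $0_{\kappa'}$ for a unique $\kappa' \in \Sigma(3)$. To identify $\kappa'$, I would compare $T$-weights on the tangent spaces $T_{0_\kappa}X$ and $T_{0_{\kappa'}}X$. The $\G_m^3$-weights on $T_{0_\kappa}X$ are the characters $x_1^\kappa, x_2^\kappa, x_3^\kappa \in \sM$ (the basis dual to $v_1^\kappa, v_2^\kappa, v_3^\kappa$), so the $T$-weights are their pullbacks $\phi^*(x_j^\kappa)$, and similarly at $0_{\kappa'}$. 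The inversion-intertwining forces the $T$-weights at $0_{\kappa'}$ to be the negatives of those at $0_\kappa$ up to a permutation $\pi$ of $\{1,2,3\}$, giving $\phi^*(x_j^{\kappa'}) = -\phi^*(x_{\pi(j)}^\kappa)$. Since $\phi$ is surjective, $\phi^*$ is injective, so $x_j^{\kappa'} = -x_{\pi(j)}^\kappa$ in $\sM$; dualizing gives $v_j^{\kappa'} = -v_{\pi(j)}^\kappa$, hence $\kappa' = -\kappa$. In particular $-\kappa \in \Sigma(3)$, and since $\Sigma$ is complete every cone of $\Sigma$ is a face of some cone in $\Sigma(3)$, so $\Sigma$ is stable under $-\id_\sN$.

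For the second assertion, I would note that $\rho(\sigma_\delta)(U_\kappa)$ is a $\G_m^3$-stable affine open subscheme of $X$ (affine because $U_\kappa$ is and $\rho(\sigma_\delta)$ is an isomorphism; $\G_m^3$-stable by the twisted equivariance from step one applied to the $\G_m^3$-stable $U_\kappa$) containing the fixed point $\rho(\sigma_\delta)(0_\kappa) = 0_{-\kappa}$. By the standard classification of $\G_m^3$-stable affine opens of a toric variety (\cite[Proposition 3.1.3]{CoxTV}), any such open equals $U_\mu$ for a unique $\mu \in \Sigma$, so $\rho(\sigma_\delta)(U_\kappa) = U_\mu$. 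Using \eqref{eqn:ComplementDiv} together with the fact that $0_{-\kappa} \in D_{\kappa^*}$ iff $\kappa^* \subset -\kappa$, the condition $0_{-\kappa} \in U_\mu$ forces every $1$-face of $-\kappa$ to be a face of $\mu$; since $-\kappa$ is already maximal, this gives $\mu = -\kappa$.

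The main obstacle is the weight-comparison in the second paragraph: one must carefully track the sign conventions relating cotangent weights, tangent weights, and the $\sM$--$\sN$ duality, and use injectivity of $\phi^*$ at exactly the right moment to promote an equality of pulled-back $T$-characters to an equality of $\G_m^3$-characters. Everything else is a routine application of the orbit-cone correspondence for toric varieties.
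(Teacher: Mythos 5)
Your proof is correct and takes essentially the same route as the paper's, with a minor reordering: you identify $\kappa' = -\kappa$ first, by comparing $T$-weights on tangent spaces at $0_\kappa$ and $0_{\kappa'}$ using the inversion-intertwining, whereas the paper first establishes $\rho(\sigma_\delta)(U_\kappa) = U_{\kappa'}$ via a complement-of-divisors computation using \eqref{eqn:ComplementDiv}, and only afterward identifies $\kappa' = -\kappa$ by decomposing the full coordinate ring $k[U_{\kappa'}]$ into $\G_m^3$-weight spaces and tracking where $\rho(\sigma_\delta)^*(x_j^\kappa)$ lands. Both arguments turn on the same three ingredients — the involution property from $\rho(-1,-1,-1)=\id_X$, the relation $\sigma_\delta t \sigma_\delta^{-1} = t^{-1}$, and surjectivity of $\rho:T\to\G_m^3$ to descend from $T$-weights to $\G_m^3$-characters — so the content is identical. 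Your tangent-space version is slightly leaner than the coordinate-ring argument, but the paper's route makes the claim $\rho(\sigma_\delta)(U_\kappa)=U_{\kappa'}$ self-contained via divisors rather than invoking the classification of $T$-stable affine opens as a black box. Two small notational points: you write $\sM$ additively ($x_j^{\kappa'} = -x_{\pi(j)}^\kappa$) where the paper uses multiplicative notation ($x_j^{\kappa'} = (x_{\pi(j)}^\kappa)^{-1}$), and you label the weights $x_1^\kappa,x_2^\kappa,x_3^\kappa$ as weights on the tangent space rather than the cotangent space — the sign error is harmless here since negation is an involution, but it is worth being precise.
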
 

\begin{proof}  Since the map $\rho:T\to \G_m^3$ is surjective, the inclusion
	\[
	X^T\subset X^{\G_m^3}=\{0_\kappa\}_{\kappa\in \Sigma(3)}
	\]
	is a bijection. For $t\in T$, we have $t\cdot \sigma_\delta=
	\sigma_\delta\cdot t^{-1}$ so
	\[
	\rho(t)(\rho(\sigma_\delta)(0_\kappa))=
	\rho(\sigma_\delta)(\rho(t^{-1})(0_\kappa))=
	\rho(\sigma_\delta)(0_\kappa),
	\]
	hence $\rho(\sigma_\delta)(0_\kappa)$ is in $X^T=X^{\G_m^3}$. Thus, there is a unique $\kappa'\in \Sigma(3)$ with $\rho(\sigma_\delta)(0_\kappa)=0_{\kappa'}$. Since $\rho(\sigma_\delta^2)=\rho(-1,-1,-1)=\id_X$, we have $\rho(\sigma_\delta)^2=\id_X$, and thus 
	$\rho(\sigma_\delta)(0_{\kappa'})=0_\kappa$.  Similarly, letting  $U=\rho(\sigma_\delta)(U_\kappa)$, we have $0_{\kappa'}\in U$, and  $\rho(\sigma_\delta)(U)=U_\kappa$. 
	
	For every affine $\G_m^3$-stable open subscheme $V$ of $X$, $X\setminus V$ is a union of integral codimension one $\G_m^3$-stable closed subschemes of $X$. Also, the codimension one $\G_m^3$-stable integral closed subschemes of $U_\kappa$ are exactly the three coordinate hyperplanes $x_j^\kappa=0$, where $x_1^\kappa, x_2^\kappa, x_3^\kappa$ are standard affine coodinates on $U_\kappa$. In particular, for $\kappa^*\in \Sigma(1)$,  $D_{\kappa^*}\cap U_\kappa\neq\0$ if and only if  $0_\kappa\in D_{\kappa^*}$, which is equivalent to $\kappa^*\subset \kappa$. By \eqref{eqn:ComplementDiv}, we thus have
	\[
	U_\kappa=X\setminus\cup_{\substack{\kappa^*\in  \Sigma(1)\\0_\kappa\not\in D_{\kappa^*}}} D_{\kappa^*}.
	\]
	Similarly
	\[
	U_{\kappa'}=X\setminus\cup_{\substack{\kappa^*\in \Sigma(1)\\0_{\kappa'}\not\in  D_{\kappa^*}}} D_{\kappa^*}.
	\]
	
	Since  $\rho(t)\circ \rho(\sigma_\delta)= \rho(\sigma_\delta)\circ \rho(t)^{-1}$ for all $t\in T$, and since $\rho:T\to \G_m^3$ is surjective, we see that $U$ is a $\G_m^3$-stable, affine open subscheme of $X$. Similarly $\rho(\sigma_\delta)(D_{\kappa^*})$ is a $\G_m^3$-stable integral codimension one closed subscheme of $X$, so $\rho(\sigma_\delta)(D_{\kappa^*})=D_{\kappa^{**}}$ for some $\kappa^{**}\in \Sigma(1)$, $\rho(\sigma_\delta)(D_{\kappa^{**}})=D_{\kappa^*}$, and  $0_\kappa\not\in D_{\kappa^*}$ if and only if  $\rho(\sigma_\delta)(0_\kappa)\not\in D_{\kappa^{**}}$ . Since $0_{\kappa'}=\rho(\sigma_\delta)(0_\kappa)$, we have 
	\begin{align*}
		U&=\rho(\sigma_\delta)(U_\kappa)\\
		&=X\setminus \cup_{\substack{\kappa^*\in \Sigma(1)\\ 0_{\kappa}\not\in D_{\kappa^*}}}\rho(\sigma_\delta)(D_{\kappa^*})\\
		&=X\setminus \cup_{\substack{\kappa^{**}\in \Sigma(1)\\ 0_{\kappa'}\not\in D_{\kappa^{**}}}}D_{\kappa^*}\\
		&=U_{\kappa'}.
	\end{align*}
	
	It remains to show that $\kappa'=-\id_\sN(\kappa)$.  For this, consider some $\lambda\in \Sigma(3)$. If we   decompose $k[U_{\lambda}]$ as the direct sum of weight-spaces for the action of $\G_m^3$, we have 
	\[
	k[U_{\lambda}]=\oplus_{\chi\in \lambda^\vee} k\cdot x_\chi,
	\]
	with $x_\chi(t\cdot u)= \chi(t)\cdot x_\chi(u)$ for each $u\in U_{\lambda}$, $t\in \G_m^3$. Thus, if $k[U_{\lambda}]=k[y_1,y_2, y_3]$, with  $y_j\in  k\cdot \chi_j$ for some $\chi_j\in \sM$, $i=1,2,3$, then after reordering, we have $\chi_j=x^\lambda_j$ and $y_j=a_j\cdot x^\lambda_j$ for some $a_j\in k^\times$.   
	
	Since $\sigma_\delta\cdot t=t^{-1}\cdot\sigma_\delta$ for $t\in T$, we have
	\begin{align*}
		\rho(t)^*(\rho(\sigma_\delta)^*(x_j^\kappa))&=
		\rho(\sigma_\delta)^*(\rho(t^{-1})^*(x_j^\kappa))\\
		&=\rho(\sigma_\delta)^*((x_j^\kappa)(\rho(t)^{-1})\cdot x_j^\kappa)\\
		&=(x_j^\kappa)^{-1}(\rho(t))\cdot\rho(\sigma_\delta)^*(x_j^\kappa)
	\end{align*}
	for all $t\in T$. 
	Since  $\rho:T\to \G_m^3$ is surjective, this implies
	\[
	(\rho(\sigma_\delta)^*(x_j^\kappa))(t\cdot u)=(x_j^\kappa)^{-1}(t)\cdot(\rho(\sigma_\delta)^*(x_j^\kappa))(u)
	\]
	for $u\in k[U_{\kappa'}]$, $t\in \G_m^3$, in other words, $\rho(\sigma_\delta)^*(x_j^\kappa)$ is in the weight-space $k\cdot (x_j^\kappa)^{-1}$.   Since $k[U_{\kappa'}]=k[U]=k[\rho(\sigma_\delta)^*(x_1^\kappa),\rho(\sigma_\delta)^*(x_2^\kappa),\rho(\sigma_\delta)^*(x_3^\kappa)]$, it follows that, after reordering, we have
	\begin{equation}\label{eqn:Inverse}
		x_j^{\kappa'}=(x_j^\kappa)^{-1},\ j=1,2,3.
	\end{equation}
	
	Let $v_1^\kappa, v_2^\kappa, v_3^\kappa$ be the generators of $\kappa$ dual to 
	$x_1^\kappa,x_2^\kappa,x_3^\kappa$, and  $v_1^{\kappa'}, v_2^{\kappa'}, v_3^{\kappa'}$ be the generators of $\kappa'$ dual to 
	$x_1^{\kappa'},x_2^{\kappa'},x_3^{\kappa'}$.  By \eqref{eqn:Inverse}, we have $v_j^{\kappa'}=-v_j^\kappa$, $j=1,2,3$, hence  $\kappa'=-\id_{\sN}(\kappa)$.
\end{proof}

In summary, writing $-\kappa$ for $-\id_\sN(\kappa)$, we have shown the following.
\begin{prop} Take $\kappa\in \Sigma(3)$\\[5pt]
	1. The action of the involution $\rho(\sigma_\delta)$ on $X$  satisfies $\rho(\sigma_\delta)(U_\kappa) = U_{-\kappa}$ and 
	$\rho(\sigma_\delta)(0_\kappa) = 0_{-\kappa}$. \\[2pt]
	2.  If $x_1^\kappa,x_2^\kappa, x_3^\kappa$ are standard coordinates for $U_\kappa$, then
	$(x_1^\kappa)^{-1},(x_2^\kappa)^{-1}, (x_3^\kappa)^{-1}$ are standard coordinates for $U_{-\kappa}$.
\end{prop}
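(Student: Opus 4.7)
The plan is to observe that both parts of this proposition are essentially consequences of Lemma~\ref{lem:SigmaInvolution} and its proof, so my ``proof'' will largely consist of extracting the relevant statements and pointing out the one small additional argument needed.

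First I would note that Lemma~\ref{lem:SigmaInvolution} already gives $\rho(\sigma_\delta)(U_\kappa)=U_{-\id_\sN(\kappa)}=U_{-\kappa}$, which is half of part~(1). For the origin statement $\rho(\sigma_\delta)(0_\kappa)=0_{-\kappa}$, I would argue as follows: the closed $\G_m^3$-orbit inside $U_{-\kappa}$ is the single point $0_{-\kappa}$ (by the orbit-cone correspondence, $U_{-\kappa}\cong \A^3$ has $0_{-\kappa}$ as its unique $\G_m^3$-fixed point). Since $\rho(\sigma_\delta)\circ\rho(t)=\rho(t^{-1})\circ\rho(\sigma_\delta)$ for all $t\in T$, and since $\rho:T\to\G_m^3$ is surjective, the image $\rho(\sigma_\delta)(0_\kappa)$ is stable under $T$-action, hence under $\G_m^3$-action, hence is a $\G_m^3$-fixed point of $U_{-\kappa}$. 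The only such point is $0_{-\kappa}$. This completes part~(1). (Indeed this fact was already derived at the very start of the proof of Lemma~\ref{lem:SigmaInvolution}.)

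For part~(2), I would simply invoke equation \eqref{eqn:Inverse} from the proof of Lemma~\ref{lem:SigmaInvolution}, which shows (after reordering) that $\rho(\sigma_\delta)^*(x_j^\kappa)$ lies in the weight space $k\cdot(x_j^\kappa)^{-1}$ of $k[U_{-\kappa}]$, and that these three functions generate $k[U_{-\kappa}]$ as a $k$-algebra. Thus, up to scalar, the functions $(x_j^\kappa)^{-1}$, $j=1,2,3$, form a free set of generators of $k[U_{-\kappa}]\cong k[(x_1^{-\kappa})^{\pm1},(x_2^{-\kappa})^{\pm1},(x_3^{-\kappa})^{\pm1}]^+$ lying in distinct one-dimensional character spaces $k\cdot\chi_j$ of $\G_m^3$. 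Comparing with the fact that $\{x_1^{-\kappa},x_2^{-\kappa},x_3^{-\kappa}\}\subset\sM$ is the unique set of $\G_m^3$-weights associated to standard coordinates of $U_{-\kappa}$, this forces $\chi_j=x_j^{-\kappa}$ after reordering, i.e. $(x_j^\kappa)^{-1}=x_j^{-\kappa}$ as elements of $\sM$, which is exactly the statement of part~(2).

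There is no real obstacle here: both claims are contained in (or trivially deducible from) Lemma~\ref{lem:SigmaInvolution}. The only minor point requiring attention is the uniqueness of standard affine coordinates up to permutation---which is why the equation $(x_j^\kappa)^{-1}=x_j^{-\kappa}$ needs to be read ``up to reordering the $x_j^{-\kappa}$.'' Since the proposition itself allows this reordering by presenting the claim in the form ``if $x_j^\kappa$ are standard coordinates for $U_\kappa$, then $(x_j^\kappa)^{-1}$ are standard coordinates for $U_{-\kappa}$'' without specifying an ordering, this causes no difficulty.
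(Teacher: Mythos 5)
Your proposal is correct and matches the paper's treatment: in the paper, this proposition is presented explicitly as a summary of what was established in the proof of Lemma~\ref{lem:SigmaInvolution} (the paper writes ``In summary, writing $-\kappa$ for $-\id_\sN(\kappa)$, we have shown the following''), and your extraction of the relevant facts — $\rho(\sigma_\delta)(U_\kappa)=U_{-\kappa}$ from the lemma statement, $\rho(\sigma_\delta)(0_\kappa)=0_{-\kappa}$ from the opening of its proof, and equation \eqref{eqn:Inverse} for part (2) — is exactly what the authors had in mind.
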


\begin{exa} Supppose we have a  toric threefold $X=X_\Sigma$ over $k$ for $\Sigma$ a complete regular simplicial fan. Suppose further that $X$ is given an $N_3$-orientation, where the $N_3$ action is given by $\rho_X:N_3\to \Aut_kX$. For $\kappa\in \Sigma(3)$, let $\pi: X^{(\kappa)}\to X$ be the blow-up $\Bl_{\alpha_\kappa}X$, with exceptional divisor $E_\kappa=E_{0_\kappa}\amalg E_{0_{-\kappa}}$.  Then $X^{(\kappa)}$ has a canonical structure of a smooth proper toric threefold with fan $\Sigma^{(\kappa)}$ the usual subdivision of $\Sigma$, and the $N_3$-action lifts canonically to an $N_3$-action $\rho_{X^{(\kappa)}}:N_3\to \Aut_kX^{(\kappa)}$  on $X^{(\kappa)}$.  Mapping $\Pic(X)$ to  $\Pic(X^{(\kappa)})$ by $\pi^*$ gives the isomorphism
	\[
	\Pic(X^{(\kappa)})\cong \Pic(X)\oplus \Z\cdot [\sO_{X^{(\kappa)}}(E_{0_\kappa})]
	\oplus \Z\cdot [\sO_{X^{(\kappa)}}(E_{0_{-\kappa}})], 
	\]
	and we have a canonical natural isomorphism 
	\[
	K_{X^{(\kappa)}}\cong \pi^*K_X\otimes \sO_{X^{(\kappa)}}(2E_\kappa).
	\]
	As the invertible sheaf $\sO_{X^{(\kappa)}}(E_\kappa)$ arises from the graded $\Sym^*\sI_{\alpha_\kappa}$-module  $\Sym^*\sI_{\alpha_\kappa}[-1]$, 
	$\sO_{X^{(\kappa)}}(E_\kappa)$  has a canonical $N_3$-linearization induced from the $N_3$-action on $X$. 
	Thus, the given $N_3$-equivariant spin structure $\tau:K_X\xrightarrow{\sim}L^{\otimes 2}$, gives  $X^{(\kappa)}$   the induced $N_3$-equivariant spin structure $\tau^{(\kappa)}:K_{X^{(\kappa)}}\xrightarrow{\sim}(\pi^*L\otimes\sO_{X^{(\kappa)}}(E_\kappa))^{\otimes 2}$.  
	
	Similarly, if $\sO_X(1)$ is an $N_3$-linearlized very ample invertible sheaf on $X$, then there is a $d_0>0$ such that for all $d\ge d_0$, the sheaf $\pi^*\sO_X(d)\otimes\sO_{X^{(\kappa)}}(-E_\kappa)$ is very ample on $X^{(\kappa)}$.
	
	Lying over $0_\kappa$, one has three $\G_m^3$-fixed points $0_{\kappa,i}$, $i=1,2,3$, in $X^{(\kappa)}$, with standard coordinates $x_1^\kappa, x_2^\kappa/x_1^\kappa, x_3^\kappa/x_1^\kappa$ for $i=1$, $x_1^\kappa/x_2^\kappa, x_2^\kappa, x_3^\kappa/x_2^\kappa$ for $i=2$ and $x_1^\kappa/x_3^\kappa, x_2^\kappa/x_3^\kappa, x_3^\kappa$ for $i=3$. One can then easily check that the orientation conditions of Definition~\ref{defn:N3Orient}\eqref{defn:N3Orient:4} pass from $X$ to $X^{(\kappa)}$. Finally, since $\rho_X(-1,-1,-1)=\id_X$, we have $\rho_{X^{(\kappa)}}(-1,-1,-1)=\id_{X^{(\kappa)}}$.
	
	In short, an $N_3$-orientation on $X$ gives us an $N_3$-orientation on $X^{(\kappa)}$, canonically defined once we choose a $d\ge d_0$ as above. One can repeat this construction to $X^{(\kappa)}$, giving us the notion of an {\em $N_3$-oriented iterated blow-up} of an $N_3$-oriented smooth proper toric threefold $X$:
	\[
	Y=Y_r\to Y_{r-1}\to\ldots\to Y_1\to Y_0=X
	\]
	with an $N_3$-orientation on $Y$ induced from that on $X$.
\end{exa}

\begin{exa}\label{exa:P13Blowup} Give $(\P^1)^3$ its usual structure of a toric variety over $k$, using the fan $\Sigma\subset \Z^3$ given by the eight coordinate octants. We identify $\Sigma(3)$ with $(\Z/2)^3$ by letting $(a_1, a_2, a_3)$ correspond to the octant $\{(n_1, n_2, n_3)\in \Z^3\mid (-1)^{a_j}n_j\ge0\}$. The $\SL_2^3$-action on $(\P^1)^3$ described in \S\ref{subsec:NActionP13} restricts to $N_3\subset \SL_2^3$ to give an $N_3$-action on $(\P^1)^3$, with $N_3$-equivariant orientation $\tau:K_{(\P^1)^3}\to \sO_{(\P^1)^3}(-1,-1,-1)^{\otimes 2}$ and $N_3$-linearized very ample invertible sheaf $\sO_{(\P^1)^3}(1,1,1)$. 
	
	At each $0_{klm}\in (\P^1)^3$, the coordinate weight matrix $(s_{ij}^{klm})_{1\le i,j\le 3}$ for the $T$-action has $s_{ij}^{klm}=\pm 2$ for each $i,j$; the conditions of Definition~\ref{defn:N3Orient} are then easy to check. Thus,  we have an $N_3$-orientation for 
	$(\P^1)^3$. This gives us an infinite set of examples of $N_3$-oriented smooth proper toric threefolds by taking the family of all $N_3$-oriented iterated blow-ups of $(\P^1)^3$.
\end{exa}

\subsection{Induced $N$-actions}\label{subsec:NAction}
Let $X=X_\Sigma$ be an $N_3$-oriented smooth proper toric threefold over $k$, with set $\Sigma(3)$ of maximal simplices, $N_3$-equivariant spin structure $\tau:K_X\xrightarrow{\sim}L^{\otimes 2}$ and very ample $N_3$-linearized invertible sheaf $\sO_X(1)$.

Given a triple of odd integers $a,b,c$, the embedding $\iota_{a,b,c}:N\to \SL_2^3$ factors through $N_3$, so we can restrict the above data  to $N$, giving $X$ an $N$-action, an $N$-equivariant spin structure and an $N$-linearized very ample invertible sheaf. Since $\iota_{a,b,c}(-1)=(-1,-1,-1)$, the action factors through $N/\<-1\>$ and the image $\bar\sigma\in 
N/\<-1\>$ acts via the action of $\sigma_\delta$. 

\begin{lem}\label{lem:GenericWeights} Given an $N_3$-oriented smooth proper toric variety $X$ over $k$,  there is a non-zero polynomial $f\in\Z[x_1, x_2, x_3]$ such that for all odd positive integers $a,b,c$ with $f(a,b,c)\neq0$, and giving $X$ the $T_1$-action induced from the $N_3$-action via $\iota_{a,b,c}$,  we have
	\begin{enumerate}
		\item  The coordinate weights $(s_1^\kappa, s_2^\kappa, s_3^\kappa)$ for $T_1$ at $0_\kappa$ are all non-zero, for each $\kappa\in \Sigma(3)$.
		\item  The inclusion $X^T\subset X^{T_1}$ is a bijection.
\end{enumerate}\end{lem}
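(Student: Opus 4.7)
The plan is to reduce both conditions to the non-vanishing of a single polynomial $f$ evaluated at $(a,b,c)$. Fix $\kappa\in\Sigma(3)$ and a coordinate weight matrix $(s_{ij}^\kappa)$ at $0_\kappa$. Since $\iota_{a,b,c}$ sends $t\in T_1$ to $(t^a,t^b,t^c)\in T$, formula \eqref{eqn:CoordWeightMatrix} tells us that $T_1$ acts on the standard coordinate $x_j^\kappa$ via the character
\[
L_j^\kappa(a,b,c)\;:=\;a\, s_{1j}^\kappa + b\, s_{2j}^\kappa + c\, s_{3j}^\kappa.
\]
My first step is to check that each $L_j^\kappa$ is a non-zero linear form in $(a,b,c)$, equivalently that $(s_{1j}^\kappa, s_{2j}^\kappa, s_{3j}^\kappa)\neq 0$. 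This is the only step with real content, and it is where the orientation data enters: by Definition~\ref{defn:N3Orient}(1), $\rho\colon T\to\G_m^3$ is surjective, and $x_j^\kappa$ is a non-trivial character of $\G_m^3$ (it belongs to the $\Z$-basis of $\sM$ dual to the minimal generators of $\kappa$), so its pullback along $\rho$ is a non-trivial character of $T$.

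Having established this, I would set
\[
f(x_1,x_2,x_3)\;:=\;\prod_{\kappa\in\Sigma(3)}\prod_{j=1}^{3}\left(x_1\, s_{1j}^\kappa + x_2\, s_{2j}^\kappa + x_3\, s_{3j}^\kappa\right)\;\in\;\Z[x_1,x_2,x_3].
\]
Since $\Sigma$ is finite, this is a finite product of non-zero linear forms, hence $f$ is a non-zero polynomial. For any $(a,b,c)$ with $f(a,b,c)\neq 0$, every $L_j^\kappa(a,b,c)$ is non-zero, which immediately gives (1).

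For (2), the inclusion $X^T\subset X^{T_1}$ is automatic since $\iota_{a,b,c}(T_1)\subset T$. Conversely, any $T_1$-fixed point of $X$ lies in some $U_\kappa$ with $\kappa\in\Sigma(3)$, since these cover $X$. On $U_\kappa\cong\A^3_k$ the $T_1$-action is diagonal in standard coordinates with weights $L_1^\kappa(a,b,c), L_2^\kappa(a,b,c), L_3^\kappa(a,b,c)$; if $f(a,b,c)\neq 0$ then all three weights are non-zero, so the origin $0_\kappa$ is the unique $T_1$-fixed point in $U_\kappa$. Hence $X^{T_1}=\{0_\kappa\}_{\kappa\in\Sigma(3)}=X^T$, proving (2). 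The main obstacle is concentrated in the non-vanishing check above; the rest is bookkeeping using the open cover and the fact that a diagonal torus action on affine space with non-zero weights has only the origin as a fixed point.
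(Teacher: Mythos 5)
Your proof is correct and takes essentially the same route as the paper: express the $T_1$-weights at $0_\kappa$ as the linear forms $(a,b,c)\cdot(s_{ij}^\kappa)$, argue that each form is a non-zero polynomial, and take $f$ to be their product over all $\kappa$ and $j$. The only cosmetic difference is that the paper justifies non-vanishing of the forms by noting the matrices $(s_{ij}^\kappa)$ are invertible over $\Q$, while you argue directly that the pullback of the non-trivial character $x_j^\kappa$ along the surjection $\rho\colon T\to\G_m^3$ is non-trivial; these are the same observation.
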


\begin{proof}  Since $X^T=\{0_\kappa\mid\kappa\in \Sigma(3)\}$, (2) follows from (1).
	
	For each $\kappa\in \Sigma(3)$, $U_\kappa$ is $T$-stable, hence is also  $T_1$-stable for each choice of  $a,b,c$. Suppose $T$ has coordinate weight matrix $(s_{ij}^\kappa)$ at $0_\kappa\in U_\kappa$, with respect to a choice of standard coordinates $x_1^\kappa, x_2^\kappa,x_3^\kappa$. Then, with respect to the $T_1$-action on $U_\kappa$ given by $\iota_{a,b,c}$, $T_1$ has coordinate weights
	$(s_1^\kappa, s_2^\kappa,s_3^\kappa)=(a,b,c)\cdot (s_{ij}^\kappa)$.
	
	Since the integer-valued matrices $(s_{ij}^\kappa)$ are all invertible in $M_{2\times 3}(\Q)$, the conditions $s_j^\kappa\neq0$ for all $\kappa\in \Sigma(3)$ and all $i=1,2,3$ for given $(a,b,c)\in \Z^3$, can be expressed as the simultaneous non-vanishing of the non-zero linear polynomials $L_{\kappa, j}(x_1, x_2, x_3)$ defined by
	\[
	(L_{\kappa, 1}(x_1, x_2, x_3),L_{\kappa, 2}(x_1, x_2, x_3),L_{\kappa, 3}(x_1, x_2, x_3)):=(x_1, x_2, x_3)\cdot (s_{ij}^\kappa). 
	\]
	We can thus take $f=\prod_{\kappa\in \Sigma(3)}\prod_{j=1}^3L_{\kappa, j}$.
\end{proof}

\begin{rem} By Lemma~\ref{lem:Nonzero}, there are infinitely many odd positive $a,b,c$  which satisfy Lemma~\ref{lem:GenericWeights}(1,2).   From now on, we will always assume that $a,b,c$ are so chosen.  
\end{rem}

\subsection{Independence} Let $X$ be an $N_3$-oriented smooth proper toric threefold over $k$, as in \S\ref{subsec:N3OrientedThreefolds}. Having chosen odd positive integers $a,b,c$, the embedding $\iota_{a,b,c}:N\to N_3\subset \SL_2^3$ gives us an $N$-action on $\Hilb^n(X)$. We want to study the resulting $N$-equivariant DT invariants $\tilde{I}_n^{N,a,b,c}(X)\in H^0(BN,\sW)$. However, since the $N$-action does not extend to an action of $\SL_2^3$ on $X$, we cannot conclude as we did for $(\P^1)^3$ that the $\tilde{I}_n^N(X)$ are independent of the choice of $a,b,c$ (see Remark~\ref{rem:Extension}). As we needed to change our choice of $a,b,c$ throughout the course of the proof of Theorem~\ref{theorem: main theorem}, we need some control on the dependence of the $\tilde{I}_n^{N,a,b,c}(X)$ on the choice of $a,b,c$. For this, we will use the fact that $\iota_{a,b,c}$ factors through $N_3$, so we have $\tilde{I}_n^{N,a,b,c}(X)=\iota_{a,b,c}^*\tilde{I}_n^{N_3}(X)$. We therefore need to understand the dependence of the pullback map $\iota_{a,b,c}^*:H^0(BN_3, \sW)_\Q\to
H^0(BN, \sW)_\Q$ on the choice of $a,b,c$, which is the object of this  section (see 
Proposition~\ref{prop:RealRealization} below for a precise statement).

We assume our base-field $k$ is a subfield of $\R$, and we give $BN(\R)$ and $BN_3(\R)$ the topology induced from the classical topology on $\R$.  We work over our base-field $k$, so all group-schemes will be group-schemes over $k$, for instance, we write $\GL_n$ for $\GL_n/k$. 

As preparation, recall that for a subgroup scheme $G$ of $\GL_n$, we construct the Ind-scheme $BG$ as 
\[
BG:=\colim_m G\backslash U_{n,m}
\]
where $U_{n,m}$ is the open subscheme of the affine space $M_{n\times m}\cong \A^{nm}_k$ of $n$ by $m$ matrices given by the matrices of  rank $n$, and $G$ acts (freely) on $U_{n,m}$ by restriction of the usual left action of $\GL_n/k$ on $M_{n\times m}$. We let $W_{n,m}\subset  M_{n\times m}$ be the closed complement $M_{n\times m}\setminus U_{n,m}$ and note that $W_{n,m}$ has codimension $m-n+1$.\footnote{The dimension of $n$ by $m$ matrices of rank $n-1$ is counted by adding the dimension of $n-1$ independent rows of size $m$ to the ways of writing the remaining row as a linear combination of the others $=(n-1)m+(n-1)=nm-(m-n+1)$. To conclude, note that the $n$ by $m$ matrices of rank $n-1$ is  dense in matrices of rank $<n$.} If we need to indicate the base-field $k$ explicitly, we write this as $BG/k$.

We recall that $\colim_m U_{n,m}$ is $\A^1$-contractible (see \cite[Proposition 4.2.3]{MorelVoevodskyAHTS}). We briefly recall the argument that shows that, given two closed immersions of group-schemes $i_j:G\hookrightarrow \GL_{n_j}/k$, $j=1,2$. we have a canonical $\A^1$-equivalence of $\colim_m G\backslash U_{n_1,m}$ with $\colim_m G\backslash U_{n_2,m}$, where $G$ acts on $U_{n_j,m}$ via $i_j$. For this, consider the action of $G$ on $U_{n_1,m}\times U_{n_2,m}$ via the diagonal embedding $(i_1, i_2):G\to \GL_{n_1}\times \GL_{n_2}$, giving projections
\[
\pi_{j,m}:G\backslash U_{n_1,m}\times U_{n_2,m}\to G\backslash U_{n_j,m}
\]
$j=1,2$. Since the action of $G$ on $U_{n_1,m}$ is free (for $m\ge n_1$), we have the free action of $G$ on $U_{n_1,m}\times M_{n_2\times m}$, making 
\[
G\backslash U_{n_1,m}\times M_{n_2\times m}\xrightarrow{\bar{\pi}_{1,m}} G\backslash U_{n_1, m}
\]
a vector bundle over $G\backslash U_{n_1, m}$, in particular, an $\A^1$-weak equivalence. Similarly,
\[
G\backslash M_{n_1\times m}\times U_{n_2, m}\xrightarrow{\bar{\pi}_{2,m}} G\backslash U_{n_2, m}
\]
is an $\A^1$-weak equivalence, and the open immersions 
\[
G\backslash U_{n_1,m}\times U_{n_2,m}\hookrightarrow G\backslash U_{n_1,m}\times M_{n_2\times m}
\]
\[
G\backslash U_{n_1,m}\times U_{n_2,m}\hookrightarrow G\backslash M_{n_1\times m}\times U_{n_2, m}
\]
induce $\A^1$-equivalences on the colimit, giving the asserted $\A^1$-weak equivalence.

Suppose we have a group-scheme $G$ over $k$ and $k$-schemes $Y,U$. Giving $Y$ a right $G$-action and $U$ a free left $G$-action, we give $Y\times_kU$ the free left $G$-action $g\cdot (y,u)=(y\cdot g^{-1}, gu)$, and write $Y\times^GU$  for the categorical quotient $G\backslash(Y\times_kU)$. In all our applications, this quotient will exist as a smooth quasi-projective $k$-scheme. The morphisms and $\A^1$-weak equivalences described in the preceeding paragraph extend to give morphisms and $\A^1$-weak equivalences after replacing $G\backslash (-)$ with $Y\times^G(-)=G\backslash Y\times_k(-)$. 

Using the closed immersions $\SL_2\subset \GL_2$, $\SL_2^3\subset \GL_2^3\subset \GL_6$, $\iota:N\hookrightarrow \SL_2$,  $\iota_{a,b,c}:N\hookrightarrow \SL_2^3$, $i:N_3\hookrightarrow \SL_2^3$, we have natural isomorphisms
\begin{align*}
	&(N\backslash_\iota \SL_2)\times^{\SL_2}U_{2, m}\cong N\backslash U_{2,m}\\
	&(N\backslash_{\iota_{a,b,c}} \SL_2^3)\times^{\SL_2^3}U_{6, m}\cong N\backslash_{\iota_{a,b,c}} U_{6,m}\\
	&(N_3\backslash\SL_2^3)\times^{\SL_2^3}U_{6,m}\cong N_3\backslash U_{6,m}.
\end{align*}
in particular, these all induce homeomorphisms on the respective spaces of $\R$-points (in the classical topology). Noting that for $m\gg0$, $U_{2, m}(\R)$ and $U_{6, m}(\R)$ are connected, and $\SL_2(\R)$ and $\SL_2^3(\R)=\SL_2(\R)^3$ are connected Lie groups, we have    bijections for all $m\gg0$
\begin{align}\label{align:pi0N}
	&\pi_0((N\backslash_\iota \SL_2)(\R))\cong \pi_0([(N\backslash_\iota \SL_2)\times^{\SL_2}U_{2, m}](\R))
	\\\notag
	&\hskip 100pt\cong \pi_0((N\backslash U_{2,m})(\R))\cong \pi_0(BN(\R))\\ \label{align:pi0Nabc}&\pi_0((N\backslash_{\iota_{a,b,c}} \SL_2^3)(\R))\cong \pi_0([(N\backslash_{\iota_{a,b,c}} \SL_2^3)\times^{\SL_2^3}U_{6, m}](\R))\\\notag
	&\hskip 100pt\cong \pi_0((N\backslash_{\iota_{a,b,c}} U_{6,m})(\R))
	\cong \pi_0(BN(\R))\\
	\label{align:pi0N3}
	&\pi_0((N_3\backslash\SL_2^3)(\R))\cong \pi_0([(N_3\backslash\SL_2^3)\times^{\SL_2^3}U_{6,m}](\R))\\\notag
	&\hskip 100pt\cong \pi_0([N_3\backslash U_{6,m}](\R))\cong \pi_0(BN_3(\R)).
\end{align}
Moreover, letting 
\[
\pi_{a,b,c}:(N\backslash_{\iota_{a,b,c}}\SL_2^3)\to (N_3\backslash \SL_2^3)(\R)
\]
be the map induced by $\iota_{a,b,c}:N\to N_3$, the diagram
\begin{equation}\label{eqn:Pi0Commute}
	\xymatrix{
		\pi_0((N\backslash_{\iota_{a,b,c}} \SL_2^3)(\R))\ar[r]^-\sim\ar[d]^{\pi_0(\pi_{a,b,c}(\R))}&\pi_0(BN(\R))\ar[d]^{\pi_0((B\iota_{a,b,c})(\R))}\\
		\pi_0((N_3\backslash\SL_2^3)(\R))\ar[r]^-\sim&\pi_0(BN_3(\R))
	}
\end{equation}
commutes.

In addition, taking $\R$-points in the diagram 
\[
\xymatrix{
	N\backslash_{(\iota,\iota_{a,b,c})} U_{2,m}\times M_{6\times m}\ar[d]&N\backslash_{(\iota,\iota_{a,b,c})} U_{2,m}\times U_{6,m}\ar[r]\ar[l]&N\backslash_{(\iota,\iota_{a,b,c})} M_{2\times m}\times U_{6,m}\ar[d]\\
	N\backslash_{\iota} U_{2,m} &&N\backslash_{\iota_{a,b,c}}  U_{6,m}
}
\]
gives us bijections for all $m\gg0$
\begin{multline}\label{mult:pi0}
	\pi_0(BN(\R))\cong \pi_0((N\backslash_{\iota} U_{2,m})(\R))\\\cong \pi_0((N\backslash_{(\iota,\iota_{a,b,c})} U_{2,m}\times U_{6,m})(\R))\cong \pi_0((N\backslash_{\iota_{a,b,c}}  U_{6,m})(\R)).
\end{multline}
Finally, we have corresponding diagrams of isomorphisms of $\<\bar\sigma\>\cong \Z/2$-schemes and bijections on the $\<\bar\sigma\>$-sets  $\pi_0$ for $\C$-points and for $\R$-points if we replace $N$ with $T_1$ and $N_3$ with $T$. 

By Hilbert's Theorem 90, taking quotients of a $k$-scheme $Y$ by a free $T_1$-action or a free $T$-action induces a surjection on $F$-points for each field $F\supset k$. 

\begin{defn}\label{defn:PlusMinus} 1. Let $Y$ be an irreducible smooth quasi-projective $\R$-scheme with a free action of $\<\bar\sigma\>=\Z/2$. Since $Y$ is an $\R$-scheme, we have an action of $\C$-conjugation, $z\mapsto \bar{z}$, on $Y(\C)$. Let $Y(\C)^-\subset Y(\C)$ be the set of $\C$-points
	\[
	Y(\C)^-=\{z\in Y(\C)\mid \bar\sigma(z)=\bar{z}\}.
	\]
	Define $(\<\bar\sigma\>\backslash Y)(\R)^+:=\<\bar\sigma\>\backslash Y(\R)$ and 
	$(\<\bar\sigma\>\backslash Y)(\R)^-:=\<\bar\sigma\>\backslash Y(\C)^-$.
	\\[2pt]
	2. Let $W$ be an  irreducible smooth quasi-projective $\R$-scheme with a free action of $N$. We suppose that the restriction of the $N$-action to $T_1$ represents $W$ as a principle $T_1$-bundle over an irreducible smooth quasi-projective $\R$-scheme $Y$,  which we give the induced free $\<\bar\sigma\>$-action.  Define $(N\backslash W)(\R)^\pm:=(\<\bar\sigma\>\backslash Y)(\R)^\pm$.
\end{defn}

\begin{lem}\label{lem:PlusMinus} Let $Y$ be an irreducible smooth quasi-projective $\R$-scheme with a free action of $\<\bar\sigma\>=\Z/2$.  Then $(\<\bar\sigma\>\backslash Y)(\R)$ is a disjoint union
	\[
	(\<\bar\sigma\>\backslash Y)(\R)=(\<\bar\sigma\>\backslash Y)(\R)^+\amalg (\<\bar\sigma\>\backslash Y)(\R)^-.
	\]
	2. Let $W$ be an irreducible smooth quasi-projective $\R$-scheme with a free action of $N$, such that the restriction to a $T_1$-action represents $W$ as a principle $T_1$-bundle over an
	irreducible smooth quasi-projective $\R$-scheme $Y$, $p:W\to Y$. Then $(N\backslash W)(\R)$ is a disjoint union
	\[
	(N\backslash W)(\R)=(N\backslash W)(\R)^+\amalg (N\backslash W)(\R)^-.
	\]
	
\end{lem}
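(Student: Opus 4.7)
The plan is to reduce part 2 to part 1 using the principal $T_1$-bundle structure, and to prove part 1 by a direct case analysis with complex conjugation.

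For part 1, I would first observe that since the $\<\bar\sigma\>$-action on $Y$ is free and $Y$ is smooth quasi-projective, the categorical quotient $Z := \<\bar\sigma\>\backslash Y$ exists as a smooth quasi-projective $\R$-scheme, with $Z(\C) = \<\bar\sigma\>\backslash Y(\C)$. Complex conjugation $c$ acts on $Z(\C)$ compatibly with the quotient map $Y(\C)\to Z(\C)$, and $Z(\R)=Z(\C)^c$. Hence a point $[y]\in Z(\C)$ lies in $Z(\R)$ if and only if $[\bar y]=[y]$, i.e.\ if and only if $\bar y\in\{y,\bar\sigma(y)\}$. This dichotomy decomposes $Z(\R)$ into those classes admitting a representative $y$ with $\bar y=y$ (giving the image of $(\<\bar\sigma\>\backslash Y)(\R)^+=\<\bar\sigma\>\backslash Y(\R)$) and those admitting a representative $y$ with $\bar y=\bar\sigma(y)$ (giving the image of $(\<\bar\sigma\>\backslash Y)(\R)^-=\<\bar\sigma\>\backslash Y(\C)^-$). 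One checks that $Y(\C)^-$ is $\<\bar\sigma\>$-stable, so the right-hand orbit space is well-defined.

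Next I would check that the natural maps $(\<\bar\sigma\>\backslash Y)(\R)^{\pm}\to Z(\R)$ are injective: if $y,y'\in Y(\R)$ have $[y]=[y']$ in $Z(\C)$, then $y'\in\{y,\bar\sigma(y)\}$, so they already coincide in $\<\bar\sigma\>\backslash Y(\R)$, and similarly for $Y(\C)^-$. Together with the dichotomy above, this gives $(\<\bar\sigma\>\backslash Y)(\R)^+\cup(\<\bar\sigma\>\backslash Y)(\R)^-=Z(\R)$.

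The main obstacle, and the only step that uses the freeness hypothesis, is disjointness. Suppose a class lies in both images: then there exist $y_0\in Y(\R)$ and $z\in Y(\C)^-$ with $z\in\{y_0,\bar\sigma(y_0)\}$. In both cases $\bar z=z$: if $z=y_0$ this is immediate, while if $z=\bar\sigma(y_0)$ then $\bar z=\bar\sigma(\bar{y_0})=\bar\sigma(y_0)=z$ since $y_0\in Y(\R)$. Combined with $\bar\sigma(z)=\bar z$ (from $z\in Y(\C)^-$) this forces $\bar\sigma(z)=z$, contradicting freeness of the $\<\bar\sigma\>$-action.

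For part 2, the principal $T_1$-bundle $p:W\to Y$ identifies $T_1\backslash W$ with $Y$, and since the hypothesis on the $N$-action is precisely that the induced action of $N/T_1=\<\bar\sigma\>$ on $Y$ is the one in Definition~\ref{defn:PlusMinus}(2), I get a canonical isomorphism $N\backslash W\cong \<\bar\sigma\>\backslash Y$ as $\R$-schemes. In particular $(N\backslash W)(\R)=(\<\bar\sigma\>\backslash Y)(\R)$, and by Definition~\ref{defn:PlusMinus}(2) the $\pm$-subsets of the left-hand side are defined to be the $\pm$-subsets of the right-hand side; so the decomposition follows immediately from part 1. Overall, apart from the disjointness check the argument is bookkeeping with Galois descent and free orbits.
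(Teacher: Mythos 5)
Your proof is correct and follows essentially the same route as the paper's: in both cases the key step is that $Z:=\<\bar\sigma\>\backslash Y$ is a free quotient so that $Z(\C)=\<\bar\sigma\>\backslash Y(\C)$, each fiber of $Y\to Z$ over an $\R$-point is a Galois-stable two-element $\bar\sigma$-orbit that is either a pair of real points or a conjugate pair of non-real points, and freeness of the $\bar\sigma$-action (giving $\bar\sigma(z)=\bar z\neq z$ for $z\in Y(\C)^-$) rules out overlap; part 2 is reduced to part 1 via $N\backslash W\cong\<\bar\sigma\>\backslash(T_1\backslash W)=\<\bar\sigma\>\backslash Y$ in both proofs. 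You spell out the injectivity/well-definedness bookkeeping a bit more explicitly than the paper, but the underlying argument is the same.
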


\begin{proof} For (1), let $\pi:Y\to \<\bar\sigma\>\backslash Y$ be the quotient map. Since the $\bar\sigma$-action is free,  a closed point $y\in \<\bar\sigma\>\backslash Y$ has residue field $\R$ if and only if the inverse image closed subscheme $\pi^{-1}(y)$ satisfies one of the following.
	\begin{enumerate}
		\item[i)] $\pi^{-1}(y)(\C)=\pi^{-1}(y)(\R)=\{y_1, y_2\mid y_i\in Y(\R), y_2=\bar\sigma(y_1)\neq y_1\}$,
		\item[ii)] $\pi^{-1}(y)(\C)=\{z, \bar{z}\mid z\in Y(\C)\}$.
	\end{enumerate}
	In case (i), $y$ is in $(\<\bar\sigma\>\backslash Y)(\R)^+$ and in case (ii), $y$ is in $\<\bar\sigma\>\backslash Y(\C)^-$. If $x$ is in $Y(\C)^-$, then since the $\bar\sigma$-action is free, we must have $\bar\sigma(x)=\bar{x}\neq x$, so $x$ is not an $\R$-point of $Y$, hence 
	$(\<\bar\sigma\>\backslash Y)(\R)^+\cap (\<\bar\sigma\>\backslash Y)(\R)^-=\0$. 
	
	For (2), we have the canonical isomorphism
	\[
	N\backslash W\cong \<\bar\sigma\>\backslash (T_1\backslash W)= \<\bar\sigma\>\backslash Y,
	\]
	so 
	\[
	(N\backslash W)(\R)=(\<\bar\sigma\>\backslash Y)(\R)
	\]
	and (2) follows from (1).
\end{proof}

\begin{lem}\label{lem:RealConnComp} 1. $(N\backslash\SL_2)(\R)^+$ and $(N\backslash\SL_2(\R))^-$ are both connected.\\[2pt]
	2. For all triples of odd positive integers $a,b,c$, $(N\backslash_{\iota_{a,b,c}}\SL_2^3(\R))^+$ and
	$(N\backslash_{\iota_{a,b,c}}\SL_2^3)(\R)^-$  are both connected.\\[2pt]
	3. Via the bijection  \eqref{align:pi0N}, we can write $BN(\R)$ as the disjoint union of two connected components, $BN(\R)=BN(\R)^+\amalg BN(\R)^-$, with $BN(\R)^+$ the image of $(N\backslash\SL_2)(\R)^+$ and $BN(\R)^-$ the image of $(N\backslash\SL_2)(\R)^-$.
	\\[2pt]
	4. The bijection \eqref{align:pi0Nabc} sends  $(N\backslash_{\iota_{a,b,c}}\SL_2^3)(\R)^+$ to $BN(\R)^+$ and $(N\backslash_{\iota_{a,b,c}}\SL_2^3)(\R)^-$ to $BN(\R)^-$.
\end{lem}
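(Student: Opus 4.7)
The argument combines Hilbert's Theorem 90 for the $+$ part with non-abelian Galois cohomology for the $-$ part, applied to the transitive right-multiplication action of $\SL_2$ (respectively $\SL_2^3$).

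For the $+$ part of items (1) and (2), Definition~\ref{defn:PlusMinus} gives $(N\backslash W)(\R)^+ = \<\bar\sigma\>\backslash (T_1\backslash W)(\R)$, and Hilbert 90 for $T_1 \cong \G_m$ yields $(T_1\backslash W)(\R) = T_1(\R)\backslash W(\R)$. Since $\SL_2(\R)$, and hence $\SL_2(\R)^3$, is connected, the continuous quotients by the Lie groups $T_1(\R) = \R^\times$ and then by $\<\bar\sigma\> \cong \Z/2$ preserve connectedness.

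For the $-$ part, the key input is that $\SL_2$ (respectively $\SL_2^3$) acts transitively on $N\backslash \SL_2$ (respectively $N\backslash_{\iota_{a,b,c}}\SL_2^3$) by right multiplication. Non-abelian Galois cohomology identifies the set of $\SL_2(\R)$-orbits (respectively $\SL_2(\R)^3$-orbits) on the $\R$-points with $\ker(H^1(\R, N) \to H^1(\R, \SL_2))$ (respectively with $\ker(H^1(\R, N) \to H^1(\R, \SL_2^3))$), which equals $H^1(\R, N)$ since the target vanishes by Hilbert 90 for the simply-connected semisimple group $\SL_2$. The long exact sequence attached to $1 \to T_1 \to N \to \Z/2 \to 1$ together with Hilbert 90 for $T_1$ embeds $H^1(\R, N)$ into $H^1(\R, \Z/2) = \Z/2$, and I verify this is an equality by exhibiting the nontrivial cocycle $c(\tau) = \bigl(\begin{smallmatrix}0 & -i \\ -i & 0\end{smallmatrix}\bigr) \in N(\C)$: a direct matrix computation gives $c(\tau)\,\overline{c(\tau)} = I$, while a case analysis over $m \in T_1(\C)$ and $m \in T_1(\C)\sigma$ shows every coboundary $m\,\bar m^{-1}$ lies in $T_1(\C)$, whereas $c(\tau) \in T_1(\C)\sigma$. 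Hence $H^1(\R, N) = \Z/2$, producing exactly two orbits on each side; each is a continuous image of a connected Lie group, hence connected. The orbit of the basepoint $[e]$ is $(N\backslash W)(\R)^+$ and the other is $(N\backslash W)(\R)^-$. For the $\SL_2^3$-case, the element $\iota_{a,b,c}(c(\tau))$ still represents a nontrivial class, since $a,b,c$ odd forces $(-i)^{s_i} \in \{i,-i\}$ and the coboundary analysis survives component by component.

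Item (3) is a definition, labelling the two connected components of $BN(\R)$ via the bijection \eqref{align:pi0N}. For item (4), the bijections \eqref{align:pi0N} and \eqref{align:pi0Nabc} arise from zigzags of $\A^1$-equivalences factoring through $(N\backslash U_{n,m})(\R)$; each morphism in these zigzags comes from an $N$-equivariant (in particular $T_1$-equivariant) open immersion or fiber bundle projection, hence is compatible with the étale double cover $T_1\backslash(-) \to N\backslash(-)$ that defines the $\pm$ decomposition, so the labels are preserved. The main obstacle is the cocycle bookkeeping: verifying $H^1(\R, N) = \Z/2$ and matching the two group-orbits with the corresponding $\pm$ piece in the sense of Definition~\ref{defn:PlusMinus}.
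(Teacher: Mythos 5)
Your proof is correct, and for the key connectedness claims in parts (1) and (2) it takes a genuinely different route from the paper's. For the $+$ parts you use exactly the paper's argument: Hilbert~90 for $T_1\cong\G_m$ realizes $(N\backslash W)(\R)^+$ as a continuous quotient of $\SL_2(\R)$ (resp.\ $\SL_2(\R)^3$), which is connected. For the $-$ parts, however, the paper parametrizes $\SL_2(\C)^-$ explicitly by pairs $(a,b)\in\C^2$ with $\im(a\bar b)\neq 0$, identifies the two components $S_\pm$, and checks by hand that $\sigma$ swaps them. You instead package the $\pm$ decomposition as the orbit decomposition for the right action of $\SL_2(\R)$ (resp.\ $\SL_2(\R)^3$) on the homogeneous space, invoke the bijection between orbits and $\ker\bigl(H^1(\R,N)\to H^1(\R,\SL_2)\bigr)$, and compute $H^1(\R,N)\cong\Z/2$ from the short exact sequence $1\to T_1\to N\to\Z/2\to1$ together with the explicit cocycle $c(\tau)=\bigl(\begin{smallmatrix}0&-i\\-i&0\end{smallmatrix}\bigr)$. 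Since each orbit is a continuous image of a connected Lie group, both are connected, and since the orbit of $[e]$ is exactly the $+$ part (by the Hilbert~90 argument again), the other orbit is the $-$ part. This is a clean and uniform argument that scales to (2) with no extra work and explains conceptually why there are exactly two components; the paper's calculation is more elementary and self-contained but must be redone separately for each piece of the statement. Two small remarks: the sentence about $\iota_{a,b,c}(c(\tau))$ and $(-i)^{s_i}$ for the $\SL_2^3$ case is superfluous---the class of $c(\tau)$ lives in $H^1(\R,N)$, which does not depend on the embedding, and $H^1(\R,\SL_2^3)=1$ already forces the kernel to be all of $H^1(\R,N)$. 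For part (4), your appeal to equivariance of the zigzags with respect to the double cover $T_1\backslash(-)\to N\backslash(-)$ is the right idea and does show that the $+$ label is preserved; combined with the fact that both sides have exactly two components (from (1)--(3) and the $\pi_0$-bijections), this forces the $-$ label to be preserved as well, which matches the structure of the paper's argument, though the paper additionally spells out the intermediate quotients and the connectivity of $(U_{2,m}\times U_{6,m})(\R)$.
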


\begin{proof} (2) follows from Lemma~\ref{lem:PlusMinus} and (1), using the bijections \eqref{align:pi0N} and \eqref{align:pi0Nabc}. 
	
	For (1), we make an explict computation. By Hilbert's Theorem 90, we have
	\[
	(T_1\backslash \SL_2)(\R)=T_1(\R)\backslash\SL_2(\R)
	\]
	so the map $\SL_2(\R)\to (N\backslash \SL_2)(\R)^+$ is surjective (and continuous). Similarly, projecting on the first column writes $\SL_2$ as an affine $\A^1$-bundle over $\A^2\setminus\{0\}$, writing $\SL_2(\R)$ as a fiber bundle $\R\to \SL_2(\R)\to \R^2\setminus\{0\}$ over $\R^2\setminus\{0\}$, so $\SL_2(\R)$ is connected, and hence so is 
	$(N\backslash \SL_2)(\R)^+$. 
	
	For the minus part,  let 
	\[
	\SL_2(\C)^-= \{g\in \SL_2(\C)\mid \exists t\in T_1(\C),\text{ such that } \begin{pmatrix}0&1\\-1&0\end{pmatrix}\cdot g=\begin{pmatrix}t&0\\0&t^{-1}\end{pmatrix}\bar{g}\}.
	\]
	Then since $\C$ is algebraically closed,  $(N\backslash \SL_2)(\R)^-$ is the image of $\SL_2(\C)^-$ under the quotient map $\SL_2(\C)\to (N\backslash \SL_2)(\C)=N(\C)\backslash \SL_2(\C)$. For $g=\begin{pmatrix}a&b\\c&d\end{pmatrix}\in \SL_2(\C)^-$,  the equation defining $\SL_2(\C)^-$ gives
	\[
	\begin{pmatrix}c&d\\-a&-b\end{pmatrix}=\begin{pmatrix}t\bar{a}&t\bar{b}\\t^{-1}\bar{c}&t^{-1}\bar{d}\end{pmatrix}
	\]
	for some $t\in \C^\times$. This is the same as 
	\[
	\frac{\bar{t}}{t}=-1,\ c=t\bar{a},\ d=t\bar{b}
	\]
	so $t=i\cdot s$ for some $s\in \R^\times$. Letting $\im(a\bar{b})$ denote the imaginary part of $a\bar{b}$, the condition $\det g=1$ says
	\[
	s=-\frac{1}{2\im(a\bar{b})}
	\]
	so 
	\begin{equation}\label{eqn:CorrespMatrix}
		g=\begin{pmatrix} a&b\\ \frac{1}{2i\im(a\bar{b})}\cdot \bar{a}&\frac{1}{2i\im(a\bar{b})}\cdot \bar{b}\end{pmatrix}
	\end{equation}
	giving a bijection of $\SL_2(\C)^-$ with $S:=\{(a,b)\in \C^2\mid \im(a\bar{b})\neq 0\}$. $S$ in turn is the inverse image of $\C\setminus \R$ under the map $m:\C^2\to \C$, $m(x,y)=x\bar{y}$. As $m$ is surjective with connected fibers, we see that $S$ has two connected components:
	\[
	S_+=\{(a,b)\in \C^2\mid \im(a\bar{b})> 0\},\ S_-=\{(a,b)\in \C^2\mid \im(a\bar{b})< 0\}.
	\]
	Finally, for $(a,b)\in S_+$, let $g$ be the matrix \eqref{eqn:CorrespMatrix} corresponding to $(a,b)$. Then 
	\[
	\sigma\cdot g=\begin{pmatrix}0&1\\-1&0\end{pmatrix}\cdot \begin{pmatrix} a&b\\ \frac{1}{2i\im(a\bar{b})}\cdot \bar{a}&\frac{1}{2i\im(a\bar{b})}\cdot \bar{b}\end{pmatrix}=
	\begin{pmatrix} \frac{1}{2i\im(a\bar{b})}\cdot \bar{a}&\frac{1}{2i\im(a\bar{b})}\cdot \bar{b}\\-a& -b\end{pmatrix}.
	\]
	Since 
	\[
	\left(\frac{1}{2i\im(a\bar{b})}\cdot \bar{a}\right)\cdot \overline{\left(\bar{a}\frac{1}{2i\im(a\bar{b})}\cdot \bar{b}\right)}=
	\frac{1}{4(\im(a\bar{b}))^2}\cdot \bar{a}\cdot b,
	\]
	we see that via \eqref{eqn:CorrespMatrix},  $\sigma$ interchanges $S_+$ and $S_-$, and thus 
	$(N\backslash \SL_2)(\R)^-$ is connected, completing the proof of (1).
	
	(3) follows from (1), using the bijection \eqref{align:pi0N}.
	
	For (4), since there are only two components to consider, we need only show that the  bijection \eqref{align:pi0Nabc} sends  $(N\backslash_{\iota_{a,b,c}}\SL_2^3(\R))^+$ to $BN(\R)^+$. Since $\SL_2^3$ is a special group, and using Hilbert's Theorem 90 again, we have
	\begin{align*}
		((T_1\backslash_{\iota_{a,b,c}}\SL_2^3)\times^{\SL_2^3}U_{6,m})(\R)&=
		(T_1\backslash_{\iota_{a,b,c}}\SL_2^3)(\R)\times^{\SL_2^3(\R)}U_{6,m}(\R)\\
		&=(T_1(\R)\times \SL_2^3)(\R)) \backslash (\SL_2^3(\R)\times U_{6,m})(\R)\\
		&=T_1(\R)  \backslash_{\iota_{a,b,c}}  U_{6,m}(\R).
	\end{align*}
	Similarly,
	\begin{align*}
		((T_1\backslash_{\iota}\SL_2)\times^{\SL_2}U_{2,m})(\R)&=
		(T_1\backslash_{\iota}\SL_2)(\R)\times^{\SL_2(\R)}U_{2,m}(\R)\\
		&=(T_1(\R)\times \SL_2(\R)) \backslash (\SL_2(\R)\times U_{2,m}(\R))\\
		&=T_1(\R)  \backslash_\iota  U_{2,m}(\R),
	\end{align*}
	and the respective images in $BN(\R)$ factor through the respective quotients by $\sigma$.
	
	We compare $T_1\backslash_{\iota_{a,b,c}}  U_{6,m}$ and 
	$T_1\backslash_{\iota}U_{2,m}$ via the $T_1$-action $(\iota,\iota_{a,b,c})$ on the product $U_{2,m}\times U_{6,m}$. This gives us the commutative diagram
	\[
	\xymatrix{
		U_{2,m}(\R)\times U_{6,m}(\R)\ar[r]^-{p_2}\ar[d]^{p_1}\ar[dr]^-p&U_{6,m}(\R)\ar[d]\\
		U_{2,m}(\R)\ar[r]&BN(\R).
	}
	\]
	Since $(U_{2,m}\times U_{6,m})(\R)$ is connected for $m\gg0$,  we see   that the map $\pi_0(p_1)$, 
	\[
	\{*\}=\pi_0(U_{2,m}(\R)\times U_{6,m}(\R))\xrightarrow{\pi_0(p_1)} \pi_0(BN(\R)),
	\]
	sends $*$ to $[BN(\R)^+]\in\pi_0(BN(\R))$, while via $p_2$, we see that $\pi_0(p_2)(*)$ is the image of 
	$(N\backslash_{\iota_{a,b,c}}\SL_2^3(\R))^+$ in $\pi_0(BN(\R))$, proving (4).
\end{proof}

\begin{nota}
	We let $H:=((\Z/4)^\times)^3/\{\pm1\}$, with $x\in \{\pm1\}=(\Z/4)^\times$ acting by    $x\cdot (\epsilon_1, \epsilon_2, \epsilon_3)=
	(x\epsilon_1, x\epsilon_2, x\epsilon_3)$. We have $H\cong (\Z/2)^2$ (non-canonically). Given a triple of odd integers $(a,b,c)$ we send $(a,b,c)$  to $H$ by first reducing modulo 4, and then mapping to $H$ under the quotient map $((\Z/4)^\times)^3\to H$. If $\tau\in H$ is the image of $(a,b,c)$ under this map, we write this as
	\[
	(a,b,c)\equiv \tau\mod 4/ \{\pm 1\}.
	\]
\end{nota}

\begin{lem} \label{lem:pi0Ind}
	Let $a,b,c$ be odd positive integers generating the unit ideal in $\Z$. We have the map
	\[
	\pi_{a,b,c}:N\backslash_{\iota_{a,b,c}}\SL_2^3\to N_3\backslash \SL_2^3
	\]
	induced by $\iota_{a,b,c}:N\to N_3$. Then
	\begin{enumerate}
		\item $(N_3\backslash \SL_2^3)(\R)^+$ is connected.
		\item  $\pi_{a,b,c}(\R)$ maps 
		$(N\backslash_{\iota_{a,b,c}} \SL_2^3)(\R)^+$ to $(N_3\backslash \SL_2^3)(\R)^+$.
		\item The map 
		\[
		\pi_0((N\backslash_{\iota_{a,b,c}}\SL_2^3)(\R))^- \xrightarrow{\pi_0(\pi_{a,b,c}(\R))} \pi_0((N_3\backslash \SL_2^3)(\R))^-
		\]
		depends only on $(a,b,c)\mod 4/ \{\pm1\}$.
	\end{enumerate}
	In consequence, the map $\pi_0(\pi_{a,b,c}(\R)):\pi_0((N\backslash_{\iota_{a,b,c}}\SL_2^3)(\R))\to \pi_0((N_3\backslash \SL_2^3)(\R))$ depends only on $(a,b,c)\mod 4/ \{\pm1\}$.
\end{lem}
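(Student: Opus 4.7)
The plan is to dispatch (1) and (2) essentially from the constructions, then make the component structure of $(N_3\backslash\SL_2^3)(\R)^-$ explicit and trace the image of a single test point for (3).

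For (1), Hilbert 90 applied to the split torus $T=T_1^3$ gives $(T\backslash\SL_2^3)(\R) = T(\R)\backslash\SL_2^3(\R) = (T_1(\R)\backslash\SL_2(\R))^3$. Since $\SL_2(\R)$ is connected, so is $T_1(\R)\backslash\SL_2(\R)$, its cube, and hence the further $\<\bar\sigma\>$-quotient $(N_3\backslash\SL_2^3)(\R)^+$. For (2), $\iota_{a,b,c}$ restricts to a map $T_1\to T$, inducing a $\<\bar\sigma\>$-equivariant map $(T_1\backslash_{\iota_{a,b,c}}\SL_2^3)(\R)\to (T\backslash\SL_2^3)(\R)$; by the definition of the plus-parts in Definition~\ref{defn:PlusMinus}, this sends plus-part to plus-part.

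The substance is (3). I would first describe the connected components of $(N_3\backslash\SL_2^3)(\R)^-$. From the explicit computation in the proof of Lemma~\ref{lem:RealConnComp}(1), $\SL_2(\C)^-$ decomposes as $S_+\amalg S_-$ via the sign of $\im(a\bar b)$ on the first row, with $\sigma$ swapping the two pieces and $T_1(\C)$ preserving each (since $\im(ua\cdot\overline{ub})=|u|^2\im(a\bar b)$). Hence $(T_1\backslash\SL_2)(\C)^-$ has exactly two connected components, interchanged by $\bar\sigma$. Passing to triples, $(T\backslash\SL_2^3)(\C)^-$ has eight components indexed by $\{\pm\}^3$, and $\sigma_\delta$ flips all three signs simultaneously; quotienting by $\<\bar\sigma\>$ gives that $(N_3\backslash\SL_2^3)(\R)^-$ has exactly four components indexed by $\{\pm\}^3/\{\pm\}\cong H$.

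Since $(N\backslash_{\iota_{a,b,c}}\SL_2^3)(\R)^-$ is connected by Lemma~\ref{lem:RealConnComp}(2), it suffices to trace the image of one specific element, for which I would take $t=i\in T_1(\C)$ and, for each $j=1,2,3$, pick $g_j$ satisfying $\sigma g_j = i^{e_j}\bar g_j$ with $(e_1,e_2,e_3)=(a,b,c)$. Since each $e_j$ is odd, $i^{e_j}=\pm i$ is determined by $e_j\bmod 4$, and writing $t^{e_j}=is_j$ with $s_j\in\R^\times$, the identity $\im(a_j\bar b_j)=-1/(2s_j)$ from the proof of Lemma~\ref{lem:RealConnComp}(1) places $g_j\in S_-$ when $e_j\equiv 1\pmod 4$ and $g_j\in S_+$ when $e_j\equiv 3\pmod 4$. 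Thus the image component is labeled by the class of $(\epsilon(a),\epsilon(b),\epsilon(c))$ in $\{\pm\}^3/\{\pm\}$, which under the identification with $H$ is exactly the image of $(a,b,c)$ modulo $4/\{\pm 1\}$. Combined with (2), this gives the stated consequence. The main obstacle I foresee is the bookkeeping in the last two steps: the decomposition $\SL_2(\C)^-=S_+\amalg S_-$ must be shown to be $T_1(\C)$-stable so that it descends to the quotient, and the diagonal sign-flip induced by $\sigma_\delta$ on $\{\pm\}^3$ must be carefully matched with the $\{\pm 1\}$-action on $((\Z/4)^\times)^3$ defining $H$; once these sign conventions are aligned, evaluation at $t=i$ makes the dependence on $(a,b,c)\bmod 4/\{\pm 1\}$ transparent.
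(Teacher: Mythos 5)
Your proof is correct and follows essentially the same route as the paper's: Hilbert 90 and connectedness of $\SL_2^3(\R)$ for (1) and (2), and for (3) the same eight-component decomposition of $(T\backslash\SL_2^3)(\C)^-$ via the sign vector of $\im(a_j\overline{b_j})$, quotiented by $\sigma_\delta$ to get the four components indexed by $H$. The one genuine simplification is that in (3), rather than parametrizing all of $(T_1\backslash_{\iota_{a,b,c}}\SL_2^3)(\C)^-$ and deriving $\bar t/t=-1$ from $\gcd(a,b,c)=1$ as the paper does, you invoke connectedness of $(N\backslash_{\iota_{a,b,c}}\SL_2^3)(\R)^-$ from Lemma~\ref{lem:RealConnComp}(2) and evaluate the map at the single test point $t=i$; this is a legitimate shortcut (and bypasses the need for the unit-ideal hypothesis in this step, though it is still used elsewhere) but lands on exactly the same sign bookkeeping. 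You also flag explicitly the $T_1(\C)$-stability of $S_\pm$, a point the paper leaves implicit; your verification $\im(ua\cdot\overline{ub})=|u|^2\im(a\bar b)$ is the right one.
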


\begin{proof} We proceed as in the description of the connected components  $(N\backslash \SL_2)(\R)^\pm$ of  $(N\backslash \SL_2)(\R)$ in the proof of Lemma~\ref{lem:RealConnComp}. 
	
	Lemma~\ref{lem:RealConnComp}(2) tells us that $(N\backslash_{\iota_{a,b,c}}\SL_2^3)(\R)$ consists of two connected components $(N\backslash_{\iota_{a,b,c}}\SL_2^3)(\R)^+$ and $(N\backslash_{\iota_{a,b,c}}\SL_2^3)(\R)^-$.
	
	We have the commutative diagram
	\[
	\xymatrix{
		&\SL_2^3(\R)\ar[dl]_{q_N(\R)}\ar[dr]^{q_{N_3}(\R)}\\
		(N\backslash_{\iota_{a,b,c}}\SL_2^3)(\R)^+\ar[rr]^{\pi_{a,b,c}(\R)}&&(N_3\backslash \SL_2^3)(\R)^+
	}
	\]
	and the maps $q_N(\R)$ and $q_{N_3}(\R)$ are surjective by Hilbert's Theorem 90. Since $\SL_2^3(\R)$ is connected,  $(N_3\backslash \SL_2^3)(\R)^+$ is connected as well. Since  the map $\pi_{a,b,c}(\R)$ sends $(N\backslash_{\iota_{a,b,c}}\SL_2^3)(\R)^+$ to $(N_3\backslash \SL_2^3)(\R)^+$, this proves (1) and (2).

	For (3), we analyze $(N_3\backslash \SL_2^3)(\R)^-$ and its relation to 
	$(N\backslash_{\iota_{a,b,c}}\SL_2^3)(\R)^-$.
	
	Let 
	\[
	\SL_2^3(\C)^-=\{(g_1, g_2, g_3)\in \SL_2^3(\C)\mid \exists t_1, t_2,t_3\in \C^\times\text{ with }\sigma\cdot g_i=t_i\cdot \overline{g_i},\ i=1,2,3\}.
	\]
	Give $T\backslash \SL_2^3$ the $\Z/2$-action via $\<\bar\sigma_\delta\>$. As in the proof of Lemma~\ref{lem:RealConnComp}, we have
	\[
	(T\backslash \SL_2^3)(\C)^-=T(\C)\backslash \SL_2^3(\C)^-,
	\]
	and $(N_3\backslash \SL_2^3)(\R)^-=\<\bar\sigma_\delta\>\backslash (T\backslash \SL_2^3)(\C)^-$. 
	
	We make the conditions defining $(T\backslash \SL_2^3)(\C)^-$ explicit: write
	\[
	g_j=\begin{pmatrix}a_j&b_j\\c_j&d_j\end{pmatrix},\ j=1,2,3
	\]
	so   the equation $\sigma\cdot g_j=t_i\cdot \overline{g_j}$ becomes
	\[
	\begin{pmatrix}c_j&d_j\\-a_j&-b_j\end{pmatrix}=\begin{pmatrix} t_j\overline{a_j}&t_j\overline{b_j}\\t_j^{-1}\overline{c_j}&t_j^{-1}\overline{d_j}\end{pmatrix}.
	\]
	As in the computation \eqref{eqn:CorrespMatrix} in the proof of  Lemma~\ref{lem:RealConnComp}, this translates into $t_j=is_j=\frac{1}{2i\cdot \im(a_j\overline{b_j})}$ with $s_j\in \R^\times$, and
	\[
	g_j=\begin{pmatrix} a_j&b_j\\ \frac{1}{2i\cdot \im(a_j\overline{b_j})}\cdot \overline{a_j}&\frac{1}{2i\cdot \im(a_j\overline{b_j})}\cdot \overline{b_j}\end{pmatrix}.
	\]
	
	Thus, we can parametrize $(T\backslash \SL_2^3)(\C)^-$ via triples $((a_1, b_1), (a_2, b_2), (a_3,b_3))\in (\C^2)^3$ such that $\im(a_j\overline{b_j})\neq0$ for $j=1,2,3$. This breaks up the set $(T\backslash \SL_2^3)(\C)^-$  into 8 connected components, indexed by the triple of signs
	\[
	\left(\frac{|\im(a_1\overline{b_1})|}{\im(a_1\overline{b_1})}, \frac{|\im(a_2\overline{b_2})|}{\im(a_2\overline{b_2})}, \frac{|\im(a_3\overline{b_3})|}{\im(a_3\overline{b_3})}\right)\in\{\pm1\}^3; 
	\]
	see the argument of Lemma~\ref{lem:RealConnComp} to see that fixing a triple of signs does yield a connected space. As the action of $\sigma_\delta=(\sigma, \sigma,\sigma)$ on $\SL_2^3(\C)$  induces the action
	\[
	(\epsilon_1, \epsilon_2, \epsilon_3)\mapsto (-\epsilon_1, -\epsilon_2, -\epsilon_3)
	\]
	on the triple of signs, we see that $(N_3\backslash\SL_2^3)(\R)^-$ has four connected components, in bijection with the set $H=(\Z/4)^\times/\{\pm1\}= \{\pm1\}^3/\{\pm1\}$, by sending the triple of matrices $(g_1,g_2, g_3)\in \SL_2^3(\C)$ parametrized by $((a_1, b_1), (a_2, b_2), (a_3,b_3))\in (\C^2)^3$ to 
	\[
	\left(\frac{|\im(a_1\overline{b_1})|}{\im(a_1\overline{b_1})}, \frac{|\im(a_2\overline{b_2})|}{\im(a_2\overline{b_2})}, \frac{|\im(a_3\overline{b_3})|}{\im(a_3\overline{b_3})}\right)\in  \{\pm1\}^3/\{\pm1\}=H.
	\]

	We now analyze $(T\backslash_{\iota_{a,b,c}} \SL_2^3)(\C)^-$ in similar fashion. To aid in the indexing, we write $n_1=a$, $n_2=b$, $n_3=c$.  By assumption, the ideal $(n_1, n_2, n_3)\subset \Z$ is the unit ideal, so we have $m_1, m_2, m_3\in \Z$ with $\sum_jm_jn_j=1$. Let  $(g_1, g_2, g_3)\in  \SL_2^3(\C)$ be a triple, and as above, write 
	\[
	g_j=\begin{pmatrix}a_j&b_j\\c_j&d_j\end{pmatrix},\ j=1,2,3.
	\]
	Then $(g_1, g_2, g_3)$ represents an element of  $(T\backslash_{\iota_{a,b,c}} \SL_2^3)(\C)^-$ if there is a $t\in \C^\times$ with 
	\[
	\begin{pmatrix}c_j&d_j\\-a_j&-b_j\end{pmatrix}=\begin{pmatrix} t^{n_j}\overline{a_j}&t^{n_j}\overline{b_j}\\t^{-n_j}\overline{c_j}&t^{-n_j}\overline{d_j}\end{pmatrix},\ j=1,2,3.
	\]
	This implies that $(\bar{t}/t)^{n_j}=-1$ for each $j$, hence
	\[
	\frac{\bar{t}}{t}=\left(\frac{\bar{t}}{t}\right)^{\sum_jm_jn_j}=(-1)^{\sum_jm_j}=-1
	\]
	since reducing the identity $\sum_jm_jn_j=1$ modulo two yields $\sum_jm_j\equiv 1\mod 2$. 
	
	Thus $t=is$ for some $s\in \R^\times$,and each matrix $g_j$ is of the form
	\[
	g_j=\begin{pmatrix} a_j&b_j\\ \frac{1}{2i\cdot \im(a_j\overline{b_j})}\cdot \bar{a}_j&\frac{1}{2i\cdot \im(a_j\overline{b_j})}\cdot \overline{b_j}\end{pmatrix}
	\]
	with the additional conditions that 
	\[
	\frac{1}{2i\cdot \im(a_j\overline{b_j})}=t^{n_j}=i^{n_j}s^{n_j},\ j=1,2,3,
	\]
	or 
	\[
	\frac{1}{2\cdot \im(a_j\overline{b_j})}=i^{n_j+1}\cdot s^{n_j},\ j=1,2,3.
	\]
	Thus, if $s>0$, we have
	\begin{align*} 
		\im(a_j\overline{b_j})<0&\text{ if }n_j\equiv 1\mod 4\\\notag
		\im(a_j\overline{b_j})>0&\text{ if }n_j\equiv -1\mod 4,
	\end{align*}
	and if $s<0$, we have 
	\begin{align*}
		\im(a_j\overline{b_j})>0&\text{ if }n_j\equiv 1\mod 4\\\notag
		\im(a_j\overline{b_j})<0&\text{ if }n_j\equiv -1\mod 4.
	\end{align*}
	
	Now take   $x\in (T_1\backslash_{\iota_{n_1,n_2,n_3}} \SL_2^3)(\C)^-$. Choose a  representative $(g_1, g_2, g_3)\in \SL_2^3(\C)$ for $x$, giving the element $s\in \R^\times$ as above. Then $(g_1, g_2, g_3)$ is in $(\SL_2^3)(\C)^-$ and under the quotient map
	\[
	(\SL_2^3)(\C)^-\to T(\C)\backslash (\SL_2^3)(\C)^-= (T\backslash \SL_2^3)(\C)^-,
	\]
	$(g_1, g_2, g_3)$ goes to the component of $(T\backslash \SL_2^3)(\C)^-$ parametrized by 
	$(-n_1, -n_2, -n_3)\mod 4$ if $s>0$ and to the component  parametrized by $(n_1, n_2, n_3)\mod 4$ if $s<0$. Thus, 
	the image of $x$ in $(N_3\backslash \SL_2^3)(\R)^-$ lands in the component parametrized by the image of $(n_1, n_2, n_3)$ in $H$, proving the Lemma.
\end{proof}

\begin{prop}\label{prop:RealRealization} Suppose our base-field $k$ is a subfield of $\R$. Let $(a,b,c)$ be odd positive integers, generating the unit ideal in $\Z$, and take $x\in H^0(BN_3,\sW)$. Then $B\iota_{a,b,c}^*(x)\in H^0(BN, \sW)[1/2]$ depends only on the image of $(a,b,c)$ in $H$.
\end{prop}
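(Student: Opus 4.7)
The plan is to lift the $\pi_0$-level statement of Lemma~\ref{lem:pi0Ind} to cohomology after inverting $2$, via an idempotent decomposition of $H^0(BN, \sW)[1/2]$ (and its analog for $H^0(BN_3, \sW)[1/2]$) indexed by the connected components of the real points. The starting observation is that the relation $\langle Q\rangle^2 = 1$ from Proposition~\ref{prop:BNCoh}(1) makes $e^\pm := (1 \pm \langle Q\rangle)/2$ into orthogonal idempotents in $H^0(BN, \sW)[1/2]$ summing to $1$, yielding a $W(k)[1/2]$-algebra splitting
\[
H^0(BN, \sW)[1/2] = W(k)[1/2] \cdot e^+ \oplus W(k)[1/2] \cdot e^-,
\]
one factor for each component $BN(\R)^\pm$ of Lemma~\ref{lem:RealConnComp}(3); the trivial $N$-torsor $\pi \colon \Spec k \to BN$ lies in $BN(\R)^+$ and gives $\pi^*(e^+) = 1$, $\pi^*(e^-) = 0$.

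Next, I would establish the analogous structural result for $BN_3$: a $W(k)[1/2]$-algebra decomposition of $H^0(BN_3, \sW)[1/2]$ into five summands, indexed by $\pi_0(BN_3(\R)) = \{c^+\} \amalg \{c^-_h\}_{h \in H}$ identified in the preamble to this subsection. This may be obtained by importing the methods of Proposition~\ref{prop:BNCoh} to the extension $T \hookrightarrow N_3 \hookrightarrow \SL_2^3$, constructing orthogonal idempotents from the sign character of $N_3/T \cong \Z/2$ together with the discrete $2$-torsion classes coming from $\Pic(BN_3)$. Alternatively, one can produce the idempotents directly as ring-theoretic ``characteristic functions'' of the five components, verified by pulling back to explicit $\R$-points in each component using the parametrizations from the proof of Lemma~\ref{lem:pi0Ind}.

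The conclusion is then a book-keeping argument. Since $B\iota_{a,b,c}^*$ is a ring homomorphism, each idempotent $e_c \in H^0(BN_3, \sW)[1/2]$ maps to one of $0, e^+, e^-, 1$ in $H^0(BN, \sW)[1/2]$; evaluating at $\R$-points in each component of $BN(\R)$ and using the $\pi_0$-map $\pi_0(\pi_{a,b,c}(\R))$ of Lemma~\ref{lem:pi0Ind}, one reads off $B\iota_{a,b,c}^*(e_{c^+}) = e^+$, $B\iota_{a,b,c}^*(e_{c^-_h}) = e^-$ when $h = [(a,b,c)] \in H$, and $B\iota_{a,b,c}^*(e_{c^-_h}) = 0$ otherwise. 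Writing $x = \sum_c x_c \cdot e_c$ with $x_c \in W(k)[1/2]$ therefore yields
\[
B\iota_{a,b,c}^*(x) = x_{c^+}\cdot e^+ + x_{c^-_{[(a,b,c)]}} \cdot e^- \in H^0(BN, \sW)[1/2],
\]
which manifestly depends only on the image of $(a,b,c)$ in $H$.

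The main obstacle is the idempotent decomposition of $H^0(BN_3, \sW)[1/2]$ into five $W(k)[1/2]$-summands matching the five real components, which the paper has not carried out explicitly; a purely signature-level realization does not suffice when $k \subsetneq \R$ because $W(k)[1/2] \to W(\R)[1/2] = \Z[1/2]$ has a large kernel in general, and it is precisely the idempotent structure that allows one to separate the components at the level of $W(k)[1/2]$-coefficients rather than merely at the level of signatures.
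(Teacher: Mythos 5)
Your approach is genuinely different from the paper's, and you correctly identify its weak point yourself: the five-summand idempotent decomposition of $H^0(BN_3,\sW)[1/2]$ over a general subfield $k\subset\R$ is not established anywhere, and you supply no proof of it. That decomposition is the entire content of the statement, not a technical detail, so as written the proposal does not prove the proposition. The bookkeeping you do at the end (ring homomorphisms send orthogonal idempotents to orthogonal idempotents, match them up via $\pi_0$ of real points) is fine, but it rests on a structural theorem that is strictly harder than the proposition itself: knowing $H^0(BN_3,\sW)[1/2]\cong W(k)[1/2]^5$ together with the matching of idempotents to real components would immediately give the result, but there is no reason this should be easier to prove than the proposition.

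Your stated reason for avoiding the paper's route is also based on a misreading. You object that ``a purely signature-level realization does not suffice when $k\subsetneq\R$ because $W(k)[1/2]\to W(\R)[1/2]$ has a large kernel.'' But the paper does not use the single signature $k\hookrightarrow\R$; it uses Pfister's local-global principle in the form cited from \cite{Scharlau}: the kernel of the \emph{total} signature $W(k)\to\prod_{\alpha\in R(k)}\Z$ over \emph{all} real closed field embeddings is exactly the ($2$-primary) torsion, so $W(k)[1/2]\hookrightarrow\prod_\alpha\Z[1/2]$ \emph{is} injective. Combined with the naturality of $H^0(BN/k,\sW)\cong W(k)^2$ in $k$ and of $B\iota_{a,b,c}^*$, this legitimately reduces the proposition to the case of a real closed base, and thence to $k=\R$; there one uses the real-\'etale comparison ($\sW[1/2]\cong\supp_*\Z[1/2]$, Jacobson, Scheiderer, Delfs) to convert $H^0(BN_3,\sW)[1/2]\to H^0(BN,\sW)[1/2]$ into $H^0(BN_3(\R),\Z[1/2])\to H^0(BN(\R),\Z[1/2])$, which is determined by the $\pi_0$-map of Lemma~\ref{lem:pi0Ind}. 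In other words, the canonical way to \emph{establish} the idempotent picture you want is precisely the paper's reduction, so your approach would end up subsuming theirs rather than replacing it. One further small caution: you assert $\pi^*(e^+)=1$, $\pi^*(e^-)=0$ for the classifying map $\pi\colon\Spec k\to BN$, whereas the paper records $\pi^*(\langle Q\rangle)=0$, which is incompatible with $\pi^*$ being the ring map at a point of a component (a ring map would send $\langle Q\rangle$, a square root of $1$, to $\pm1$). Whatever the correct reading of the paper's $\pi^*$ is, your identification of the trivial-torsor point with the ``$e^+$'' idempotent needs a justification that you have not given.
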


\begin{proof} We write $BN/k$ to emphasize the choice of base-field $k$. Recall from Proposition~\ref{prop:BNCoh} that we have an isomorphism $H^0(BN/k, \sW)\cong W(k)^2$, natural in $k$. In addition, letting $R(k)$ denote the set of embeddings   $k\hookrightarrow \bar{k}^r_\alpha$  with $\bar{k}^r_\alpha$ a real closed field,  the map sending $q\in W(k)$ to the collection of signatures $\{\sig_\alpha(q)\in \Z\}_{\alpha\in R(k)}$  has kernel the 2-primary torsion subgroup of $W(k)$, which is equal to the entire torsion subgroup of $W(k)$ \cite[Theorem II.7.3]{Scharlau}. Since $W(\bar{k}^r_\alpha)\cong \Z$ via the signature map, it suffices to prove the result for $k$ real closed. As $k$ is assumed to be a subfield of $\R$, and base-extension of real closed fields induces an isomorphism on $W(-)$, we may assume that we are working over the base-field $k=\R$.  
	
	Let $X$ be a smooth finite type scheme over $\R$. We have the real spectrum $X_r$ of $X$ and the real \'etale site on $X$, $X_\ret$ (see, e.g., \cite[\S 1]{Scheiderer}), with corresponding topoi of sheaves $\widetilde{X}_r$ and $\widetilde{X}_{\ret}$. By  \cite[Theorem 1.3]{Scheiderer}, these are equivalent topoi. We have the change of topology maps of sites $X_\ret\to X_\Nis\to X_\Zar$, and it follows from the construction of the comparison isomorphism $\widetilde{X}_r\cong \widetilde{X}_{\ret}$ in {\it loc.cit.} that  the composition 
	\[
	\widetilde{X}_r\cong \widetilde{X}_{\ret}\to \widetilde{X}_{\Nis}\to  \widetilde{X}_{\Zar}
	\]
	is induced by the continuous map $\supp:X_r\to X_\Zar$ (see \cite[Definition 4.1]{Jacobson} for the definition of $\supp$). 
	By \cite[Theorem 8.9]{Jacobson}, the signature map \cite[Definition 8.1]{Jacobson} induces an isomorphism $\sW[1/2]\xymatrix{\ar[r]^\sig_\sim&}\supp_*\Z[1/2]$ and by \cite[Lemma 4.6]{Jacobson} this in turn induces an isomorphism
	\[
	H^*(X_\Zar, \sW[1/2]) \xymatrix{\ar[r]^-{H^*(\sig)}_-\sim&}H^*(X_r, \Z[1/2]).
	\]
	Thus, the change of topology map $X_\ret\to X_\Zar$ induces a natural isomorphism 
	\[
	H^*(X_\Zar,\sW)[1/2]\xrightarrow{\sim} H^*(X_\ret, \Z[1/2]). 
	\]
	Since the change of topology map $H^*(X_\Zar,\sW)\to H^*(X_\Nis, \sW)$ is also an isomorphism, by \cite[Corollary 5.43]{MorelATF}, we have the natural isomorphism $H^*(X_\Nis, \sW[1/2])\cong H^*(X_\ret, \Z[1/2])$. Finally, by \cite[Theorem II.5.7]{Delfs}, we have the natural isomorphism 
	$H^*(X_\ret, \Z[1/2])\cong H^*(X(\R), \Z[1/2])$. Composing these isomorphisms and taking the cohomology derived limit over the smooth, finite-type approximations defining $BN_3$ and $BN$, we thus have the natural isomorphisms
	\begin{gather*}
		H^*(BN,\sW)[1/2]\xymatrix{\ar[r]^{Re_\R}_\sim&} H^*(BN(\R),\Z[1/2]),\\
		H^*(BN_3,\sW)[1/2] \xymatrix{\ar[r]^{Re_\R}_\sim&} H^*(BN_3(\R),\Z[1/2]).
	\end{gather*}
	These isomorphisms can also be constructed using  \cite[Theorem 35, Proposition 36]{Bachmann}.

	Thus, we can compute $B\iota_{a,b,c}^*(x)\in H^0(BN,\sW)[1/2]$ by computing the image of $x\in H^0(BN(\R), \Z[1/2])$ under the map
	\[
	H^0((B\iota_{a,b,c})(\R)^*):H^0(BN_3(\R), \Z[1/2])\to H^0(BN(\R), \Z[1/2]).
	\]
	But the map $H^0((B\iota_{a,b,c})(\R)^*)$ is determined by the map 
	\[
	\pi_0((B\iota_{a,b,c})(\R)):\pi_0(BN(\R))\to \pi_0(BN_3(\R)),
	\]
	which, by Lemma~\ref{lem:pi0Ind} and the commutativity of the diagram \eqref{eqn:Pi0Commute}, depends only on the image of $(a,b,c)$ in $H$.
\end{proof}

\subsection{Equivariant DT invariants}  We now take $k=\R$.
Let $X$ be an $N_3$-oriented smooth and proper toric threefold over $\R$, with its $N_3$-action $\rho_X:N_3\to \Aut_k(X)$, its $N_3$-equivariant spin structure $\tau:K_X\xrightarrow{\sim}L^{\otimes 2}$ and an $N_3$-linearized very ample invertible sheaf $\sO_X(1)$. We then have for each $n\ge1$ the Hilbert scheme $\Hilb^n(X)$ with induced $N_3$-action, the $N_3$-equivariant perfect obstruction theory $\phi_n(X): E_\bullet^{DT(n)}(X)\to \L_{\Hilb^n(X)/k}$ and the $N_3$-equivariant orientation $\rho_n(X):\det E_\bullet^{DT(n)}(X)\xrightarrow{\sim} L_n(X)^{\otimes 2}$.

\begin{defn} Let $\tilde{I}_n(X)\in W(\R)=\Z$ be the DT invariant 
	\[
	\tilde{I}_n(X):=\deg_\R^{\rho_n(X)}([\Hilb^n(X),\phi^{DT}_n(X)]^\vir_{\EM(\sW_*)}),
	\]
	and let $\tilde{I}_n^{N_3}(X)\in H^0(BN_3,\sW)$ be the $N_3$-equivariant version. Finally, given odd positive integers $a,b,c$, let $\tilde{I}_n^{N, a,b,c}(X)\in H^0(BN,\sW)$ be the $N$-equivariant DT invariant given by the $N$-action and spin structure induced via restriction from $N_3$ via $\iota_{a,b,c}:N\to N_3$. 
\end{defn}
In what follows, we omit the mention of the orientation $\rho_n(X)$ from the notation.

\begin{lem}\label{lem:IndABC} If $a,b,c$ are odd positive integers that generate the unit ideal in $\Z$, then  $\tilde{I}_n^{N, a,b,c}(X)\in H^0(BN,\sW)$ depends only on the image of $(a,b,c)$ in $((\Z/4)^\times)^3/\{\pm1\}$.
\end{lem}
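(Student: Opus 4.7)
The plan is to identify $\tilde{I}_n^{N, a,b,c}(X)$ as the pullback of the $N_3$-equivariant invariant $\tilde{I}_n^{N_3}(X)$ along $B\iota_{a,b,c}\colon BN\to BN_3$, and then apply Proposition~\ref{prop:RealRealization}.

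First, since the $N$-action on $X$, the $N$-linearization on $L$ and on $\sO_X(1)$, and the $N$-equivariant spin structure $\tau$ are all obtained by restricting the corresponding $N_3$-structures along $\iota_{a,b,c}\colon N\to N_3$, the same is true of all induced data on $\Hilb^n(X)$: the $N$-action, the $N$-equivariant perfect obstruction theory $\phi^{DT}_n(X)$, and the $N$-equivariant orientation $\rho_n(X)$ are obtained by restriction from the $N_3$-equivariant versions. By the naturality of the equivariant motivic virtual fundamental class construction of \cite{LevineISNC, LevineVLEWC, LevineOST} with respect to restriction along a closed subgroup, the $N$-equivariant Borel--Moore class $[\Hilb^n(X),\phi^{DT}_n(X)]^{\vir}_{N,\EM(\sW_*)}$ is the pullback of $[\Hilb^n(X),\phi^{DT}_n(X)]^{\vir}_{N_3,\EM(\sW_*)}$ under $B\iota_{a,b,c}$, and this identification is compatible with the proper pushforward to the base. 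Hence
\[
\tilde{I}_n^{N,a,b,c}(X) \;=\; B\iota_{a,b,c}^*\bigl(\tilde{I}_n^{N_3}(X)\bigr) \quad\text{in } H^0(BN,\sW).
\]

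Second, apply Proposition~\ref{prop:RealRealization} to the class $x:=\tilde{I}_n^{N_3}(X)\in H^0(BN_3,\sW)$. Since $(a,b,c)$ are odd positive integers generating the unit ideal in $\Z$, the proposition yields that the image of $B\iota_{a,b,c}^*(x)$ in $H^0(BN,\sW)[1/2]$ depends only on the class of $(a,b,c)$ in $H=((\Z/4)^\times)^3/\{\pm1\}$.

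Finally, by Proposition~\ref{prop:BNCoh}(1), $H^0(BN/\R,\sW) \cong W(\R)\cdot 1 \oplus W(\R)\cdot\langle Q\rangle \cong \Z^2$ is torsion-free, so the localization map
\[
H^0(BN,\sW)\longrightarrow H^0(BN,\sW)[1/2]
\]
is injective. Combining this with the previous step upgrades the dependence statement from $H^0(BN,\sW)[1/2]$ to $H^0(BN,\sW)$, proving the lemma.

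The only nontrivial point is the naturality claim $\tilde{I}_n^{N,a,b,c}(X)=B\iota_{a,b,c}^*(\tilde{I}_n^{N_3}(X))$; this is the standard base-change behavior of equivariant virtual classes under restriction to a closed subgroup (via the approximations $G\backslash U_{n,m}$ defining $BG$), and it is implicit in the constructions of \cite[\S 5]{LevineVLEWC}. Once that is granted, everything else is a direct application of earlier results.
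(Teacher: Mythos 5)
Your proof is correct and takes essentially the same approach as the paper's: identify $\tilde{I}_n^{N,a,b,c}(X)$ as $\iota_{a,b,c}^*(\tilde{I}_n^{N_3}(X))$, apply Proposition~\ref{prop:RealRealization}, and conclude using injectivity of $H^0(BN,\sW)\to H^0(BN,\sW)[1/2]$ (which holds since $H^0(BN,\sW)\cong\Z^2$ over $\R$). The paper treats the compatibility $\tilde{I}_n^{N,a,b,c}(X)=\iota_{a,b,c}^*(\tilde{I}_n^{N_3}(X))$ as evident from the construction, whereas you expand on the naturality argument; this is just a matter of exposition.
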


\begin{proof} Since we are taking $k=\R$, we have $H^0(BN,\sW)=W(\R)^2=\Z^2$, so the map $H^0(BN,\sW)\to H^0(BN,\sW)[1/2]$ is injective, and it suffices to see that $\tilde{I}_n^{N, a,b,c}(X)\in H^0(BN,\sW)[1/2]$ depends only on the image of $(a,b,c)$ in $H=((\Z/4)^\times)^3/\{\pm1\}$. Since $\tilde{I}_n^{N, a,b,c}(X)=\iota_{a,b,c}^*(\tilde{I}_n^{N_3}(X))$, this follows from Proposition~\ref{prop:RealRealization}.
\end{proof}

Applying Proposition~\ref{prop:BNCoh} in case $k=\R$ gives us the canonical isomorphism  $H^0(BN, \sW)[1/e]\cong W(\R)$, which in turn is isomorphic to $\Z$ by the signature map.

\begin{defn} 1. For $\tau\in H$, let  $a,b,c$ be odd positive integers that generate the unit ideal in $\Z$ such that $(a,b,c)\equiv\tau\mod 4/ \{\pm1\}$. Define
	\[
	\tilde{I}_n^{N, \tau}(X)\in H^0(BN,\sW)[1/e]=W(\R)=\Z
	\]
	to be the image of $\tilde{I}_n^{N, a,b,c}(X)$ in $H^0(BN,\sW)[1/e]$; this is well-defined by Lemma~\ref{lem:IndABC}.
	\\[2pt]
	2. Let $\tilde{Z}^{N,\tau}(X,q)$ be the generating series
	\[
	\tilde{Z}^{N,\tau}(X,q)=1+\sum_{n\ge1}\tilde{I}_n^{N, \tau}(X)\in \Z[[q]].
	\]
\end{defn}

\begin{rem} Let $a,b,c$ be odd positive integers generating the unit ideal, giving the $N$-action on $X$ via $\iota_{a,b,c}$. We have the obstruction bundle $\text{ob}_2(X)$ on $\Hilb^2(X)$ and the degree of the corresponding equivariant Euler class (after localizing) 
	\[
	\deg_\R(e^{N,a,b,c}(\text{ob}_2(X))\in  H^0(BN, \sW)[1/e]=W(\R)=\Z.
	\]
	As $\Hilb^2(X)$ is smooth, it follows from the proof of  \cite[Proposition 5.6]{BehrendFantechiISNC} that  $e^{N,a,b,c}(\text{ob}_2(X))$ is the virtual fundamental class $[\Hilb^2(X),\phi^{DT}_2(X)]^{\vir, N}_{\EM(\sW_*)}$, so we have 
	\[
	\deg_\R(e^{N,a,b,c}(\text{ob}_2(X))=\tilde{I}_2^{N,a,b,c}(X),
	\]
	and thus $\deg_\R(e^{N,a,b,c}(\text{ob}_2(X)))$ depends only on the image $\tau$ of $(a, b,c)$ in $H$. We therefore write $\deg_\R(e^{N,\tau}(\text{ob}_2(X)))$ for $\deg_\R(e^{N,a,b,c}(\text{ob}_2(X)))$.
\end{rem}

Our goal is to prove the following theorem. 

\begin{theorem}\label{thm:MainBlowup} Let $X$ be an $N_3$-oriented smooth proper toric threefold over $\R$.  For each $\tau\in ((\Z/4)^\times)^3/\{\pm1\}$, we have
	\[
	\tilde{Z}^{N,\tau}(X,q)=M(q^2)^{\deg_\R(e^{N,\tau}(\text{ob}_2(X)))}.
	\]
	Moreover, we have
	\[
	\deg_\R(e^{N,\tau}(\text{ob}_2(X)))=-\frac{1}{2}\deg_\R(c_3((\text{ob}_1(X)))=\frac{1}{2}\deg_\R(c_3(T_X\otimes K_X)),
	\]
	and
	\[
	\tilde{Z}^{N,\tau}(X,q)=M(q^2)^{\frac{1}{2}\deg_\R(c_3(T_X\otimes K_X))}.
	\]
	In particular, $\deg_\R(e^{N,\tau}(\text{ob}_2(X)))$ and $\tilde{Z}^{N,\tau}(X,q)$ are independent of the choice of $\tau\in H$. 
\end{theorem}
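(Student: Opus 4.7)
The plan is to extend the proof of Theorem~\ref{theorem: main theorem} from $(\P^1)^3$ to an arbitrary $N_3$-oriented smooth proper toric threefold $X = X_\Sigma$. Fix $\tau \in H$ and an integer $M > 0$. Combining Lemma~\ref{lem:GenericWeights} with Lemma~\ref{lem:Nonzero} and an adaptation of Lemma~\ref{lem:NumericalConditions} to $X$, we produce a triple of odd positive integers $(a,b,c)$ that: (i) generates the unit ideal in $\Z$, (ii) reduces to $\tau$ modulo $4/\{\pm 1\}$, and (iii) makes the $N$-action on $X$ via $\iota_{a,b,c}$ satisfy the genericity conditions needed for virtual localization up to co-length $2M$, namely, all $N$-fixed ideal sheaves $\sI$ are isolated $k$-rational points with each restriction $\sI|_{U_\kappa}$ a monomial ideal and each $T_1$-representation $\Ext^i(\sI|_{U_\kappa}, \sI|_{U_\kappa})$ free of trivial summands for $i=1,2$. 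The adaptation of Lemma~\ref{lem:NumericalConditions} requires only that monomial ideals supported at $T$-fixed points have bounded generator degrees (which follows from projectivity of the relevant Hilbert schemes), after which Lemma~\ref{lem:Nonzero} combines all constraints.

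With this choice, Proposition~\ref{prop:LocFormula} and Corollary~\ref{cor:SumFormula} apply to $X$, and by Lemma~\ref{lem:SigmaInvolution} the $N$-orbits of $T$-fixed points are precisely the pairs $\{0_\kappa, 0_{-\kappa}\}$ indexed by $\Sigma(3)/\<-\id\>$. A short arithmetic check using $a,b,c$ odd and the orientation conditions of Definition~\ref{defn:N3Orient}\eqref{defn:N3Orient:4} (column and row sums of $(s_{ij}^\kappa)$ even, total sum $\equiv 2 \bmod 4$) shows that the coordinate weights $(s_1^\kappa, s_2^\kappa, s_3^\kappa)$ of $T_1$ at each $0_\kappa$ are all even with $s_1^\kappa + s_2^\kappa + s_3^\kappa \equiv 2 \bmod 4$. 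The sign analysis of Proposition~\ref{proposition: signs} together with the reformulation in Remark~\ref{rem:QuadEquivVM} therefore gives, in $\Z[[q]]/(q^{2M+1})$,
\begin{equation*}
\tilde{Z}^{N,\tau}(X,q) \equiv \prod_{\{\kappa,-\kappa\}} W(s_1^\kappa, s_2^\kappa, s_3^\kappa, q) \equiv \prod_{\{\kappa,-\kappa\}} W'(s_1^\kappa, s_2^\kappa, s_3^\kappa, -q^2).
\end{equation*}

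Applying \cite[Theorem 1]{MaulikII} gives $W'(s_1, s_2, s_3, q) = M(-q)^{-r(s)}$ with $r(s) := (s_1+s_2)(s_1+s_3)(s_2+s_3)/(s_1s_2s_3)$. Since $r(-s) = r(s)$, the two elements of each $N$-orbit $\{0_\kappa, 0_{-\kappa}\}$ make the same contribution, so summing exponents over pairs yields $\tfrac{1}{2}\sum_{\kappa \in \Sigma(3)} r(s^\kappa)$. By toric Bott localization applied to $c_3(T_X \otimes K_X)$ on $X$ (the classical identity underpinning the MNOP formula), $\sum_{\kappa \in \Sigma(3)} r(s^\kappa) = -\deg_\R c_3(T_X \otimes K_X)$. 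Therefore $\tilde{Z}^{N,\tau}(X, q) \equiv M(q^2)^{\frac{1}{2}\deg_\R c_3(T_X \otimes K_X)} \pmod{q^{2M+1}}$, and letting $M \to \infty$ gives the identity in $\Z[[q]]$. Independence of $\tau$ is immediate from the right-hand side.

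For the remaining identifications, reading off the coefficient of $q^2$ yields $\tilde{I}_2^{N,\tau}(X) = \frac{1}{2}\deg_\R c_3(T_X \otimes K_X)$. Since $\Hilb^2(X)$ is smooth, an $N$-equivariant version of \cite[Proposition 5.6]{BehrendFantechiISNC} identifies the localized equivariant virtual class with $e^{N,\tau}(\text{ob}_2(X))$, giving $\deg_\R(e^{N,\tau}(\text{ob}_2(X))) = \frac{1}{2}\deg_\R c_3(T_X \otimes K_X)$. On $X = \Hilb^1(X)$ the obstruction bundle is $\text{ob}_1(X) = (T_X \otimes K_X)^\vee$, so $c_3(\text{ob}_1(X)) = -c_3(T_X \otimes K_X)$, closing the chain of equalities. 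The main obstacle in this plan is the bookkeeping in the first paragraph: assembling the polynomial non-vanishing conditions (bounded-degree generator data from finitely many Hilbert schemes) with the coprimality-and-congruence condition on $\tau$ into one application of Lemma~\ref{lem:Nonzero}. The remaining steps are mechanical consequences of the material in \S\ref{sec:EquivVertex}.
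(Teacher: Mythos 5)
Your proposal matches the paper's proof in both structure and content: the same sequence of lemmas adapted from $(\P^1)^3$ to $X_\Sigma$ (genericity of $(a,b,c)$ constrained by the conditions $(a,b,c)\equiv\tau\bmod 4/\{\pm1\}$ and coprimality via Lemma~\ref{lem:Nonzero}, virtual localization, sign analysis using the $N_3$-orientation parity conditions, the MNOP vertex formula, Bott residue for $c_3(T_X\otimes K_X)$, smoothness of $\Hilb^2(X)$, and $\text{ob}_1(X)\cong(T_X\otimes K_X)^\vee$). The only surface difference is the order in which the $q^2$ coefficient is read off versus when the Bott residue identification is invoked, which is immaterial.
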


\begin{defn}\label{defn:CommonValue} We let $\widetilde{\deg}_\R(e^N(\text{ob}_2(X)))\in \Z$ and $\tilde{Z}^N(X,q)\in \Z[[q]]$ denote   the respective common values of  $\deg_\R(e^{N,\tau}(\text{ob}_2(X)))$ and $\tilde{Z}^{N,\tau}(X,q)$, for $\tau\in H$.
\end{defn}

As immediate consequence (see Example~\ref{exa:P13Blowup}) we find the following corollary. 

\begin{cor}\label{cor:MainBlowup}  1. Let $X$ be an $N_3$-equivariant  iterated  blow-up of $(\P^1_\R)^3$. Then 
	\[
	\tilde{Z}^N(X,q)=M(q^2)^{\widetilde{\deg}_\R(e^N(\text{ob}_2(X)))}\in W(\R)[[q]]=\Z[[q]].
	\]
	2. Let $a,b,c$ be odd positive integers generating the unit ideal in $\Z$, and define  $\widetilde{I}^{N,a,b,c}_n(X)^*\in \Z$ to be the image of $\widetilde{I}^{N,a,b,c}_n(X)\in H^0(BN, \sW)$ under the localization map $H^0(BN, \sW)\to H^0(BN, \sW)[1/e]\cong W(\R)\cong \Z$.   
	Then $\widetilde{I}^{N,a,b,c}_n(X)^*\in\Z$ is the coefficient of $q^n$ in $\tilde{Z}^N(X,q)$. In particular, $\widetilde{I}^{N,a,b,c}_n(X)^*$ is independent of the choice of $a,b,c$ and is zero for $n$ odd.
\end{cor}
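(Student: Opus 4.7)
\smallskip
\noindent\textbf{Proof plan.} The corollary is presented as an immediate consequence of Theorem~\ref{thm:MainBlowup} and Example~\ref{exa:P13Blowup}, so the plan is essentially to unwind the definitions and cite the right results. First, for part (1), I would invoke Example~\ref{exa:P13Blowup}, which verifies that $(\P^1_\R)^3$ is $N_3$-oriented and that an $N_3$-equivariant iterated blow-up at $N_3$-stable collections of $\R$-points inherits an $N_3$-orientation (after suitably twisting the chosen very ample line bundle to keep it very ample on the blow-up). Hence any such iterated blow-up $X$ is an $N_3$-oriented smooth proper toric threefold in the sense of Definition~\ref{defn:N3Orient}, so Theorem~\ref{thm:MainBlowup} applies and gives the asserted identity $\tilde{Z}^N(X,q) = M(q^2)^{\widetilde{\deg}_\R(e^N(\mathrm{ob}_2(X)))}$, where the notation $\tilde{Z}^N(X,q)$ and $\widetilde{\deg}_\R(e^N(\mathrm{ob}_2(X)))$ make sense by Definition~\ref{defn:CommonValue} because Theorem~\ref{thm:MainBlowup} shows the $\tau$-dependence drops out.

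For part (2), I would unwind the chain of definitions. By definition, $\tilde{Z}^{N,\tau}(X,q) = 1 + \sum_{n\ge1}\tilde{I}_n^{N,\tau}(X)q^n$, and by Theorem~\ref{thm:MainBlowup} this series is independent of $\tau\in H$ and equals $\tilde{Z}^N(X,q)$. So the coefficient of $q^n$ in $\tilde{Z}^N(X,q)$ is $\tilde{I}_n^{N,\tau}(X)$ for any $\tau$. Now, given $(a,b,c)$ odd positive integers generating the unit ideal, let $\tau\in H$ be the image of $(a,b,c)$ under reduction mod $4$ followed by the quotient map; by Lemma~\ref{lem:IndABC} (which uses Proposition~\ref{prop:RealRealization}, and this is where the ``generate the unit ideal'' hypothesis enters), $\tilde{I}_n^{N,a,b,c}(X)\in H^0(BN,\sW)$ only depends on $\tau$, and hence its image $\tilde{I}_n^{N,a,b,c}(X)^*$ in the localization $H^0(BN,\sW)[1/e]\cong W(\R)\cong\Z$ coincides with $\tilde{I}_n^{N,\tau}(X)$ by the very definition of $\tilde{I}_n^{N,\tau}(X)$. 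This yields both the identification of $\tilde{I}_n^{N,a,b,c}(X)^*$ with the $q^n$-coefficient of $\tilde{Z}^N(X,q)$ and its independence of the choice of $(a,b,c)$.

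Finally, the vanishing of $\tilde{I}_n^{N,a,b,c}(X)^*$ for odd $n$ is immediate from the identity proved in part (1): the MacMahon factor $M(q^2)^{\widetilde{\deg}_\R(e^N(\mathrm{ob}_2(X)))}$ is visibly a power series in $q^2$, so all of its odd-degree coefficients vanish.

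\smallskip
\noindent\textbf{Expected main obstacle.} There is no real obstacle at the level of the corollary itself; all the substantive content is in Example~\ref{exa:P13Blowup} and Theorem~\ref{thm:MainBlowup}. The only thing to be careful about is not to conflate the two independence statements at play: independence of $\tau$ (coming from Theorem~\ref{thm:MainBlowup}, which is genuine content) versus the weaker independence of $(a,b,c)$ within a fixed $\tau$-class (coming from Lemma~\ref{lem:IndABC} via the real-realization argument in Proposition~\ref{prop:RealRealization}, which requires the unit-ideal hypothesis on $(a,b,c)$). Combining the two gives independence of $(a,b,c)$ outright.
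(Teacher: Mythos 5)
Your proposal is correct and follows the same route the paper intends: cite Example~\ref{exa:P13Blowup} to place the iterated blow-up within the hypotheses of Theorem~\ref{thm:MainBlowup} for part (1), then unwind the definitions of $\tilde{Z}^{N,\tau}$ and $\tilde{I}_n^{N,\tau}(X)$ (which are exactly $\tilde{I}_n^{N,a,b,c}(X)^*$ via Lemma~\ref{lem:IndABC}) together with the $\tau$-independence from Theorem~\ref{thm:MainBlowup} for part (2), with oddness killed by the $q^2$-dependence of the MacMahon factor. The paper states the corollary as an immediate consequence without a displayed proof, and your unwinding supplies precisely the intended chain of citations.
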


\begin{exa}
	Let $X$ be the blowup of $(\P^1)^3$ in the $\sigma$-orbit $\alpha_1$. Using a SAGE-implementation, one can compute that  $\widetilde{I}^{N,a,b,c}_2(X)^* = -3$, $\widetilde{I}^{N,a,b,c}_4(X)^* = -3$ and $\widetilde{I}^{N,a,b,c}_6(X)^* = 8$. Using Corollary \ref{cor:MainBlowup} and the fact that $\widetilde{I}^{N,a,b,c}_2(X)^* = -3$, we have that
	$\tilde{Z}^N(X,q)=M(q^2)^{-3}$, which is compatible with the above explicit calculations.
	
	Similarly, let $X'$ be the blowup of $X$ in the $\sigma$-orbit $\alpha_2$, then one  computes that $\widetilde{I}^{N,a,b,c}_2(X')^* = -6$, $\widetilde{I}^{N,a,b,c}_4(X')^* = 3$ and $\widetilde{I}^{N,a,b,c}_6(X')^* = 34$, and we have that 
	$\tilde{Z}^N(X',q)=M(q^2)^{-6}$. 
\end{exa}

The proof of Theorem~\ref{thm:MainBlowup} follows the same line of argument was used to prove Theorem~\ref{theorem: main theorem}. Since all the main ideas are essentially the same, we will content ourselves with pointing out the main steps and indicating what needs to be modified.

As in  \S\ref{subsec:ToricRecoll}, we have our open cover   $\{U_\kappa\}_{\kappa\in \Sigma(3)}$ of $X=X_\Sigma$. For an ideal sheaf $\sI$ on $X$, we let $\sI_\kappa$ denote the restriction to $U_\kappa$. If $\sI$ is supported in $X^T$, we write $\sI=\cap_{\kappa\in \Sigma(3)}\sI_{0_\kappa}$ where the ideal sheaf $\sI_{0_\kappa}\subset \sO_X$ in this intersection is uniquely determined by requiring that $\sI_{0_\kappa}$ has support $\{0_\kappa\}$.

\begin{lem}\label{lem:1} Given $M>0$, and $\tau\in H$,  there are infinitely many positive odd integers $a,b,c$ such that, letting $T_1, N$ act on $X$ via $\iota_{a,b,c}$, we have
	\begin{enumerate}
		\item $(a,b,c)\equiv \tau\mod 4/ \{\pm1\}$, and $a,b,c$ generate the unit ideal in $\Z$.
		\item  $X^{T}=X^{T_1}$.
	\end{enumerate}
	Moreover, for each ideal sheaf $\sI$ on $X$, fixed by $N$ and of co-length $2n\le 2M$, we have
	\begin{enumerate}
		\item[(3)] $[\sI]\in \Hilb^{2n}(X)$ has residue field $\R$.
		\item[(4)] For each $0_\kappa\in X^T$, the restriction $\sI_\kappa$ of $\sI$ to $U_\kappa\cong \Spec \R[x_1^\kappa,x_2^\kappa,x_3^\kappa]$ is a monomial ideal.
		\item[(5)] For each $0_\kappa\in X^T$, there are no $T_1$-trivial subrepresentations in $\Ext^i(\sI_\kappa, \sI_{|U_\kappa})$, $i=1,2$.
	\end{enumerate}
\end{lem}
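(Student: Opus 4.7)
The plan is to adapt the proof of Lemma~\ref{lem:NumericalConditions} to the general $N_3$-oriented setting, using Lemma~\ref{lem:GenericWeights} to handle (2) and Lemma~\ref{lem:Nonzero} to arrange (1) along with the remaining genericity conditions.

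First, I would choose standard affine coordinates $x_1^\kappa, x_2^\kappa, x_3^\kappa$ on each $U_\kappa \cong \A^3_\R$ for $\kappa \in \Sigma(3)$, and let $(s_{ij}^\kappa)$ be the coordinate weight matrix of the $T$-action at $0_\kappa$. For a choice of odd positive integers $a, b, c$, the $T_1$-action at $0_\kappa$ has coordinate weights $(s_1^\kappa, s_2^\kappa, s_3^\kappa) = (a,b,c) \cdot (s_{ij}^\kappa)$, which are $\Z$-linear forms in $(a,b,c)$. Lemma~\ref{lem:GenericWeights} gives a non-zero polynomial $f \in \Z[x_1, x_2, x_3]$ whose non-vanishing at $(a,b,c)$ implies (2).

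Next, I would produce a uniform degree bound: since each $\Hilb_0^m(\A^3)$ (ideals $I \subset \R[x_1, x_2, x_3]$ of co-length $m$ supported at the origin) is projective over $\R$, covering it by finitely many affines and using noetherianity gives an integer $D_m > 0$ so that every such $I$ is generated by polynomials of degree $\le D_m$. Taking $D := \max_{m \le M} D_m$ gives a bound valid for all co-lengths $\le M$. Now, at each $0_\kappa$, requiring that distinct monomials $x_1^{k_1} x_2^{k_2} x_3^{k_3}$ of degree $\le D$ have pairwise distinct $T_1$-weights $k_1 s_1^\kappa + k_2 s_2^\kappa + k_3 s_3^\kappa$ amounts to finitely many non-zero linear conditions in $(a,b,c)$. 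If these hold, then, as in the proof of Lemma~\ref{lem:NumericalConditions}, every $T_1$-stable ideal $\sI_\kappa$ of co-length $\le M$ in $\R[x_1^\kappa, x_2^\kappa, x_3^\kappa]$ is forced to be monomial, proving (4). Base change from $\R$ to any extension field $L$ preserves the monomial property of the restrictions, so $\sI$ is defined over $\R$, proving (3).

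For (5), I would observe that for each of the finitely many monomial ideals $I \subset \R[x_1, x_2, x_3]$ of co-length $\le M$, the groups $\Ext^i_A(I, I)$ ($i = 1, 2$, $A = \R[x_1, x_2, x_3]$) carry a natural $T = \G_m^3$-action with characters
\[
\chi_i(t_1, t_2, t_3) = \sum_{k \in \Z^3} n_{k,i}(I)\, t_1^{k_1} t_2^{k_2} t_3^{k_3},
\]
with only finitely many non-zero $n_{k,i}(I)$. Under $\iota_{a,b,c}$, each weight $k$ pulls back to $T_1$-weight $k \cdot (s_1^\kappa, s_2^\kappa, s_3^\kappa)$, a linear form in $(a,b,c)$. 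Requiring that none of these finitely many linear forms vanish (as $I$, $i$, and $\kappa$ vary) is another finite set of non-zero linear conditions in $(a,b,c)$, and excluding a non-trivial summand is exactly the condition in (5).

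Finally, I would apply Lemma~\ref{lem:Nonzero} to combine: the condition $f(a,b,c) \neq 0$ together with finitely many non-zero linear conditions determines a non-empty Zariski-open subset $U \subset \A^3_\Q$, while $(a,b,c) \equiv \tau \mod 4 / \{\pm 1\}$ and positivity are exactly the residue and sign conditions (2), (3) of Lemma~\ref{lem:Nonzero}; representative $\tau$ can be lifted so that at least one coordinate is a unit modulo $4$, triggering the supplementary clause of Lemma~\ref{lem:Nonzero} and producing infinitely many $(a,b,c)$ that in addition generate the unit ideal in $\Z$. This gives (1) simultaneously with (2)--(5). The main technical obstacle is the uniform degree bound $D$, but this is purely formal once one knows that the Hilbert schemes $\Hilb_0^m(\A^3)$ are projective; everything else is a matter of packaging finitely many open/congruence conditions into Lemma~\ref{lem:Nonzero}.
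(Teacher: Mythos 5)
Your proof is correct and takes essentially the same route as the paper: the paper itself says only that one repeats the argument of Lemma~\ref{lem:NumericalConditions}, adding to the application of Lemma~\ref{lem:Nonzero} the congruence condition $(a,b,c)\equiv\tau\bmod 4/\{\pm1\}$, the positivity, and the coprimality condition, and your sketch fills in exactly those details (uniform degree bound via projectivity of $\Hilb^m_0(\A^3)$, distinct weights for monomials of degree $\le D$ forcing monomiality, residue field $\R$ via base change, and finitely many linear non-vanishing conditions for the $\Ext^i$-characters). One small point worth noting, which is implicit in your writeup: the bound $D$ need only cover co-lengths $\le M$ because $\sigma$-equivariance forces the local co-lengths at $0_\kappa$ and $0_{-\kappa}$ to agree, so each piece has co-length $\le n\le M$; and for the supplementary clause of Lemma~\ref{lem:Nonzero}, observe that the condition is automatic here since each $a_i$ is odd, hence already a unit modulo $4$.
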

The proof follows the same arguments as used to prove Lemma~\ref{lem:NumericalConditions}, where instead of requiring only that $a,b,c$ are odd and positive, we use suitable conditions in Lemma~\ref{lem:Nonzero} to give infinitely many positive integers $a,b,c$ such that $(a,b,c)\equiv\tau\mod 4/ \{\pm1\}$, and $a,b,c$ generate the unit ideal, with $a,b,c$ satisfying (2)-(5). 

\begin{lem}\label{lem:2} Fix $M>0$,  $\tau\in H$, and  a triple of positive integers $(a,b,c)$ satisfying the conditions of Lemma~\ref{lem:1}. We let $N$ act on $X$ via $\iota_{a,b,c}$ and let $n>0$ be an integer with $2n\le 2M$.   Then in $H^0(BN, \sW)[1/e]$ we have
	\begin{equation}\label{equation: virtual localization2} 
		\tilde{I}_{2n}^{N,\tau} = \sum_{[\sI]\in \text{Hilb}^n(X)^{N}} \frac{e_N(\Ext^2(\sI,\sI))}{e_N(\Ext^1(\sI,\sI))}\in W(\R)=\Z.
	\end{equation} 
	Moreover $\tilde{I}_{n'}^{N,\tau}=0$ for every odd integer $n'>0$.
\end{lem}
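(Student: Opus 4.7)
The approach is a direct application of Levine's virtual localization theorem \cite[Theorem 1]{LevineVLEWC} to the $N$-action on $\Hilb^{2n}(X)$ induced by $\iota_{a,b,c}:N\hookrightarrow N_3$, mirroring the proof of Proposition~\ref{prop:LocFormula}. Unlike the $(\P^1)^3$ case there is no $\SL_2^3$-extension of the $N$-action, so we cannot invoke Proposition~\ref{prop:Splitting Identity}; however, the target invariant $\tilde I_{2n}^{N,\tau}$ already lives in the localization $H^0(BN,\sW)[1/e]\cong W(\R)=\Z$ by definition, so the split-off issue simply does not arise.

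First, I would check the input hypotheses for virtual localization. For $2n\le 2M$, parts (3)--(5) of Lemma~\ref{lem:1} ensure that each $N$-fixed point $[\sI]\in\Hilb^{2n}(X)^N$ is an isolated fixed point with residue field $\R$, that each restriction $\sI_\kappa$ is a monomial ideal in the standard affine coordinates on $U_\kappa$, and that the $T_1$-representations $\Ext^i(\sI_\kappa,\sI_\kappa)$ ($i=1,2$) contain no trivial subrepresentation. The last condition guarantees that $e_N(\Ext^1(\sI,\sI))$ becomes a unit after inverting $e$, so that the local contribution $e_N(\Ext^2(\sI,\sI))/e_N(\Ext^1(\sI,\sI))$ is a well-defined element of $H^0(BN,\sW)[1/e]$. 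Applying \cite[Theorem 1]{LevineVLEWC} to the pushforward $p_{\Hilb^{2n}(X),*}([\Hilb^{2n}(X),\phi_{DT,2n}]^{\vir}_{N,\EM(\sW_*)})$, one then obtains the stated identity~\eqref{equation: virtual localization2} in $H^0(BN,\sW)[1/e]$.

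For the vanishing claim, fix any odd $n'>0$. By Lemma~\ref{lem:GenericWeights}(2), any $N$-fixed ideal sheaf of finite co-length is supported on $X^T=\{0_\kappa\}_{\kappa\in\Sigma(3)}$, and one may write it uniquely as $\sI=\cap_{\kappa\in\Sigma(3)}\sI_{0_\kappa}$ with $\sI_{0_\kappa}$ supported at $0_\kappa$. The element $\sigma\in N$ acts on $X$ as $\sigma_\delta$ under $\iota_{a,b,c}$, and by Lemma~\ref{lem:SigmaInvolution} one has $\sigma_\delta(0_\kappa)=0_{-\kappa}$. Since every $\kappa\in\Sigma(3)$ is a strongly convex cone, $\kappa\ne -\kappa$, so $\sigma_\delta$ partitions $\Sigma(3)$ into pairs $\{\kappa,-\kappa\}$. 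The $\sigma_\delta$-equivariance of $\sI$ forces $\sigma_\delta^*\sI_{0_\kappa}=\sI_{0_{-\kappa}}$, so the co-lengths at $0_\kappa$ and $0_{-\kappa}$ are equal and the total co-length is even. Hence $\Hilb^{n'}(X)^N=\emptyset$ for odd $n'$, and virtual localization for the empty fixed locus gives $\tilde I_{n'}^{N,\tau}=0$.

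The main potential obstacle is verifying that \cite[Theorem 1]{LevineVLEWC} truly applies in our setting: it requires an $N$-equivariant closed embedding of $\Hilb^n(X)$ into a smooth $N$-scheme together with a compatible $N$-linearized obstruction theory. These are supplied by the $N_3$-linearized very ample sheaf $\sO_X(1)$ and the $N_3$-equivariance of $\phi_{DT,n}$ and $\rho_n$ discussed in \S\ref{subsec:EMDT}, restricted via $\iota_{a,b,c}$; everything else is a direct transcription of the arguments used in the proof of Proposition~\ref{prop:LocFormula} and Corollary~\ref{corollary: In vanishes for n odd}.
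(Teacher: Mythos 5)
Your proposal is essentially the paper's argument: apply \cite[Theorem 1]{LevineVLEWC} exactly as in Proposition~\ref{prop:LocFormula}, observe that the split-off issue of Remark~\ref{rem:LocId} does not arise because $\tilde{I}_{2n}^{N,\tau}$ is by definition an image in $H^0(BN,\sW)[1/e]$, and prove the vanishing for odd $n'$ from the $\sigma_\delta$-pairing of the fixed points $0_\kappa\leftrightarrow 0_{-\kappa}$ (the analogue of Corollary~\ref{corollary: In vanishes for n odd}).

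There is one small but real gap. You assert that \cite[Theorem 1]{LevineVLEWC} directly yields the identity in $H^0(BN,\sW)[1/e]$, but, as in Proposition~\ref{prop:LocFormula}, the localization theorem only produces the identity in $H^*(BN,\sW)[1/m_n e]^0 = W(\R)[1/m_n]$ for some positive integer $m_n$ determined by the equivariant Euler classes that must be inverted. To promote this to an identity in $H^0(BN,\sW)[1/e] = W(\R)=\Z$ you need to use that the localization map $W(\R)\to W(\R)[1/m_n]$ (i.e.\ $\Z\hookrightarrow\Z[1/m_n]$) is injective: the left-hand side is by definition an element of $\Z$, so equality in $\Z[1/m_n]$ forces the right-hand side to lie in $\Z$ and the identity to hold there. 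This injectivity of $W(\R)\hookrightarrow W(\R)[1/m]$ is precisely the one additional observation the paper's (one-sentence) proof of this lemma records beyond the argument of Proposition~\ref{prop:LocFormula}.
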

The proof is the same as the proof of Proposition~\ref{prop:LocFormula}, where we use in addition that fact that $W(\R)$ injects into $W(\R)[1/m]$ for each positive integer $m$.

Similarly, the same argument as used for Proposition~\ref{prop:NEulerClassLocal} gives us the next result. 
\begin{lem}\label{lem:3} We use the embedding $\iota_{a,b,c}$ to define the $N$-action on $X$,  and suppose $(a,b,c)$ satisfies the conditions of Lemma~\ref{lem:1} for given $M>0$ and $\tau\in H$. Let $\sI$ be an $N$-fixed ideal sheaf on $X$ of co-length $2n\le 2M$ with $\sI$ supported on $\alpha_\kappa=\{0_\kappa, 0_{-\kappa}\}$,  let $s:=(s_1, s_2, s_3)$ be the coordinate weights for $T_1$ at $0_\kappa$. 
	
	We write the trace $V_{\sI_\kappa}(t)$ for the virtual $T_1$-representation $\Ext^1(\sI_\kappa,\sI_\kappa)- \Ext^2(\sI_\kappa,\sI_\kappa)$ as the finite sum  $V_{\sI_\kappa}(t) = \sum_{w \in \Z} v_w t^w$, $v_w\in \Z$. Then  
	\begin{equation}
		\frac{e_N(\Ext^2(\sI,\sI))}{e_N(\Ext^1(\sI,\sI))} = \prod_{w\in \Z}(\epsilon(w)\cdot w)^{-v_w}\in \Z.
	\end{equation}
\end{lem}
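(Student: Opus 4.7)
The proof proceeds in direct parallel to Proposition~\ref{prop:NEulerClassLocal}. Since $\sI$ is $N$-stable and supported on the orbit $\alpha_\kappa=\{0_\kappa, 0_{-\kappa}\}$, I first write $\sI=\sI_{0_\kappa}\cap \sigma_\delta(\sI_{0_\kappa})$, where $\sigma_\delta=\iota_{a,b,c}(\sigma)$ interchanges $0_\kappa$ and $0_{-\kappa}$ by Lemma~\ref{lem:SigmaInvolution}, and $\sI_{0_\kappa}$ denotes the component of $\sI$ supported at $0_\kappa$. Correspondingly, $\Ext^i(\sI,\sI)=\Ext^i(\sI_{0_\kappa},\sI_{0_\kappa})\oplus \Ext^i(\sigma_\delta(\sI_{0_\kappa}),\sigma_\delta(\sI_{0_\kappa}))$ as $N$-representations. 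Since $\sI_\kappa$ is a monomial ideal by hypothesis (4) of Lemma~\ref{lem:1}, Proposition~\ref{proposition: compute trace} identifies $V_{\sI_\kappa}(t)=\sum_w v_wt^w$ with the character of the virtual $T_1$-representation $\Ext^1(\sI_\kappa,\sI_\kappa)-\Ext^2(\sI_\kappa,\sI_\kappa)$.

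Following the bookkeeping of Proposition~\ref{prop:NEulerClassLocal}, one verifies that the $N_3$-equivariant orientation $\rho_{2n}(X)$ on $\det E^{DT(2n)}_\bullet$, constructed from the $N_3$-equivariant spin structure via \cite[Theorem 6.3]{LevineOST}, restricts, up to squares, to the concatenation of oriented bases $(a_1,\ldots,a_r,\sigma_\delta(a_1),\ldots,\sigma_\delta(a_r))$ for $\Ext^1(\sI,\sI)$ and the analogous basis for $\Ext^2(\sI,\sI)$, where $(a_i)$ and $(b_j)$ are oriented bases for $\Ext^1(\sI_{0_\kappa},\sI_{0_\kappa})$ and $\Ext^2(\sI_{0_\kappa},\sI_{0_\kappa})$. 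I then refine these to $T_1$-weight vectors; condition (5) of Lemma~\ref{lem:1} ensures that no weight vanishes. The crucial new input over the $(\P^1)^3$ case is that the $T_1$-coordinate weights $s_j=as_{1j}^\kappa+bs_{2j}^\kappa+cs_{3j}^\kappa$ at $0_\kappa$ are even: this follows from the parity condition $\sum_i s_{ij}^\kappa\equiv 0\bmod 2$ in Definition~\ref{defn:N3Orient}(4) together with the oddness of $a,b,c$. Consequently every $w$ appearing in $V_{\sI_\kappa}(t)$ is even and nonzero, and each pair $(e,\sigma_\delta(e))$ arising from a weight-$w$ vector $e$ is, after rescaling, a standard basis for $\rho_w$ in the sense of Definition~\ref{defn:StandardBasis}.

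This decomposes the virtual $N$-representation $\Ext^2(\sI,\sI)-\Ext^1(\sI,\sI)$ as $\bigoplus_w \tilde{\sO}(w)^{-v_w}$, with the orientation from the chosen standard bases compatible (up to squares) with the one induced by $\rho_{2n}(X)$. Applying Lemma~\ref{lem:NEulerClass} and multiplicativity of the equivariant Euler class yields
\[
\frac{e_N(\Ext^2(\sI,\sI))}{e_N(\Ext^1(\sI,\sI))}=\prod_{w\in\Z}\bigl(\epsilon(w)\cdot (w/2)\cdot\tilde{e}\bigr)^{-v_w}
\]
in a localization of $H^*(BN,\sW)\oplus H^*(BN,\sW(\gamma))$ inverting $2\tilde{e}$. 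Since $E^{DT(2n)}_\bullet(X)$ has virtual rank zero, $\sum_w v_w=0$, so the powers of $\tilde{e}$ and of $2$ all cancel, leaving $\prod_w(\epsilon(w)\cdot w)^{-v_w}\in H^0(BN,\sW)[1/e]=W(\R)=\Z$ as required. The main obstacle, just as in Proposition~\ref{prop:NEulerClassLocal}, is verifying the orientation compatibility between $\rho_{2n}(X)$ and the bases $(e,\sigma_\delta(e))$; but the argument of \cite[Theorem 6.3]{LevineOST} depends only on the $N_3$-equivariance of the spin structure, and cancellations in expressing the virtual representation as a difference of genuine ones can be ignored since they contribute equally to numerator and denominator.
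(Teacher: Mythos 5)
Your proposal is correct and takes essentially the same approach as the paper, which proves this lemma by simply pointing to the argument of Proposition~\ref{prop:NEulerClassLocal}. You have faithfully carried that argument over, correctly isolating the key new ingredient needed in the toric setting, namely that the $T_1$-coordinate weights $s_j^\kappa$ at $0_\kappa$ are all even (which the paper records as the unlabeled lemma immediately following this one, derived from Definition~\ref{defn:N3Orient}\eqref{defn:N3Orient:4} and the oddness of $a,b,c$), so that each pair $(e,\sigma_\delta(e))$ of weight vectors is a standard basis for $\rho_w$.
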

The corresponding analog of Corollary~\ref{cor:SumFormula} holds as well. 

Next, we have an analog of Proposition~\ref{proposition: signs}. We first prove an elementary lemma. 

\begin{lem} Let $a,b,c$ be odd positive integers and let $N$ act on $X=X_\Sigma$ via the embedding $\iota_{a,b,c}:N\to N_3$. For every $\kappa\in \Sigma(3)$, let $s_1^\kappa, s_2^\kappa, s_3^\kappa$ be the coordinate weights for $T_1$ at $0_\kappa$, with respect to  standard affine coordinates $x_1^\kappa, x_2^\kappa, x_3^\kappa$ on $U_\kappa$. Then each $s_j^\kappa$ is even and $\sum_{j=1}^3s_j^\kappa\equiv2\mod 4$.
\end{lem}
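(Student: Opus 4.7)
The proof is a direct calculation using the $N_3$-orientation conditions of Definition~\ref{defn:N3Orient}\eqref{defn:N3Orient:4} together with the oddness of $a,b,c$. The strategy is to express the $T_1$-weights $s_j^\kappa$ at $0_\kappa$ (for the action pulled back along $\iota_{a,b,c}$) in terms of the entries of the $T$-coordinate weight matrix $(s_{ij}^\kappa)_{1\le i,j\le 3}$, and then track parities.

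Since $\iota_{a,b,c}(t)=(t^a,t^b,t^c)\in T_1^3=T$ for $t\in T_1$, the definition \eqref{eqn:CoordWeightMatrix} of the weight matrix gives, just as in the proof of Lemma~\ref{lem:GenericWeights},
\[
s_j^\kappa\;=\;a\,s_{1j}^\kappa+b\,s_{2j}^\kappa+c\,s_{3j}^\kappa\qquad(j=1,2,3).
\]
For the first assertion, I would reduce this identity modulo $2$. Since $a,b,c$ are odd, we obtain $s_j^\kappa\equiv s_{1j}^\kappa+s_{2j}^\kappa+s_{3j}^\kappa\pmod 2$, and the right-hand side is even by the column-sum condition of Definition~\ref{defn:N3Orient}\eqref{defn:N3Orient:4}. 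Hence $s_j^\kappa$ is even.

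For the second assertion, I set $r_i:=\sum_{j=1}^3 s_{ij}^\kappa$ for $i=1,2,3$. The row-sum condition gives $r_i$ even, so $r_i=2r_i'$ for some $r_i'\in\Z$, and summing the displayed identity over $j$ yields
\[
\sum_{j=1}^3 s_j^\kappa\;=\;a\,r_1+b\,r_2+c\,r_3\;=\;2\bigl(a\,r_1'+b\,r_2'+c\,r_3'\bigr).
\]
The total-sum condition $\sum_{i,j}s_{ij}^\kappa\equiv 2\pmod 4$ becomes $2(r_1'+r_2'+r_3')\equiv 2\pmod 4$, i.e.\ $r_1'+r_2'+r_3'$ is odd. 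Since $a,b,c$ are odd, $a\,r_1'+b\,r_2'+c\,r_3'\equiv r_1'+r_2'+r_3'\equiv 1\pmod 2$, so $\sum_j s_j^\kappa\equiv 2\pmod 4$, as claimed.

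There is no real obstacle in this lemma: once the relationship $s_j^\kappa=a\,s_{1j}^\kappa+b\,s_{2j}^\kappa+c\,s_{3j}^\kappa$ is in hand (which is immediate from the definition of $\iota_{a,b,c}$), the two congruence statements are purely arithmetic consequences of the orientation axioms combined with the oddness of $a,b,c$. The only point requiring care is to apply the mod-$4$ condition only after pulling out the common factor of $2$ coming from the evenness of the row sums.
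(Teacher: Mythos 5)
Your proof is correct and takes essentially the same route as the paper: both express $s_j^\kappa = a\,s_{1j}^\kappa + b\,s_{2j}^\kappa + c\,s_{3j}^\kappa$, reduce mod $2$ using the column-sum condition for the first claim, and then use the evenness of the row sums to justify replacing $a,b,c$ by $1$ modulo $4$ in the total sum. Your write-up merely makes explicit the factoring-out of the common factor of $2$ that the paper's one-line justification (``since $\sum_j s_{ij}^\kappa$ is even for each $i$'') leaves implicit.
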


\begin{proof} Let $(s_{ij}^\kappa)_{1\le i,j\le 3}$ be the coordinate weight matrix for $T$ at $0_\kappa$ (with respect to $x_1^\kappa, x_2^\kappa, x_3^\kappa$). Then
	\[
	(s_1^\kappa, s_2^\kappa, s_3^\kappa)=(a,b,c)\cdot (s_{ij}^\kappa).
	\]
	By  Definition~\ref{defn:N3Orient},  $\sum_is_{ij}^\kappa$ is even for each $j=1,2,3$, $\sum_js_{ij}^\kappa$ is even for each $i=1,2,3$,  and 
	$\sum_{i,j}s_{ij}^\kappa\equiv 2\mod 4$. But
	\[
	s_j^\kappa=a\cdot s_{1j}^\kappa+b\cdot s_{3j}^\kappa+c\cdot s_{3j}^\kappa\equiv \sum_is_{ij}^\kappa\equiv 0\mod 2
	\]
	so each $s_j^\kappa$ is even. Similarly 
	\[
	\sum_js_j^\kappa=a\sum_j s_{1j}^\kappa+b\sum_j s_{2j}^\kappa+c \sum_j s_{3j}^\kappa\equiv
	\sum_{i,j} s_{ij}^\kappa\equiv 2 \mod 4
	\]
	since $\sum_j s_{ij}^\kappa$ is even for each $i$.
\end{proof}

Thus, given odd positive integers $a,b,c$ and $0_\kappa\in X^T$, letting $(s_1^\kappa, s_2^\kappa, s_3^\kappa)$  be the coordinate weights at $0_\kappa$ for the $N$-action given by $\iota_{a,b,c}$,  each $s_i^\kappa$ is even and $\sum_is_i^\kappa\equiv 2\mod 4$, just as for $X=(\P^1)^3$. These were the only properties of $s_1, s_2, s_3$ needed for the proof of Proposition~\ref{proposition: signs} beyond the hypotheses in Proposition~\ref{proposition: signs}(1), so the same argument gives the following result. 

\begin{lem}\label{lem:4} We use the embedding $\iota_{a,b,c}$ to define the $N$-action on $X$,  and suppose $(a,b,c)$ satisfies the conditions of Lemma~\ref{lem:1} for given $M>0$ and $\tau\in H$.   Let $\sI$ be an $N$-invariant  ideal sheaf supported on $\alpha_\kappa=\{0_\kappa, \sigma(0_\kappa)\}$ of co-length $2n\le 2M$.
	
	Write $\sI=I_{0_\kappa}\cap\sigma(I_{0_\kappa})$ with
	$I_{0_\kappa}$ supported at $0_\kappa$.  $V_{\sI_\kappa}(t) = \sum_{w\in \Z} v_w(\sI_\kappa) t^w$ be the trace for $\sI_\kappa$ with respect to the induced $T_1$-action. Then  we have  
	\begin{enumerate}
		\item[i)] $\prod_{w\in\Z}\epsilon(w)^{v_w(\sI_\kappa)}=(-1)^n$.
		\item[ii)]  
		\[
		\frac{e_N(\Ext^2(\sI_\kappa,\sI_\kappa))}{e_N(\Ext^1(\sI_\kappa, \sI_\kappa)} = (-1)^n\prod_{w\in \Z} w^{-v_w(\sI_\kappa)}.
		\]
	\end{enumerate}
\end{lem}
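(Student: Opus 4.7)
\textbf{Proof plan for Lemma \ref{lem:4}.} The plan is to reduce the statement directly to the argument of Proposition~\ref{proposition: signs}, exploiting the fact that the only numerical facts about the coordinate weights $(s_1,s_2,s_3)$ used in that proof were: (a) each $s_j$ is even, and (b) $s_1+s_2+s_3 \equiv 2 \mod 4$. The lemma immediately preceding Lemma~\ref{lem:4} establishes precisely these two properties for the coordinate weights $(s_1^\kappa,s_2^\kappa,s_3^\kappa)$ of $T_1$ at $0_\kappa$ for an $N_3$-oriented toric threefold with $N$-action induced by $\iota_{a,b,c}$. Moreover, by hypothesis $(a,b,c)$ satisfies Lemma~\ref{lem:1}, so $\sI_\kappa$ is a monomial ideal in the standard affine coordinates on $U_\kappa \cong \Spec \R[x_1^\kappa,x_2^\kappa,x_3^\kappa]$ and the representations $\Ext^i(\sI_\kappa,\sI_\kappa)$ contain no trivial $T_1$-summand, so the trace-function formalism of \S\ref{subsec:Trace} applies verbatim.

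For part (i), I would proceed as in Proposition~\ref{proposition: signs}(1i). Split the trace as $V_{\sI_\kappa}(t) = V_{\sI_\kappa}^+(t) + V_{\sI_\kappa}^-(t)$. Because each $s_j^\kappa$ is even, the partition function $Q_{\sI_\kappa}(t) = \sum_{k\in\pi_{\sI_\kappa}} t^{s^\kappa\cdot k}$ only involves even exponents; hence every non-zero term $v_w^+(\sI_\kappa) t^w$ in $V_{\sI_\kappa}^+(t)$ has $w$ even. By Lemma~\ref{lemma: splitting}, each such term pairs with the term $-v_w^+(\sI_\kappa) t^{-w - s_1^\kappa - s_2^\kappa - s_3^\kappa}$ in $V_{\sI_\kappa}^-(t)$; since $w$ is even we have $w \equiv -w \mod 4$, and since $s_1^\kappa + s_2^\kappa + s_3^\kappa \equiv 2 \mod 4$ the product $\epsilon(w)\epsilon(-w-s_1^\kappa-s_2^\kappa-s_3^\kappa)$ equals $-1$. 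Setting $k := \sum_{w} v_w^+(\sI_\kappa)$ then gives $\prod_{w} \epsilon(w)^{v_w(\sI_\kappa)} = (-1)^k$.

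To identify $k \mod 2$ with $n \mod 2$, I would compute $\det V_{\sI_\kappa}(t)$ in two ways. On one hand the pairing gives $\det V_{\sI_\kappa}(t) = t^{k(s_1^\kappa+s_2^\kappa+s_3^\kappa)} \cdot t^{2K}$ for $K := \sum_w v_w^+(\sI_\kappa) \cdot w$; since each $w$ is even, $K$ is even. On the other hand, $\det V_{\sI_\kappa}^+(t) = \det Q_{\sI_\kappa}(t)$ and $\det V_{\sI_\kappa}^-(t) = \det(-Q_{\sI_\kappa}(t^{-1}) t^{-s_1^\kappa-s_2^\kappa-s_3^\kappa})$ (the cross-term contributes trivially because the factor $(1-t^{s_1^\kappa})(1-t^{s_2^\kappa})t^{-s_1^\kappa-s_2^\kappa}$ has virtual rank zero and trivial determinant), so $\det V_{\sI_\kappa}(t) = t^{\sum_{k\in\pi_{\sI_\kappa}} 2 s^\kappa\cdot k} \cdot t^{n(s_1^\kappa+s_2^\kappa+s_3^\kappa)}$, recalling that $\#\pi_{\sI_\kappa} = n$. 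Equating and reducing mod $4$, using that $s_j^\kappa \equiv 0 \mod 2$ and $s_1^\kappa+s_2^\kappa+s_3^\kappa \equiv 2\mod 4$, yields $2k \equiv 2n \mod 4$, i.e., $k \equiv n \mod 2$. This gives (i).

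Part (ii) is then formal: combining Lemma~\ref{lem:3} (applied to the single-point contribution, exactly as Proposition~\ref{prop:NEulerClassLocal} was applied in the proof of Proposition~\ref{proposition: signs}(1ii)) with (i) gives
\[
\frac{e_N(\Ext^2(\sI_\kappa,\sI_\kappa))}{e_N(\Ext^1(\sI_\kappa,\sI_\kappa))} = \prod_{w\in\Z} (\epsilon(w)\cdot w)^{-v_w(\sI_\kappa)} = \Bigl(\prod_{w} \epsilon(w)^{v_w(\sI_\kappa)}\Bigr) \prod_{w} w^{-v_w(\sI_\kappa)} = (-1)^n \prod_{w} w^{-v_w(\sI_\kappa)}.
\]
The only real content beyond Proposition~\ref{proposition: signs} is the parity verification for the toric coordinate weights, which is already handled by the lemma immediately preceding Lemma~\ref{lem:4}; everything else is a direct transplant of the $(\P^1)^3$ argument. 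The one point I would want to double-check carefully is that the determinant cancellation in the cross-term of $\det V_{\sI_\kappa}^+(t)\det V_{\sI_\kappa}^-(t)$ is insensitive to the choice of signs of the $s_j^\kappa$ (which can now be negative, unlike in the $(\P^1)^3$ set-up where we had fixed sign conventions), but this is immediate since the determinant identity depends only on parities.
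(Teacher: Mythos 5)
Your proposal is correct and takes essentially the same approach as the paper: the paper's argument is precisely to observe, via the elementary lemma immediately preceding Lemma~\ref{lem:4}, that the coordinate weights $(s_1^\kappa,s_2^\kappa,s_3^\kappa)$ satisfy the same two parity conditions (each even, sum $\equiv 2\bmod 4$) that were the only numerical input in the proof of Proposition~\ref{proposition: signs}, and then transfer that proof verbatim, combining with Lemma~\ref{lem:3} (the analogue of Proposition~\ref{prop:NEulerClassLocal}) for part~(ii). Your detailed re-derivation of the splitting, sign-pairing, and two-way determinant computation reproduces the paper's proof of Proposition~\ref{proposition: signs}(1) exactly, and your observation that possible negativity of the $s_j^\kappa$ is harmless is correct (and in fact already present in the $(\P^1)^3$ case, where the weights are $\pm 2a$, etc.).
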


Lemma~\ref{lem:4} gives us the following analog of Corollary~\ref{cor: signs}. As above, we use the embedding $\iota_{a,b,c}$ to define the $N$-action on $X$,  and suppose $(a,b,c)$ satisfies the conditions of Lemma~\ref{lem:1} for given $M>0$ and $\tau\in H$. 

\begin{cor}\label{cor: signs'} Let $\sI$ be an $N$-stable ideal sheaf on $X$ of co-length $2n\le 2M$, and write $\sI=\cap_{0_\kappa\in X^T}\sI_{0_\alpha}$. Let $\Sigma(3)^0\subset \Sigma(3)=X^T$ be a set of representatives for the   action of $\sigma$ on $X^T$. For each $0_\kappa\in \Sigma(3)^0$, 
	let $s^\kappa:=(s_1^\kappa, s_2^\kappa, s_3^\kappa)$ be the coordinate weights at $0_\kappa$, and let $V_\kappa(t)=\sum_{w\in \Z}v_w^\kappa t^w$ be the trace of $\Ext^1(\sI_{0_\kappa},\sI_{0_\kappa})-\Ext^2(\sI_{0_\kappa},\sI_{0_\kappa})$ as virtual $T_1$-representation. Then
	\[
	\frac{e_N(\Ext^2(\sI,\sI))}{e_N(\Ext^1(\sI,\sI))}=(-1)^n\prod_{\kappa\in \Sigma(3)^0}
	\prod_{w\in \Z} w^{-v^\kappa_w}\in \Z.
	\]
\end{cor}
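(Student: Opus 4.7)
The plan is to carry over the proof of Corollary~\ref{cor: signs} essentially verbatim. The two ingredients are already in place: the toric analog of Corollary~\ref{cor:SumFormula} (asserted in the excerpt immediately after Lemma~\ref{lem:3}) factors the global Euler class as a product over $N$-orbits in $X^T$, and Lemma~\ref{lem:4} computes each local factor together with its sign.

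First, I would decompose the ideal sheaf. Under the running assumptions of Lemma~\ref{lem:1}, the action of $\sigma$ on $X^T=\{0_\kappa\mid \kappa\in\Sigma(3)\}$ sends $0_\kappa$ to $0_{-\kappa}$ by Lemma~\ref{lem:SigmaInvolution}, so the $N$-orbits in $X^T$ are precisely the pairs $\alpha_\kappa=\{0_\kappa,0_{-\kappa}\}$ for $\kappa\in\Sigma(3)^0$. Write $\sI=\bigcap_{\kappa\in\Sigma(3)^0}\sI_{\alpha_\kappa}$, where $\sI_{\alpha_\kappa}$ is the intersection of those primary components of $\sI$ whose support lies in $\alpha_\kappa$; then $\sI_{\alpha_\kappa}$ is $N$-stable, and its co-length is even because $\rho(\sigma_\delta)$ swaps the two points of $\alpha_\kappa$. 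Denoting this co-length by $2n_\kappa$, we have $\sum_{\kappa\in\Sigma(3)^0}n_\kappa=n$, and in particular $2n_\kappa\le 2n\le 2M$ for every $\kappa$, so Lemma~\ref{lem:4} is applicable to each $\sI_{\alpha_\kappa}$ (its hypotheses — monomiality of the local restriction and absence of trivial $T_1$-subrepresentations in the relevant Ext groups — are guaranteed by conditions (4) and (5) of Lemma~\ref{lem:1}).

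Next, I would apply the toric analog of Corollary~\ref{cor:SumFormula} to obtain
\[
\frac{e_N(\Ext^2(\sI,\sI))}{e_N(\Ext^1(\sI,\sI))}
=\prod_{\kappa\in\Sigma(3)^0}\frac{e_N(\Ext^2(\sI_{\alpha_\kappa},\sI_{\alpha_\kappa}))}{e_N(\Ext^1(\sI_{\alpha_\kappa},\sI_{\alpha_\kappa}))},
\]
this being a consequence of the direct sum decomposition $\Ext^j(\sI,\sI)=\bigoplus_{\kappa\in\Sigma(3)^0}\Ext^j(\sI_{\alpha_\kappa},\sI_{\alpha_\kappa})$ as $N$-representations, together with multiplicativity of the equivariant Euler class (the compatibility of orientations on concatenated bases being argued exactly as in the proof of Proposition~\ref{prop:NEulerClassLocal}). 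Then Lemma~\ref{lem:4}(ii) identifies the $\kappa$-th factor with $(-1)^{n_\kappa}\prod_{w\in\Z}w^{-v_w^\kappa}$. Multiplying over $\kappa\in\Sigma(3)^0$ and using $\sum_\kappa n_\kappa=n$ collects the signs into an overall factor $(-1)^n$, yielding the claimed identity.

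No genuine obstacle is expected: this is a direct transcription of the argument for $(\P^1)^3$ to the toric setting, with the indexing set $\Sigma(3)^0$ replacing the four orbits $\alpha_1,\dots,\alpha_4$ used there. The only conceptual point that must be verified is that every $N$-orbit in $X^T$ really does consist of two distinct points (i.e.\ $\kappa\ne -\kappa$ for every $\kappa\in\Sigma(3)$), which follows because $\rho(\sigma_\delta)$ acting on $X$ has no $T$-fixed points in common with $X^T$ under the orientation hypotheses in Definition~\ref{defn:N3Orient}; equivalently, the sign symmetry $-\id_\sN$ on $\Sigma$ (Lemma~\ref{lem:SigmaInvolution}) has no three-dimensional fixed cones.
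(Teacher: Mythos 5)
Your proof is correct and matches the paper's approach exactly: the paper proves Corollary~\ref{cor: signs'} by decomposing $\sI=\cap_{\kappa\in\Sigma(3)^0}\sI_{\alpha_\kappa}$, applying the toric analog of Corollary~\ref{cor:SumFormula} to factor the Euler class over the $N$-orbits, invoking Lemma~\ref{lem:4}(ii) for each local factor, and collecting the signs $(-1)^{n_\kappa}$ into $(-1)^n$ via $\sum_\kappa n_\kappa=n$, exactly mirroring the proof of Corollary~\ref{cor: signs}. (Your closing remark attributing $\kappa\ne-\kappa$ to the orientation hypotheses is slightly off — it already follows from strong convexity of the simplicial cones — but this does not affect the argument.)
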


We can now complete the proof of Theorem~\ref{thm:MainBlowup}.

\begin{proof}[Proof of Theorem~\ref{thm:MainBlowup}]
	We proceed as in \S\ref{sec:MainProof}, retaining the definition of $W(s, q)$ and $W'(s,q)$ from \S\ref{sec:MainProof}. Putting together Lemma~\ref{lem:2}, Corollary~\ref{cor: signs'},  Proposition~\ref{proposition: signs}(2), and \eqref{eqn:VertexIdentity} gives
	\[
	\prod_{\kappa\in \Sigma(3)^0}W(s_1^\kappa, s_2^\kappa, s_3^\kappa, q)=M(q^2)^{-\sum_{\kappa\in \Sigma(3)^0} \frac{(s_1^\kappa+s_2^\kappa)(s_1^\kappa+s_3^\kappa)(s_2^\kappa+s_3^\kappa)}{s_1^\kappa s_2^\kappa s_3^\kappa}},
	\]
	and for each positive integer $M>0$,
	\[
	\tilde{Z}^{N, \tau}(X,q)\equiv M(q^2)^{-\sum_{\kappa\in \Sigma(3)^0} \frac{(s_1^\kappa+s_2^\kappa)(s_1^\kappa+s_3^\kappa)(s_2^\kappa+s_3^\kappa)}{s_1^\kappa s_2^\kappa s_3^\kappa}}
	\in \Z[[q]]/(q^{2M+1}).
	\]
	Taking $M\ge 1$ and looking at the coefficient of $q^2$, this gives
	\[
	\tilde{I}_2^{N,\tau}(X)=-\sum_{\kappa\in \Sigma(3)^0} \frac{(s_1^\kappa+s_2^\kappa)(s_1^\kappa+s_3^\kappa)(s_2^\kappa+s_3^\kappa)}{s_1^\kappa s_2^\kappa s_3^\kappa},
	\]
	and since $M>0$ can be taken arbitrarily large, we have
	\[
	\tilde{Z}^{N, \tau}(X,q)= M(q^2)^{\tilde{I}_2^{N,\tau}(X)}.
	\]

	On the other hand, consider the $T_1$-action on $X$ and the equivariant Chern classes $c_3^{T_1}((T_X\otimes K_X)_\kappa)$ and $c_3^{T_1}((T_X)_\kappa)$ at the $T_1$-fixed point $0_\kappa$. Since $(T_X)_\kappa$ has weights $-s_1^\kappa,-s_2^\kappa,-s_3^\kappa$ and $K_X$ has 
	weight $s_1^\kappa+s_2^\kappa +s_3^\kappa$, the Bott residue formula for this $T_1$-action on $X$ gives us
	\begin{align*}
		\deg_\R(c_3(T_X\otimes K_X))&=\sum_{\kappa\in X^T}\frac{c_3^{T_1}((T_X\otimes K_X)_\kappa)}{c_3^{T_1}((T_X)_\kappa)}\\
		&=-\sum_{\kappa\in X^T}\frac{(s_1^\kappa+s_2^\kappa)(s_1^\kappa+s_3^\kappa)(s_2^\kappa+s_3^\kappa)}{s_1^\kappa s_2^\kappa s_3^\kappa}.
	\end{align*}
	The $\sigma$-action satisfies
	\[
	s_j^{\sigma(\kappa)}=-s_j^\kappa
	\]
	and since the expression for $\frac{c_3^{T_1}((T_X\otimes K_X)_\kappa)}{c_3^{T_1}((T_X)_\kappa)}$ is homogeneous of weight 0 in the $s_i^\kappa$, we see that
	\[
	\deg_\R(c_3(T_X\otimes K_X))=-2\sum_{\kappa\in \Sigma(3)^0}\frac{(s_1^\kappa+s_2^\kappa)(s_1^\kappa+s_3^\kappa)(s_2^\kappa+s_3^\kappa)}{s_1^\kappa s_2^\kappa s_3^\kappa}=
	2\tilde{I}_2^{N,\tau}(X),
	\]
	showing that $\tilde{I}_2^{N,\tau}(X)=\frac{1}{2}\deg_\R(c_3(T_X\otimes K_X))$ is independent of the choice of $\tau\in H$. Thus 
	\[
	\tilde{Z}^{N,\tau}(X,q)=M(q^2)^{\frac{1}{2}\deg_\R(c_3(T_X\otimes K_X))}
	\]
	is also independent of the choice of $\tau\in H$. Finally, the obstruction bundle $\text{ob}_1(X)$ on $X$ is the dual of $T_X\otimes K_X$ (see Lemma~\ref{lem:Ob1} below), so we have
	\[
	\tilde{I}_2^{N,\tau}(X)=\frac{1}{2}\deg_\R(c_3(T_X\otimes K_X))=
	-\frac{1}{2}\deg_\R(c_3(\text{ob}_1(X))),
	\]
	completing the proof of Theorem~\ref{thm:MainBlowup}.
\end{proof}

We conclude with a proof of this last assertion about $\text{ob}_1(X)$.

\begin{lem}\label{lem:Ob1} Let $X$ be a smooth projective threefold over $\R$ with $H^i(X,\sO_X)=0$ for $i=1,2$, and let $\text{ob}_1(X)$ be the obstruction bundle on $\Hilb^1(X)=X$ for the Donaldson-Thomas perfect obstruction theory $E_\bullet^{DT(1)}(X)$. There is a natural isomorphism
	\[
	\text{ob}_1(X)\cong (T_X\otimes K_X)^\vee.
	\]
\end{lem}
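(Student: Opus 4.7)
The plan is to identify the obstruction sheaf $\text{ob}_1(X)$ with a specific relative Ext sheaf on $X\times X$, use relative Serre duality to convert it to a ``tangent-like'' Ext sheaf, and then compute the latter via the long exact sequence coming from the ideal-sheaf sequence of the diagonal.

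First I would identify $\text{ob}_1(X)=h^1(E^{DT(1),\vee}_\bullet)=\sExt^2_\pi(\sI_\Delta,\sI_\Delta)_0$, where $\pi=p_2:X\times X\to X$ projects onto the Hilbert scheme factor, $\sI_\Delta$ is the ideal sheaf of the diagonal $\Delta\cong X$, and $(-)_0$ denotes the trace-free part. Under the hypothesis $H^2(X,\sO_X)=0$, the target $R^2\pi_*\sO_{X\times X}=H^2(X,\sO_X)\otimes\sO_X$ of the trace map vanishes, so the trace-free part coincides with the full relative Ext sheaf.

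Next I would invoke relative Serre duality for the smooth proper morphism $\pi$, which has relative dualizing sheaf $\omega_\pi=p_1^*K_X$ and relative dimension $3$, to obtain the natural isomorphism
\[
\sExt^2_\pi(\sI_\Delta,\sI_\Delta)\cong\sExt^1_\pi(\sI_\Delta,\sI_\Delta\otimes p_1^*K_X)^\vee.
\]
I would compute the right-hand side by applying $R\sHom_\pi(\sI_\Delta,-)$ to the tensored ideal sequence $0\to\sI_\Delta\otimes p_1^*K_X\to p_1^*K_X\to\sO_\Delta\otimes p_1^*K_X\to 0$. The key inputs are: (i) the reflexivity identifications $\sHom(\sI_\Delta,p_1^*K_X)=p_1^*K_X=\sHom(\sI_\Delta,\sI_\Delta\otimes p_1^*K_X)$, under which the first map in the long exact sequence is the identity; (ii) the next map $H^0(X,K_X)\otimes\sO_X\to T_X\otimes K_X$ is zero, because a section $s\in H^0(X,K_X)$ gives the homomorphism $f\mapsto sf$ on $\sI_\Delta$, whose further image in the fiber $K_X|_\Delta$ is zero since $f\in\sI_\Delta$ vanishes on $\Delta$; (iii) the vanishing $\sExt^1_\pi(\sI_\Delta,p_1^*K_X)=H^1(X,K_X)\otimes\sO_X=0$, via Serre duality on $X$ from $H^2(X,\sO_X)=0$; and (iv) the normal-bundle identification $\sHom(\sI_\Delta,\sO_\Delta\otimes p_1^*K_X)\cong\sN_{\Delta/X\times X}\otimes p_1^*K_X|_\Delta$, whose pushforward under the isomorphism $\pi|_\Delta:\Delta\xrightarrow{\sim}X$ is $T_X\otimes K_X$. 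These combine to yield $\sExt^1_\pi(\sI_\Delta,\sI_\Delta\otimes p_1^*K_X)\cong T_X\otimes K_X$, hence $\text{ob}_1(X)\cong(T_X\otimes K_X)^\vee$.

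The main obstacle will be invoking relative Serre duality so as to give an isomorphism of the specific cohomology sheaves rather than merely of the derived pushforwards, which requires verifying that the relevant target has the expected locally free cohomology and carefully tracking the cohomological shift; the a posteriori local freeness of $\sExt^1_\pi(\sI_\Delta,\sI_\Delta\otimes p_1^*K_X)\cong T_X\otimes K_X$ justifies the passage. A secondary subtlety is the vanishing of the map in (ii), which globalizes the fiber-level observation and is what ultimately produces the precise rank-3 identification (rather than an extension of $T_X\otimes K_X$ by some non-trivial kernel).
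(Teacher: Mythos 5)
Your proof is correct, and it takes a genuinely different computational route from the paper's. Both arguments start from the same identification of $E_\bullet^{DT(1)}(X)$, dispose of the trace-free decoration using $H^1(X,\sO_X)=H^2(X,\sO_X)=0$, and rely on Grothendieck--Serre duality; but from there they diverge. The paper computes $\text{ob}_1(X)^\vee = R^1p_{1*}\bigl(R\sHom(\sI_\Delta,\sI_\Delta)_0\otimes p_2^*K_X\bigr)$ directly via the local-to-global spectral sequence for $R\sHom$, and obtains the key input $R^1\sHom(\sI_\Delta,\sI_\Delta)\cong i_{\Delta*}T_X$ indirectly: it first identifies $T_X = p_{1*}R^1\sHom(\sI_\Delta,\sI_\Delta)$ as $h^0(E^\vee)$, then notes that $R^1\sHom(\sI_\Delta,\sI_\Delta)$ is annihilated by $\sI_\Delta$, hence is supported on the diagonal, and wins. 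You instead dualize at the relative-Ext level, $\sExt^2_\pi(\sI_\Delta,\sI_\Delta)\cong\sExt^1_\pi(\sI_\Delta,\sI_\Delta\otimes\omega_\pi)^\vee$, and compute the right-hand side by running $R\sHom_\pi(\sI_\Delta,-)$ through the ideal sequence of the diagonal, using the reflexivity identities, the vanishing of $\sExt^1_\pi(\sI_\Delta,\omega_\pi)=H^1(X,K_X)\otimes\sO_X$, and the normal-bundle identification $\sHom(\sI_\Delta,\sO_\Delta)\cong N_{\Delta}$. Your long-exact-sequence computation is more explicit and arguably more elementary (the normal-bundle identification is proved directly rather than inferred), at the cost of one extra subtlety you correctly flag: converting the derived-category duality $R\pi_*R\sHom(\sI_\Delta,\sI_\Delta)\cong R\sHom\bigl(R\pi_*R\sHom(\sI_\Delta,\sI_\Delta\otimes\omega_\pi),\sO_X\bigr)[-3]$ into the statement $\sExt^2_\pi\cong(\sExt^1_\pi)^\vee$ about individual cohomology sheaves. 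This does require knowing the adjacent cohomology sheaves of $R\pi_*R\sHom(\sI_\Delta,\sI_\Delta\otimes\omega_\pi)$ are locally free (so the dualizing spectral sequence has no contaminating $\sExt^{>0}$ terms); your a posteriori identification of $h^1$ with $T_X\otimes K_X$ handles the degree-one slot, and one should also note that $h^2$ is $\Lambda^2 T_X\otimes K_X$ (or simply invoke local freeness of $\text{ob}_1(X)$, which holds since $\Hilb^1(X)=X$ is smooth) to make this airtight. One small remark: your item (ii), the vanishing of the map $H^0(X,K_X)\otimes\sO_X\to T_X\otimes K_X$, is in fact forced by exactness once (i) says the first map is an isomorphism; your fiberwise argument is a useful consistency check but not logically required.
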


\begin{proof} Let $i_\Delta:X\to X\times_kX$ be the diagonal morphism and let  $\sI_\Delta$ be the ideal of the diagonal. We have the trace map $R\sHom(\sI_\Delta, \sI_\Delta)\to \sO_{X\times X}$, which gives a splitting of $R\sHom(\sI_\Delta, \sI_\Delta)$ as
	\[
	R\sHom(\sI_\Delta, \sI_\Delta)=R\sHom(\sI_\Delta, \sI_\Delta)_0\oplus \sO_{X\times X}.
	\]
	By the definition of $E_\bullet^{DT(1)}(X)$ following \cite[Lemma 1.22]{Behrend-FantechiSOTHSPT}, we have 
	\[
	E_\bullet^{DT(1)}(X)=Rp_{1*}(R\sHom(\sI_\Delta, \sI_\Delta)_0\otimes p_2^*K_X[2]).
	\]
	By Grothendieck-Serre duality, noting that $R\sHom(\sI_\Delta, \sI_\Delta)_0$ is self-dual, this gives
	\[
	E_\bullet^{DT(1)}(X)^\vee=Rp_{1*}(R\sHom(\sI_\Delta, \sI_\Delta)_0[1])
	\]

	Since $X$ is smooth,  $h^i(E_\bullet^{DT(1)}(X)^\vee)$ is a locally free sheaf on $X$ for $i=0,1$, $h^0(E_\bullet^{DT(1)}(X)^\vee)=T_X$ and 
	$h^1(E_\bullet^{DT(1)}(X)^\vee)=\text{ob}_1(X)$ (see \cite[Proposition 1.20]{Behrend-FantechiSOTHSPT}). Using the vanishing of $H^i(X, \sO_X)$, $i=1,2$, we have
	\[
	h^i(E_\bullet^{DT(1)}(X)^\vee)=R^{i+1}p_{1*}R\sHom(\sI_\Delta, \sI_\Delta)
	\]
	for $i=0,1$.  
	
	Similarly, we have 
	\begin{align*}
		\text{ob}_1(X)^\vee&=R^1p_{1*}R\sHom(R\sHom(\sI_\Delta, \sI_\Delta)_0, p_2^*K_X)\\
		&\cong R^1p_{1*}(R\sHom(\sI_\Delta, \sI_\Delta)_0\otimes p_2^*K_X).
	\end{align*}
	Using the spectral sequence
	\[
	R^ip_{1*}(R^j\sHom(\sI_\Delta,\sI_\Delta)_0\otimes p_2^*K_X)\Rightarrow R^{i+j}p_{1*}(R\sHom(\sI_\Delta,\sI_\Delta)_0\otimes p_2^*K_X)
	\]
	and the vanishing of $R^0\sHom(\sI_\Delta,\sI_\Delta)_0$ and $H^1(X, K_X)$, we see that 
	\[
	R^1p_{1*}(R\sHom_0(\sI_\Delta,\sI_\Delta)_0\otimes p_2^*K_X)\cong p_{1*}(R^1\sHom(\sI_\Delta, \sI_\Delta)\otimes p_2^*K_X).
	\]
	
	Similarly, using the spectral sequence
	\[
	R^ip_{1*}(R^j\sHom(\sI_\Delta,\sI_\Delta)_0)\Rightarrow R^{i+j}p_{1*}(R\sHom(\sI_\Delta,\sI_\Delta)_0)
	\]
	and the vanishing of  $R^0\sHom(\sI_\Delta,\sI_\Delta)_0$ and $H^1(X,\sO_X)$, we have
	\[
	T_X=h^0(E_\bullet^{DT(1)}(X)^\vee)= R^1p_{1*}(R\sHom(\sI_\Delta,\sI_\Delta)_0)=p_{1*}(R^1\sHom(\sI_\Delta, \sI_\Delta)).
	\]
	Since   
	\[
	R^1\sHom(\sI_\Delta, \sI_\Delta)\cong R^2\sHom(i_{\Delta*}\sO_X,\sI_\Delta), 
	\]
	we see that $\sI_\Delta\cdot R^1\sHom(\sI_\Delta, \sI_\Delta)=0$, so 
	\[
	T_X=p_{1*}(R^1\sHom(\sI_\Delta, \sI_\Delta))\Rightarrow R^1\sHom(\sI_\Delta, \sI_\Delta)\cong i_{\Delta*}T_X. 
	\]
	
	Thus
	\begin{align*}
		\text{ob}_1(X)^\vee&\cong p_{1*}(R^1\sHom(\sI_\Delta, \sI_\Delta)\otimes p_2^*K_X)\\
		&\cong p_{1*}(i_{\Delta*}T_X\otimes p_2^*K_X)\\
		&\cong p_{1*}(i_{\Delta*}(T_X\otimes i_{\Delta}^*p_2^*K_X))\\
		&\cong T_X\otimes K_X. \qedhere 
	\end{align*}
\end{proof}

\section{Constructing $N_3$-oriented toric threefolds}\label{sec:Construction}
In this section, we find conditions on a complete  complete regular simplicial fan $\Sigma\subset \sN\cong \Z^3$ for the toric variety $X_\Sigma$ over $k$ to admit an $N_3$-orientation.

\subsection{Statement of the main result}

We let   $\Sigma\subset \sN$ be a complete regular simplicial fan in the co-character lattice $\sN$ for $\G_m^3$. Let $\sM$ be the  character lattice, dual to $\sN$. 

Recall that $\G_m^3=\Spec k[x_1^{\pm},x_2^{\pm}, x_3^{\pm}]$, where $x_i:\G_m^3\to \G_m$ is the $i$th projection. Using $x_1, x_2, x_3$ as a $\Z$-basis for $\sM$, gives the dual $\Z$-basis $e_1, e_2, e_3$ for $\sN$, identifying $\sN$ with $\Z^3$. 

For each $\kappa\in \Sigma(3)$, fix generators $v_1^\kappa, v_2^\kappa, v_3^\kappa$ of $\kappa$, with corresponding standard coordinates $x_1^\kappa, x_2^\kappa, x_3^\kappa$ for $U_\kappa$. Write
\[
v_i^\kappa=\sum_{j=1}^3 v_{ij}^\kappa\cdot e_j.
\]
Then $(v_{ij}^\kappa)$ is invertible in $M_{3\times 3}(\Z)$, so defining $(r_{ij}^\kappa)=(v_{ij}^\kappa)^{-1}$, $r_{ij}^\kappa$ is an integer for all $i,j,\kappa$, and 
$\pm1=\det (v_{ij}^\kappa)=\det (r_{ij}^\kappa)$. Moreover, we have
\begin{equation}\label{eqn:CoordTrans}
x_j^\kappa=\prod_{i=1}^3x_i^{r_{ij}^\kappa},\ j=1,2,3.
\end{equation}

As in \S\ref{subsec:ToricRecoll}, we have the open subscheme $U_0=\Spec k[x_1^{\pm1}, x_2^{\pm1}, x_3^{\pm1}]\subset X_\Sigma$. We consider $\G_m^3$ as a subgroup scheme of $\Aut_kX_\Sigma$ via its canonical action on $X_\Sigma$; this action restricts to the $\G_m^3$-action on $U_0$ given by $(t_1, t_2, t_3)^*(x_i)=t_ix_i$. We let $\rho_T:T=\G_m^3\to \G_m^3\subset \Aut_kX_\Sigma$ be the map $\rho_T(t_1, t_2, t_3)=(t_1^{-2}, t_2^{-2}, t_3^{-2})$, and we let $\rho_0(\sigma_\delta)\in \Aut_k(U_0)$ be defined by $\rho_0(\sigma_\delta)^*(x_i)=-x_i^{-1}$.

\begin{prop}\label{prop:Construction} Let $\Sigma\subset \sN=\Z^3$ be a regular complete simplicial fan defining the smooth proper toric variety $X= X_\Sigma$ over $k$. Suppose that $-\id_\sN(\Sigma)=\Sigma$. Then we have the following.
\begin{enumerate}
\item $\rho_0(\sigma_\delta):U_0\to U_0$ extends to an automorphism $\rho(\sigma_\delta):X_\Sigma\to X_\Sigma$, and there is a unique homomorphism $\rho:N_3\to \Aut_k(X)$ that restricts to $\rho_T$ on $T$ and sends $\sigma_\delta$ to  $\rho(\sigma_\delta)$. 
\item Suppose $X$ has a very ample $\G_m^3$-linearized invertible sheaf. Then with the $N_3$-action of (1), $X$ admits an $N_3$-orientation if the following conditions hold:
\begin{enumerate}
\item\label{enum:Odd} For each $\kappa\in \Sigma(3)$, $\sum_{i,j=1}^3r_{ij}^\kappa$ is odd.
\item\label{enum:Even} For $\kappa, \kappa'\in\Sigma(3)$ and $i=1,2,3$, $\sum_{j=1}^3(r_{ij}^\kappa-r_{ij}^{\kappa'})$ is even.
\end{enumerate}
\end{enumerate}
\end{prop}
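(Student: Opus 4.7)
The plan is first to observe that $\rho_0(\sigma_\delta)$ acts on the character lattice $\sM$ by $-\id$, so combined with $-\id_\sN(\Sigma)=\Sigma$ the pullback $\rho_0(\sigma_\delta)^*$ identifies $R_\kappa$ with $R_{-\kappa}$ for every $\kappa\in\Sigma$; these local isomorphisms will patch to a global automorphism $\rho(\sigma_\delta)$ of $X_\Sigma$ sending $U_\kappa$ to $U_{-\kappa}$. I will then verify $\rho(\sigma_\delta)^2=\id_X$ and $\rho(\sigma_\delta)\rho_T(t)\rho(\sigma_\delta)^{-1}=\rho_T(t^{-1})$ on the dense open $U_0$ by direct coordinate computation, which suffices to define $\rho:N_3\to\Aut_k(X)$. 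For part (2), from $x_j^\kappa=\prod_i x_i^{r_{ij}^\kappa}$ and $\rho_T(t)^*(x_i)=t_i^{-2}x_i$ the coordinate weight matrix is $s_{ij}^\kappa=-2r_{ij}^\kappa$, making the row and column parity conditions of Definition~\ref{defn:N3Orient}(4) automatic and reducing $\sum_{i,j}s_{ij}^\kappa\equiv 2\bmod 4$ precisely to condition~(\ref{enum:Odd}). Conditions (1) and (3) of Definition~\ref{defn:N3Orient} are immediate from the formula for $\rho_T$, and replacing the given $\G_m^3$-linearized $\sO_X(1)$ by $\sO_X(1)\otimes\rho(\sigma_\delta)^*\sO_X(1)$ yields an $N_3$-linearized very ample sheaf, with $\sigma_\delta$ acting by factor-swap (whose square matches the trivial action of $\rho_T(-1,-1,-1)$ on the two factors).

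\textbf{Construction of the spin structure.} The next step is to produce $\psi\in\sM$ with $\<\psi,v^{\kappa^*}\>\equiv 1\bmod 2$ for every ray $\kappa^*\in\Sigma(1)$. For each $\kappa\in\Sigma(3)$, duality gives that $\sum_j x_j^\kappa\in\sM$ already has this property on the rays of $\kappa$; expressing it as $\sum_i(\sum_j r_{ij}^\kappa)x_i$ in the standard basis of $\sM$, condition~(\ref{enum:Even}) is exactly the statement that these candidates all lie in a single class modulo $2\sM$ as $\kappa$ varies, so I can take $\psi:=\sum_j x_j^{\kappa_0}$ for any fixed $\kappa_0\in\Sigma(3)$. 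Setting $E:=\sum_{\kappa^*}\tfrac{1-\<\psi,v^{\kappa^*}\>}{2}\,D_{\kappa^*}$ produces an integer $T$-invariant divisor with $2E\sim D:=\sum_{\kappa^*}D_{\kappa^*}$; then $L:=\sO_X(-E)$ satisfies $L^{\otimes 2}\cong K_X$ via multiplication by $\psi$, giving the spin structure $\tau$ as a plain isomorphism of line bundles.

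\textbf{The main obstacle.} The hardest part will be upgrading $\tau$ to an $N_3$-equivariant isomorphism. Since every character pulled back through $\rho_T$ lies in $2\sM_T$, the half-character $\psi^{1/2}\in\sM_T$ is well-defined, and twisting the natural $T$-linearization of $L$ by it will make $\tau$ $T$-equivariant (a check at each fixed point $0_\kappa$). The relation $\sigma_\delta^*E=D-E$, coming from $\sigma_\delta^*D_{\kappa^*}=D_{-\kappa^*}$ and $\<\psi,v^{-\kappa^*}\>=-\<\psi,v^{\kappa^*}\>$, identifies $\sigma_\delta^*L$ with $L$ via $\psi$, producing a canonical lift $\tilde\sigma_\delta\in\Aut(L)$ of $\sigma_\delta$; the compatibility required by the $N_3$-linearization is then $\tilde\sigma_\delta^{\,2}=\rho_T(-1,-1,-1)$ on $L$. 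Condition~(\ref{enum:Odd}) shows that $\rho_T(-1,-1,-1)$ acts as $-1$ on every fibre of $L$ (the weight of $L$ at $0_\kappa$ evaluated at $(-1,-1,-1)$ being $(-1)^{\sum_{i,j}r_{ij}^\kappa}=-1$), and a careful bookkeeping of the sign ambiguities in the identification $\sigma_\delta^*L\cong L$---for which the symmetry $\psi\leftrightarrow-\psi$ made available by the conjunction of (\ref{enum:Odd}) and (\ref{enum:Even}) is essential---will produce a lift whose square is exactly this $-1$, completing the $N_3$-orientation.
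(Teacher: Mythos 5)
Your proposal takes a genuinely different route from the paper's. The paper constructs $L$ directly as $L_\eta$ from an explicit \v{C}ech $1$-cocycle $\eta_{\kappa,\kappa'}=\prod_i x_i^{\frac{1}{2}\sum_j(r_{ij}^{\kappa'}-r_{ij}^\kappa)}$ (well-defined by condition~\eqref{enum:Even}) and builds the $N_3$-linearization by hand via Lemma~\ref{lem:N3Linearization}, supplying explicit units $\lambda_\kappa(t)=\prod_i t_i^{-\sum_j r_{ij}^\kappa}$ and $\lambda_\kappa(\sigma_\delta)\in\{\pm1\}$ determined by $\sum_{i,j}r_{ij}^\kappa\bmod 4$. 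You instead observe that each $\psi_\kappa=\sum_j x_j^\kappa$ has coordinates $(\sum_j r_{ij}^\kappa)_i$ in the standard basis of $\sM$, that condition~\eqref{enum:Even} says these agree modulo $2\sM$, and that therefore a single global character $\psi$ pairs oddly against every ray; you then set $E=\sum_{\kappa^*}\tfrac{1-\<\psi,v^{\kappa^*}\>}{2}D_{\kappa^*}$ and $L=\sO_X(-E)$, and build the $N_3$-linearization from the toric divisor picture. This is more conceptual and exposes the geometric content of condition~\eqref{enum:Even} (the candidates $\psi_\kappa$ form a single coset of $2\sM$), while the paper's cocycle argument is more mechanical but leaves nothing unverified.

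Where your write-up falls short of a proof is precisely the final ``careful bookkeeping.'' The sign issues you wave at there are real: the canonical identification $K_X\cong\sO_X(-D)$ (the paper's $\phi_X(-D)$) is \emph{not} $\sigma_\delta$-equivariant with the naive linearization on $\sO_X(-D)$ --- this is why the paper introduces the twisted sheaf $\sO_X(-D)^-$. Your proposed linearization (twist by $\psi^{1/2}$ on $T$, multiplication by $\psi$ for $\sigma_\delta$) does reproduce the paper's weights $\lambda_\kappa(t)=\prod_i t_i^{-\sum_j r_{ij}^\kappa}$, and the computation $\tilde\sigma_\delta^2 = \psi\cdot\rho(\sigma_\delta)^*\psi = \psi\cdot(-\psi^{-1}) = -1 = \lambda_\kappa(-1,-1,-1)$ does close up, both via condition~\eqref{enum:Odd}. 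But you state this only as ``will produce a lift''; the check that the two linearizations on $K_X$ and $L^{\otimes 2}$ actually match up through $\tau$, absorbing the sign in $\rho(\sigma_\delta)^*\omega=-\omega$, has to be written out (the reference to a ``$\psi\leftrightarrow-\psi$ symmetry'' is too vague to serve as that argument). So: correct strategy, genuinely different from and complementary to the paper's, but the concluding sign verification --- which is the crux of part~(2) --- is asserted rather than proved.
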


\begin{rem} By
Lemma~\ref{lem:SigmaInvolution}, in order for $X_\Sigma$ to admit an $N_3$-orientation, one must have $-\id_\sN(\Sigma)=\Sigma$, so this assumption in the statement of Proposition~\ref{prop:Construction} is a necessary condition. 
\end{rem}

\begin{rem} Going back to our $N_3$-action on $(\P^1)^3$, if we let $\Sigma$ be the complete simplicial fan generated by taking $\Sigma(1)$ to be the positive rays generated by $(\pm 1,0,0)$, $(0,\pm1,0)$ and $(0,0,\pm1)$, then $X_\Sigma=(\P^1)^3$. If we use the standard multi-homogeneous coordinates $([X_0: X_1], [Y_0: Y_1]: [Z_0:Z_1])$ for $(\P^1)^3$, then letting $x_1=X_1/X_0, x_2 = Y_1/Y_0$ and $x_3 = Z_1/Z_0$, we have that $x_1, x_2, x_3$ is our standard basis for $\sM$, and in terms of $x_1, x_2, x_3$, our $N_3$-action on $(\P^1)^3$ is given by
\[
(t_1, t_2, t_3)^*(x_i)=t_i^{-2}x_i,\ \sigma_\delta^*(x_i)=-x_i^{-1}.
\]

Moreover, for each $\kappa=(klm)$, we have $r_{ij}^\kappa=\pm\delta_{ij}$, with $r_{ii}^\kappa=+1$ if the $i$'th term in $klm$ is $0$ and $r_{ii}^\kappa=-1$ if the $i$'th term in $klm$ is $1$. Thus $r_{ij}^\kappa-r_{ij}^{\kappa'}$ is even for all $i,j$ and $\sum_{ij}r_{ij}^\kappa$ is always odd, so our base-case of $(\P^1)^3$ is a special case of Proposition~\ref{prop:Construction}. 

Similarly, it is easy to show that if $\Sigma$ satisfies the conditions of Proposition~\ref{prop:Construction}, and if we let $\Sigma'$ be the fan corresponding to the blow-up of $X_\Sigma$ along an $N_3$-stable set of $T$-fixed points, then the $N_3$-action on $X_{\Sigma'}$ induced by our given one on $X_\Sigma$ is given by our construction, and $\Sigma'$ also satisfies the conditions of Proposition~\ref{prop:Construction}.

Unfortunately, we do not at present know of an example of a $\Sigma$ satisfying the conditions of Proposition~\ref{prop:Construction} other than those arising from repeated blow-ups of  $(\P^1)^3$ along $N_3$-stable sets of $T$-fixed points. We present Proposition~\ref{prop:Construction} and its proof in the hope that this may change in the future.
\end{rem}

\subsection{A useful lemma}
As an aid to the proof of Proposition~\ref{prop:Construction}, we first state a lemma on conditions for the existence of an $N_3$-linearization for a line bunde on $X_\Sigma$.

Let $\{\tau_{\kappa, \kappa'}\in \sO_{X_\Sigma}(U_\kappa\cap U_{\kappa'})^\times\}_{\kappa,\kappa'\in \Sigma(3)}$ be a 1-cocycle defining an invertible sheaf $L_\tau$ on $X_\Sigma$, that is,
\[
\tau_{\kappa, \kappa'}\cdot \tau_{\kappa', \kappa''}=\tau_{\kappa, \kappa''}
\]
on $U_\kappa\cap U_{\kappa'}\cap  U_{\kappa''}$ and
 for $U\subset X_\Sigma$,
\[
L_\tau(U)=\{\{s_\kappa\in \sO_{X_\Sigma}(U\cap U_\kappa)\}_\kappa\mid s_\kappa=\tau_{\kappa, \kappa'}s_{\kappa'} \text{ on }U\cap U_\kappa\cap U_{\kappa'}\}.
\]
We suppose we are given an action of $N_3$ on $X_\Sigma$, via a homomorphism
\[
\rho:N_3\to \Aut_k(X_\Sigma)
\]
satisfying the following conditions.
\begin{equation}\label{eqn:ActionConditions}
\vbox{
\begin{itemize}
\item With respect to the inclusion of $\G_m^3$ defined by the canonical $\G_m^3$-action on $X_\Sigma$, $\rho$ restricts to a surjective homomorphism $\rho:T\to \G_m^3\subset \Aut_k(X_\Sigma)$.
\item $\rho(-\id_T)=\id_{X_\Sigma}$, hence $\rho(\sigma_\delta)^2=\id_{X_\Sigma}$.
\item $\rho(\sigma_\delta)(U_\kappa)=U_{-\kappa}$.
\end{itemize}
}
\end{equation}

\begin{lem}\label{lem:N3Linearization} Suppose we have an $N_3$-action on $X_\Sigma$ satisfying the conditions \eqref{eqn:ActionConditions}. Then an $N_3$-linearization of $L_\tau$ is given by the following data: For each $\kappa\in \Sigma(3)$ a character $\lambda_\kappa:T\to \G_m$, and an element $\lambda_\kappa(\sigma_\delta)\in k^\times$. These satisfy:
\begin{enumerate}
\item\label{lem:N3Linearization;2}  Given $\kappa, \kappa'\in \Sigma(3)$, then for $t\in T$, we have 
\[
\rho(t)^*(\tau_{\kappa, \kappa'})=\frac{\lambda_{\kappa'}(t)}{\lambda_\kappa(t)}\cdot \tau_{\kappa, \kappa'},
\]
and
\[
\rho(\sigma_\delta)^*(\tau_{-\kappa, -\kappa'})=\frac{\lambda_{\kappa'}(\sigma_\delta)}{\lambda_\kappa(\sigma_\delta)}\cdot \tau_{\kappa, \kappa'}.
\]
\item \label{lem:N3Linearization;3}
$\lambda_\kappa(\sigma_\delta)\cdot \lambda_{-\kappa}(\sigma_\delta)=\lambda_\kappa(-\id_T)$ for all $\kappa\in \Sigma(3)$.
\item\label{lem:N3Linearization;1} $\lambda_{-\kappa}=\lambda_\kappa^{-1}$ on $T$.
\end{enumerate}
Given this data, define
\[
\Psi_t:\rho(t)^*L_\tau\to L_\tau
\]
by the collection of map $\Psi_t^\kappa=\times \lambda_\kappa(t):k[U_\kappa]\to k[U_\kappa]$ and define 
\[
\Psi_{\sigma_\delta}:\rho(\sigma_\delta)^*L_\tau\to L_\tau
\]
by the collection of maps $\Psi_{\sigma_\delta}^\kappa:\times \lambda_\kappa(\sigma_\delta):k[U_\kappa]\to k[U_\kappa]$.
Then these maps fit together to define an $N_3$-linearization of $L_\tau$, and all $N_3$-linearizations of $L_\tau$ occur in this way.
\end{lem}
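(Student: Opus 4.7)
An $N_3$-linearization of $L_\tau$ is a compatible family of $\sO_{X_\Sigma}$-linear isomorphisms $\Psi_g\colon\rho(g)^*L_\tau\to L_\tau$, for $g\in N_3$, varying algebraically and satisfying the usual cocycle identity coming from composition in $N_3$. The group $N_3$ is generated by $T$ and $\sigma_\delta$ subject to the two relations $\sigma_\delta^2=-\id_T$ and $\sigma_\delta\cdot t=t^{-1}\cdot\sigma_\delta$ for $t\in T$, so such a collection is uniquely determined by the $T$-linearization $(\Psi_t)_{t\in T}$ together with the single involution-type map $\Psi_{\sigma_\delta}$, subject precisely to these two relations. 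My plan is to translate the $T$-linearization and $\Psi_{\sigma_\delta}$ into the scalar data $(\lambda_\kappa,\lambda_\kappa(\sigma_\delta))$ of the lemma (giving the first identity of condition~(1) from the $T$-part, and the second identity of condition~(1) from the $\sigma_\delta$-part), and then verify that the two group relations translate exactly into conditions~(2) and~(3).

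For the $T$-linearization, $\rho(t)$ preserves $U_\kappa$, so $\rho(t)^*L_\tau|_{U_\kappa}$ is trivial with overlap cocycle $\rho(t)^*\tau_{\kappa,\kappa'}$, and $\Psi_t|_{U_\kappa}$ is given by multiplication by an invertible function $\Lambda_\kappa(t)\in\sO(U_\kappa)^\times$. Viewing this as a morphism on $T\times X_\Sigma$ and using that $U_\kappa\cong\A^3_k$ (since $\Sigma$ is regular and $\kappa$ is maximal), the units in $\sO(T\times U_\kappa)$ are scalar multiples of characters of $T$, so after normalizing at $t=1$ the function $\Lambda_\kappa(t)$ is identified with a character $\lambda_\kappa$ of $T$. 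Compatibility of $\Psi_t$ with the gluing across $U_\kappa\cap U_{\kappa'}$ becomes exactly $\rho(t)^*\tau_{\kappa,\kappa'}=(\lambda_{\kappa'}/\lambda_\kappa)(t)\cdot\tau_{\kappa,\kappa'}$.

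For $\Psi_{\sigma_\delta}$, Lemma~\ref{lem:SigmaInvolution} gives $\rho(\sigma_\delta)(U_\kappa)=U_{-\kappa}$, so $\rho(\sigma_\delta)^*L_\tau|_{U_\kappa}$ is trivial with overlap cocycle $\rho(\sigma_\delta)^*\tau_{-\kappa,-\kappa'}$, and $\Psi_{\sigma_\delta}|_{U_\kappa}$ is multiplication by an element of $\sO(U_\kappa)^\times=k^\times$, which I denote $\lambda_\kappa(\sigma_\delta)$; the overlap condition becomes the second identity of~(1). The two group relations are then translated by direct local computation: the composite $\Psi_{\sigma_\delta}\circ\rho(\sigma_\delta)^*\Psi_{\sigma_\delta}$ restricted to $U_\kappa$ is multiplication by $\lambda_\kappa(\sigma_\delta)\cdot\lambda_{-\kappa}(\sigma_\delta)$ (the second factor appearing because $\rho(\sigma_\delta)$ carries $U_\kappa$ onto $U_{-\kappa}$), and equating this with $\Psi_{-\id_T}|_{U_\kappa}=\lambda_\kappa(-\id_T)$ produces condition~(2); similarly, evaluating both sides of $\Psi_{\sigma_\delta t}=\Psi_{t^{-1}\sigma_\delta}$ on $U_\kappa$ produces scalars whose equality is the relation $\lambda_{-\kappa}=\lambda_\kappa^{-1}$ of condition~(3).

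Conversely, given any data $(\lambda_\kappa,\lambda_\kappa(\sigma_\delta))$ satisfying (1)--(3), the local multiplications define well-defined maps $\Psi_t,\Psi_{\sigma_\delta}$ by~(1) and extend uniquely to an assignment $g\mapsto\Psi_g$ on all of $N_3$; conditions (2) and (3) are exactly what is needed for this assignment to respect the defining relations of $N_3$, hence to yield an $N_3$-linearization. The only subtlety I anticipate is keeping track of the sign and orientation conventions in the cocycle identity $\Psi_{gh}=\Psi_g\circ\rho(g)^*\Psi_h$ and in the identifications of $\rho(g)^*L_\tau$ with local trivializations, since a different convention would swap $\lambda_\kappa$ and $\lambda_{\kappa'}$ in the formulas of~(1); once fixed at the outset, every remaining step reduces to a direct, short local computation.
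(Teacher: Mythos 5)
Your proposal is correct and follows essentially the same route as the paper's proof: trivialize locally on the $T$-stable charts $U_\kappa$, identify the $T$-part with characters $\lambda_\kappa$ and the $\sigma_\delta$-part with scalars $\lambda_\kappa(\sigma_\delta)$ via $k[U_\kappa]^\times=k^\times$, and match the two relations $\sigma_\delta^2=-\id_T$ and $\sigma_\delta t=t^{-1}\sigma_\delta$ with conditions (2) and (3) by the same local computation (the paper simply presents sufficiency first and then deduces necessity from the triviality of the transition bundles on the cover).
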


\begin{proof} For each $\kappa\in \Sigma(3)$,  our assumptions on $\rho$ imply that $\rho(t)(U_\kappa)=U_\kappa$ for each $t\in T$ and $\rho(\sigma_\delta)(U_\kappa)=U_{-\kappa}$. This gives the canonical isomorphism
\[
(\rho(t)^*L_\tau)_{|U_\kappa}= k[U_\kappa]\otimes_{\id, k[U_\kappa], \rho(t)^*} k[U_\kappa]\cong k[U_\kappa]
\]
and
\[
(\rho(\sigma_\delta)^*L_\tau)_{|U_\kappa}= k[U_{-\kappa}]\otimes_{\id, k[U_{-\kappa}], \rho(\sigma_\delta)^*} k[U_\kappa]\cong 
k[U_\kappa].
\]
This in turn identifies $\rho(t)^*L_\tau$ with $L_{\rho(t)^*\tau}$ and $\rho(\sigma_\delta)^*L_\tau$ with $L_{\rho(\sigma_\delta)^*\tau}$, where
\[
(\rho(t)^*\tau)_{\kappa, \kappa'}=\rho(t)^*(\tau_{\kappa, \kappa'}),\
(\rho(\sigma_\delta)^*\tau)_{\kappa, \kappa'}=\rho(\sigma_\delta)^*(\tau_{-\kappa, -\kappa'}).
\]
By \eqref{lem:N3Linearization;2}, the maps
\[
 \times \lambda_\kappa(t):k[U_\kappa]\to k[U_\kappa],\ \kappa\in \Sigma(3)
\]
give a well-defined map $\Psi_t:\rho(t)^*L_\tau\to L_\tau$, and the maps
\[
 \times \lambda_\kappa(\sigma_\delta):k[U_\kappa]\to k[U_\kappa],\ \kappa\in \Sigma(3)
\]
give a well-defined map $\Psi_{\sigma_\delta}:\rho(\sigma_\delta)^*L_\tau\to L_\tau$.

For $t_1, t_2\in T$, $\kappa\in \Sigma(3)$, we have 
\[
\lambda_\kappa(t_1)\cdot \rho(t_1)^*(\lambda_\kappa(t_2))=
\lambda_\kappa(t_1)\cdot \lambda_\kappa(t_2)=\lambda_\kappa(t_1t_2)
\]
since $\rho(t_1)^*$ acts by the identity on $k\subset k[U_\kappa]$ and $\lambda_\kappa:T\to \G_m$ is a character. Thus
\[
\Psi_{t_1}\circ\rho(t_1)^*\Psi_{t_2}=\Psi_{t_1t_2}.
\]
Similarly, since $\rho(\sigma_\delta)^*$ acts by the identity on $k$, it follows from  \eqref{lem:N3Linearization;3} that
\[
\lambda_{\kappa}(\sigma_\delta)\cdot \rho(\sigma_\delta)^*(\lambda_{-\kappa}(\sigma_\delta))=
\lambda_{\kappa}(\sigma_\delta)\cdot \lambda_{-\kappa}(\sigma_\delta)=\lambda_\kappa(-\id_T).
\]
As $\sigma_\delta^2=-\id_T$, we thus have
\[
\Psi_{\sigma_\delta}\circ\rho(\sigma_\delta)^*\Psi_{\sigma_\delta}=\Psi_{\sigma_\delta^2}.
\]
Finally, for $t\in T$, we have by \eqref{lem:N3Linearization;1}
\[
\lambda_{\kappa}(t^{-1})\cdot \rho(t^{-1})^*(\lambda_\kappa(\sigma_\delta))=
\lambda_{-\kappa}(t)\cdot \lambda_\kappa(\sigma_\delta)=
\lambda_\kappa(\sigma_\delta)\cdot \rho(\sigma_\delta)^*(\lambda_{-\kappa}(t)),
\]
so
\[
\Psi_{t^{-1}}\circ  \rho(t^{-1})^*(\Psi_{\sigma_\delta})=\Psi_{\sigma_\delta}\circ\rho(\sigma_\delta)^*\Psi_t.
\]

Defining
\[
\Psi_{\sigma_\delta\cdot t}:=\Psi_{\sigma_\delta}\circ\rho(\sigma_\delta)^*\Psi_t
\]
and
\[
\Psi_{t\cdot \sigma_\delta}:=\Psi_t\circ\rho(t)^*\Psi_{\sigma_\delta}
\]
we then have 
\[
\Psi_{\sigma_\delta\cdot t}=\Psi_{t^{-1}\sigma_\delta}.
\]

In total, we see that  the relations defining $N_3$ as a quotient of the free product of $\G_m^3$ and $\<\sigma_\delta\>=\Z/4$ are respected by $\Psi$, and $\Psi_{\alpha\cdot \beta}=\Psi_\alpha\circ \rho(\alpha)^*\Psi_\beta$ for all $\alpha,\beta\in N_3$,  giving a well-defined $N_3$-linearization of $L_\tau$.

Conversely, $L_\tau$, $\rho(\sigma_\delta)^*L_\tau$ and $\rho(t)^*L_\tau$ are all trivialized on the cover $\{U_\kappa\}_{\kappa\in \Sigma(3)}$. Given an $N_3$-linearization $\Psi$ of $L_\tau$ and an $\alpha\in N_3$, the map $\Psi_\alpha:\rho(\alpha)^*L_\tau\to L_\tau$ is thus given by a collection of units $\lambda_\kappa(\alpha)\in k[U_\kappa]^\times =k^\times$. By reading the above argument in reverse, we see that $\Psi$ is determined by the elements $\lambda_\kappa(t)$, $t\in T$, and $\lambda_\kappa(\sigma_\delta)$, and that these must satisfy the relations \eqref{lem:N3Linearization;2}, \eqref{lem:N3Linearization;3}, \eqref{lem:N3Linearization;1}, so the data described in the Lemma gives all possible $N_3$-linearizations of $L_\tau$.
\end{proof}

\subsection{The proof of Proposition~\ref{prop:Construction}}

Before launching into the proof, we introduce the notion of {\em oriented} standard coordinates and some other notations. 

Recall that for each $\kappa$, the standard coordinates $x_1^\kappa, x_2^\kappa, x_3^\kappa$ are uniquely determined up to reordering. Noting that the conditions \eqref{enum:Odd} and \eqref{enum:Even} are invariant under reordering of the standard coordinates, we are thus free to fix a choice of standard coordinates for each $\kappa$.

Let 
\[
\omega:=\frac{dx_1}{x_1}\frac{dx_2}{x_2}\frac{dx_3}{x_3}.
\]
We call the coordinates $x_1^\kappa, x_2^\kappa, x_3^\kappa$ {\em oriented} if $\det (r_{ij}^\kappa)=1$, and we similarly call dual generators $v_1^\kappa, v_2^\kappa, v_3^\kappa$ for $\kappa$ oriented if equivalently $\det (v_{ij}^\kappa)=1$.

By assumption $-\id_\sN(\Sigma)=\Sigma$. 
Let $\Sigma(3)^0\subset \Sigma(3)$ be a set of representatives for the action of $-\id_\sN$ on $\Sigma(3)$. We will assume for all $\kappa\in \Sigma(3)^0$ that $v_1^\kappa, v_2^\kappa, v_3^\kappa$ are oriented generators for $\kappa$, and that $v_i^{-\kappa}=-v_i^\kappa$, $i=1,2,3$.  

For standard coordinates $x_1^\kappa, x_2^\kappa, x_3^\kappa$ we set
\[
x^\kappa:=x_1^\kappa\cdot x_2^\kappa\cdot x_3^\kappa,\ dx^\kappa:=
dx_1^\kappa\wedge dx_2^\kappa\wedge  dx_3^\kappa.
\]
Then from  \eqref{eqn:CoordTrans} we have
\begin{equation}\label{eqn:nForm1}
\frac{dx^\kappa}{x^\kappa}=\det(r_{ij}^\kappa)\cdot\omega=\pm \omega,
\end{equation}
with the sign $+1$ if and only if $x_1^\kappa,x_2^\kappa, x_3^\kappa$  is oriented.

\begin{rem}\label{rem:Orient} Since  $x_1^\kappa,x_2^\kappa, x_3^\kappa$ is oriented if and only if  $x_1^{-\kappa}=(x_1^\kappa)^{-1}$, $x_3^{-\kappa}=(x_2^\kappa)^{-1}$, $x_3^{-\kappa}=(x_3^\kappa)^{-1}$ is not oriented, we see that  $x_1^\kappa,x_2^\kappa, x_3^\kappa$ is oriented if and only if $\kappa$ is in $\Sigma(3)^0$. 
\end{rem}

\begin{proof}[Proof of Proposition~~\ref{prop:Construction}]
Given  $\kappa, \kappa'\in \Sigma(3)$, with corresponding standard coordinates  $x_1^{\kappa},x_2^{\kappa}, x_3^{\kappa}$ and $x_1^{\kappa'},x_2^{\kappa'}, x_3^{\kappa'}$, it follows from \eqref{eqn:CoordTrans} that
\begin{equation}\label{eqn:Cocycle}
x^{\kappa'}=\left[\prod_{i=1}^3x_i^{\sum_jr_{ij}^{\kappa'}-\sum_{ij}r_{ij}^\kappa}\right]\cdot x^\kappa.
\end{equation}

Let 
\[
\xi_{\kappa, \kappa'}:=\prod_{i=1}^3x_i^{\sum_jr_{ij}^{\kappa'}-\sum_{ij}r_{ij}^\kappa}.
\]
Since $x^\kappa$ and $ x^{\kappa'}$ define the same divisor $D:=\sum_{\kappa^*\in \Sigma(1)}D_{\kappa*}$ on $U_\kappa\cap U_{\kappa'}$,  it follows from \eqref{eqn:Cocycle} that  $\xi_{\kappa, \kappa'}$ is a unit on $U_\kappa\cap U_{\kappa'}$. Moreover, \eqref{eqn:Cocycle} gives a canonical isomorphism of $\sO_{X_\Sigma}(-D)$ with the invertible sheaf defined by the cocycle 
$\xi_{\kappa, \kappa'}\in \Gamma(U_\kappa\cap U_{\kappa'}, \sO_{X_\Sigma}^\times)$ for the cover $\{U_\kappa\}_{\kappa\in \Sigma(3)}$ of $X_\Sigma$, by using $\prod_{j=1}^3x_j^\kappa$ for the generator of $\sO_{U_\kappa}(-D_{|U_\kappa})$.

From  \eqref{eqn:nForm1}, we have
\[
\Div(\omega)=D,
\]
so using $\omega$ as global generator for $K_{X_\Sigma}(D)$ gives us 
the canonical isomorphism
\[
\phi_X: K_{X_\Sigma}(D)\cong \sO_{X_\Sigma},
\]
with 
\[
\phi_X(\frac{dx_1}{x_1}\frac{dx_2}{x_2}\frac{dx_3}{x_3})=1.
\]
Restricting to $U_\kappa$, we thus have
\[
\phi_X(\frac{dx^\kappa}{x^\kappa})=\pm 1,
\]
with value $+1$ exactly when $x_1^\kappa, x_2^\kappa, x_3^\kappa$ are oriented standard coordinates.

We have chosen oriented generators $v_1^\kappa, v_2^\kappa, v_3^\kappa$  for $\kappa\in \Sigma(3)^0$ with dual oriented  standard coordinates $x_1^\kappa,x_2^\kappa, x_3^\kappa$ for $U_\kappa$, and we have chosen 
$-v_1^\kappa, -v_2^\kappa, -v_3^\kappa$ as ``anti-oriented'' generators for $-\kappa$ with dual anti-oriented standard coordinates  $x_1^{-\kappa}, x_2^{-\kappa},x_3^{-\kappa}$ for $U_{-\kappa}$ (see Remark~\ref{rem:Orient}). Thus
\[
x_j^{-\kappa}=(x_j^\kappa)^{-1}
\]
and 
\begin{equation}\label{eqn:MinusAction}
(r_{ij}^{-\kappa})=(-r_{ij}^\kappa).
\end{equation}
In particular, the section $-\frac{dx^{-\kappa}}{x^{-\kappa}}$ of  $K_{X_\Sigma}(D)$ over $U_{-\kappa}$ maps to 1 under the isomorphism $\phi_X$.

We have the canonical action $\G_m^3\times X_\Sigma\to X_\Sigma$. For $(t_1,t_2, t_3)\in \G_m^3$, \eqref{eqn:CoordTrans} implies that
\[
(t_1, t_2, t_3)^*(x_j^\kappa)=(\prod_{i=1}^3t_i^{r_{ij}^\kappa})\cdot x_j^\kappa.
\]
To show (1), we have the homomorphism $\rho_T:T\to \G_m^3\subset \Aut_k(X_\Sigma)$ defined by
\[
\rho_T(t_1, t_2, t_3)=(t_1^2, t_2^2, t_3^2),
\]
so $\rho_T(-\id_T)=\id_{X_\Sigma}$. We have the action of $\sigma_\delta$  on $U_0=\Spec k[\sM]=\G_m^3$ defined by
\[
\rho_0(\sigma_\delta)^*(x_i)=-x_i^{-1}, i=1,2,3.
\]
It follows from \eqref{eqn:MinusAction} that $\rho_0(\sigma_\delta)$ extends to an involution $\rho(\sigma_\delta)$ of $X_\Sigma$, with $\rho(\sigma_\delta)(U_\kappa)=U_{-\kappa}$ and with
\[
 \rho(\sigma_\delta)^*(x_j^\kappa)=(-1)^{\sum_ir_{ij}^\kappa}x_j^{-\kappa}.
 \]
 We also have
\[
\rho_T(t^{-1})^*\circ \rho(\sigma_\delta)^*= \rho(\sigma_\delta)^*\circ \rho_T(t)^*
\]
for all $t\in T$,  hence $\rho_T$ and $\rho(\sigma_\delta)$ fit together to define a homomorphism
\[
\rho:N_3\to \Aut_k(X_\Sigma),
\]
proving (1). 

The isomorphism $\phi_X$ is {\em not} $\<\sigma_\delta\>$-equivariant, since 
\[
\rho(\sigma_\delta)^*(\frac{dx_1}{x_2}\frac{dx_1}{x_2}\frac{dx_3}{x_3})
=-\frac{dx_1}{x_2}\frac{dx_1}{x_2}\frac{dx_3}{x_3}
\]
while $\rho(\sigma_\delta)^*(1)=1$.  
Twisting by $\sO_X(-D)$ gives us the isomorphism
\[
\phi_X(-D):K_X\to \sO_X(-D),
\]
which sends the generator $dx^\kappa$ of $K_X|_{U_\kappa}$ to $x^\kappa$ for $\kappa\in \Sigma(3)^0$, and to $-x^\kappa$ for $\kappa\not\in \Sigma(3)^0$.  For the natural $\<\sigma_\delta\>$-linearization on $K_X$ and $\sO_X(-D)$ given by the action of $\rho(\sigma_\delta)$ on $X$, we have $\rho(\sigma_\delta)^*(dx^\kappa)=-dx^{-\kappa}$ and $\rho(\sigma_\delta)^*(x^\kappa)=-x^{-\kappa}$  for all $\kappa\in \Sigma(3)$. We therefore twist the natural  $\<\sigma_\delta\>$-linearization on $\sO_X(-D)$ by the character $\sigma_\delta\mapsto -1$, giving the  $\<\sigma_\delta\>$-linearized invertible sheaf $\sO_X(-D)^-$ with $\sigma_\delta^*(x^\kappa)=x^{-\kappa}$  for all $\kappa\in \Sigma(3)$. Thus $\phi_X(-D)$ defines the $\<\sigma_\delta\>$-equivariant isomorphism
\[
\phi_X(-D)^-:K_X\to \sO_X(-D)^-.
\]
The subsheaf $\sO_{X_\Sigma}(-D)$ of $\sO_{X_\Sigma}$ inherits a $T$-linearization from $\sO_{X_\Sigma}$, which induces a $T$-linearization on $\sO_{X_\Sigma}(-D)^-$ from the identity of underlying sheaves $\sO_{X_\Sigma}(-D)^-= \sO_{X_\Sigma}(-D)$, and $\phi_X(-D)^-$ is $T$-equivariant. 

Writing 
\[
\rho(t_1, t_2, t_3)^*(x_j^\kappa)=(t_1^{s_{1j}^\kappa}t_2^{s_{2j}^\kappa}t_3^{s_{3j}^\kappa})\cdot x_j^\kappa
\]
we have $s_{ij}^\kappa=-2r_{ij}^\kappa$, so the $s_{ij}^\kappa$ are all even. The given $T$-linearization and   $\<\sigma_\delta\>$-linearization for $\sO_{X_\Sigma}(-D)^-$ fit together to give $\sO_{X_\Sigma}(-D)^-$ an $N_3$-linearization, for which $\phi_X(-D)^-$ is $N_3$-equivariant.

We now assume the $r_{ij}^\kappa$ satisfy the conditions \eqref{enum:Odd} and \eqref{enum:Even}. The matrices $(s_{ij}^\kappa)$ then satisfy Definition~\ref{defn:N3Orient}\eqref{defn:N3Orient:4} since each $s_{ij}^\kappa$ is even, and
\[
\sum_{i,j=1}^3s_{ij}^\kappa=-2\cdot \sum_{i,j=1}^3r_{ij}^\kappa\equiv 2\mod 4.
\]
In addition,  $\sum_j(r_{ij}^\kappa-r_{ij}^{\kappa'})$ is an even integer for each $i=1,2,3$ and for each $\kappa, \kappa'\in \Sigma(3)$. 

Let $L_\eta$ be the invertible sheaf on $X_\Sigma$ defined by the 1-cocycle
\[
\eta_{\kappa, \kappa'}:=\prod_{i=1}^3 x_i^{\frac{1}{2}\sum_j(r_{ij}^{\kappa'}-r_{ij}^{\kappa})}
\]
for the cover $\{U_\kappa\mid \kappa\in \Sigma(3)\}$ of $X_\Sigma$. Using the local generator $x^{\kappa}$ for $\sO_{X_\Sigma}(-D)|_{U_\kappa}$, \eqref{eqn:Cocycle} gives us the canonical isomorphism $\psi_X: \sO_{X_\Sigma}(-D)^-\xrightarrow{\sim} L_\eta^{\otimes 2}$
of invertible sheaves on $X_\Sigma$, sending the generator $x_\kappa$ of $\sO_X(-D)^-$ on $U_\kappa$ to the canonical generator $1\in k[U_\kappa]=L_\eta^{\otimes 2}|_{U_\kappa}$. Let \[
\tau_X:K_{X_\Sigma}\xrightarrow{\sim}L_\eta^{\otimes 2}
\]
be the composition of isomorphisms
\[
K_{X_\Sigma}\xymatrix{\ar[r]^{\phi_X(-D)^-}_\sim&}\sO_{X_\Sigma}(-D)^-\xymatrix{\ar[r]^{\psi_X}_\sim&} L_\eta^{\otimes 2}.
\]

For $\kappa\in \Sigma(3)$ define
\[
\lambda_\kappa(\sigma_\delta):=\begin{cases} 
+1&\text{ if }\sum_{ij}r_{ij}^\kappa\equiv1\mod 4\\
-1&\text{ if }\sum_{ij}r_{ij}^\kappa\equiv3\mod 4,
\end{cases}
\]
which is well-defined, since $\sum_{ij}r_{ij}^\kappa$ is odd for all $\kappa$.
Then
\[
\frac{\lambda_{\kappa'}(\sigma_\delta)}{\lambda_\kappa(\sigma_\delta)}=(-1)^{\frac{1}{2}\sum_{ij}(r_{ij}^{\kappa'}-r_{ij}^\kappa)}.
\] 
which gives us 
\begin{align*}
\rho(\sigma_\delta)^*(\eta_{-\kappa, -\kappa'})&=\prod_{i=1}^3 (-x_i)^{\frac{1}{2}\sum_j(r_{ij}^{\kappa'}-r_{ij}^{\kappa})}\\
&=\frac{\lambda_{\kappa'}(\sigma_\delta)}{\lambda_\kappa(\sigma_\delta)} \prod_{i=1}^3 x_i^{\frac{1}{2}\sum_j(r_{ij}^{\kappa'}-r_{ij}^\kappa)}\\
&=\frac{\lambda_{\kappa'}(\sigma_\delta)}{\lambda_\kappa(\sigma_\delta)}\eta_{\kappa, \kappa'}.
\end{align*}

For $\kappa\in \Sigma(3)$, and  $t=(t_1, t_2, t_3)\in T=\G_m^3$, define
\[
\lambda_\kappa(t):=\prod_{i=1}^3t_i^{-\sum_jr_{ij}^\kappa}.
\]
Then 
\[
\rho(t)^*(\eta_{\kappa, \kappa'})=\frac{\lambda_{\kappa'}(t)}{\lambda_\kappa(t)}\cdot
\eta_{\kappa, \kappa'}.
\]
Thus by Lemma~\ref{lem:N3Linearization}, the collection of units $\{\lambda_\kappa(t), \lambda_\kappa(\sigma_\delta)\}$ give us an $N_3$-linearization of $L_\eta$, and the collection
$\{\lambda_\kappa(t)^2, \lambda_\kappa(\sigma_\delta)^2=1\}$ defines the induced $N_3$-linearization of $L_\eta^{\otimes 2}$. If we trivialize $\sO_X(-D)^-$ using the generator $x^\kappa$ on $U_\kappa$, we see that the corresponding cocycle describing $\sO_X(-D)^-$ is $\{\eta_{\kappa, \kappa'}^2\}$ and that $\{\lambda_\kappa(t)^2, \lambda_\kappa(\sigma_\delta)^2=1\}$ is  the collection of units defining our given $N_3$-linearization of $\sO_X(-D)^-$.  Thus our isomorphism $\psi_X:\sO_X(-D)^-\to L_\eta^{\otimes 2}$ is   $N_3$-equivariant, and $\tau_X:K_X\xrightarrow{\sim}  L_\eta^{\otimes 2}$ defines an $N_3$-equivariant spin structure on $X_\Sigma$.

Finally, by assumption $X_\Sigma$ admits a $\G_m^3$-linearized very ample invertible sheaf $\sL$, which is then automatically $T$-linearized with corresponding isomorphisms $\phi_t:\rho(t)^*\sL\to \sL$ for $t\in T$. Let $\sO_X(1)$ be the very ample invertible sheaf $\sL\otimes\rho(\sigma_\delta)^*\sL$. The $T$-linearization of $\sL$ induces a $T$-linearization of $\sO_X(1)$ by
\[
\psi_t:=\phi_t\otimes\rho(\sigma_\delta)^*(\phi_{t^{-1}}):
\rho(t)^*(\sL\otimes\rho(\sigma_\delta)^*\sL)\xrightarrow{\sim} \sL\otimes\rho(\sigma_\delta)^*\sL,\ t\in T.
\]
We also have
\[
\psi_{\sigma_\delta}:\rho(\sigma_\delta)^*(\sL\otimes\rho(\sigma_\delta)^*\sL)\xrightarrow{\sim} \sL\otimes\rho(\sigma_\delta)^*\sL
\]
defined as the composition
\[
\rho(\sigma_\delta)^*(\sL\otimes\rho(\sigma_\delta)^*\sL)\cong
\rho(\sigma_\delta)^*(\sL)\otimes \sL\xymatrix{\ar[r]^\tau_\sim&}\sL\otimes\rho(\sigma_\delta)^*\sL,
\]
where $\tau$ is the exchange of factors. We define $\psi_{t\cdot \sigma_\delta}:=\psi_{\sigma_\delta}\circ \rho(\sigma_\delta)^*(\psi_t)$, and one can then check that with this definition of $\psi_t$, $\psi_{\sigma_\delta}$ and $\psi_{t\cdot \sigma_\delta}$, we have a well-defined $N_3$-linearization of $\sO_X(1)$. This completes the proof of the Proposition.
\end{proof}

\printbibliography
	
\end{document}